\newcommand{\harxiv}[1]{\href{http://arxiv.org/abs/#1}{\texttt{arXiv:#1}}}
\newcommand{\hyref}[2]{\hyperref[#2]{#1~\ref*{#2}} }
\newcommand{\coloneqq}{\mathrel{\mathop:}=}
\def\Ddots{\mathinner{\mkern1mu\raise\p@
\vbox{\kern7\p@\hbox{.}}\mkern2mu
\raise4\p@\hbox{.}\mkern2mu\raise7\p@\hbox{.}\mkern1mu}}
\newcommand{\spvdots}[1]{\text{\raisebox{#1ex}{$\vdots$}}}
\newcommand{\ccdots}{\cdot\!\cdot\!\cdot} % ugly hack
\newcommand{\arf}[1]{\scriptstyle(#1)}    % AR coordinates from Keller-Vossieck
\newcommand{\arft}[1]{ \arf{#1} \ar[dr] }         %     shortcut for top row
\newcommand{\arfc}[1]{ \arf{#1} \ar[dr] \ar[ur] } %     shortcut for middle row
\newcommand{\arfb}[1]{ \arf{#1} \ar[ur] }         %     shortcut for bottom row
\newcommand{\lend}{\delta}                % for the positive and negative lengths of End algebras
\theoremstyle{plain}
\newtheorem{theorem}{Theorem}[section]
\newtheorem{conjecture}[theorem]{Conjecture}
\newtheorem{lemma}[theorem]{Lemma}
\newtheorem{corollary}[theorem]{Corollary}
\newtheorem{proposition}[theorem]{Proposition}
\theoremstyle{definition}
\newtheorem{remark}[theorem]{Remark}
\newtheorem{example}[theorem]{Example}
\newtheorem*{naive-algorithm}{Na\"ive algorithm}
\newtheorem*{refined-algorithm}{Refined algorithm}
\newtheorem{definition}[theorem]{Definition}
\newtheorem*{question}{Question}
\newtheorem{properties}[theorem]{Properties}
\newcommand{\Step}[1]{\medskip\noindent\emph{Step #1:}}
\newcommand{\Case}[1]{\medskip\noindent\emph{Case #1:}}
\newcommand{\RemainingCases}{\medskip\noindent\emph{Remaining cases: }}
\newcommand{\LLambda}{\Lambda(r,n,m)}
\newcommand{\catD}{\mathsf D}
\newcommand{\Db}{\sD^b}
\newcommand{\cX}{\mathcal{X}}
\newcommand{\cY}{\mathcal{Y}}
\newcommand{\cZ}{\mathcal{Z}}
\newcommand{\ray}[1]{\mathsf{ray}(#1)}                % unused
\newcommand{\rayfrom}[1]{\mathsf{ray}_{\! +}(#1)}
\newcommand{\rayto}[1]{\mathsf{ray}_{\! -}(#1)}
\newcommand{\raythrough}[1]{\mathsf{ray}_{\! \pm}(#1)}
\newcommand{\coray}[1]{\mathsf{coray}(#1)}            % unused
\newcommand{\corayfrom}[1]{\mathsf{coray}_{\! +}(#1)}
\newcommand{\corayto}[1]{\mathsf{coray}_{\! -}(#1)}
\newcommand{\coraythrough}[1]{\mathsf{coray}_{\! \pm}(#1)}
\newcommand{\begintabularhammock}{ \smallskip\noindent\hspace{0.05\textwidth}
                                   \begin{tabular}{@{} p{0.15\textwidth} @{} p{0.80\textwidth} @{}} }
\newcommand{\rk}[1]{\mathrm{rk}\, #1}
\newcommand{\listskip}{\\[0.7ex]}
\newcommand{\AAA}{{\mathbb{A}}}
\newcommand{\IZ}{{\mathbb{Z}}}
\newcommand{\IN}{{\mathbb{N}}}
\newcommand{\ko}{{\mathcal O}}
\newcommand{\kl}{{\mathcal L}}
\newcommand{\kd}{{\mathcal D}}
\newcommand{\kr}{{\mathcal R}}
\newcommand{\ku}{{\mathcal U}}
\newcommand{\kk}{{\mathbf{k}}}
\newcommand{\orth}{^\perp}
\newcommand{\inv}{^{-1}}
\newcommand{\blank}{-}
\DeclareMathOperator{\susp}{\mathsf{susp}}
\DeclareMathOperator{\cosusp}{\mathsf{cosusp}}
\DeclareMathOperator{\stHom}{\underline{\mathsf{Hom}}} % for stable homs.
\DeclareMathOperator{\Aut}{\mathsf{Aut}}
\DeclareMathOperator{\Out}{\mathsf{Out}}
\DeclareMathOperator{\Inn}{\mathsf{Inn}}
\DeclareMathOperator{\id}{{\mathsf{id}}}
\newcommand{\ind}[1]{\mathsf{ind}(#1)}
\renewcommand{\mod}[1]{\mathsf{mod}(#1)}
\newcommand{\stmod}[1]{\underline{\mathsf{mod}}(#1)}
\newcommand{\proj}[1]{\mathsf{proj}(#1)}
\DeclareMathOperator{\add}{\mathsf{add}}
\newcommand{\hi}[1]{h(#1)}
\newcommand{\thick}[2]{\mathsf{thick}_{#1}(#2)}
\DeclareMathOperator{\Hom}{\mathrm{Hom}}
\newcommand{\TTT}{\mathsf{T}\!}   % special case for the \TTT_X notation
\newcommand{\FFF}{\mathsf{F}\!}   % special case for the \FFF_X notation
\newcommand{\FF}{\mathsf{F}\!\!\:}
\newcommand{\SSS}{\mathsf{S}}
\renewcommand{\setminus}{\backslash}
\newcommand{\sA}{\mathsf{A}}
\newcommand{\sB}{\mathsf{B}}
\newcommand{\sC}{\mathsf{C}}
\newcommand{\sD}{\mathsf{D}}
\newcommand{\sE}{\mathsf{E}}
\newcommand{\sH}{\mathsf{H}}
\newcommand{\sK}{\mathsf{K}}
\newcommand{\sM}{\mathsf{M}}
\newcommand{\sN}{\mathsf{N}}
\newcommand{\sP}{\mathsf{P}}
\newcommand{\sT}{\mathsf{T}}
\newcommand{\sU}{\mathsf{U}}
\newcommand{\sX}{\mathsf{X}}
\newcommand{\sY}{\mathsf{Y}}
\newcommand{\sZ}{\mathsf{Z}}
\DeclareMathAlphabet{\mathpzc}{OT1}{pzc}{m}{it}
\newcommand{\cC}{\mathscr{C}}
\newcommand{\sod}[1]{{\langle #1\rangle}}      % semi-orthogonal decomposition, also for subcategory generated by #1
\newcommand{\clext}[1]{{\langle #1\rangle}}    % extension closure of #1
\newcommand{\generate}[1]{{\langle #1\rangle}} % (sub)group or ideal generated by #1
\newcommand{\isom}{ \text{{\hspace{0.48em}\raisebox{0.8ex}{${\scriptscriptstyle\sim}$}}}
                    \hspace{-0.65em}{\rightarrow}\hspace{0.3em}} % Gewalt, ohne Trennung
\newcommand{\embed}{\hookrightarrow}
\newcommand{\onto}{\twoheadrightarrow}
\newcommand{\arrd}{ \ar@{-}[r] \ar@{=}[d] }
\newcommand{\too}{\longrightarrow}
\renewcommand{\iff}{\Longleftrightarrow}
\newcommand{\rightlabel}[1]{\stackrel{#1}{\longrightarrow}}
\newcommand{\leftlabel}[1]{\stackrel{#1}{\longleftarrow}}
\newcommand{\xxrightarrow}[1]{\xrightarrow{\raisebox{-0.2ex}{\ensuremath{{\scriptscriptstyle #1}}}}} % Supposedly for inline labelled arrows.
\newcommand{\tri}[3]{#1\rightarrow #2\rightarrow #3\rightarrow \Sigma #1}
\newcommand{\trilabels}[6]{#1\stackrel{#4}{\longrightarrow} #2\stackrel{#5}{\longrightarrow} #3\stackrel{#6}{\longrightarrow} \Sigma #1}
\newcommand{\paths}{\mathsf{Pa}}
\newcommand{\rpaths}{\widehat{\mathsf{Pa}}}
\newcommand{\strings}{\mathsf{St}}
\newcommand{\rLambda}{\text{\scalebox{0.9}{$\hat\Lambda$}}}
\newcommand{\rLLambda}{\rLambda(r,n,m)}
\newcommand{\rQ}{\hat{Q}}
\newcommand{\rrho}{\hat{\rho}}
\newcommand{\rp}{\hat{\bp}}
\renewcommand{\rq}{\hat{\mathbf{q}}}
\newcommand{\rS}{S}
\newcommand{\bp}{\mathbf{p}}
\newcommand{\ip}{\bar{p}}
\newcommand{\iv}{\bar{v}}
\newcommand{\iw}{\bar{w}}
\newcommand{\ia}{\bar{a}}
\newcommand{\ib}{\bar{b}}
\newcommand{\ic}{\bar{c}}
\newcommand{\invd}{\bar{d}}
\newcommand{\ix}{\bar{x}}
\newcommand{\Fac}[1]{\mathsf{Fac}(#1)}
\newcommand{\Sub}[1]{\mathsf{Sub}(#1)}
\newcommand{\sqmat}[4]{{\big(\genfrac{.}{.}{0pt}{1}{#1}{#3} \,
                        \genfrac{.}{.}{0pt}{1}{#2}{#4}\big) }}
\newcommand{\colmat}[2]{{\big(\genfrac{.}{.}{0pt}{1}{#1}{#2}\big) }}
\renewcommand{\phi}{\varphi}
\renewcommand{\epsilon}{\varepsilon}
\newcommand{\smxy}[1]{{\text{\tiny$#1$}}}
\newcommand{\arr}{\ar@{~}[r]}
\newcommand{\arrr}{\ar@{~}[rr]}
\newcommand{\arudd}{\ar[ur] \ar@{.}[dr]}
\newcommand{\aruu}{\ar@{.>}[uuurrr]}
\newcommand{\arurr}{\ar@{.>}[ur] \ar@{.}[rr]}
\newcommand{\bib}[6]{{\bibitem{#2} #3: {\emph{#4},} #5#6.}}
\newcommand{\bibno}[1]{}
\newcommand{\dnr}[1]{ \node[shape=circle,draw,inner sep=0.75pt,black,thick] at (-1.4,1) {\text{\textsf{\scriptsize #1}}}; }
\newcommand{\tns}[1]{ \tikz[baseline=(char.base)]{ \node[shape=circle,draw,semithick,inner sep=0.75pt] (char) {\textsf{\scriptsize #1}}; } }
\newcommand{\Hht}[1]{ \scalebox{1.5}{$#1$} }
\newcommand{\Homhammocktemplate}[1]{
   \tikzset{ every path/.style = {->, >=latex, shorten <=6pt, shorten >=6pt, very thick} }
   
     \begin{scope}[gray!50]
   
       \foreach \x in {0,...,9} \foreach \y in {0,...,9} 
         { \node (\x\y) at (\x,\y) {};    % for debugging use:  \node (\x\y) at (\x,\y)  {\x,\y};
           \ifnumless{\x}{9}{ \ifnumless{\y}{9}{ \ifnumgreater{\x}{0}{ \ifnumgreater{\y}{#1} {
             \ifnumodd{\x+\y}{ \fill (\x,\y) circle (0.16); } {}
           } {} }{} } {} }  
         }
   
       \foreach \x in {0,...,8} \foreach \y in {9,...,1}
           { \ifnumodd{\x+\y}{ \draw (\x,\y) -- (\x+1,\y-1); }{ \draw (\x,\y-1) -- (\x+1,\y); } }
   
     \end{scope}
}   
  \tikzstyle{dot}   = [fill=gray!20, circle, inner sep=0pt, minimum size=3pt]
  \tikzstyle{box}   = [fill=black, rectangle, inner sep=0pt, minimum height=4pt, minimum width=4pt]
  \tikzstyle{minus} = [fill=white, rectangle, inner sep=0pt, minimum height=1pt, minimum width=2.5pt]
  \tikzstyle{plus}  = [draw=white, cross out, rotate=45, thick, minimum size=2.5pt, inner sep=0pt, outer sep=0pt]
  \tikzstyle{Xaisle} = [fill, gray!95, line width=8pt]
  \tikzstyle{Yaisle} = [fill, gray!30, line width=8pt]
  \newcommand{\Xdot}[1]{ \fill[Xaisle] (#1) circle (16pt); }
  \newcommand{\Ydot}[1]{ \fill[Yaisle] (#1) circle (16pt); }
  \newcommand{\boxo}[1]{ \node[box] at (#1) {}; }
  \newcommand{\boxp}[1]{ \node[box] at (#1) {}; \node[plus] at (#1) {}; }
  \newcommand{\boxm}[1]{ \node[box] at (#1) {}; \node[minus] at (#1) {}; }
\begin{document}

\title[Discrete derived categories I]{Discrete derived categories I \\[0.5ex] 
                      \scalebox{0.86}{Homomorphisms, autoequivalences and t-structures}}

\author{Nathan Broomhead}
\author{David Pauksztello}
\author{David Ploog}

\keywords{Discrete derived category, Auslander--Reiten quiver, Hom-hammock, twist functor, silting object, t-structure, string algebra}

\subjclass[2010]{16G10, 16G70, 18E30}
%13D25: Complexes
%16E10: Homological dimension
%16E45: Differential graded algebras and applications
%16G10: Representations of Artinian rings 
%16G60: Representation type (finite, tame, wild, etc.) 
%16G70: Auslander-Reiten sequences (almost split sequences) and Auslander-Reiten quivers
%18E30: Derived categories, triangulated categories
%18E35: Localization of categories
%18G05: Projectives and injectives
%18G35: Chain complexes
%18G99: Homological algebra: None of the above, but in this section 
%55P62: Rational homotopy theory

\begin{abstract}
Discrete derived categories were studied initially by Vossieck \cite{Vossieck} and later by Bobi\'nski, Gei\ss, Skowro\'nski \cite{BGS}. In this article, we describe the homomorphism hammocks and autoequivalences on these categories. We classify silting objects and bounded t-structures.
\end{abstract}

\maketitle

\noindent
\hspace*{0.125\linewidth} \hspace*{-1.2em}  % unfortunately, \begin{center}\parbox{0.75\linewidth}{}\small...\tableofcontents}\end{center}
\parbox{0.75\linewidth}{                    % does not quite centre the toc, due to the leading indent (which is 1.5em for sections)
  \small
  \setcounter{tocdepth}{1}
  \tableofcontents
}

\addtocontents{toc}{\protect{\setcounter{tocdepth}{-1}}}  % No toc entry for Introduction

\section*{Introduction}
\addtocontents{toc}{\protect{\setcounter{tocdepth}{1}}}   % but enable toc entries for other sections

\noindent
In this article, we study the bounded derived categories of finite-dimensional algebras that are \emph{discrete} in the sense of Vossieck \cite{Vossieck}. Informally speaking, discrete derived categories can be thought of as having structure intermediate in complexity between the derived categories of hereditary algebras of finite representation type and those of tame type. Note, however, that the algebras with discrete derived categories are \emph{not} hereditary.  We defer the precise definition until the beginning of the next section.

Understanding homological properties of algebras means understanding the structure of their derived categories. We investigate several key aspects of the structure of discrete derived categories: the structure of homomorphism spaces, the autoequivalence groups of the categories, and the t-structures and co-t-structures inside discrete derived categories.

The study of the structure of algebras with discrete derived categories was begun by Vossieck, who showed that they are always gentle and classified them up to Morita equivalence. Bobi\'nski, Gei\ss\ and Skowro\'nski \cite{BGS} obtained a canonical form for the derived equivalence class of these algebras; see Figure~\ref{fig:canonical-form}.
This canonical form is parametrised by integers $n\geq r \geq 1$ and $m>0$, and the corresponding algebra denoted by $\LLambda$. We restrict to parameters $n>r$, which is precisely the case of finite global dimension.
In \cite{BGS}, the authors also determined the components of the Auslander--Reiten (AR) quiver of derived-discrete algebras and computed the suspension functor.

The structure exhibited in \cite{BGS} is remarkably simple, which brings us to our principal motivation for studying these categories. Discrete derived categories are sufficiently straightforward to make explicit computation highly accessible but also non-trivial enough to manifest interesting behaviour. For example, discrete derived categories contain natural examples of spherelike objects in the sense of \cite{HKP}. If one takes one of these spherelike objects and forms the smallest subcategory generated by it, this category is then equivalent to a triangulated category generated by a spherical object. Such categories have previously been studied in the context of (higher) cluster categories of type $A_\infty$ in \cite{HJ2,HJY}. Indeed, we shall see that every discrete derived category contains two such higher cluster categories as proper subcategories when the algebra has finite global dimension.

Furthermore, the structure of discrete derived categories is highly reminiscent of the categories of perfect complexes of cluster-tilted algebras of type $\tilde{A}_n$ studied in \cite{AG}. This suggests approaches developed here to understand discrete derived categories are likely to find applications more widely in the study of derived categories of gentle algebras.

The basis of our work is giving a combinatorial description via AR quivers of which indecomposable objects admit non-trivial homomorphism spaces between them, so called `Hom-hammocks'.
As a byproduct, we get the following interesting property of these categories:
the dimensions of the homomorphism spaces between indecomposable objects have a common bound. In fact, in Theorem~\ref{thm:hom-dimensions} we show there are unique homomorphisms, up to scalars, whenever $r>1$, and in the exceptional case $r=1$, the common dimension bound is $2$. 
We believe this property holds independent interest and warrants further investigation. See \cite{Han-Zhang} for a different approach to measuring the `smallness' of discrete derived categories. As another for categorical size, the Krull--Gabriel dimension of discrete derived categories has been computed in \cite{BK}; it is at most 2.

In Theorem~\ref{thm:auteq} we explicitly describe the group of autoequivalences.  For this, we introduce a generalisation of spherical twist functors arising from cycles of exceptional objects. The action of these twists on the AR components of $\LLambda$ is a useful tool, which is frequently employed here.

In Section~\ref{sec:classifications}, we address the classification of bounded t-structures and co-t-structures in $\Db(\LLambda)$, which are important in understanding the cohomology theories occurring in triangulated categories, and have recently become a focus of intense research as the principal ingredients in the study of Bridgeland stability conditions \cite{Bridgeland}, and their co-t-structure analogues \cite{JP}. Further investigation into the properties of (co-)t-structures and the stability manifolds is conducted in the sequel \cite{BPP2}; see also \cite{Woolf}.

We study the (co-)t-structures indirectly via certain generating sets: silting subcategories, which behave like the projective objects of hearts of bounded t-structures and generalise tilting objects. In general, one cannot get all bounded t-structures in this way, but in Proposition~\ref{prop:length}, we show that the heart of each bounded t-structure in $\Db(\LLambda)$ is equivalent to $\mod{\Gamma}$, where $\Gamma$ is a finite-dimensional algebra of finite representation type. The upshot is that using the bijections of K\"onig and Yang \cite{Koenig-Yang}, classifying silting objects is enough to classify all bounded (co-)t-structures.
We show that $\Db(\LLambda)$ admits a semi-orthogonal decomposition into $\Db(\kk A_{n+m-1})$ and the thick subcategory generated by an exceptional object. Using Aihara and Iyama's silting reduction \cite{AI}, we classify the silting objects in Theorem~\ref{thm:silting-classification}.
We finish with an explicit example of $\Lambda(2,3,1)$ in Section~\ref{sec:example}.

\smallskip

\noindent\textbf{Acknowledgments:}
We are grateful to Aslak Bakke Buan, Christof Gei\ss, Martin Kalck, Henning Krause, and Dong Yang for answering our questions and particularly to an anonymous referee for a careful reading and many valuable comments. Much of this paper was prepared while all three authors worked at Leibniz Universit\"at Hannover. The second author acknowledges the financial support of the EPSRC of the United Kingdom through the grant EP/K022490/1.

%%%%%%%%%%%%%%%%%%%%%%%%%%%%%%%%%%%%%%%%%%%%%%%%%%%%%%%%%%%%%%%%%%%%%%%%%%%%%%%%%%%%%%%%%

\section{Discrete derived categories and their AR-quiver}
\label{sec:discrete_categories}
\label{sec:AR-quiver}

We always work over a fixed algebraically closed field $\kk$. All modules will be finite-dimensional right modules. Throughout, all subcategories will be additive and closed under isomorphisms.

\subsection{Discrete derived categories}

We are interested in $\kk$-linear, Hom-finite triangulated categories which are small in a certain sense. One precise definition of such smallness is given by Vossieck \cite{Vossieck}; here we present a slight generalisation of his notion.

\begin{definition} \label{def:DDC}
A derived category (or, more generally and intrinsically, a Hom-finite triangulated category with a bounded t-structure) $\catD$ is \emph{discrete (with respect to this $t$-structure)}, if for every map $v\colon \IZ\to K_0(\catD)$ there are only finitely many isomorphism classes of objects $D\in\catD$ with $[H^i(D)]=v(i)\in K_0(\catD)$ for all $i\in\IZ$.
\end{definition}

Let us elaborate on the connection to \cite{Vossieck}: Vossieck speaks of \emph{finitely supported, positive} dimension vectors $v\in K_0(\catD)^{(\IZ)}$ which he can do since he has $\catD=\Db(\Lambda)$ for a finite-dimensional algebra $\Lambda$, so $K_0(\Lambda)\cong\IZ^r$. In our slight generalisation of his notion, we cannot do so, but for finite-dimensional algebras the new notion gives back the old one: if $v$ is negative somewhere, there will be no objects of that dimension vector whatsoever. For the same reason, we don't have to assume that $v$ has finite support: if it doesn't, the set of objects of that class is empty.

Note that our definition of discreteness appears to depend on the choice of bounded t-structure. Throughout this article, we shall be interested in the bounded derived category $\Db(\Lambda)$ of a finite-dimensional algebra $\Lambda$. We shall always use discreteness with respect to the standard t-structure, whose heart is $\mod{\Lambda}$, the category of finite-dimensional right $\Lambda$-modules. However, in \cite{DTC}, the results of this article will be used to show that the categories studied here are discrete with respect to any bounded t-structure.

Obviously, derived categories of path algebras of type ADE Dynkin quivers are examples of discrete categories. Moreover, \cite{Vossieck} shows that the bounded derived category of a finite-dimensional algebra $\Lambda$, which is not of finite representation type, is discrete if and only if $\Lambda$ is Morita equivalent to the bound quiver algebra of a gentle quiver with exactly one cycle having different numbers of clockwise and anticlockwise orientations.

Furthermore, in \cite{BGS}, Bobi\'nski, Gei\ss\ and Skowro\'nski give a derived Morita classification of such algebras. More precisely, for $\Lambda$ connected and not of Dynkin type, the derived category $\Db(\Lambda)$ is discrete if and only if $\Lambda$ is derived equivalent to the path algebra $\LLambda$ for the quiver with relations given in Figure~\ref{fig:gentle_algebra}, and some values of $r,n,m$.

\begin{figure}
\includegraphics[width=0.75\textwidth]{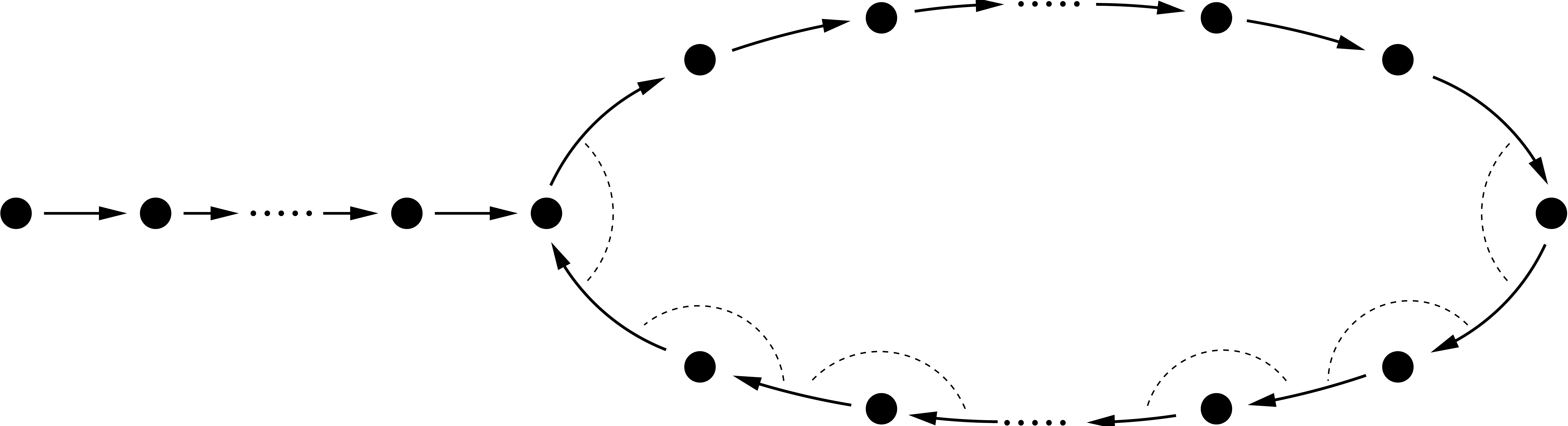}
\caption{ \label{fig:canonical-form} \label{fig:gentle_algebra}
The quiver $Q(r,n,m)$ consisting of an oriented cycle of length $n$ with
a tail of length $m$ and $r$ consecutive zero relations inside the cycle.
}
\end{figure}

\subsection{The AR quiver of $\boldsymbol{\Db(\LLambda)}$}

The algebra $\LLambda$ has finite global dimension if and only if $n>r$.
\emph{In the following, we always make this assumption.} Therefore the derived category $\Db(\LLambda)$ enjoys duality in the form
\[ \Hom(A,B) = \Hom(B,\SSS A)^* \]
functorially in $A,B\in\Db(\LLambda)$, where the Serre functor $\SSS$ is given by the Nakayama functor. In other words, $\Db(\LLambda)$ has Auslander--Reiten triangles and translation $\tau \coloneqq\Sigma^{-1}\SSS$. We will use both notations, depending on the context. Some general properties of $\Db(\LLambda)$ are: this triangulated category is algebraic, Hom-finite, Krull--Schmidt and indecomposable; see Appendix~\ref{app:category-properties} for details.

We collect together some more special properties of $\Db(\LLambda)$ which will be crucial throughout the paper; the reference is \cite{BGS}. 
By \cite[Theorem B]{BGS}, the AR quiver of $\Db(\LLambda)$ has precisely $3r$ components; these are denoted by
\[ \cX^0,\ldots,\cX^{r-1}, \qquad \cY^0,\ldots,\cY^{r-1}, \qquad \cZ^0,\ldots,\cZ^{r-1} . \]
The $\cX$ and $\cY$ components are of type $\IZ A_\infty$, whereas the $\cZ$ components are of type $\IZ A_\infty^\infty$.
It will be convenient to have notation for the subcategories generated by indecomposable objects of the same type:
\[ \cX \coloneqq \add \bigcup_i\cX^i, \qquad \cY \coloneqq \add \bigcup_i\cY^i, \qquad \cZ \coloneqq \add \bigcup_i\cZ^i .\]
For each $k= 0,\ldots, r-1$, we label the indecomposable objects in $\cX^k, \cY^k, \cZ^k$ as follows:
\[ X^k_{ij} \in \cX^k \text{ with } i,j\in\IZ, j\geq i; \quad
   Y^k_{ij} \in \cY^k \text{ with } i,j\in\IZ, i\geq j; \quad
   Z^k_{ij} \in \cZ^k \text{ with } i,j\in\IZ . \]

\begin{properties} \label{properties}
This labelling is chosen in such a way that the following properties hold:
\begin{enumerate}[leftmargin=2em,itemsep=0.8ex,label = (\arabic*)]
\item Irreducible morphisms go from an object with coordinate $(i,j)$ to objects $(i+1,j)$
      and $(i,j+1)$ in the same component (when they exist).

\bigskip
\noindent
\parbox{0.27\textwidth}{
$\cX$ coordinates:
}
\hfill
\parbox{0.27\textwidth}{
$\cY$ coordinates:
}
\hfill
\parbox{0.27\textwidth}{
$\cZ$ coordinates:
}

\noindent
\parbox{0.27\textwidth}{
\raisebox{-10ex}{\scalebox{0.75}{$\xymatrix@M=0.1em@H=0.0ex@W=0.4ex@!0{
\text{\raisebox{1.2ex}{\rotatebox{-10}{$\ddots$}}} \ar[dr]
               & \spvdots{4}    & \arft{-1,1} & \spvdots{4}   & \arft{0,2} & \spvdots{4}    &
\text{\raisebox{0.4ex}{\rotatebox{10}{$\Ddots$}}} \\
               & \arfc{-1,0} &             & \arfc{0,1} &            & \arfc{1,2}  &        \\
\cdots \ar[ur] &             & \arfb{0,0}  &            & \arfb{1,1} &             & \cdots
}$}}
}
\hfill
\parbox{0.27\textwidth}{
\raisebox{6.5ex}[0ex][0ex]{\scalebox{0.75}{$\xymatrix@M=0.1em@H=0.0ex@W=0.4ex@!0{
\cdots \ar[dr] &             & \arft{0,0}  &            & \arft{1,1} &             & \cdots \\
               & \arfc{0,-1} &             & \arfc{1,0} &            & \arfc{2,1}  &        \\
\text{\raisebox{0.4ex}{\rotatebox{10}{$\Ddots$}}} \ar[ur]
               & \spvdots{-4}    & \arfb{1,-1} & \spvdots{-4}   & \arfb{2,0} & \spvdots{-4}    &
\text{\raisebox{1.2ex}{\rotatebox{-10}{$\ddots$}}}
}$} }
}
\hfill
\parbox{0.27\textwidth}{
\scalebox{0.75}{$\xymatrix@M=0.1em@H=0.0ex@W=0.4ex@!0{
\text{\raisebox{1.2ex}{\rotatebox{-10}{$\ddots$}}} \ar[dr]
               & \spvdots{4}    & \arft{-1,1} & \spvdots{4}   & \arft{0,2} & \spvdots{4}    &
\text{\raisebox{0.4ex}{\rotatebox{10}{$\Ddots$}}} \\
               & \arfc{-1,0} &             & \arfc{0,1} &            & \arfc{1,2}  &       & \\
\cdots \ar[dr] \ar[ur] &      & \arfc{0,0} &             & \arfc{1,1}  &            & \cdots \\
               & \arfc{0,-1}  &             & \arfc{1,0} &            & \arfc{2,1}  &      &  \\
\text{\raisebox{0.4ex}{\rotatebox{10}{$\Ddots$}}} \ar[ur]
               & \spvdots{-4}    & \arfb{1,-1} & \spvdots{-4}   & \arfb{2,0} & \spvdots{-4}    &
\text{\raisebox{1.2ex}{\rotatebox{-10}{$\ddots$}}} &
}$}
}
\hfill
\item The AR translate of an object with coordinate $(i,j)$ is the object with coordinate
      $(i-1,j-1)$ in the same component, i.e.\ $\tau X^k_{i,j} = X^k_{i-1,j-1}$ etc.
\item The suspension of indecomposable objects is given below, with $k=0,\ldots,r-2$:
\[ \begin{array}{l@{\:}l l@{\:}l}
    \Sigma X^k_{ij} &= X^{k+1}_{ij},  &\quad  \Sigma X^{r-1}_{ij} &= X^0_{i+r+m,j+r+m}, \\[1ex]
    \Sigma Y^k_{ij} &= Y^{k+1}_{ij},  &\quad  \Sigma Y^{r-1}_{ij} &= Y^0_{i+r-n,j+r-n}, \\[1ex]
    \Sigma Z^k_{ij} &= Z^{k+1}_{ij},  &\quad  \Sigma Z^{r-1}_{ij} &= Z^0_{i+r+m,j+r-n}
\end{array} \]
In particular, $\Sigma^r|_\cX = \tau^{-m-r}$ and $\Sigma^r|_\cY = \tau^{n-r}$ on objects.
\item There are distinguished triangles, for any $i,j,d\in\IZ$ with $d\geq0$:
\[ \xymatrix@R=0ex{
    X^k_{i,i+d} \ar[r] & Z^k_{ij} \ar[r] & Z^k_{i+d+1,j} \ar[r] & \Sigma X^k_{i,i+d}, \\
    Y^k_{j+d,j} \ar[r] & Z^k_{ij} \ar[r] & Z^k_{i,j+d+1} \ar[r] & \Sigma Y^k_{j+d,j}.
} \]
\item There are chains of non-zero morphisms for any $i\in\IZ$ and $k=0,\ldots,r-1$:
\[ \xymatrix@C=0.8em@R=0ex{
   X^k_{ii}    \ar[r] & X^k_{i,i+1} \ar[r] & \ccdots \ar[r] &
   Z^k_{i,i-1} \ar[r] & Z^k_{ii} \ar[r] & Z^k_{i,i+1} \ar[r] & \ccdots \ar[r] &
   \Sigma X^k_{i+1,i-1} \ar[r] & \Sigma X^k_{i,i-1} \ar[r] & \Sigma X^k_{i-1,i-1},
\\
   Y^k_{ii} \ar[r] & Y^k_{i+1,i} \ar[r] & \ccdots    \ar[r] &
   Z^k_{i-1,i} \ar[r] & Z^k_{ii} \ar[r] & Z^k_{i+1,i} \ar[r] & \ccdots \ar[r] &
   \Sigma Y^k_{i-1,i-3} \ar[r] & \Sigma Y^k_{i-1,i-2} \ar[r] & \Sigma Y^k_{i-1,i-1}.
} \]
\end{enumerate}
\end{properties}

Later, we will often use the `height' of indecomposable objects in $\cX$ or $\cY$ components. \label{def:height}
For $X^k_{ij} \in \ind{\cX^k}$, we set $h(X^k_{ij}) = j-i$ and call it the \emph{height} of $X^k_{ij}$ in the component $\cX^k$.
Similarly, for $Y^k_{ij} \in \ind{\cY^k}$, we set $h(Y^k_{ij}) = i-j$ and call it the \emph{height} of $Y^k_{ij}$ in the component $\cY^k$. The \emph{mouth} of an  $\cX$ or $\cY$ component consists of all objects of height 0.

%%%%%%%%%%%%%%%%%%%%%%%%%%%%%%%%%%%%%%%%%%%%%%%%%%%%%%%%%%%%%%%%%%%%%%%%%%%%%%%%%%%%%%%%%

\section{Hom spaces: hammocks}
\label{sec:hammocks}

\noindent
For brevity, we will write $\Lambda \coloneqq \LLambda$.
In this section, for a fixed indecomposable object $A\in\Db(\Lambda)$ we compute the so-called `Hom-hammock' of $A$, i.e.\ the set of indecomposables $B\in\Db(\Lambda)$ with $\Hom(A,B)\neq0$. By duality, this also gives the contravariant Hom-hammocks: $\Hom(\blank,A) = \Hom(\SSS\inv A,\blank)^*$. Therefore we generally refrain from listing the $\Hom(\blank,A)$ hammocks explicitly.

The precise description of the hammocks is slightly technical. However, the result is quite simple, and the following schematic indicates the hammocks $\Hom(X,\blank)\neq0$ and $\Hom(Z,\blank)\neq0$ for indecomposables $X\in\cX$ and $Z\in\cZ$:

\bigskip

\begin{center}
\begin{tikzpicture}[scale=0.15,thick]
%\draw[help lines] (0,-10) grid (85,10);

  \newcommand{\sst}[1]{\scalebox{0.75}{$#1$}}

  %\draw[fill,blue!50] (16,-7) -- (19,-10) -- (36,7) -- (33,10);
  \draw[fill,gray!30] (16,-7) -- (19,-10) -- (20.5, -8.5) -- (17.5, -5.5) -- cycle;
  \draw[fill,gray!30] (18.5,-4.5) -- (21.5,-7.5) -- (31.5,2.5) -- (28.5, 5.5) -- cycle;
  \draw[fill,gray!30] (32.5,3.5) -- (36,7) -- (33,10) -- (29.5,6.5) -- cycle;
  \draw[fill] (16,-7) node [left =-0.1]  {$\scriptstyle X$}      circle (0.2);
  \draw[fill] (36, 7) node [right=-0.05] {$\scriptstyle \SSS X$} circle (0.2);

  %\draw[fill,red!80]  (50, 2) -- (58,10) -- (70,-2) -- (62,-10);
  \draw[fill,gray!30]  (50,2) -- (54,-2) -- (56,0) -- (52,4) -- cycle;
  \draw[fill,gray!30]  (53,5) -- (58,10) -- (62,6) -- (57,1) -- cycle;
  \draw[fill,gray!30]  (55,-3) -- (62,-10) -- (64,-8) -- (57,-1) -- cycle;
  \draw[fill,gray!30]  (58,0) -- (63,5) -- (70,-2) -- (65,-7) -- cycle;
  \draw[fill] (50, 2) node [left =-0.1]  {$\scriptstyle Z$}      circle (0.2);
  \draw[fill] (70,-2) node [right=-0.05] {$\scriptstyle \SSS Z$} circle (0.2);

  \node at ( 7.5, 8.75)  {\sst{\cY^0}};  \node at (29.5, 8.75)  {\sst{\cX^1}};  \node at (51.5, 8.75)  {\sst{\cY^2}};
  \node at ( 7.5,-8.75)  {\sst{\cX^0}};  \node at (29.5,-8.75)  {\sst{\cY^1}};  \node at (51.5,-8.75)  {\sst{\cX^2}};
  \node at (16.5,0.2) {\sst{\cZ^0}};  \node at (38.5,0.2) {\sst{\cZ^1}};  \node at (60.5,0.2) {\sst{\cZ^2}};

  \draw[dashed] (0,  8) -- ( 2, 10) -- (12, 0) -- (2,-10) -- (0,-8); 
  \draw[dashed] (4, 10) -- (13,  1) -- (22, 10);
  \draw[dashed] (4,-10) -- (13, -1) -- (22,-10);
  \draw         (4, 10) -- (22, 10);
  \draw         (4,-10) -- (22,-10);
  \draw[dashed] (14, 0) -- (24, 10) -- (34, 0) -- (24,-10) -- cycle;
  \draw[dashed] (26, 10) -- (35,  1) -- (44, 10);
  \draw[dashed] (26,-10) -- (35, -1) -- (44,-10);
  \draw         (26, 10) -- (44, 10);
  \draw         (26,-10) -- (44,-10);
  \draw[dashed] (36, 0) -- (46, 10) -- (56, 0) -- (46,-10) -- cycle;
  \draw[dashed] (48, 10) -- (57,  1) -- (66, 10);
  \draw[dashed] (48,-10) -- (57, -1) -- (66,-10);
  \draw         (48, 10) -- (66, 10);
  \draw         (48,-10) -- (66,-10);
  \draw[dashed] (58, 0) -- (68, 10) -- (78, 0) -- (68,-10) -- cycle;
  \draw[dashed] (70, 10) -- (79,  1) -- (81, 3);
  \draw[dashed] (70,-10) -- (79, -1) -- (81,-3);
  \draw         (70, 10) -- (81, 10);
  \draw         (70,-10) -- (81,-10);
\end{tikzpicture}
\end{center}

\smallskip

\subsection{Hammocks from the mouth}

We start with a description of the Hom-hammocks of objects at the mouths of all $\IZ A_\infty$ components. The proof relies on Happel's triangle equivalence of $\Db(\LLambda)$ with the stable module category of the repetitive algebra of $\LLambda$. As the repetitive algebras are special biserial algebras, the well-known theory of string (and band) modules provides a useful tool to understand the indecomposable objects and homomorphisms between them; we summarise this theory in Appendix~\ref{app:strings}.

To make our statements of Hom-hammocks more readable, we employ the language of rays and corays. Let $V=V_{i,j}$ be an indecomposable object of $\Db(\LLambda)$ with coordinates $(i,j)$.  Recall the conventions that $j\geq i$ if $V\in\cX$ whereas $i\geq j$ if $V\in\cY$. Denoting the AR component of $V$ by $\cC$ and its objects by $V_{a,b}$, the following six definitions give the
 \emph{rays/corays from/to/through $V$}, respectively
\begin{align*}
  \rayfrom{V_{i,j}}      & \coloneqq \{ V_{i,j+l} \in \cC \mid l\in\IN \}, &&&
  \corayfrom{V_{i,j}}    & \coloneqq \{ V_{i+l,j} \in \cC \mid l\in\IN \}, \\[-0.5ex]
  \rayto{V_{i,j}}        & \coloneqq \{ V_{i,j-l} \in \cC \mid l\in\IN \}, &&&
  \corayto{V_{i,j}}      & \coloneqq \{ V_{i-l,j} \in \cC \mid l\in\IN \}, \\[-0.5ex]
  \raythrough{V_{i,j}}   & \coloneqq \{ V_{i,j+l} \in \cC \mid l\in\IZ \}, &&&
  \coraythrough{V_{i,j}} & \coloneqq \{ V_{i+l,j} \in \cC \mid l\in\IZ \}.
\end{align*}
Note that, because of the orientation of the components, the (positive) ray of an indecomposable $X^k_{ii}\in\cX^k$ at the mouth consists of indecomposables in $\cX^k$ reached by arrows going out of $X^k_{ii}$, % and the coray of $X^k_{ii}$ consists of indecomposables in $\cX^k$ on the straight line to $X^k_{ii}$. For
while in the $\cY$ components the (negative) ray of $Y^k_{ii}$ contains objects which have arrows going in to it.

For the next statement, whose proof is deferred to Lemma~\ref{lem:appendix-mouth}, recall that the Serre functor is given by suspension and AR translation: $\SSS = \Sigma\tau$. Also, rays and corays commute with these three functors.

\begin{lemma} \label{lem:mouth_hammocks}
Let $A \in \ind{\Db(\LLambda)}$ with $r>1$ and let $i,k\in \IZ$, $0\leq k<r$. Then
\[ \begin{array}{l@{\:}ll}
  \Hom(X^k_{ii}, A) & = \kk & \text{if } A \in \rayfrom{X^k_{ii}}   \cup \corayto{\SSS X^k_{ii}}     \cup \raythrough{Z^k_{ii}} ,\\[0.5ex]
  \Hom(Y^k_{ii}, A) & = \kk & \text{if } A \in \corayfrom{Y^k_{ii}} \cup \rayto{\SSS Y^k_{ii}}       \cup \coraythrough{Z^k_{ii}} , \\[0.5ex]
 \Hom(A, X^k_{ii})  & = \kk & \text{if } A \in \corayto{X^k_{ii}} \cup \rayfrom{\SSS\inv X^k_{ii}} \cup \raythrough{\SSS\inv Z^k_{ii}} ,\\[0.5ex]
 \Hom(A, Y^k_{ii})  & = \kk & \text{if } A \in \rayto{Y^k_{ii}}   \cup \corayfrom{\SSS\inv Y^k_{ii}} \cup \coraythrough{\SSS\inv Z^k_{ii}}
\end{array} \]
and in all other cases the Hom spaces are zero. For $r=1$ the Hom-spaces are as above, except $\Hom(X^0_{ii},X^0_{i,i+m}) = \kk^2$.
\end{lemma}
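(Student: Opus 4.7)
The plan is to translate the problem into the stable module category of the repetitive algebra $\rLambda = \rLLambda$ via Happel's triangle equivalence $\Db(\Lambda)\simeq \stmod{\rLambda}$. Since $\Lambda$ is gentle, $\rLambda$ is special biserial, so its indecomposable modules are classified (up to suspension) by strings in a doubled quiver, and homomorphisms between string modules have an explicit description in terms of graph maps, i.e.\ pairs consisting of a factor string of the source and a submodule string of the target that coincide as strings. This is the content of Appendix~\ref{app:strings}, and is the main tool I will use.

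First, I would fix a precise dictionary between AR-quiver coordinates and strings, exploiting the description of the components in \cite{BGS}. The mouth objects $X^k_{ii}$ of a $\IZ A_\infty$ component $\cX^k$ must correspond to the shortest strings available in $\rLambda$ at a specified starting vertex (depending on $i$ and $k$); moving up the ray $\rayfrom{X^k_{ii}}$ extends the string by direct letters on one side, moving along the coray into $\cZ^k$ extends it on the other side, and passing from $\cX^k$ to $\cZ^k$ across the boundary corresponds to concatenating on both sides. The key sanity check is that, under this identification, the Serre functor $\SSS = \Sigma\tau$ acts as a specified shift of strings, so that $\SSS X^k_{ii}$ is again a mouth-type string whose position in the AR quiver is given by Properties~\ref{properties}(3). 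An analogous dictionary applies for $\cY^k$ using inverse letters.

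Next, I would read off the hammock. Because $X^k_{ii}$ is a minimal mouth string, the only factor string of itself is $X^k_{ii}$ in its entirety, so the graph-map formula reduces to asking for which indecomposable $A$ the string $X^k_{ii}$ occurs as a submodule string of $A$ at the appropriate endpoint. A direct inspection of the strings lying on the rays $\rayfrom{X^k_{ii}}$, $\corayto{\SSS X^k_{ii}}$ and $\raythrough{Z^k_{ii}}$ shows these are exactly the ones admitting such an embedding, and that the embedding is unique, yielding one-dimensional Hom spaces. The mirror statement for $Y^k_{ii}$ is identical with direct and inverse letters swapped, and the two $\Hom(-,X^k_{ii})$, $\Hom(-,Y^k_{ii})$ statements follow by the Serre duality $\Hom(A,B) = \Hom(B,\SSS A)^*$ recalled earlier.

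The main obstacle is twofold. First, the string/AR dictionary must be nailed down precisely (especially how $\Sigma$ acts on strings at the two ends of an $\cX$ component), so that the rays, corays, and through-rays in the statement match the combinatorial output of the graph-map count. Second, the exceptional case $r=1$ requires care: there the cycle relations in $\rQ$ force the ray $\rayfrom{X^0_{ii}}$ and the coray $\corayto{\SSS X^0_{ii}}$ to overlap at a single object, which by Properties~\ref{properties}(3) is precisely $X^0_{i,i+m}$. At this object \emph{two} distinct graph maps arise, one coming from each side of the hammock, and both are nonzero in the stable category; this is the source of the $\kk^2$ in the statement, and for $r>1$ the two sides stay disjoint, so every other Hom space remains one-dimensional.
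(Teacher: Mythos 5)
Your plan is essentially the paper's own proof: passing to $\stmod{\rLambda}$ via Happel's equivalence, using the string-module description of indecomposables, and counting morphisms via the Crawley-Boevey/Krause graph-map formula (Theorem~\ref{thm:maps}), with the reduction to a trivial string at the mouth so that only one side of the admissible-pair count survives. One detail you do not flag: the direct argument via a simple module on the $\cX$-mouth requires $m>0$ (Lemma~\ref{lem:BGS-facts}), and the paper handles $m=0$ separately by embedding $\Db(\Lambda(r,n,0))$ into $\Db(\Lambda(r,n,1))$ as the perpendicular category of a projective at the mouth; your ``specified starting vertex'' dictionary would need this extra step to cover that case.
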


\subsection{Hom-hammocks for objects in $\cX$ components} \label{sub:hammocks_X}

Assume $A=X^k_{ij}\in\ind{\cX^k}$. In order to describe the various Hom-hammocks conveniently, we set
\begin{enumerate}[leftmargin=1em,itemsep=0.8ex,label = {}]
\item $A_0 \coloneqq X^k_{jj}$ to be the intersection of the coray through $A$ with the mouth of $\cX^k$, and
\item $_0 A \coloneqq X^k_{ii}$ to be the intersection of the ray through $A$ with the mouth of $\cX^k$.
\end{enumerate}
By definition, $A_0$ and $_0 A$ have height $0$. If $A$ sits at the mouth, then $A = A_0 = {}_0A$.

We now write down some standard triangles involving the objects $_0 A$, $A_0$ and $A$. The following lemma is completely general and holds in any $\IZ A_\infty$ component of the AR quiver of a Krull--Schmidt triangulated category --- we use the notation introduced above for the $\cX$ components of $\Db(\LLambda)$.

\begin{lemma} \label{lem:triangles}
Let $A$ be an indecomposable object of height $\hi{A} \geq 1$ in a $\IZ A_\infty$ component of a Krull--Schmidt triangulated category. Let
 $A' \xxrightarrow{u} A \oplus C \xxrightarrow{v} A'' \to \Sigma A'$
be the AR triangle with $A$ at its apex; assuming $C = 0$ if $\hi{A}=1$. Then there are triangles
\[
\trilabels{_0 A}{A}{A''}{}{v''}{} \quad\text{and}\quad \trilabels{A'}{A}{A_0}{u'}{}{} 
\]
where $u'$ and $v''$ are induced by $u$ and $v$, respectively.
\end{lemma}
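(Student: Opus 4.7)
I would induct on the height $h \coloneqq \hi{A} \geq 1$, establishing both triangles simultaneously. In the base case $h = 1$, both $A' = {_0 A}$ and $A'' = A_0$ lie at the mouth of the component, and the hypothesis forces $C = 0$; the AR triangle itself realises both required triangles with $u' = u$ and $v'' = v$.

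For the inductive step $h \geq 2$, the main tool is the octahedral axiom applied to two factorisations of $u$. First, factor $u_2 \coloneqq \pi_C \circ u : A' \to C$ through the projection $\pi_C : A \oplus C \to C$. Using the AR triangle (whose cone is $A''$) together with the split triangle $A \to A \oplus C \to C \to \Sigma A$ (whose cone is $\Sigma A$), the octahedron yields a triangle $A'' \to \Cone(u_2) \to \Sigma A \to \Sigma A''$ in which the connecting map is, up to sign, $\Sigma v_1$. Now apply the inductive hypothesis to $A'$, of height $h - 1 \geq 1$: observing that $_0 A' = {_0 A}$ and that the $A''$-analog for $A'$ is exactly $C$, we obtain a triangle ${_0 A} \to A' \to C \to \Sigma {_0 A}$ whose middle map is irreducible. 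Since irreducible morphisms $A' \to C$ in the AR quiver are unique up to nonzero scalar, this map is a multiple of $u_2$, and hence $\Cone(u_2) \cong \Sigma {_0 A}$. Rotating the octahedron triangle backward twice now produces the first required triangle ${_0 A} \to A \xrightarrow{v''} A'' \to \Sigma {_0 A}$.

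For the second triangle, symmetrically factor $u_1 \coloneqq \pi_A \circ u : A' \to A$ through $\pi_A : A \oplus C \to A$. The octahedral axiom produces a triangle $A'' \to \Cone(u_1) \to \Sigma C \to \Sigma A''$, rotating to $C \xrightarrow{v_2} A'' \to \Cone(u_1) \to \Sigma C$. Apply the second triangle of the inductive hypothesis to $A''$, of height $h - 1$: since $(A'')' = C$ and $(A'')_0 = A_0$, this reads $C \to A'' \to A_0 \to \Sigma C$. The same proportionality of irreducible maps forces $\Cone(u_1) \cong A_0$, delivering the second required triangle $A' \xrightarrow{u'} A \to A_0 \to \Sigma A'$.

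The delicate point will be matching the maps produced by the octahedra with the inductively available maps, and ensuring that the final triangles are realised by the specific maps $u'$ and $v''$ induced by $u$ and $v$. This is handled by the uniqueness up to scalar of irreducible morphisms between fixed adjacent indecomposables in a $\IZ A_\infty$ component (so any two such maps yield isomorphic cones), together with careful bookkeeping of the signs introduced by rotations of triangles, which can be absorbed into the connecting maps.
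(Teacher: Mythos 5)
Your proof is correct and follows essentially the same strategy as the paper: induction on height, the AR triangle itself in the base case, and for $h \geq 2$ the octahedral axiom applied to the composite $A' \to A \oplus C \to C$, with the inductive hypothesis for $A'$ (using $_0 A' = {}_0 A$ and that $C$ is the ``$A''$''-object for $A'$) identifying the third vertex. The only difference is presentational: the paper proves one triangle and obtains the other by duality, while you run a second, symmetric octahedron; you are also a bit more explicit than the paper about the uniqueness-up-to-scalar of irreducible morphisms, which both proofs use to match the map produced by the octahedron with the map furnished by the inductive hypothesis.
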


\begin{proof}
By Lemma~\ref{lem:mouth_hammocks} the composition, ${}_0A \to A$, of irreducible maps along a ray is non-zero. Likewise the composition, $A \to A_0$, of irreducible maps along a coray is non-zero.

We proceed by induction on $\hi{A}$. If $\hi{A} = 1$, then both triangles coincide with the AR triangle  ${}_0A \to A \to A_0 \to \Sigma {}_0A$; in particular, $A' = {}_0 A$ and $A'' = A_0$. 

Assume $\hi{A} > 1$. We shall show the existence of one triangle, the other one is dual. Consider the AR triangle together with the split triangle $A \to A \oplus C \xxrightarrow{0} C \to \Sigma A$. These triangles fit into the following commutative diagram arising from the octahedral axiom.
\[ \xymatrix@C=+4em{
                                            & A \ar@{=}[r] \ar[d]_-{\colmat{1}{0}}                        & A \ar[d]^-{v'} \\
A' \ar[r]_-{u=\colmat{u'}{u''}}  \ar@{=}[d] & A \oplus C \ar[r]^-{v=(v'~v'')} \ar[d]^-{(0~1)} & A'' \ar[d]    \\
A' \ar[r]_-{u''}                            & C \ar[r]                                                    & D
} \]
Since $\hi{A'} = \hi{A} - 1$, by induction there is a triangle ${}_0 A' \to  A' \xxrightarrow{u''} C \to \Sigma({}_0A)$. Thus $D = \Sigma (_0 A')$. From $_0 A' = {}_0A$ we get the desired triangle.
\end{proof}

We introduce notation for line segments in the AR quiver: given two indecomposable objects $A,B\in\Db(\LLambda)$ which lie on a ray or coray (so in particular sit in the same component), then the finite set consisting of these two objects and all indecomposables lying between them on the (co)ray is denoted by $\overline{AB}$.
Finally, we recall our convention that $\cX^r=\cX^0$ and note that $_0(\SSS A) = \Sigma\tau({}_0A)$.

\begin{lemma} \label{lem:raysfromA}
Consider $\Db(\LLambda)$ with $r > 1$. If $A \in \ind{\cX} \cup \ind{\cY}$ then for each indecomposable object $B \in \rayfrom{\overline{AA_0}}$ we have $\Hom(A,B) \neq 0$.
\end{lemma}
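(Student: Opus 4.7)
The plan is to proceed by induction on the height $h(A)$, treating the case $A\in\ind{\cX^k}$ (the $\cY$-case is handled dually, swapping the roles of rays and corays). Writing $A = X^k_{i,j}$ gives $A_0 = X^k_{jj}$ and $\rayfrom{\overline{AA_0}} = \{X^k_{a,b} : i\le a\le j,\ b\ge j\}$. The base case $h(A)=0$ places $A$ at the mouth with $\rayfrom{\overline{AA_0}} = \rayfrom{A}$, so the statement reduces immediately to Lemma~\ref{lem:mouth_hammocks}.

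For the inductive step $h(A)\ge 1$ I would apply Lemma~\ref{lem:triangles} to obtain the triangle $A'\to A\to A_0\to\Sigma A'$ with $A' = X^k_{i,j-1}$ of height $h(A)-1$ and $(A')_0 = X^k_{j-1,j-1}$. Applying $\Hom(-,B)$ for $B = X^k_{a,b}\in\rayfrom{\overline{AA_0}}$ yields the long exact sequence
\[
\Hom(\Sigma A', B) \to \Hom(A_0, B) \to \Hom(A, B) \to \Hom(A', B) \to \Hom(\Sigma^{-1}A_0, B),
\]
and I would then split on whether $a<j$ or $a=j$. When $a<j$, the object $B$ still lies in $\rayfrom{\overline{A'(A')_0}}$, so the inductive hypothesis yields $\Hom(A',B)\ne 0$. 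The key observation is that $\Hom(\Sigma^{-1}A_0,B)=0$: by Property~(3) the object $\Sigma^{-1}A_0$ is a mouth object in an adjacent $\cX$-component, and applying Lemma~\ref{lem:mouth_hammocks} one checks that its three hammock sets all miss $B$ (the only one landing in $\cX^k$ is the coray $\corayto{X^k_{j-1,j-1}}$, which cannot contain $B$ because $b\ge j>j-1$). This makes $\Hom(A,B)\to\Hom(A',B)$ surjective, and hence $\Hom(A,B)\ne 0$.

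When $a=j$, one has $B\in\rayfrom{A_0}$ and $\Hom(A_0,B)\ne 0$ by Lemma~\ref{lem:mouth_hammocks}; here the task is to show the map $\Hom(A_0,B)\to\Hom(A,B)$ is injective, equivalently that $\Hom(\Sigma A',B)\to\Hom(A_0,B)$ has zero image. This is the main obstacle of the proof: both $\Sigma A'$ and $B$ now typically sit in the interior of two distinct AR-components, so Lemma~\ref{lem:mouth_hammocks} does not apply directly. The natural route is to invoke the string-module description of homomorphisms from Appendix~\ref{app:strings} (via Happel's equivalence with the stable module category of the repetitive algebra of $\Lambda$): the composition $A_0\to\Sigma A'\to B$ then corresponds to a concrete graph map of strings, which one verifies to be a non-admissible overlap and hence zero. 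Alternatively, one may prove in parallel an auxiliary vanishing $\Hom(X,Y)=0$ for cross-component positions $X\in\cX^{k+1}$, $Y\in\cX^k$ satisfying the relevant coordinates, and feed this back into the long exact sequence.
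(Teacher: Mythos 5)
Your strategy---induction on the height of $A$, applying $\Hom(-,B)$ to the coray triangle $A'\to A\to A_0$ from Lemma~\ref{lem:triangles}---is a genuinely different route from the paper's. The paper fixes $A$ and does a spatial induction over the positions of $B$ within the hammock, applying $\Hom(A,-)$ to the triangles $\tri{{}_0B}{B}{B''}$ and $\tri{B'}{B}{B_0}$ centered on $B$. The structural advantage is that ${}_0B$ and $B_0$, together with their one-step shifts, always sit at an $\cX$-mouth, so every boundary term of the long exact sequences can be read off Lemma~\ref{lem:mouth_hammocks}. Your $a<j$ case is fine: $\Sigma^{-1}A_0$ is a mouth object whose hammock in $\cX^k$ is $\corayto{X^k_{j-1,j-1}}$, which misses $B$ because $b\geq j>j-1$, so $\Hom(\Sigma^{-1}A_0,B)=0$ and $\Hom(A,B)\to\Hom(A',B)$ is onto.

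The $a=j$ case is a genuine gap, and neither of your proposed repairs closes it. The string-module computation is far heavier machinery than the rest of the argument and you do not carry it out. The ``auxiliary cross-component vanishing'' $\Hom(\cX^{k+1},\cX^k)=0$ is simply \emph{false} when $r=2$: there $\cX^{k+2}=\cX^k$ and $\cX^{k+1}$ has a nontrivial hammock into it, so the intruding term $\Hom(\Sigma A',B)$ really can be nonzero and the map $\Hom(A_0,B)\to\Hom(A,B)$ need not be injective. A more natural repair inside your scheme is to switch to the other triangle of Lemma~\ref{lem:triangles}, ${}_0A\to A\to A''$, for $a=j$: then $\Sigma({}_0A)$ is a mouth object, $B\in\rayfrom{\overline{A''A''_0}}$ allows induction via $A''$, and for $r>2$ the vanishing $\Hom(\Sigma({}_0A),B)=0$ is clean. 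But for $r=2$ one computes that $\corayto{\SSS\Sigma({}_0A)}$ lands back in $\cX^k$ and can contain $B$, so the argument still breaks at isolated positions. This is exactly the $r=2$ phenomenon the paper's proof confronts and circumvents by reversing the direction of its spatial induction (starting from the top ray $\rayfrom{A}$ instead of the bottom); your version would need an analogous workaround, and as written it does not have one.
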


\noindent
Note that we shall treat the case $r=1$ in Proposition~\ref{prop:r=1_intersection} below; we continue to use the notation for the $\cX$ components, however, the argument applies also to the $\cY$ components.

\begin{proof}
First observe that any indecomposable object $B$ lying in an $\cX$ or $\cY$ component admits morphisms to precisely \emph{two} objects on the mouth, and precisely \emph{one} object on the mouth of the \emph{same} component, since $B$ lies in precisely one ray and one coray.

Let $A$ be an indecomposable object in an $\cX$ or $\cY$ component. If $B \in \rayfrom{A}$ or $B \in \overline{AA_0}$ or $B \in \rayfrom{A_0}$, then $\Hom(A,B) \neq 0$, using Serre duality for the third statement.
Let $B \in \rayfrom{\overline{AA_0}}$; the rays and corays of $\rayfrom{A} \cup \overline{AA_0} \cup \rayfrom{A_0}$ are indicated in the left-hand sketch in Figure~\ref{fig:hammocks}. Consider the following part of the AR quiver of $\sD$: 
\[
\xymatrix@!=1pt{
                        & &                     & C \ar[dr]  &           &                  &    \\
                        & &                     &            & B \ar[dr] &                  &    \\
                        & &                     & B' \ar[ur] &           & B'' \ar@{.>}[dr] &    \\
{}_0 C \ar@{.>}[uuurrr] & & {}_0 B \ar@{.>}[ur] &            &           &                  & B_0
}
\]
where $C$ is one irreducible morphism closer to $\rayfrom{A}$, $B'$ one closer to $\overline{AA_0}$ and $B''$ one closer to $\rayfrom{A_0}$, if $B$ is in the interior of the region $\rayfrom{\overline{AA_0}}$.
Note that the triangles
\[
\tri{{}_0 C}{C}{C''}, \quad \tri{{}_0 B}{B}{B''} \quad\text{and}\quad \tri{B'}{B}{B_0}
\]
are those from Lemma~\ref{lem:triangles} and $C'' = B$. Furthermore, since $\Sigma$ is an autoequivalence, any (co)suspension of ${}_0 C$, ${}_0 B$ and $B_0$ must also lie on the mouth.

The idea is to proceed by induction up each ray of the hammock starting with the lowest ray, $\rayfrom{A_0}$. By the observation above if $B \in \rayfrom{A_0} \cup \overline{AA_0}$ then $\Hom(A,B) \neq 0$ and we are done. By induction, we may assume $B \notin \rayfrom{A_0} \cup \overline{AA_0}$ and that $\Hom(A,B') \neq 0$ and $\Hom(A,B'') \neq 0$.

Since ${}_0 B \neq A_0 \neq B_0$, we have $\Hom(A,{}_0 B) = \Hom(A,B_0) = 0$.
Applying $\Hom(A,-)$ to the triangles involving $B'$ and $B''$ above produces long exact sequences in which the vanishing of one of $\Hom(A,\Sigma({}_0 B))$ and $\Hom(A,\Sigma\inv B_0)$ is enough to give $\Hom(A,B) \neq 0$. 

However, in the case $r = 2$ it may happen that $\Sigma({}_0 B )= \Sigma\inv B_0 = \SSS({}_0 A)$ and by Serre duality $\Hom(A, \Sigma({}_0 B)) = \Hom(A, \Sigma\inv B_0) \neq 0$. In this case, starting with the induction from the topmost ray, $\rayfrom{A}$, instead will give us that $\Hom(A,C) \neq 0$. Now we only require the vanishing of $\Hom(A,\Sigma({}_0 C))$ to give us $\Hom(A,B) \neq 0$. However, we have $\Sigma({}_0 C) \neq \Sigma({}_0 B) = \SSS({}_0 A)$. Since $A$ admits morphisms only to the objects $A_0$ and $\SSS({}_0 A)$ on the mouth of a $\IZ A_\infty$ component, we get $\Hom(A,\Sigma({}_0 C)) = 0$. We can now resume the standard induction.
\end{proof}

\begin{proposition}[Hammocks $\Hom(\cX^k,\blank)$] \label{prop:X-hammocks}
Let $A=X^k_{ij}\in\ind{\cX^k}$ and assume $r>1$. \newline
For any indecomposable object $B\in\ind{\Db(\Lambda)}$ the following cases apply:

\begintabularhammock
$B\in\cX^k$:    & then $\Hom(A,B)\neq0 \iff B \in \rayfrom{\overline{AA_0}}$; \\[0.2ex]
$B\in\cX^{k+1}$: & then $\Hom(A,B)\neq0 \iff B \in \corayto{\overline{{}_0(\SSS A),\SSS A}}$; \\
$B\in\cZ^k$:    & then $\Hom(A,B)\neq0 \iff B \in \raythrough{\overline{Z^k_{ii}Z^k_{ji}}}$
\end{tabular}

\noindent
and $\Hom(A,B)=0$ for all other $B\in\ind{\Db(\Lambda)}$.
\newline
For $r=1$, these results still hold, except that the $\cX$-clauses are replaced by

\begintabularhammock
$B\in\cX^0$: & then $\Hom(A,B)\neq0 \iff B \in \rayfrom{\overline{AA_0}} \cup \corayto{\overline{_0(\tau^{-m}A),\tau^{-m}A}}$.
\end{tabular}
\end{proposition}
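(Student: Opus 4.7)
The plan is to induct on the height $h(A) = j-i$ of $A = X^k_{ij}$, using Lemma~\ref{lem:mouth_hammocks} as the base case: when $h(A) = 0$ we have $A = A_0 = {}_0 A$ and $\overline{A A_0} = \{A\}$, $\overline{{}_0(\SSS A),\SSS A} = \{\SSS A\}$, $\overline{Z^k_{ii} Z^k_{ji}} = \{Z^k_{ii}\}$, so the three hammock regions collapse to the single rays, corays and raythroughs that Lemma~\ref{lem:mouth_hammocks} records. For the inductive step I would apply the contravariant functor $\Hom(-,B)$ to the triangle $A' \to A \to A_0 \to \Sigma A'$ of Lemma~\ref{lem:triangles}, where $A' = X^k_{i,j-1}$ has height $h(A) - 1$ and $A_0 = X^k_{jj}$ sits on the mouth, obtaining the long exact sequence
$$ \Hom(A_0, B) \to \Hom(A, B) \to \Hom(A', B) \to \Hom(\Sigma^{-1} A_0, B) $$
in which the outer terms are computed by Lemma~\ref{lem:mouth_hammocks} and the third by the inductive hypothesis.

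For non-vanishing in the three claimed regions, the $\cX^k$-region $\rayfrom{\overline{A A_0}}$ is already covered by Lemma~\ref{lem:raysfromA}. For the $\cX^{k+1}$-region, I would invoke Serre duality $\Hom(A,B) \cong \Hom(B,\SSS A)^*$: a direct coordinate check shows $B \in \corayto{\overline{{}_0(\SSS A),\SSS A}}$ is equivalent to $\SSS A \in \rayfrom{\overline{B B_0}}$, whereupon Lemma~\ref{lem:raysfromA} applied to $B \in \ind{\cX^{k+1}}$ yields $\Hom(B,\SSS A) \neq 0$. For the $\cZ^k$-region, I would compose a coray morphism $A \to X^k_{l,j}$ (for $l \in [i,j]$) with the chain $X^k_{l,j} \to Z^k_{l,m}$ from Properties~\ref{properties}(5); non-vanishing of the composition follows by factoring through the mouth object $X^k_{ll}$ and invoking Lemma~\ref{lem:mouth_hammocks}.

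For vanishing outside the three regions, if $B$ lies outside then both $\Hom(A_0,B) = 0$ (by Lemma~\ref{lem:mouth_hammocks}) and $\Hom(A',B) = 0$ (by induction), which forces $\Hom(A,B) = 0$ via the long exact sequence. The main obstacle I anticipate is the boundary analysis: objects $B$ at the interface where exactly one of $\Hom(A_0,B)$, $\Hom(A',B)$ is nonzero, so that the sequence alone does not determine $\Hom(A,B)$; here one must identify the connecting morphism by comparison with the non-vanishing results established above (and, where necessary, by applying $\Hom(-,B)$ to the dual triangle ${}_0 A \to A \to A'' \to \Sigma({}_0 A)$ as well). Finally, for $r = 1$ the components $\cX^k$ and $\cX^{k+1}$ both equal $\cX^0$; using Properties~\ref{properties}(3) to identify $\SSS A = \tau^{-m} A$ when $r = 1$, the two separate regions $\rayfrom{\overline{A A_0}}$ and $\corayto{\overline{{}_0(\SSS A), \SSS A}}$ combine in $\cX^0$ as claimed.
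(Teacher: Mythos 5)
Your inductive strategy is in the right spirit, but there are several concrete gaps that would sink the argument as written.

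\textbf{The boundary case is not closed.} You apply $\Hom(-,B)$ to the triangle $A'\to A\to A_0\to\Sigma A'$ of Lemma~\ref{lem:triangles}, with $A'=X^k_{i,j-1}$ on the same ray as $A$. The set of $B\in\cX^k$ where $\Hom(A_0,B)=0$ and $\Hom(A',B)=0$ fail to \emph{both} hold outside $R(A)$ is exactly the line segment $\overline{A'A'_0}$. You suggest also using the dual triangle ${}_0A\to A\to A''\to\Sigma{}_0A$; that indeed disposes of $B=X^k_{a,j-1}$ for $a>i$ (since then $\Hom({}_0A,B)=0$ and $\Hom(A'',B)=0$), but for $B=A'$ itself both long exact sequences only give $\hom(A,A')\leq 1$, never vanishing: $\Hom(A',A')$ and $\Hom({}_0A,A')$ are each one-dimensional, so both embeddings are consistent with $\Hom(A,A')\neq 0$. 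The paper closes this exactly by switching direction: it applies the \emph{covariant} functor $\Hom(A,-)$ to triangles built along the segment $\overline{A'A'_0}$ and uses the \emph{contravariant} clauses of Lemma~\ref{lem:mouth_hammocks} (namely $\Hom(A,X^k_{ll})=0$ for $l\neq j$) to kill the outer terms. This change of variance is the essential move your proposal omits; ``identify the connecting morphism by comparison with the non-vanishing results'' is too vague to substitute for it. (Alternatively one can cite the $\cX^{k+1}$ clause of the inductive hypothesis for $A'$ via Serre duality, $\Hom(A,A')=\Hom(A',\SSS A)^*$, but then the three clauses must be proved by a simultaneous induction, which you do not set up.)

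\textbf{The $\cZ^k$ non-vanishing argument is not sound.} Knowing that $A\to X^k_{l,j}$ is a nonzero coray morphism and that some $X^k_{l,j}\to Z^k_{l,m}$ from the chain in Properties~\ref{properties}(5) is nonzero does \emph{not} imply their composition is nonzero, and ``factoring through the mouth object $X^k_{ll}$'' does not make sense for that composite, since $A\to X^k_{l,j}\to Z^k_{l,m}$ does not pass through the mouth. The paper instead gets non-vanishing along $\raythrough{Z^k_{ii}}$ directly from Properties~\ref{properties}(5), and then moves to neighbouring rays by applying $\Hom(A,-)$ to the special triangles $X^k_{i,i+s-1}\to Z^k_{it}\to Z^k_{i+s,t}\to\Sigma X^k_{i,i+s-1}$ of Properties~\ref{properties}(4), using the already-computed $\cX$-hammocks to show both outer terms vanish. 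You also never address \emph{vanishing} in $\cZ^k$; this too needs an inductive argument via a triangle and the mouth hammocks.

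\textbf{The $r=1$ clause is not a one-line corollary.} When $r=1$ the two regions land in the same component and can overlap, the Hom-spaces jump to dimension $2$ on the overlap, and the inductive long exact sequence no longer silences the $\Hom(\Sigma^{\pm 1}A_0,B)$ terms because suspension stays inside $\cX^0$. The paper does not prove this clause inside the present proposition at all; it defers it to the separate Proposition~\ref{prop:r=1_intersection}, whose proof is a five-step analysis that partitions $\cX^0$ into regions cut out by six rays and corays and pins down the value of $\hom(X,-)$ on each region. Your concluding sentence, ``the two separate regions combine in $\cX^0$ as claimed,'' is not a proof of that.

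Also note the paper's induction in the $\cX^k$ case runs on the \emph{full AR mesh triangle} $A'\to A\oplus C\to A''\to\Sigma A'$ with $A$ at the top vertex rather than on the half-triangles of Lemma~\ref{lem:triangles}; both choices lead to the same boundary segment, so neither avoids the direction-switching step, but it is worth knowing that the published proof uses the mesh.
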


\begin{figure}
  \parbox{\textwidth}{
    \parbox{0.3\textwidth}{ \resizebox{0.3\textwidth}{!}{
      \begin{tikzpicture} 
        \Homhammocktemplate{-1}
        \fill (2,3) circle (0.16); \fill (3,2) circle (0.16); \fill (4,1) circle (0.16); \fill (5,0) circle (0.16);
        \draw (2,3) -- (3,2); \draw (3,2) -- (4,1); \draw (4,1) -- (5,0);
        \fill (6,1) circle (0.16); \fill (7,2) circle (0.16); \fill (8,3) circle (0.16);
        \draw (5,0) -- (6,1); \draw (6,1) -- (7,2); \draw (7,2) -- (8,3); \draw (8,3) -- (9,4);
        \fill (3,4) circle (0.16); \fill (4,5) circle (0.16); \fill (5,6) circle (0.16); \fill (6,7) circle (0.16); \fill (7,8) circle (0.16);
        \draw (2,3) -- (3,4); \draw (3,4) -- (4,5); \draw (4,5) -- (5,6); \draw (5,6) -- (6,7); \draw (6,7) -- (7,8); \draw (7,8) -- (8,9);
        \node at (1.4,3) {\Hht{A}}; \node at (5.7,0) {\Hht{A_0}};
        \begin{scope}[on background layer]
          \draw[fill, gray!20, line width = 0.5cm] (2,3) -- (5,0) -- (9,4) -- (9,9) -- (8,9) -- cycle; 
        \end{scope}
      \end{tikzpicture}
      }  
      \centerline{$\rayfrom{\overline{AA_0}} \subset \cX^0$}
    }
  \hfill
  \parbox{0.3\textwidth}{ \resizebox{0.3\textwidth}{!}{
    \begin{tikzpicture} 
      \Homhammocktemplate{-1}
      \fill (3,0) circle (0.16); \fill (4,1) circle (0.16); \fill (5,2) circle (0.16); \fill (6,3) circle (0.16);
      \fill[gray!70] (8,3) circle (0.16);
      \draw (3,0) -- (4,1); \draw (4,1) -- (5,2); \draw (5,2) -- (6,3);
      \node at (3.9,0) {\Hht{{}_0 \SSS A}}; \node at (6.8,3) {\Hht{\SSS A}}; \node at (8.7,3) {\Hht{\Sigma A}};
      \begin{scope}[on background layer]
        \draw[fill, gray!20, line width = 0.5cm] (3,0) -- (6,3) -- (0,9) -- (0,3) -- cycle; 
      \end{scope}
    \end{tikzpicture}
    }  
    \centerline{$\corayto{\overline{{}_0\SSS A,\SSS A}} \subset \cX^1$}
  }
  \hfill
  \parbox{0.3\textwidth}{ \resizebox{0.3\textwidth}{!}{ 
    \begin{tikzpicture} 
      \Homhammocktemplate{0}
      \fill (1,4) circle (0.16); \fill (2,3) circle (0.16); \fill (3,2) circle (0.16); \fill (4,1) circle (0.16);
      \fill[gray!70] (7,4) circle (0.16);
      \draw (1,4) -- (2,3); \draw (2,3) -- (3,2); \draw (3,2) -- (4,1);
      \node at (0.8,4.8) {\Hht{Z^0_{1,1}}}; \node at (4.2,0.2) {\Hht{Z^0_{4,1}}}; \node at (8.0,4) {\Hht{Z^0_{4,4}}};
      \begin{scope}[on background layer]
        \draw[fill, gray!20, line width = 0.5cm] (0,0) -- (0,3) -- (6,9) -- (9,9) -- (9,6) -- (3,0) -- cycle; 
      \end{scope}
    \end{tikzpicture}
    }  
    \centerline{$\raythrough{\overline{Z^0_{1,1}Z^0_{4,1}}} \subset \cZ^0$}
  }
  }
  \caption{Hom hammocks $\Hom(A,\blank) = \Hom(\blank,\SSS A)^*\neq 0$ for $A=X^0_{1,4}$. 
           \newline
           $A_0=X^0_{4,4}, \quad \Sigma A=X^1_{1,4} ~ (\text{if } r\geq2), \quad \SSS A = \Sigma\tau A = X^1_{0,3}, \quad
           {}_0\SSS A = X^1_{0,0}$
  } \label{fig:hammocks}
\end{figure}

\begin{proof}
The main tool in the proof of this, and the following propositions, will be induction on the height of $A$ --- the induction base step is proved in Lemma~\ref{lem:mouth_hammocks} which gives the hammocks for indecomposables of height $0$. We give a careful exposition for the first claim, and for $r>1$. The $r=1$ case will be treated in Proposition~\ref{prop:r=1_intersection}.

\Case{$B\in\cX^k$} For any indecomposable object $A\in\cX^k$, write $R(A)$ for the subset of $\cX^k$ specified in the statement, i.e.\ bounded by the rays out of $A$ and $A_0$, and the line segment $\overline{AA_0}$.
The existence of non-zero homomorphisms $A\to B$ for objects $B\in R(A)$ follows directly from Lemma~\ref{lem:raysfromA}.

For the vanishing statement, we proceed by induction on the height of $A$. If $A$ sits on the mouth of $\cX^k$, then Lemma~\ref{lem:mouth_hammocks} states indeed that the $\Hom(A,B)\neq0$ if and only if $B$ is in the ray of $A$. Note that $R(A)$ is precisely $\rayfrom{A}$ in this case.

Now let $A\in\cX^k$ be any object of height $h\coloneqq\hi{A}>0$. We consider the diamond in the AR mesh which has $A$ as the top vertex, and the corresponding AR triangle
  $\tri{A'}{A\oplus C}{A''}$,
where $\hi{A'} = \hi{A''} = h-1$ and $\hi{C} = h-2$. (If $h=1$, we are in the degenerate case with $C=0$.)
It is clear from the definitions that $A_0=A''_0$, $A'_0=C_0$ and there are inclusions $R(A'')\subset R(A)\subset R(A')\cup R(A'')$. We start with an object $B\in\cX^k$ such that $B\notin R(A')\cup R(A'')$. By the induction hypothesis, we know that $R(A')$, $R(C)$ and $R(A'')$ are the Hom-hammocks in $\cX^k$ for $A'$, $C$, $A''$, respectively. Since $B$ is contained in none of them, we see that $\Hom(A',B)=\Hom(C,B)=\Hom(A'',B)=0$. Applying $\Hom(\blank,B)$ to the given AR triangle shows $\Hom(A,B)=0$.

It remains to show that $\Hom(A,D)=0$ for objects $D \in (R(A')\cup R(A'')) \backslash R(A)$ which can be seen to be the line segment $\overline{A'A'_0}$. Again we work up from the mouth: $\Hom(A,A'_0)= 0$ and $\Hom(A,\tau A'_0)=0$ by Lemma~\ref{lem:mouth_hammocks}, as before. The extension $D_1$ given by $\tri{\tau A'_0}{D_1}{A'_0}{}{}{}$ is the indecomposable object of height 1 on $\overline{A'A'_0}$. Applying $\Hom(A,\blank)$ to this triangle, we find $\Hom(A,D_1)=0$, as required. The same reasoning works for the objects of heights $2,\ldots,h-1$ on the segment.

\Case{$B\in\cX^{k+1}$} We start by showing the existence of non-zero homomorphisms to indecomposable objects in the desired region. For any $B$ in this region, it follows directly from the dual of Lemma~\ref{lem:raysfromA} that there is a non-zero homomorphism from $B$ to $\SSS A$. However, by Serre duality we see that $\Hom(A,B) = \Hom(B, \SSS A)^* \neq0$, as required.
The statement that $\Hom(A,B) = 0$ for all other $B\in\cX^{k+1}$ can be proved by an induction argument which is analogous to the one given in the first case above.

\Case{$B\in\cZ^k$} For any indecomposable object $A=X^k_{ij}\in\cX^k$, write $V(A)$ for the region in $\cZ^k$ specified in the statement, i.e.\ the region bounded by the rays through $Z^k_{ii}$ and $Z^k_{ji}$.
We start by proving that $\Hom(A,B) \neq 0$ for $B \in V(A)$. The first chain of morphisms in Properties~\ref{properties}(5), implies that $\Hom(A,B)\neq0$ for any $B\in\raythrough{Z^k_{ii}}$.
For any other $B'=Z^{k}_{i+s,t} \in V(A)$, so $t \in \IZ$ and $s \in \{1,\ldots,\hi{A} = j-i\}$, we consider the special triangle $\tri{X^k_{i,i+s-1}}{B}{B'}{}{}{}$ from Properties~\ref{properties}(4), where $B=Z^k_{it}\in\raythrough{Z^k_{ii}}$.
Applying $\Hom(A,\blank)$ leaves us with the exact sequence
\[ \Hom(A,X^k_{i,i+s-1}) \to \Hom(A,B) \to \Hom(A,B') \to \Hom(A,\Sigma X^k_{i,i+s-1}) . \]
By looking at the Hom-hammocks in the $\cX$-components that we already know,  we see that the left-hand term vanishes as $X^k_{i,i+s-1}$ is on the same ray as $A$ but has strictly lower height.
Similarly, we observe that the right-hand term of the sequence vanishes:
$0 = \Hom(X^k_{i,i+s-1}, \tau A) = \Hom(A,\Sigma X^k_{i,i+s-1})$. Hence
$\Hom(A,B') = \Hom(A,B) \neq 0$.

For the Hom-vanishing part of the statement, we again use induction on the height $h\coloneqq\hi{A}\geq 0$. For $h=0$, Lemma~\ref{lem:mouth_hammocks} gives $V(A)=\raythrough{Z^k_{ii}}$. For $h>0$, as before we consider the AR mesh which has $A$ as its top vertex:
 $\tri{A'}{A\oplus C}{A''}{}{}{}$.
For any $Z\in\ind{\cZ^k}$, we apply $\Hom(\blank,Z)$ to this triangle and find that $\Hom(A,Z)\neq0$ implies $\Hom(A',Z)\neq0$ or $\Hom(A'',Z)\neq0$. Therefore $\Hom(A,B) = 0$ for all $B \notin V(A')\cup V(A'') = V(A)$, where the final equality is clear from the definitions.

\RemainingCases These comprise vanishing statements for entire AR components, namely $\Hom(\cX^k,\cX^j)=0$ for $j\neq k,k+1$, and $\Hom(\cX^k,\cY^j)=0$ for any $j$, and $\Hom(\cX^k,\cZ^j)=0$ for $j\neq k$. All of those follow at once from Lemma~\ref{lem:mouth_hammocks}: with no non-zero maps from $A$ to the mouths of the specified components of type $\cX$ and $\cY$, Hom vanishing can be seen using induction on height and considering a square in the AR mesh. The vanishing to the $\cZ^k$ components with $k\neq j$ follows similarly.
\end{proof}

\subsection{Hom-hammocks for objects in $\cY$ components} \label{sub:hammocks_Y}

Assume $A=Y^k_{ij}\in\ind{\cY^k}$. This case is similar to the one above. Put
\begin{enumerate}[leftmargin=1em,itemsep=0.8ex,label = {}]
\item $^0\! A \coloneqq Y^k_{ii}$ to be the intersection of the coray through $A$ with the mouth of $\cY^k$, and
\item $A^0    \coloneqq Y^k_{jj}$ to be the intersection of the ray through $A$ with the mouth of $\cY^k$.
\end{enumerate}

\begin{proposition}[Hammocks $\Hom(\cY^k,\blank)$] \label{prop:Y-hammocks}
Let $A=Y^k_{ij}\in\ind{\cY^k}$ and assume $r>1$. \newline
For any indecomposable object $B\in\ind{\Db(\Lambda)}$ the following cases apply:

\begintabularhammock
$B\in\cY^k$:     & then $\Hom(A,B)\neq0 \iff B \in \corayfrom{\overline{AA^0}}$; \\[0.2ex]
$B\in\cY^{k+1}$: & then $\Hom(A,B)\neq0 \iff B \in \rayto{\overline{{}^0(\SSS A),\SSS A}}$; \\
$B\in\cZ^k$:     & then $\Hom(A,B)\neq0 \iff B \in \coraythrough{\overline{Z^k_{ii}Z^k_{ij}}}$
\end{tabular}

\noindent
and $\Hom(A,B)=0$ for all other $B\in\ind{\Db(\Lambda)}$.
\newline
For $r=1$, these results still hold, except that the $\cY$-clauses are replaced by

\begintabularhammock
$B\in\cY^0$: & then $\Hom(A,B)\neq0 \iff B \in \corayfrom{\overline{AA^0}} \cup \rayto{\overline{^0(\tau^n A),\tau^n A}}$.
\end{tabular}
\end{proposition}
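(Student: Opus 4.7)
The plan is to mirror the proof of Proposition~\ref{prop:X-hammocks} essentially verbatim, exploiting the formal symmetry between the $\cX$ and $\cY$ components (both are $\IZ A_\infty$ but with the coordinate conventions flipped). All necessary inputs are already available and component-symmetric: Lemma~\ref{lem:mouth_hammocks} supplies the base case at the mouth, Lemma~\ref{lem:triangles} provides the two standard triangles around any object of positive height in a $\IZ A_\infty$ component, Lemma~\ref{lem:raysfromA} is stated uniformly for both $\cX$ and $\cY$, and Properties~\ref{properties}(4),(5) supply the triangles and chains that interface with the $\cZ$ components.

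I would fix $A = Y^k_{ij}$ and induct on the height $h = i-j$. For $B \in \cY^k$: Lemma~\ref{lem:raysfromA} directly yields $\Hom(A,B) \ne 0$ on $\corayfrom{\overline{AA^0}}$. For the vanishing outside this region, take the AR triangle $A' \to A \oplus C \to A''$ with $A$ at its apex (so $\hi{A'} = \hi{A''} = h-1$), apply $\Hom(\blank,B)$, and invoke the inductive hypothesis that the $\cY^k$-hammocks of $A'$, $C$, $A''$ are the corresponding $\corayfrom{\overline{(\cdot)(\cdot)^0}}$ regions. As in the $\cX$ case, the residual line segment $\overline{A'(A')^0}$ is disposed of by a second induction working up from the mouth, using Lemma~\ref{lem:mouth_hammocks} for the base steps.

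For $B \in \cY^{k+1}$: apply Serre duality $\Hom(A,B) = \Hom(B,\SSS A)^{*}$ together with the dual of Lemma~\ref{lem:raysfromA} applied to $\SSS A \in \cY^{k+1}$, producing non-vanishing exactly on $\rayto{\overline{{}^0(\SSS A),\SSS A}}$; vanishing elsewhere follows by the same AR-mesh induction. For $B \in \cZ^k$: the second chain in Properties~\ref{properties}(5) gives non-vanishing along $\coraythrough{Z^k_{ii}}$, and to propagate across the region $\coraythrough{\overline{Z^k_{ii}Z^k_{ij}}}$ I would apply $\Hom(A,\blank)$ to the second triangle of Properties~\ref{properties}(4), namely $Y^k_{j+d,j} \to Z^k_{ij} \to Z^k_{i,j+d+1}$, using the just-established $\cY^k$-hammocks to verify that the flanking Hom groups vanish. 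The remaining total vanishing statements for $B$ lying in $\cX^j$, $\cY^j$ with $j \neq k,k+1$, or $\cZ^j$ with $j \neq k$, again fall out of Lemma~\ref{lem:mouth_hammocks} by induction on $h$ via the AR mesh.

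For the exceptional case $r=1$, the only novelty is that $\cY^{k+1} = \cY^0$ wraps back onto the same component, so the Serre-dual hammock appears alongside the primary one. Using $\Sigma|_{\cY^0} = \tau^{n-r} = \tau^{n-1}$ from Properties~\ref{properties}(3), one has $\SSS = \Sigma\tau = \tau^n$ on $\cY^0$, and the region $\rayto{\overline{{}^0(\SSS A),\SSS A}}$ becomes $\rayto{\overline{{}^0(\tau^n A),\tau^n A}}$, matching the stated formula. The main obstacle, as already encountered inside Lemma~\ref{lem:raysfromA}, is the small-$r$ edge case (here $r=2$) in which two boundary rays of the hammock can meet on the mouth and Serre duality reintroduces a non-zero morphism at the ``top''; this is handled by reversing the direction of the height induction, exactly as in the $\cX$ case. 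Apart from that the argument is purely formal bookkeeping.
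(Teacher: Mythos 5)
Your proposal is correct and takes essentially the same approach as the paper: the paper's entire proof of Proposition~\ref{prop:Y-hammocks} is the one-line remark that the statements are analogous to those of Proposition~\ref{prop:X-hammocks}, and you have simply spelled out the intended analogy (swapping rays for corays, using the second triangle and chain in Properties~\ref{properties}(4),(5), and computing $\SSS|_{\cY^0}=\tau^n$ when $r=1$), all in line with the paper's intent.
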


\begin{proof}
These statements are analogous to those of Proposition~\ref{prop:X-hammocks}.
%
%In this case, the vanishing statement for full components consists of $\Hom(\cY^k,\cX^j)=0$ for any $j$, and $\Hom(\cY^k,\cY^j)=0$ for $j\neq k,k+1$, and $\Hom(\cY^k,\cZ^j)=0$ for $j\neq k$.
\end{proof}

\subsection{Hom-hammocks for objects in $\cZ$ components}

Let $A = Z^k_{ij}\in\ind{\cZ^k}$. By Lemma~\ref{lem:mouth_hammocks} we know that the following objects are well defined:
\begin{enumerate}[leftmargin=1em,itemsep=0.8ex,label = {}]
  \item $A_0 \coloneqq $ the unique object at the mouth of an $\cX$ component for which $\Hom(A, A_0) \neq 0$,
  \item $A^0  \coloneqq$ the unique object at the mouth of a $\cY$ component for which $\Hom(A, A^0) \neq 0$.
\end{enumerate}
In fact, $ A_0 \in \cX^{k+1}$ and $A^0 \in \cY^{k+1}$.

\begin{proposition}[Hammocks $\Hom(\cZ^k,\blank)$] \label{prop:Z-hammocks}
Let $A=Z^k_{ij}\in\ind{\cZ^k}$ and assume $r>1$. \newline
For any indecomposable object $B\in\ind{\Db(\Lambda)}$ the following cases apply:

\begintabularhammock
$B\in\cX^{k+1}$: & then $\Hom(A,B)\neq0 \iff B \in \rayfrom{\corayto{A_0}}$; \\
$B\in\cY^{k+1}$: & then $\Hom(A,B)\neq0 \iff B \in \rayto{\corayfrom{A^0}}$; \\
$B\in\cZ^k$:    & then $\Hom(A,B)\neq0 \iff B \in \rayfrom{\corayfrom{A}}$;  \\
$B\in\cZ^{k+1}$: & then $\Hom(A,B)\neq0 \iff B \in \rayto{\corayto{\SSS A}}$
\end{tabular}

\noindent
and $\Hom(A,B)=0$ for all other $B\in\ind{\Db(\Lambda)}$.
\newline
For $r=1$, these results still hold, with the $\cZ$-clauses replaced by

\begintabularhammock
$B\in\cZ^0$: & then $\Hom(A,B)\neq0 \iff B \in \rayfrom{\corayfrom{A}} \cup \rayto{\corayto{\SSS A}}$.
\end{tabular}
\end{proposition}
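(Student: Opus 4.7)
The plan is to follow the pattern of Propositions~\ref{prop:X-hammocks} and \ref{prop:Y-hammocks}, using Serre duality to translate most cases to hammocks already computed and handling the genuinely new $\cZ\to\cZ$ case by direct computation with the triangles of Properties~\ref{properties}(4). For $B\in\cX^l$ or $B\in\cY^l$, the identity $\Hom(A,B)\cong\Hom(B,\SSS A)^*$ together with $\SSS Z^k_{ij}\in\cZ^{k+1}$ (up to a wrap-around shift when $k=r-1$, by Properties~\ref{properties}(3)) reduces everything to the $\cX^l\to\cZ^l$ and $\cY^l\to\cZ^l$ clauses of those propositions. Unravelling the regions gives the stated $\rayfrom{\corayto{A_0}}$ and $\rayto{\corayfrom{A^0}}$ and the vanishings for $l\neq k+1$.

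For nonvanishing of $B\in\cZ^k$ when $p\geq i$ and $q\geq j$, I apply $\Hom(-,Z^k_{pq})$ to the triangle $X^k_{i,p-1}\to Z^k_{ij}\to Z^k_{pj}\to\Sigma X^k_{i,p-1}$ from Properties~\ref{properties}(4) (with $d=p-i-1$). Both $\Hom(X^k_{i,p-1},Z^k_{pq})$ and $\Hom(\Sigma X^k_{i,p-1},Z^k_{pq})=\Hom(X^{k+1}_{i,p-1},Z^k_{pq})$ vanish by Proposition~\ref{prop:X-hammocks} (the first fails the coordinate range $i\leq p\leq p-1$; the second lies across the incompatible components $\cX^{k+1}$ and $\cZ^k$), so $\Hom(A,Z^k_{pq})\cong\Hom(Z^k_{pj},Z^k_{pq})$. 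A symmetric reduction along $Y^k_{q-1,j}\to Z^k_{pj}\to Z^k_{pq}$ then gives $\Hom(Z^k_{pj},Z^k_{pq})\cong\Hom(Z^k_{pq},Z^k_{pq})\ni\id$. Degenerate edges $p=i$ or $q=j$ omit one of the two reductions.

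The main obstacle is the $B\in\cZ^k$ vanishing claim, since the $\IZ A_\infty^\infty$ components $\cZ^k$ have no mouth and the height induction used in Propositions~\ref{prop:X-hammocks} and \ref{prop:Y-hammocks} is unavailable. For the delicate sub-case $p<i$, $q\geq j$, I first reduce along $Y^k_{q-1,j}\to Z^k_{pj}\to Z^k_{pq}$ (outer Homs vanish by the first paragraph) to get $\Hom(A,Z^k_{pq})\cong\Hom(A,Z^k_{pj})$, and then apply $\Hom(A,-)$ to $X^k_{p,i-1}\to Z^k_{pj}\to Z^k_{ij}\to\Sigma X^k_{p,i-1}$ to obtain $0\to\Hom(A,Z^k_{pj})\to\End(A)\to\Hom(A,X^{k+1}_{p,i-1})=\kk$. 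Since the connecting morphism $Z^k_{ij}\to\Sigma X^k_{p,i-1}$ is nonzero (otherwise $Z^k_{pj}$ would decompose, contradicting indecomposability), the last arrow is surjective, and $\Hom(A,Z^k_{pj})$ has codimension $1$ in $\End(A)$. Promoting this to full vanishing needs $\End(A)=\kk$; I would handle it either by running the $\cZ^k\to\cZ^k$ and $\cZ^k\to\cZ^{k+1}$ cases simultaneously via the Serre-duality equivalence $\Hom(Z^k_{ij},Z^k_{pq})\cong\Hom(Z^k_{pq},Z^{k+1}_{i-1,j-1})^*$ with a joint induction on distance to the non-vanishing quarter-plane, or by falling back on the string-module dimension count that underlies Lemma~\ref{lem:mouth_hammocks}. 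Symmetric and combined arguments cover $p\geq i,q<j$ and $p<i,q<j$.

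Finally, the case $B\in\cZ^{k+1}$ follows from Serre duality and the $\cZ^{k+1}\to\cZ^{k+1}$ case just handled. For the residual vanishing $\Hom(A,\cZ^l)=0$ with $l\neq k,k+1$, applying $\Hom(A,-)$ to both Properties~\ref{properties}(4) triangles for $B\in\cZ^l$ and using $\Hom(A,\cX^l)=\Hom(A,\cX^{l+1})=\Hom(A,\cY^l)=\Hom(A,\cY^{l+1})=0$ from the first paragraph shows $\Hom(A,Z^l_{pq})$ is independent of $(p,q)$. This common value must be zero, because $\bigoplus_{s\in\IZ}\Hom(A,\Sigma^s B)$ is finite-dimensional in $\Db(\LLambda)$ while the $\Sigma^r$-orbit $\{\Sigma^{rs}B\}_{s\in\IZ}$ yields infinitely many distinct objects of $\cZ^l$ (coordinates shift by multiples of $(r+m,r-n)$) all carrying the same Hom value. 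The $r=1$ statement is recovered by observing $\cZ^0=\cZ^{k+1}$ in that case and collapsing the two regions into the stated union.
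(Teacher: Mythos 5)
Your reduction of the $\cX^{k+1}$ and $\cY^{k+1}$ cases to Serre duality, the nonvanishing argument for $B\in\cZ^k$ via the Properties~\ref{properties}(4) triangles, and the $l\neq k,k+1$ vanishing via constancy plus the finite-dimensionality of $\bigoplus_s\Hom(A,\Sigma^s B)$ all match the paper. But you correctly flag the $\cZ^k\to\cZ^k$ vanishing as the main obstacle, and your proposed resolutions do not close it. Your long-exact-sequence computation only bounds $\dim\Hom(A,Z^k_{pj})\leq\dim\End(A)-1$, which requires $\End(A)=\kk$ to conclude vanishing, and neither of your two suggestions establishes this: the ``joint induction on distance to the non-vanishing quarter-plane'' has no base case available (the $\IZ A_\infty^\infty$ components have no mouth to anchor an induction), and Lemma~\ref{lem:mouth_hammocks} only gives dimensions for morphisms out of mouth objects in $\cX,\cY$, not between interior $\cZ$ objects, so ``falling back on the string-module count'' is not a finished argument either. (There is also a minor circularity: your codimension-one claim relies on $\Hom(A,X^{k+1}_{p,i-1})=\kk$, which is a dimension statement only established in Theorem~\ref{thm:hom-dimensions}, which in turn depends on this proposition.)

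The paper's route around this is worth noting, since it is the key idea you are missing. Rather than computing $\End(A)$, the paper shows that the constant value on the left-open region $\kl$ equals $\Hom(A,\tau A)=\Hom^1(A,A)$, and then establishes $\Hom^1(A,A)=0$ for all $A\in\ind{\cZ}$ by a non-computational argument: Lemma~\ref{lem:proj-position} places the stalk complex of the projective $P(n-r)$ in $\cZ$, a projective has no self-extensions, and Corollary~\ref{cor:twist-action} (whose proof uses only Lemma~\ref{lem:mouth_hammocks}, so there is no circularity) gives transitivity of the autoequivalence group on $\ind{\cZ}$. The regions $\ku$ and $\kd$ are then handled exactly as you handle the $l\neq k,k+1$ case (constancy plus infinitely many $\Sigma^{-rc}A$ lying in the region), and $\kr$ is the nonvanishing region. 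Without the projective-in-$\cZ$ observation and transitivity, your argument has a genuine hole.
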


\begin{figure}
  \parbox{\textwidth}{
    \parbox{0.3\textwidth}{ \resizebox{0.3\textwidth}{!}{
      \begin{tikzpicture} 
        \Homhammocktemplate{-1}
        \fill (1,4) circle (0.16); \fill (2,3) circle (0.16); \fill (3,2) circle (0.16); \fill (4,1) circle (0.16);
        \fill (5,0) circle (0.16); \fill (6,1) circle (0.16); \fill (7,2) circle (0.16); \fill (8,3) circle (0.16);
        \draw (0,5) -- (1,4); \draw (1,4) -- (2,3); \draw (2,3) -- (3,2); \draw (3,2) -- (4,1); \draw (4,1) -- (5,0);
        \draw (5,0) -- (6,1); \draw (6,1) -- (7,2); \draw (7,2) -- (8,3); \draw (8,3) -- (9,4);
        \node at (5,0.7) {\Hht{A_0}};
        \begin{scope}[on background layer]
          \draw[fill, gray!20, line width = 0.5cm] (0,9) -- (0,5) -- (5,0) -- (9,4) -- (9,9) -- cycle; 
        \end{scope}
      \end{tikzpicture}
      }  
      \centerline{$\rayfrom{\corayto{A_0}} \subset \cX^1$}
    }
  \hfill
  \parbox{0.3\textwidth}{ \resizebox{0.3\textwidth}{!}{
    \begin{tikzpicture} 
      \Homhammocktemplate{0}
      \fill (4,5) circle (0.16); \fill (5,6) circle (0.16); \fill (6,7) circle (0.16); \fill (7,8) circle (0.16);
                                 \fill (5,4) circle (0.16); \fill (6,3) circle (0.16); \fill (7,2) circle (0.16);  \fill (8,1) circle (0.16);
      \fill[gray!70] (2,5) circle (0.16);
      \draw (4,5) -- (5,6); \draw (5,6) -- (6,7); \draw (6,7) -- (7,8); \draw (7,8) -- (8,9);
      \draw (4,5) -- (5,4); \draw (5,4) -- (6,3); \draw (6,3) -- (7,2); \draw (7,2) -- (8,1); \draw (8,1) -- (9,0);
      \node at (3.4,5) {\Hht{A}}; \node at (1.2,5) {\Hht{\tau A}};
      \begin{scope}[on background layer]
        \draw[fill, gray!20, line width = 0.5cm] (4,5) -- (9,0) -- (9,9) -- (8,9) -- cycle; 
      \end{scope}
    \end{tikzpicture}
    }  
    \centerline{$\rayfrom{\corayfrom{A}} \subset \cZ^0$}
  }
  \hfill
  \parbox{0.3\textwidth}{ \resizebox{0.3\textwidth}{!}{ 
    \begin{tikzpicture} 
      \Homhammocktemplate{0}
      \fill (1,8) circle (0.16); \fill (2,7) circle (0.16); \fill (3,6) circle (0.16); \fill (4,5) circle (0.16);
      \fill (1,2) circle (0.16); \fill (2,3) circle (0.16); \fill (3,4) circle (0.16);
      \fill[gray!70] (4,7) circle (0.16);
      \draw (0,9) -- (1,8); \draw (1,8) -- (2,7); \draw (2,7) -- (3,6); \draw (3,6) -- (4,5);
      \draw (0,1) -- (1,2); \draw (1,2) -- (2,3); \draw (2,3) -- (3,4); \draw (3,4) -- (4,5);
      \node at (4.7,5) {\Hht{\SSS A}}; \node at (6.7,5) {\Hht{\Sigma A}};
      \begin{scope}[on background layer]
        \draw[fill, gray!20, line width = 0.5cm] (0,1) -- (4,5) -- (0,9) -- cycle; 
      \end{scope}
    \end{tikzpicture}
    }  
    \centerline{$\rayto{\corayto{\SSS A}} \subset \cZ^1$}
  }
  }
  \caption{Hammocks $\Hom(A,\blank) = \Hom(\blank,\SSS A)^*\neq 0$ for $A\in\ind{\cZ^0}$.\newline
           The remaining hammock $\rayfrom{\corayfrom{A^0}}\subset\cY^1$ is not shown.
  }
\end{figure}

\begin{proof}
The cases $B\in\cX^{k+1}$ and $B\in\cY^{k+1}$ follow by Serre duality from Proposition~\ref{prop:X-hammocks} and Proposition~\ref{prop:Y-hammocks}, respectively.

Thus let $B=Z^l_{ab}\in\cZ^l$ be an indecomposable object in a $\cZ$ component. There are two special distinguished triangles associated with $B$; see Properties~\ref{properties}(4):
\[
\parbox{0.4\textwidth}{
   \xymatrix@R=2ex{
   {}_0B \ar@{=}[d] \ar[r] & B \ar@{=}[d] \ar[r] & B' \ar@{=}[d] \ar[r] & \Sigma \, {}_0B \ar@{=}[d] \\
   X^l_{aa}          \ar[r] & Z^l_{ab}      \ar[r] & Z^l_{a+1,b}    \ar[r] & \Sigma X^l_{aa}
}} \quad\text{and }\quad
\parbox{0.4\textwidth}{
   \xymatrix@R=2ex{
   {}^{0\!}B \ar@{=}[d] \ar[r] & B \ar@{=}[d] \ar[r] & B'' \ar@{=}[d] \ar[r] & \Sigma \, {}^{0\!}B \ar@{=}[d] \\
   Y^l_{bb}          \ar[r] & Z^l_{ab}      \ar[r] & Z^l_{a,b+1}     \ar[r] & \Sigma Y^l_{bb}
}}
\]
where ${}_0B=X^l_{aa}$ is the unique object at the mouth of a $\cX$ component with $\Hom({}_0B,B)\neq0$ and similarly ${}^0B=Y^l_{bb}$ is unique at a $\cY$ mouth with $\Hom({}^0B,B)\neq0$. We get two exact sequences by applying $\Hom(A,\blank)$:
\begin{align*}
   \Hom(A,{}_0B) \too \Hom(A,B) \too \Hom(A,B')  \too \Hom(A,\Sigma \, {}_0B) ,  \\
   \Hom(A,{}^{0\!}B) \too \Hom(A,B) \too \Hom(A,B'') \too \Hom(A,\Sigma \, {}^{0\!}B) .
\end{align*}

\Case{$l\neq k, k+1$} In this case $\Hom(A,B)=\Hom(A,B')=\Hom(A,B'')$ follows from the above triangles via these exact sequences and Lemma~\ref{lem:mouth_hammocks}. But this implies $\Hom(A,B)=\Hom(A,Z)$ for all $Z \in\cZ^l$ and in particular $\Hom(A,B)=\Hom(A,\Sigma^{cr}B)$ for all $c\in\IZ$. It follows that $\Hom(A,B)=0$ as $\LLambda$ has finite global dimension.

\Case{$l=k$} Again, we first show that the dimension function $\hom(A,\blank)$ is constant on certain regions of $\cZ^k$. In particular, we have
\begin{align}
& \Hom(A,B) = \Hom(A,B')  \text{ for } B\notin \raythrough{\SSS A}\cup \raythrough{\tau A}; \label{first-triangle}\\
& \Hom(A,B) = \Hom(A,B'') \text{ for } B\notin \coraythrough{\SSS A}\cup \coraythrough{\tau A}. \label{second-triangle}
\end{align}
Half of the first equality follows through the chain of equivalences
\[ \Hom(A,{}_0B) \neq 0 \iff A \in \raythrough{\SSS\inv Z^k_{aa}}
                        \iff A \in \raythrough{\SSS\inv B}
                        \iff B \in \raythrough{\SSS A} . \]
Likewise one obtains $\Hom(A,\Sigma{}_0B) \neq 0 \iff B \in \raythrough{\tau A}$, giving the first equality. Using the other triangle, the second equality is analogous.

The component $\cZ^k$ is divided by $\raythrough{\tau A}$ and $\coraythrough{\tau A}$ into four regions:

\noindent
\parbox{0.7\textwidth}{
\begin{tabular}{@{} p{0.05\textwidth} @{} p{0.65\textwidth} @{}}
$\ku \colon$ & The upwards-open region including $\rayfrom{\tau A} \setminus \{\tau A\}$
               but excluding $\corayto{\tau A}$; \\
$\kl \colon$ & The left-open region including $\rayto{\tau A} \cup \corayto{\tau A}$; \\
$\kd \colon$ & The downwards-open region including $\corayfrom{\tau A}\setminus \{\tau A\}$
               but excluding $\rayto{\tau A}$; \\
$\kr \colon$ & The right-open region excluding $\raythrough{\tau A} \cup \coraythrough{\tau A}$.
\end{tabular}
}
\hfill
\parbox{0.20\textwidth}{ \resizebox{0.16\textwidth}{!}{\begin{tikzpicture}

\pgfmathsetmacro{\l}{1.5}    % placement of letters for regions
\pgfmathsetmacro{\s}{1.8}    % length of co/rays

\node    at (0,0)     {$\bullet$};
\node    at (1,0)     {$\bullet$};
\node    at (-0.6,0)  {$\tau A$};
\node    at ( 0.7,0)  {$A$};

\node    at ( 0, \l)    {\large$\ku$};
\node    at ( 0,-\l)    {\large$\kd$};
\node    at ( \l,-0.5)  {\large$\kr$};
\node    at (-\l, 0.5)  {\large$\kl$};

\draw (-\s,-\s) -- (\s, \s) ;
\draw (-\s, \s) -- (\s,-\s) ;

\end{tikzpicture}} }

Using \eqref{first-triangle} above coupled with the fact that $\ku$ contains infinitely many objects $\Sigma^{-rc} A$ with $c \in \IN$, shows by the finite global dimension of $\LLambda$ that no objects in $\ku$ admit non-trivial morphisms from $A$. Using \eqref{second-triangle} and analogous reasoning shows that no objects in $\kd$ admit non-trivial morphisms from $A$. Non-existence of non-trivial morphisms from $A$ to objects in $\kl$ follows as soon as $\Hom(A,\tau A) = \Hom^1(A,A)=0$ by using \eqref{second-triangle} above. The existence of the stalk complex of a projective module in the $\cZ$ component, Lemma~\ref{lem:proj-position}, coupled with the transitivity of the action of the automorphism group of $\Db(\LLambda)$ on the $\cZ$ component, which is proved in Section~\ref{sec:auteq} using only Lemma~\ref{lem:mouth_hammocks}, shows that $\Hom^1(A,A)=0$ for all $A\in\cZ$.

Finally, $\kr = \corayfrom{\rayfrom{A}}$ is the non-vanishing hammock simply by $\Hom(A,A)\neq0$ and using either \eqref{first-triangle} or \eqref{second-triangle}.

\Case{$l=k+1$} This is analogous to the previous case.
\end{proof}

\begin{remark}
In the case that $r>1$, Propositions~\ref{prop:X-hammocks}, \ref{prop:Y-hammocks} and \ref{prop:Z-hammocks} say that each component of the AR quiver of $\Db(\LLambda)$ is standard, i.e.\ that there are no morphisms in the infinite radical. Note that the components are not standard when $r=1$.
\end{remark}

%%%%%%%%%%%%%%%%%%%%%%%%%%%%%%%%%%%%%%%%%%%%%%%%%%%%%%%%%%%%%%%%%%%%%%%%%%%%%%%%%%%%%%%%%

\section{Twist functors from exceptional cycles} \label{sec:exceptional_cycles}

\noindent
In this purely categorical section, we consider an abstract source of auto\-equivalences coming from exceptional cycles. These generalise the tubular mutations from \cite{Meltzer} as well as spherical twists. In fact, a quite general and categorical construction has been given in \cite{Roosmalen}. However, for our purposes this is still a little bit too special, as the Serre functor will act with different degree shifts on the objects in our exceptional cycles. We also give a quick proof using spanning classes.

Let $\catD$ be a $\kk$-linear, Hom-finite algebraic triangulated category. Assume that $\catD$ has a Serre functor $\SSS$ and is indecomposable; see Appendix~\ref{app:category-properties} for these notions. Recall that an object $E\in\catD$ is called \emph{exceptional} if $\Hom^\bullet(E,E)=\kk\cdot\id_E$. For any object $A\in\catD$ we define the functor
\[ \FFF_A\colon \catD\to\catD, \quad X \mapsto \Hom^\bullet(A,X)\otimes A \]
and note that there is a canonical evaluation morphism $\FFF_A\to\id$ of functors. Also note that for two objects $A_1,A_2\in\catD$ there is a common evaluation morphism $\FFF_{A_1}\oplus\FFF_{A_2}\to\id$. In fact, for any sequence of objects $A_*=(A_1,\ldots,A_n)$, we define the associated \emph{twist functor} $\TTT_{A_*}$ as the cone of the evaluation morphism --- this gives a well-defined, exact functor by our assumption that $\sD$ is algebraic; see \cite[\S2.1]{HKP} for details:
\[ \FFF_{A_*} \to \id_\catD \to \TTT_{A_*} \to \Sigma\FFF_{A_*}
   \qquad\text{with}\qquad
   \FFF_{A_*} \coloneqq \FFF_{A_1} \oplus \cdots \oplus \FFF_{A_n} \]

These functors behave well under equivalences:

\begin{lemma} \label{lem:conjugated_twists}
Let $\varphi\colon \sD\isom\sD'$ be a triangle equivalence of algebraic $\kk$-linear triangulated categories induced from a dg functor, and let $A_*=(A_1,\ldots,A_n)$ be any sequence of objects. Then there are functor isomorphisms
 $\FFF_{\varphi(A_*)} = \varphi\FFF_{A_*}\varphi\inv$ and
 $\TTT_{\varphi(A_*)} = \varphi\TTT_{A_*}\varphi\inv$.
\end{lemma}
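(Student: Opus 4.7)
The plan is to check the first statement by a direct computation on objects and then transfer it to the second statement via the defining triangle of the twist functor. The only subtlety is that the twist is a genuine functor, not merely an object defined up to isomorphism, and this relies on the dg hypothesis.

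First I would handle a single object: for $A\in\sD$ and $X\in\sD'$, unpack
\[
 \varphi\FFF_A\varphi\inv(X) = \varphi\bigl(\Hom^\bullet_\sD(A,\varphi\inv X)\otimes A\bigr)
                             = \Hom^\bullet_\sD(A,\varphi\inv X)\otimes \varphi(A),
\]
using that $\varphi$ is $\kk$-linear and commutes with tensoring by a graded $\kk$-vector space (since it is exact and additive). The equivalence $\varphi$ induces $\Hom^\bullet_\sD(A,\varphi\inv X)\isom \Hom^\bullet_{\sD'}(\varphi A,X)$, so the right-hand side is $\FFF_{\varphi(A)}(X)$. These identifications are natural in $X$, giving an isomorphism $\varphi\FFF_A\varphi\inv \cong \FFF_{\varphi(A)}$ of functors. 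Additivity of $\varphi$ then yields
\[
 \varphi\FFF_{A_*}\varphi\inv = \bigoplus_{i=1}^n \varphi\FFF_{A_i}\varphi\inv
    \cong \bigoplus_{i=1}^n \FFF_{\varphi(A_i)} = \FFF_{\varphi(A_*)},
\]
and these isomorphisms are compatible with the evaluation maps to $\id$.

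For the twist, apply the equivalence $(\varphi,\varphi\inv)$ to the defining triangle
\[
 \FFF_{A_*}\to\id_\sD\to\TTT_{A_*}\to\Sigma\FFF_{A_*}
\]
to obtain a triangle of functors $\sD'\to\sD'$
\[
 \varphi\FFF_{A_*}\varphi\inv \to \id_{\sD'}\to\varphi\TTT_{A_*}\varphi\inv\to\Sigma\varphi\FFF_{A_*}\varphi\inv.
\]
Under the isomorphism of the previous paragraph, the first map is identified with the evaluation morphism $\FFF_{\varphi(A_*)}\to\id_{\sD'}$. Since $\varphi$ is induced from a dg functor, both $\varphi\TTT_{A_*}\varphi\inv$ and $\TTT_{\varphi(A_*)}$ arise as cones of this evaluation morphism in a dg enhancement, hence are canonically isomorphic as exact functors (compare \cite[\S 2.1]{HKP}). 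This gives the desired isomorphism $\varphi\TTT_{A_*}\varphi\inv\cong\TTT_{\varphi(A_*)}$.

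The potentially delicate point is the last step: in a bare triangulated category a cone is only defined up to non-canonical isomorphism, so conjugation need not commute with forming twists in a functorial way. This is precisely why the dg hypothesis is imposed; it ensures that $\FFF_{A_*}\to\id$ and its conjugate admit functorial cones and that the equivalence transports one to the other. Once this point is granted, both claimed isomorphisms follow as above.
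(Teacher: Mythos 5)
Your argument is essentially the paper's own proof: compute $\varphi\FFF_{A_*}\varphi\inv$ directly via the defining formula and the adjunction isomorphism for $\Hom$, then conjugate the evaluation morphism and identify $\varphi\TTT_{A_*}\varphi\inv$ with the cone defining $\TTT_{\varphi(A_*)}$, with the dg hypothesis guaranteeing functorial cones. Your write-up adds helpful detail (treating a single object first, flagging the non-canonicity issue in a bare triangulated category), but the route is the same.
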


\begin{proof}
This follows the standard argument for spherical twists: For $\FFF_{A_*}$ we have
\[ \varphi\FFF_{A_*}\varphi\inv
 = \bigoplus_i\Hom^\bullet(A_i,\varphi\inv(\blank))\otimes\varphi(A_i)
 = \bigoplus_i\Hom^\bullet(\varphi(A_i),\blank)\otimes\varphi(A_i)
 = \FFF_{\varphi(A_*)} .\]
Conjugating the evaluation functor morphism $\FFF_{A_*}\to\id$ with $\varphi$, we find that $\varphi\TTT_{A_*}\varphi\inv$ is the cone of the conjugated evaluation functor morphism $\FFF_{\varphi(A_*)}\to\id$ which is the evaluation morphism for $\varphi(A_*)$. Hence that cone is $\TTT_{\varphi(A_*)}$.
\end{proof}

\begin{definition}
A sequence $(E_1,\ldots,E_n)$ of objects of $\catD$ is an \emph{exceptional $n$-cycle} if
\begin{enumerate}
\item every $E_i$ is an exceptional object,
\item there are integers $k_i$ such that $\SSS(E_i)\cong\Sigma^{k_i}(E_{i+1})$ for all $i$ (where $E_{n+1}\coloneqq E_1$),
\item $\Hom^\bullet(E_i,E_j)=0$ unless $j=i$ or $j=i+1$.
\end{enumerate}
\end{definition}

\noindent
This definition assumes $n\geq2$ but a single object $E$ should be considered an `exceptional 1-cycle' if $E$ is a spherical object, i.e.\ there is an integer $k$ with $\SSS(E)\cong\Sigma^k(E)$ and $\Hom^\bullet(E,E)=\kk\oplus\Sigma^{-k}\kk$. In this light, the above definition, and statement and proof of Theorem~\ref{thm:twist} are generalisations of the treatment of spherical objects and their twist functors as in \cite[\S8]{Huybrechts}.

In an exceptional cycle, the only non-trivial morphisms among the $E_i$ apart from the identities are given by $\alpha_i\colon E_i\to \Sigma^{k_i}E_{i+1}$. This explains the terminology: the subsequence $(E_1,\ldots,E_{n-1})$ is an honest exceptional sequence, but the full set $(E_1,\ldots,E_n)$ is not --- the morphism $\alpha_n\colon E_n\to\Sigma^{k_n}E_1$ prevents it from being one, and instead creates a cycle.

\begin{remark}
All objects in an exceptional $n$-cycle are fractional Calabi--Yau: since $\SSS(E_i)\cong \Sigma^{k_i}E_{i+1}$ for all $i$, applying the Serre functor $n$ times yields $\SSS^n(E_i)\cong\Sigma^k E_i$, where $k\coloneqq k_1+\cdots+k_n$. Thus the Calabi--Yau dimension of each object in the cycle is $k/n$.
\end{remark}

\begin{example}
We mention that this severely restricts the existence of exceptional $n$-cycles of geometric origin: Let $X$ be a smooth, projective variety over $\kk$ of dimension $d$ and let $\catD\coloneqq\Db(\mathsf{coh} X)$ be its bounded derived category. The Serre functor of $\catD$ is given by $\SSS(\blank) = \Sigma^d(\blank)\otimes\omega_X$ and in particular, is given by an autoequivalence of the standard heart followed by an iterated suspension. If $E_*$ is any exceptional $n$-cycle in $\catD$, we find $\SSS^n(E_i)=\Sigma^{dn} E_i\otimes\omega^n_X \cong \Sigma^k E_i$, hence $k=k_1+\cdots+k_n=dn$ and $E_i\otimes\omega_X^n\cong E_i$. If furthermore the exceptional $n$-cycle $E_*$ consists of sheaves, then this forces $k_i=d$ to be maximal for all $i$, as non-zero extensions among sheaves can only exist in degrees between 0 and $d$. However, $\SSS E_i=\Sigma^d E_i\otimes\omega_X\cong \Sigma^d E_{i+1}$ implies $E_{i+1}\cong E_i\otimes\omega_X$ for all $i$.

As an example, let $X$ be an Enriques surface. Its structure sheaf $\ko_X$ is exceptional, and the canonical bundle $\omega_X$ has minimal order 2. In particular, $(\ko_X,\omega_X)$ forms an exceptional 2-cycle and, by the next theorem, gives rise to an autoequivalence of $\Db(X)$.
\end{example}

\begin{theorem} \label{thm:twist}
Let $E_* = (E_1,\ldots,E_n)$ be an exceptional $n$-cycle in $\catD$. Then the twist functor $\TTT_{E_*}$ is an autoequivalence of $\catD$.
\end{theorem}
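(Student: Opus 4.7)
The plan is to invoke Bridgeland's criterion: a $\kk$-linear exact endofunctor of an indecomposable, Hom-finite, algebraic triangulated category with Serre functor is an autoequivalence as soon as it has both adjoints, commutes with the Serre functor, and is fully faithful on a spanning class. Both adjoints to $\TTT_{E_*}$ exist because it is the cone of a morphism of dg functors. Serre compatibility follows directly from \hyref{Lemma}{lem:conjugated_twists}: applying $\SSS$ to $E_*$ produces $(\Sigma^{k_1}E_2,\ldots,\Sigma^{k_n}E_1)$, a cyclic permutation of $E_*$ up to shifts. Since $\FFF_{\Sigma^k A} = \FFF_A$ for any $A$ and any $k$, and a direct sum is insensitive to permutation of summands, we obtain $\SSS\,\TTT_{E_*}\,\SSS^{-1} = \TTT_{\SSS(E_*)} = \TTT_{E_*}$.

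For the spanning class I would take
\[ \Omega \coloneqq \{E_1,\ldots,E_n\} \,\cup\, \{E_1,\ldots,E_n\}^{\perp}, \]
where $\{E_1,\ldots,E_n\}^{\perp}$ consists of objects $A$ with $\Hom^\bullet(E_i,A) = 0$ for every $i$. Checking that $\Omega$ is a spanning class is standard: right-orthogonality to $\Omega$ forces $A \in {}^{\perp}\{E_i\}$, and Serre duality together with $\SSS(E_i) = \Sigma^{k_i}E_{i+1}$ then places $\SSS A \in (\{E_i\}^{\perp})^{\perp}$; indecomposability of $\catD$ forces $A = 0$. Next I would compute $\TTT_{E_*}$ on $\Omega$. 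For $A \in \{E_i\}^{\perp}$, every summand of $\FFF_{E_*}(A)$ vanishes, hence $\TTT_{E_*}(A) \cong A$. For $A = E_j$, exceptionality of $E_j$ and Serre duality show that $\Hom^\bullet(E_i,E_j) \neq 0$ only for $i = j$ (contributing the identity summand, which splits off the cone) and for $i = j-1$ (contributing $\Sigma^{-k_{j-1}}E_{j-1}$, with map to $E_j$ represented by the cycle-creating morphism $\alpha_{j-1}$). Thus $\TTT_{E_*}(E_j)$ is the cone of $\alpha_{j-1}$ up to shift.

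Full faithfulness on $\Omega$ splits into four cases according to whether each argument is one of the $E_i$ or lies in $\{E_i\}^{\perp}$. Two objects in $\{E_i\}^{\perp}$ is tautological. In the mixed cases, applying $\Hom(A,\blank)$ (resp.\ $\Hom(\blank,B)$) to the triangle defining $\TTT_{E_*}(E_j)$ kills the error terms by the perpendicularity hypothesis. The main obstacle is the case $(E_i,E_j)$: applying $\Hom(E_i,\blank)$ to the defining triangle produces a long exact sequence whose boundary map must be identified with the canonical composition with $\alpha_{j-1}$, which by one-dimensionality of the relevant Hom-spaces is an isomorphism. The subcase $j = i+1$ is the most delicate, since here two of the three non-vanishing Hom-spaces among the $E_\ell$ interact through the boundary and one must use the cycle axiom to pin down the sign and scaling. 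Once this is verified, the standard cone-of-unit argument, that the cone of $\id \to \TTT^R_{E_*}\TTT_{E_*}$ is a thick subcategory containing $\Omega$ and hence equals $\catD$, upgrades full faithfulness from $\Omega$ to $\catD$, and Bridgeland's criterion then concludes that $\TTT_{E_*}$ is an autoequivalence.
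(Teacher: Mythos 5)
Your overall architecture matches the paper's: compute $\TTT_{E_*}$ on the $E_i$ and the cycle morphisms, then verify full faithfulness on the spanning class $\Omega = \sE \cup \sE\orth$, then obtain essential surjectivity via Serre compatibility and indecomposability. The Serre-compatibility shortcut you propose (conjugating by $\SSS$ via Lemma~\ref{lem:conjugated_twists} and observing that $\SSS(E_*)$ is a cyclic shuffle of $E_*$ up to suspensions) is neater than the paper's direct check, though it quietly presupposes that $\SSS$ is induced from a dg functor --- a hypothesis that Lemma~\ref{lem:conjugated_twists} requires but the theorem statement does not; the paper sidesteps this by verifying the compatibility natural transformation only on $\Omega$, which is all Huybrechts's Corollary~1.56 needs.

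There is a genuine gap in your final paragraph. The ``cone-of-unit'' argument (the locus where the unit $\id \to \TTT_{E_*}^R\TTT_{E_*}$ is an isomorphism is thick and contains $\Omega$, hence is all of $\catD$) requires that $\Omega$ \emph{thick-generates} $\catD$. But $\Omega$ is only asserted to be a \emph{spanning class}, i.e.\ $\Omega\orth = {}\orth\Omega = 0$, which is a strictly weaker property and does not imply $\thick{}{\Omega} = \catD$. Indeed, in the paper's principal application this fails: by Lemma~\ref{lem:thick_AR-components} and the proof of Proposition~\ref{prop:twist-action}, $\thick{}{E_*} = \cX$ and $\sE\orth = \cY$, so $\thick{}{\Omega} = \cX \oplus \cY$, which omits the $\cZ$ components entirely (and these are essential --- they contain the projective $P(n-r)$ by Lemma~\ref{lem:proj-position}). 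Consequently your argument would only establish full faithfulness on the proper thick subcategory $\cX \oplus \cY$, not on $\catD$. The correct tool is the spanning-class criterion for full faithfulness (Huybrechts, Proposition~1.49), whose proof uses the existence of \emph{both} adjoints together with the orthogonality conditions $\Omega\orth = {}\orth\Omega = 0$, detecting isomorphisms from either side, rather than thick generation. This is exactly what the paper invokes after Step~2 of its proof.
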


\begin{proof}
We define two classes of objects of $\catD$ by $\sE \coloneqq \{ \Sigma^l E_i \mid l\in\IZ, i=1,\ldots,n \}$ and $\Omega \coloneqq \sE \cup \sE\orth$.
Note that $\sE$ and hence $\Omega$ are closed under suspensions and cosuspensions.
It is a simple and standard fact that $\Omega$ is a spanning class for $\catD$, i.e.\ $\Omega\orth=0$ and ${}\orth\Omega=0$; the latter equality depends on the existence of a Serre functor for $\catD$. Note that spanning classes are often called `(weak) generating sets' in the literature.

\Step{1}
We start by computing $\TTT_{E_*}$ on the objects $E_i$ and the maps $\alpha_i$. For notational simplicity, we will treat $E_1$ and $\alpha_1\colon E_1\to\Sigma^{k_1}E_2$. It follows immediately from the definition of exceptional cycle that $\FF_{E_*}(E_1)=E_1\oplus\Sigma^{-k_n}E_n$. The cone of the evaluation morphism is easily seen to sit in the following triangle
\[ \xymatrix@C=5.5em{
         E_1\oplus\Sigma^{-k_n}E_n \ar[r]^-{\id\oplus\Sigma^{-k_n}\alpha_n} & 
         E_1 \ar[r]^-0 & \Sigma^{1-k_n}E_n \ar[r]^-{(-\Sigma^{1-k_n}\alpha_n,\id)^t} &
  \Sigma E_1\oplus\Sigma^{1-k_n}E_n ,
} \]
so that $\TTT_{E_*}(E_1) = \Sigma^{1-k_n}E_n$. The left-hand morphism has an obvious splitting, this implies the zero morphism in the middle. The third map is indeed the one specified above; this can be formally checked with the octahedral axiom, or one can use the vanishing of the composition of two adjacent maps in a triangle.

Likewise, we find $\FF_{E_*}(E_2)=\Sigma^{-k_1}E_1\oplus E_2$ and $\TTT_{E_*}(E_2) = \Sigma^{1-k_1}E_1$.
Now consider the following diagram of distinguished triangles, where the vertical maps are induced by $\alpha_1$:
\[ \xymatrix@C=5em@R=6ex{
   E_1\oplus\Sigma^{-k_n}E_n \ar[r]^-{\id\oplus\Sigma^{-k_n}\alpha_n}  \ar[d]^{\sqmat{\id}{0}{0}{0}}  &
   E_1                      \ar[r]^0                                 \ar[d]^{\alpha_1}            &
   \Sigma^{1-k_n}E_n         \ar[r]^-{(-\Sigma^{1-k_n}\alpha_n,\id)^t}  \ar[d]^{\TTT\:(\alpha_1)}     &
   \Sigma E_1 \oplus \Sigma^{1-k_n}E_n                                \ar[d]^{\sqmat{\id}{0}{0}{0}}
\\
   E_1\oplus\Sigma^{k_1}E_2  \ar[r]^-{\alpha_1\oplus\id}                            &
   \Sigma^{k_1}E_2           \ar[r]^0                                               &
   \Sigma E_1               \ar[r]^-{(\id,-\Sigma\alpha_1)^t}                      &
   \Sigma E_1 \oplus \Sigma^{1+k_1} E_2
} \]
Hence, the commutativity of the right-hand square forces $\TTT_{E_*}(\alpha_1) = -\Sigma^{1-k_n}\alpha_n$.

This also works if $n=2$ and $k_1=-k_2$ (with unchanged left-hand vertical arrow).

\Step{2}
The above computation shows that the functor $\TTT_{E_*}$ is fully faithful when restricted to $\sE$. It is also obvious from the construction of the twist that $\TTT_{E_*}$ is the identity when restricted to $\sE\orth$.

Let $E_i\in\sE$ and $X\in\sE\orth$. Then $\Hom^\bullet(E_i,X)=0$ and also
 $\Hom^\bullet(\TTT_{E_*}(E_i),\TTT_{E_*}(X))=\Hom^\bullet(\Sigma^{1-k_{i-1}}E_{i-1},X)=0$.
Finally, we use Serre duality and the defining property of $E_*$ to see that
\[ \Hom^\bullet(X,E_i) = \Hom^\bullet(X,\Sigma^{-k_{i-1}}\SSS(E_{i-1})) \cong
                        \Hom^\bullet(E_{i-1},\Sigma^{k_{i-1}}X)^* = 0 . \]
Combining all these statements, we deduce that $\TTT_{E_*}$ is fully faithful when restricted to the spanning class $\Omega$, hence bona fide fully faithful by general theory; see e.g.\ \cite[Proposition~1.49]{Huybrechts}. Note that $\TTT_{E_*}$ has left and rights adjoints as the identity and $\FF_{E_*}$ do.

\Step{3}
With $\TTT_{E_*}$ fully faithful, the defining property of Serre functors gives a canonical map of functors $\SSS\inv\TTT_{E_*}\SSS\to\TTT_{E_*}$ which can be spelled out in the following diagram:
\[ \xymatrix{
   \bigoplus_i \Hom^\bullet(E_i,\SSS(\blank))\otimes\SSS\inv(E_i) \ar[r] \ar[d] & \id \ar[r] \ar[d] & \SSS\inv\TTT_{E_*}\SSS \ar[d] \\
   \bigoplus_i \Hom^\bullet(E_i,\blank)      \otimes E_i          \ar[r]        & \id \ar[r]        & \TTT_{E_*}
} \]
It is easy to check that the left-hand vertical arrow is an isomorphism whenever we plug in objects from $\Omega$: both vector spaces are zero for objects from $\sE\orth$; for the top row, use
 $\Hom^\bullet(E_i,\SSS(\blank)) = \Hom^\bullet(\SSS\inv(E_i),\blank) = \Hom^\bullet(\Sigma^{-k_{i-1}}E_{i-1},\blank)$.
For objects $E_i$, again use $\SSS(E_i)\cong\Sigma^{k_i}E_{i+1}$.
Hence $\TTT_{E_*}$ commutes with the Serre functor on $\Omega$, and so by more general theory is essentially surjective; see \cite[Corollary~1.56]{Huybrechts}, this is the place where we need the assumption that $\catD$ is indecomposable.
\end{proof}

\begin{remark}
We point out that the twist $\TTT_{E_*}$ defined above is an instance of a spherical functor \cite{Anno-Logvinenko}, given by the following data:
\[ \begin{array}{rl @{\qquad} rl}
 S \colon & \Db(\kk^n) \to \catD, &
 (V^\bullet_1,\ldots,V^\bullet_n) & \mapsto V^\bullet_1\otimes_\kk E_1 \oplus\cdots \oplus V^\bullet_n\otimes_\kk E_n, \\[1ex]
 R \colon & \catD \to \Db(\kk^n), &
                 X                & \mapsto (\Hom^\bullet(E_1,X),\ldots,\Hom^\bullet(E_n,X))
\end{array} \]
where $\Db(\kk^n)=\bigoplus_n\Db(\kk)$ is a decomposable category. It is easy to see that $R$ is right adjoint to $S$ and that $\TTT_{E_*}$ coincides with the cone of the adjunction morphism $SR\to\id$.
\end{remark}

An object $X\in\catD$ is called \emph{$d$-spherelike} if $\Hom^\bullet(X,X) = \kk \oplus \Sigma^{-d}\kk$; see \cite{HKP} and also Section~\ref{sub:coarse_classification}. We will now show that reasonable exceptionable cycles come with a spherelike object. For this purpose, we call an exceptional cycle $E_* = (E_1,\ldots,E_n)$ \emph{irredundant} if $E_n\notin\thick{}{E_1,\ldots,E_{n-1}}$. Recall that an exceptional $n$-cycle $(E_1,\ldots,E_n)$ comes with a tuple of integers $(k_1,\ldots,k_n)$ and that we have set $k = k_1+\cdots+k_n$.

\begin{proposition} \label{prop:spherelike_top}
Let $E_* = (E_1,\ldots,E_n)$ be an irredundant exceptional $n$-cycle in $\catD$. Then there exists a $(k+1-n)$-spherelike object $X\in\catD$ with non-zero maps $X\to E_1$ and $\Sigma^{n-1-k+k_n}E_n\to X$.
\end{proposition}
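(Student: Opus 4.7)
The plan is to construct $X$ as an iterated extension built from shifts of $E_1, \ldots, E_n$ and then to verify the spherelike property by direct hom-computations.

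Set $X_1 \coloneqq E_1$ and, for $1 \le i < n$, let $s_i \coloneqq k_1 + \cdots + k_i - (i-1)$. Inductively I would define $X_{i+1}$ as the fibre of a morphism $\gamma_i\colon X_i \to \Sigma^{s_i} E_{i+1}$, yielding the triangle
\[ \Sigma^{s_i - 1} E_{i+1} \to X_{i+1} \to X_i \xrightarrow{\gamma_i} \Sigma^{s_i} E_{i+1}. \]
Existence and essential uniqueness of $\gamma_i$ rest on a parallel induction showing $\Hom^\bullet(X_i, E_{i+1}) \cong \kk$, concentrated in degree $s_i$: the inductive step applies $\Hom(-, E_{i+1})$ to the defining triangle for $X_i$ and exploits the vanishings $\Hom^\bullet(E_j, E_{i+1}) = 0$ for $j < i$ that follow from axiom (3) of exceptional cycle.

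Setting $X \coloneqq X_n$, the required map $X \to E_1$ arises by composing the structural maps $X_n \to X_{n-1} \to \cdots \to X_1 = E_1$, and the first arrow of the last triangle provides a non-zero morphism $\Sigma^{s_{n-1} - 1} E_n \to X$ (matching, up to cycle-orientation conventions, the shift $n-1-k+k_n$ via the identity $s_{n-1} - 1 = k - k_n - (n-1)$). For the spherelike property one shows
\[ \Hom^\bullet(X, E_j) = 0 \text{ for } 2 \le j \le n, \qquad \text{and} \qquad \Hom^\bullet(X, E_1) = \kk \oplus \Sigma^{-(k+1-n)}\kk. \]
Both statements follow by further induction on the defining triangles: the degree-$0$ class in $\Hom^\bullet(X, E_1)$ is the structural map, while the top-degree class is the Serre-dual of a composition involving the cycle-closing morphism $\alpha_n \in \Hom^{k_n}(E_n, E_1)$. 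Applying $\Hom(X, -)$ to the defining triangles and using the first vanishing then collapses $\Hom^\bullet(X, X)$ onto $\Hom^\bullet(X, E_1)$.

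The main obstacle is controlling the connecting homomorphisms in the iterated long exact sequences. The intermediate objects $X_i$ ($i < n$) carry partial spherelike-type Ext classes that a priori threaten to contribute spurious summands to $\Hom^\bullet(X, E_1)$ in degrees $s_1, \ldots, s_{n-2}$, and verifying that these are killed requires careful bookkeeping tied to the specific choices of $\gamma_i$ (in particular, the connecting maps are precisely given by pre-composition with $\gamma_i$). The irredundancy hypothesis enters at the closing step: if $E_n$ lay in $\thick{}{E_1, \ldots, E_{n-1}}$ the final extension would degenerate and the top-degree class in $\Hom^\bullet(X, E_1)$ would vanish, so that $X$ would fail to be spherelike.
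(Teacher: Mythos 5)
Your construction is the same one the paper uses: set $X_1 = E_1$ and iteratively take fibres of the canonical maps $\gamma_i \colon X_i \to \Sigma^{s_i}E_{i+1}$, with $s_i$ equal to what the paper calls $l_i = k_1+\cdots+k_i+1-i$, then show the endomorphism complex of $X = X_n$ collapses onto $\Hom^\bullet(X,E_1)$ and invoke irredundancy to conclude that the top-degree class survives. Two remarks about where your sketch diverges from, or improves on, the paper's treatment. First, your worry about ``partial spherelike-type Ext classes'' in the intermediate $X_i$ contributing spurious summands is unfounded: the paper proves, as part of the induction, that each $X_i$ for $i<n$ is \emph{exceptional} (by observing that $X_{i+1}$ is, up to shift, the left mutation of the exceptional pair $(X_i,E_{i+1})$); and even in your version, which does not establish this, the collapse is already clean because $\Hom^\bullet(E_j,E_1)=0$ for $2\le j\le n-1$ so nothing in degrees $s_1,\ldots,s_{n-2}$ can contribute to $\Hom^\bullet(X,E_1)$. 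In fact, by pushing the collapse all the way to $\Hom^\bullet(X,E_1)$ rather than stopping at $\Hom^\bullet(X,X_{n-1})$, your argument manages to bypass the exceptionality of the intermediate objects entirely, whereas the paper's final $3\times3$ diagram does use $\Hom^\bullet(X_{n-1},X_{n-1})=\kk$. Second, the appeal to Serre duality for the top-degree class is unnecessary (and the paper makes no such appeal); that class is produced directly by the connecting map of the last triangle, whose non-vanishing is exactly what irredundancy guarantees --- your diagnosis of where irredundancy enters is correct. You should, however, actually prove the vanishings $\Hom^\bullet(X,E_j)=0$ for $2\le j\le n$: the key point, which you allude to but do not verify, is that the precomposition map $\Hom^\bullet(\Sigma^{s_{j-1}}E_j,E_j)\to\Hom^\bullet(X_{j-1},E_j)$ is an isomorphism because both are one-dimensional and $\mathrm{id}\circ\gamma_{j-1}=\gamma_{j-1}\neq0$.
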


\begin{proof}
Inductively, we construct a series of objects $X_1,\ldots,X_n$ with the following properties for $i<n$:
\begin{enumerate}[label = (\roman*)]
\item $X_i$ is exceptional,
% \item $\Hom^\bullet(E_{i+1},X_i) = 0$,
\item $X_i \in \thick{}{E_1,\ldots,E_i}$,
\item $\Hom^\bullet(X_i,E_{i+1}) = \Sigma^{-l_i}\kk$ with $l_i \coloneqq k_1+\cdots+k_i + 1-i$.
\end{enumerate}
These conditions are satisfied for $X_1\coloneqq E_1$, because $\Hom^\bullet(E_1,E_2)$ is generated by $\alpha_1\colon E_1\to\Sigma^{k_1}E_2$. With $X_i$ already constructed, by (iii) there is a unique object $X_{i+1}$ with a non-split distinguished triangle
\[ X_{i+1} \to X_i \to \Sigma^{l_i} E_{i+1} \to \Sigma X_{i+1} . \]
Moreover, $(X_i,E_{i+1})$ is an exceptional pair with just one (graded) morphism by (ii) and (iii). Hence in the above triangle, the object $X_{i+1}$ is, up to suspension, the left mutation of that pair. In particular, $X_{i+1}$ is exceptional. By construction, $X_{i+1}$ satisfies (ii).

If $i+1<n$, then $\Hom^\bullet(X_i,E_{i+2})=0$ by (ii) and the definition of exceptional cycles, hence
 $\Hom^\bullet(X_{i+1},E_{i+2}) = \Hom^\bullet(\Sigma^{l_i-1}E_{i+1},E_{i+2})$. As $\alpha_{i+1}\colon E_{i+1}\to\Sigma^{k_{i+1}}E_{i+2}$ generates $\Hom^\bullet(E_{i+1},E_{i+2})$, we find that $\Hom^\bullet(X_{i+1},E_{i+2})$ is 1-dimensional, and situated in degree
 $l_i+k_{i+1} - 1 = l_{i+1}$.

Having constructed $X_{n-1}$ in this fashion, we can use (iii) to define
\[ X_n \to X_{n-1} \to \Sigma^{l_{n-1}} E_n \to \Sigma X_n . \]

This triangle induces a commutative diagram of complexes of $\kk$-vector spaces
\[ \xymatrix{
\Hom^\bullet(              X_n, X_n)  \ar[r]        & \Hom^\bullet(               X_n, X_{n-1}) \ar[r]           & \Hom^\bullet(X_n,              \Sigma^{l_{n-1}}E_n)       \\
\Hom^\bullet(           X_{n-1}, X_n)  \ar[r] \ar[u] & \Hom^\bullet(           X_{n-1}, X_{n-1})  \ar[r] \ar[u]   & \Hom^\bullet(X_{n-1},           \Sigma^{l_{n-1}}E_n) \ar[u] \\
\Hom^\bullet(\Sigma^{l_{n-1}}E_n, X_n)  \ar[r] \ar[u] & \Hom^\bullet(\Sigma^{l_{n-1}}E_n, X_{n-1})  \ar[r] \ar[u]^g & \Hom^\bullet(\Sigma^{l_{n-1}}E_n, \Sigma^{l_{n-1}}E_n) \ar[u]_f
} \]
and we know that $\Hom^\bullet(X_{n-1}, X_{n-1}) = \Hom^\bullet(\Sigma^{l_{n-1}}E_n,\Sigma^{l_{n-1}}E_n) = \kk$, since $X_{n-1}$ and $E_n$ are exceptional.
Moreover, we get $\Hom^\bullet(X_{n-1}, \Sigma^{l_{n-1}}E_n) = \kk$ from applying $\Hom^\bullet(\blank,E_n)$ to the triangle defining $X_{n-1}$ and using $X_{n-2}\in\clext{E_1,,\ldots,E_{n-2}}$, none of which map to $E_n$. In particular, the map $f$ sends the identity to the morphism $X_{n-1} \to \Sigma^{l_{n-1}}E_n$ defining $X_n$. Hence $f$ is an isomorphism, thus $\Hom^\bullet(X_n, \Sigma^{l_{n-1}}E_n) = 0$ and we arrive at the isomorphism $\Hom^\bullet(X_n, X_n) \isom \Hom^\bullet(X_n, X_{n-1})$.

We turn to $\Hom^\bullet(\Sigma^{l_{n-1}}E_n, X_{n-1})$. By (ii) and $\Hom^\bullet(E_n,E_i)=0$ for $1<i<n$,
\[ \Hom^\bullet(\Sigma^{l_{n-1}}E_n, X_{n-1}) =  \Hom^\bullet(\Sigma^{l_{n-1}}E_n, X_{n-2}) = \cdots =  \Hom^\bullet(\Sigma^{l_{n-1}}E_n, X_1) = \Sigma^{n-2-k}\kk , \]
where $k = k_1 + \cdots + k_n$ as before. Now $g$ is a map of two 1-dimensional complexes. This map cannot be an isomorphism, because this would force $\Hom^\bullet(X_n,X_n)=0$, hence $X_n=0$ but we have $X_{n-1}\in\thick{}{E_1,\ldots,E_{n-1}}$ by (ii) and also $E_n\notin\thick{}{E_1,\ldots,E_{n-1}}$ as $E_*$ is irredundant. Therefore we find
\[ \Hom^\bullet(X_n, X_n) \cong \Hom^\bullet(X_n, X_{n-1}) \cong \kk \oplus \Sigma^{n-1-k}\kk . \]
Hence $X\coloneqq X_n$ is indeed $(k+1-n)$-spherelike. The degrees of non-zero maps in $\Hom^\bullet(E_n,X)$ and $\Hom^\bullet(X,E_1)$ are computed with the same methods as above.
\end{proof}

\begin{example}
The additional hypothesis on $E_*$ is necessary: consider $\sD = \Db(\kk A_3)$ for the $A_3$-quiver $1\to2\to3$.
Denoting the injective-projective module by $M=P(1)=I(3)$, the sequence $E_*=(S(1),S(2),S(3),M)$ is an exceptional cycle with $k_*=(1,1,0,0)$. The cycle is redundant because of $M\in\thick{}{S(1),S(2),S(3)}$; note that $(S(1),S(2),S(3))$ is a full exceptional collection for $\sD$.

Following the iterative construction of the above proof, we get $X_1 = S(1)$, $X_2=I(2)$ and $X_3=M$. This forces $X=X_4=0$, and we do not get a spherelike object in this case. Note that $E_*$ still gives a twist autoequivalence, which for this example is just $\TTT_{E_*}=\tau\inv$.
\end{example}

%%%%%%%%%%%%%%%%%%%%%%%%%%%%%%%%%%%%%%%%%%%%%%%%%%%%%%%%%%%%%%%%%%%%%%%%%%%%%%%%%%%%%%%%%

\section{Autoequivalence groups of discrete derived categories}
\label{sec:auteq}

\noindent
We now use the general machinery of the previous section to show that categories $\Db(\LLambda)$ possess two very interesting and useful autoequivalences. We will denote these by $\TTT_\cX$ and $\TTT_\cY$ and prove some crucial properties: they commute with each other, act transitively on the indecomposables of each $\cZ^k$ component and provide a weak factorisation of the Auslander--Reiten translation: $\TTT_\cX\TTT_\cY = \tau\inv$ on objects. Moreover, $\TTT_\cX$ acts trivially on $\cY$ and $\TTT_\cY$ acts trivially on $\cX$; see Proposition~\ref{prop:twist-action} and Corollary~\ref{cor:twist-action} for the precise assertions. We then give an explicit description of the group of autoequivalences of $\Db(\LLambda)$ in Theorem~\ref{thm:auteq}.

The category $\catD = \Db(\LLambda)$ with $n>r$ is Hom-finite, indecomposable, algebraic and has Serre duality (see Appendix~\ref{app:category-properties}). Therefore we can apply the results of the previous section to $\catD$.

Our first observation is that every sequence of $m+r$ consecutive objects at the mouth of $\cX^0$ is an exceptional $(m+r)$-cycle; likewise, every sequence of $n-r$ consecutive objects at the mouth of $\cY^0$ is an exceptional $(n-r)$-cycle, by which we mean a $(r+1)$-spherical object in case $n-r=1$.
For the moment, we specify two concrete sequences:
\begin{align*}
E_* = (E_1,\ldots,E_{m+r}) &\coloneqq (X^0_{m+r,m+r}, \ldots, X^0_{11}) , && \text{i.e.\ } E_i = X^0_{m+r+1-i,m+r+1-i} , \\
F_* = (F_1,\ldots,F_{n-r}) &\coloneqq (Y^0_{n-r,n-r}, \ldots, Y^0_{11}) , && \text{i.e.\ } F_i = Y^0_{n-r+1-i,n-r+1-i} .
\end{align*}

\begin{lemma} \label{lem:exceptional-cycle}
$E_*$ forms an exceptional $(m+r)$-cycle in $\catD$ with $k_*=(1,\ldots,1,1-r)$, and $F_*$ forms an exceptional $(n-r)$-cycle in $\catD$ with $k_*=(1,\ldots,1,1+r)$.
\end{lemma}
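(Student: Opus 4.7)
The plan is to verify the three defining conditions of an exceptional cycle for each of the two sequences. Both $E_*$ and $F_*$ consist of mouth objects, so Lemma~\ref{lem:mouth_hammocks} directly controls all the Hom-spaces involved, and Properties~\ref{properties}(2)--(3) control the action of $\Sigma$ and $\tau$ on coordinates and component indices. The three conditions split naturally into (a) exceptionality of each individual $E_i$, (b) the Serre-shift identities $\SSS E_i \cong \Sigma^{k_i} E_{i+1}$, and (c) vanishing of all other Hom complexes.

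For exceptionality of $E_i = X^0_{a,a}$ with $a = m+r+1-i$, note that $\Sigma^s E_i$ lies in $\cX^{s \bmod r}$ with first coordinate $a + \lfloor s/r\rfloor(r+m)$ by iterating Properties~\ref{properties}(3). Lemma~\ref{lem:mouth_hammocks} says the Hom-hammock out of $X^0_{a,a}$ meets the $\cX$-components only in $\rayfrom{X^0_{a,a}} \subset \cX^0$ and $\corayto{\SSS X^0_{a,a}} \subset \cX^1$; checking coordinates shows neither set contains $\Sigma^s E_i$ unless $s = 0$, so $\Hom^\bullet(E_i, E_i) = \kk$. The argument for $F_i$ is entirely parallel, using the $\cY$-hammock of Lemma~\ref{lem:mouth_hammocks} instead. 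For the Serre condition on $E_*$, Properties~\ref{properties}(2) gives $\SSS E_i = \Sigma \tau X^0_{m+r+1-i,m+r+1-i} = \Sigma X^0_{m+r-i,m+r-i} = \Sigma E_{i+1}$ for $1 \leq i < m+r$, yielding $k_i = 1$. For the wrap-around term $i = m+r$, one rewrites $X^0_{0,0} = \tau^{m+r} X^0_{m+r,m+r} = \Sigma^{-r} E_1$ using the identity $\Sigma^r|_\cX = \tau^{-m-r}$ from Properties~\ref{properties}(3); this gives $\SSS E_{m+r} = \Sigma^{1-r} E_1$, so $k_{m+r} = 1-r$. The same computation for $F_*$ uses instead $\Sigma^r|_\cY = \tau^{n-r}$, so $\tau^{n-r} Y^0_{n-r,n-r} = \Sigma^r F_1$, producing $k_{n-r} = 1+r$ and explaining the asymmetry between the two tuples.

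For vanishing, fix $i \neq j$ and compute $\Hom^\bullet(E_i,E_j) = \bigoplus_s \Hom(E_i,\Sigma^s E_j)$. By Lemma~\ref{lem:mouth_hammocks}, a nonzero summand forces $\Sigma^s E_j \in \cX^0$ or $\cX^1$, hence $s \equiv 0$ or $1 \pmod r$. Writing $s = qr$ in the first case and $s = qr+1$ in the second, the membership condition in $\rayfrom{X^0_{a,a}}$ (respectively $\corayto{X^1_{a-1,a-1}}$) reduces to $q(r+m) = j-i$ (respectively $q(r+m) = j-i-1$). Since $|j-i| \leq m+r-1 < r+m$, the first equation forces $q = 0, j = i$ (excluded), while the second forces either $q = 0, j = i+1$, or the wrap-around solution $q = -1, (i,j) = (m+r,1)$. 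These are precisely the adjacent pair and the cycle-closing edge, confirming condition (iii). The same scheme, with $\cY$- and $Z^k$-hammocks substituted, handles $F_*$.

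The one genuinely delicate point is the bookkeeping in the wrap-around: both the shift $k_{m+r} = 1-r$ (resp.\ $k_{n-r} = 1+r$) and the existence of the closing morphism $\Hom^{1-r}(E_{m+r},E_1) = \kk$ (resp.\ $\Hom^{1+r}(F_{n-r},F_1)=\kk$) come from correctly combining $\Sigma^r|_\cX = \tau^{-m-r}$ (resp.\ $\Sigma^r|_\cY = \tau^{n-r}$) with the translation action. Everything else is a routine diagram chase through the list in Properties~\ref{properties}, once the hammock description of Lemma~\ref{lem:mouth_hammocks} is in hand.
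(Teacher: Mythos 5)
Your proof is correct and follows essentially the same approach as the paper: both verify the three defining conditions of an exceptional cycle by combining Lemma~\ref{lem:mouth_hammocks} with the coordinate formulas in Properties~\ref{properties}(2)--(3). The main difference is one of granularity — the paper treats exceptionality and the Hom-vanishing as immediate consequences of Lemma~\ref{lem:mouth_hammocks} (using that $\tau$ is an autoequivalence to spread exceptionality from $X^0_{11}$ to every mouth object), whereas you spell out the coordinate arithmetic showing $\Sigma^s E_j$ lands in the hammock of $E_i$ only when $s=0, j=i$ or $s=1, j=i+1$ or the wrap-around $s=1-r,\,(i,j)=(m+r,1)$; these details are genuinely worth recording but do not constitute a different route.
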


\begin{proof}
The object $X^0_{11}$ is exceptional by Lemma~\ref{lem:mouth_hammocks}, hence any object at the mouth $X^0_{ii}=\tau^{1-i}(X^0_{11})$ is.
This point also gives the second condition of exceptional cycles: for $i=1,\ldots,m+r-1$, we have
 $\SSS E_i = \Sigma\tau X^0_{m+r+1-i,m+r+1-i} = \Sigma X^0_{m+r-i,m+r-i} = \Sigma E_{i+1}$ and at the boundary step we have
 $\SSS E_{m+r} = \Sigma\tau X^0_{11} = \Sigma X^0_{00} = \Sigma^{1-r} X^0_{m+r,m+r} = \Sigma^{1-r} E_1$,
where we freely make use of the results stated in Section~\ref{sec:AR-quiver}.
Hence the degree shifts of the sequence $E_*$ are $k_1=\ldots=k_{m+r-1}=1$ and $k_{m+r}=1-r$. Finally, the required vanishing $\Hom(E_i,E_j)=0$ unless $j=i+1$ or $i=j$ again follows from Lemma~\ref{lem:mouth_hammocks}.

The same reasoning works for $\cY$, now with the boundary step degree computation
 $\SSS F_{n-r} = \Sigma\tau Y^0_{11} = \Sigma Y^0_{00} = \Sigma^{1+r} Y^0_{n-r,n-r} = \Sigma^{1+r} F_1$.
\end{proof}

The next lemma shows that the functors $\FF_{E_*}$ and $\FF_{F_*}$ of the last section take on a particularly simple form, where we use the notation ${}_X,X_0,{}^0Y,Y^0$ from Sections~\ref{sub:hammocks_X},\ref{sub:hammocks_Y}:

\begin{lemma} \label{lem:F-functor}
For $X\in\ind{\cX}$ and $Y\in\ind{\cY}$,
\[ \FF_{E_*}(X) = {}_0X \oplus \SSS\inv X_0 , \qquad
   \FF_{F_*}(Y) = {}^0Y \oplus \SSS\inv X^0 . \]
\end{lemma}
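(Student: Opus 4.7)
The plan is to unwind $\FF_{E_*}$ into a sum indexed by the mouth objects of the $\cX$-components and then read off the answer directly from Lemma~\ref{lem:mouth_hammocks}. First I will rewrite
\[
\FF_{E_*}(X) \;=\; \bigoplus_{i=1}^{m+r} \Hom^\bullet(E_i, X) \otimes E_i \;=\; \bigoplus_{p\in\IZ}\bigoplus_{i=1}^{m+r} \Hom(\Sigma^{-p}E_i, X) \otimes \Sigma^{-p} E_i ,
\]
reindexing so that each summand corresponds to a genuine object $F = \Sigma^{-p}E_i$ rather than a graded piece of a Hom-space.

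The crucial combinatorial observation is that the family $\{\Sigma^{-p} E_i \mid i\in\{1,\ldots,m+r\},\; p\in\IZ\}$ is precisely the set of all indecomposable objects lying on the mouth of some $\cX^k$, with no repetitions. This follows from Properties~\ref{properties}(3): suspension cyclically permutes $\cX^0,\ldots,\cX^{r-1}$ and the first return $\Sigma^r$ acts on the mouth of $\cX^0$ as $\tau^{-m-r}$, so the $m+r$ consecutive mouth objects $E_1,\ldots,E_{m+r}$ of $\cX^0$ form a fundamental domain for the $\Sigma$-action on the entire $\cX$-mouth. I expect this bookkeeping to be the most delicate point of the argument.

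Once this bijection is in place, for fixed $X\in\ind{\cX}$ I will invoke Lemma~\ref{lem:mouth_hammocks} to identify the mouth objects $F$ of $\cX$ for which $\Hom(F, X)\neq 0$. Since $X\notin\cZ$, the only surviving possibilities are $X\in\rayfrom{F}$ or $X\in\corayto{\SSS F}$; these force $F = {}_0X$ and $F = \SSS\inv X_0$ respectively, each contributing a one-dimensional Hom-space. Substituting into the reindexed formula yields $\FF_{E_*}(X) = {}_0X \oplus \SSS\inv X_0$ as claimed. A minor subtlety to check is that when $r=1$ and $X = X^0_{i,i+m}$ the two summands coincide (both equal $X^0_{ii}$); the formula remains correct because Lemma~\ref{lem:mouth_hammocks} tells us that in precisely this case the relevant Hom-space has dimension $2$, which reproduces the multiplicity on the right-hand side.

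The computation for $\FF_{F_*}(Y)$ proceeds along the same lines with corays in place of rays: the $F_j$'s form a fundamental domain for the $\Sigma$-action on the $\cY$-mouth (now via $\Sigma^r|_{\cY} = \tau^{n-r}$, shifting by $n-r$ positions), and Lemma~\ref{lem:mouth_hammocks} picks out exactly two mouth objects of $\cY$ with non-trivial morphism to $Y$, namely ${}^0Y$ and $\SSS\inv Y^0$, each contributing $\kk$.
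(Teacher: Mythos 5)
Your proof is correct and unwinds exactly what the paper's one-line proof gestures at: apply the definition of $\FF_{E_*}$, use the mouth-hammock computations, and use $\Sigma^r|_\cX = \tau^{-m-r}$ (resp.\ $\Sigma^r|_\cY = \tau^{n-r}$) from Properties~\ref{properties}(3) to see that the $\Sigma$-translates of the $E_i$ enumerate the $\cX$-mouths without repetition. The reindexed direct sum over all mouth objects is a clean way to organise the same computation, and your handling of the $r=1$ degeneracy (where ${}_0X = \SSS\inv X_0$ is compensated by the $\kk^2$ in Lemma~\ref{lem:mouth_hammocks}) is exactly the right check.
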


\begin{proof}
This follows immediately from the definition of these functors in Section~\ref{sec:exceptional_cycles}, Proposition~\ref{prop:X-hammocks} and Properties~\ref{properties}(3), i.e.\ $\Sigma^r|_\cX = \tau^{-m-r}$ and $\Sigma^r|_\cY = \tau^{n-r}$ on objects.

Note that the right-hand sides extend to direct sums.
%A morphism $A\to B$ in $\cX$ induces ${}_0A\to {}_0B$ on those indecomposable summands which lay on the same ray (and zero else).
Another description of $\FF_{E_*}(X)$ is as the minimal approximation of $X$ with respect to the mouth of $\cX^0$, and analogously for $\FF_{F_*}$.
\end{proof}

The actual choice of exceptional cycle is not relevant as the following easy lemma shows. We only state it for $E_*$ but the similar statement holds for $F_*$, with the same proof. This allows us to write $\TTT_\cX$ instead of $\TTT_{E_*}$ and $\TTT_\cY$ instead of $\TTT_{F_*}$.

\begin{lemma} \label{lem:invariance_of_twists}
Any two exceptional cycles $E_*, E'_*$ at the mouths of $\cX$ components differ by suspensions and AR translations, and the associated twist functors coincide: $\TTT_{E_*} = \TTT_{E'_*}$.
\end{lemma}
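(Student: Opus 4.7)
The plan is first to classify exceptional cycles at mouths of $\cX$ components, and then to check the twist functor is invariant under the operations relating any two of them. For the classification, I would invoke Lemma~\ref{lem:mouth_hammocks}: since $\SSS = \Sigma\tau$, the defining relation $\SSS E'_i \cong \Sigma^{k'_i}E'_{i+1}$ is equivalent to $E'_{i+1} \cong \Sigma^{1-k'_i}\tau E'_i$, so iteration gives $E'_i \cong \Sigma^{c_i}\tau^{i-1}E'_1$ for some integers $c_i$. The cycle-closing constraint, combined with the identity $\tau^{m+r}|_\cX = \Sigma^{-r}$ from Properties~\ref{properties}(3), forces the length to be exactly $m+r$, matching that of $E_*$. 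Consequently, any such cycle $E'_*$ is obtained from $E_*$ by a global shift $\Sigma^a\tau^b$ (moving $E'_1$ into position), a cyclic rotation, and individual suspensions of the entries.

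For the invariance step, I would verify that $\FFF_{E_*} = \bigoplus_i \FFF_{E_i}$ together with its evaluation morphism to $\id$ is stable under each of these operations, whence so is its cone $\TTT_{E_*}$. Cyclic rotation merely reorders the direct summands, and the evaluation morphism is symmetric. Individual suspension uses the natural identification
\[
\Hom^\bullet(\Sigma^k A, \blank)\otimes \Sigma^k A \;=\; \Sigma^{-k}\Hom^\bullet(A,\blank)\otimes\Sigma^k A \;\cong\; \Hom^\bullet(A,\blank)\otimes A,
\]
under which the evaluations agree. A global suspension is then a special case. For a global AR translate, the relations $\tau E_i = E_{i+1}$ for $i<m+r$ and $\tau E_{m+r} = \Sigma^{-r}E_1$ show that $\tau E_*$ is literally a cyclic rotation of $E_*$ twisted by one boundary suspension, so invariance reduces to the two preceding cases.

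The main (mild) obstacle is simply careful bookkeeping of suspensions when passing between $\tau E_*$ and a cyclic rotation of $E_*$; this relies crucially on the wrap-around identity $\tau^{m+r}|_\cX = \Sigma^{-r}$ coming from Properties~\ref{properties}(3), which is precisely what makes $E_*$ close into an honest cycle of length $m+r$ rather than a non-periodic sequence.
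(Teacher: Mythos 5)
Your proposal is correct, and it actually proceeds somewhat differently from the paper. The paper's own proof is short: it asserts that a global suspension moves $E'_*$ into the same $\cX$ component as $E_*$, that two cycles at the mouth of a single component differ by a power of $\tau$ (so $E'_*=\Sigma^a\tau^b E_*$), and then applies Lemma~\ref{lem:conjugated_twists} together with the general fact that $\Sigma$ and $\tau$ commute with all autoequivalences to get $\TTT_{E'_*}=\Sigma^a\tau^b\,\TTT_{E_*}\,\Sigma^{-a}\tau^{-b}=\TTT_{E_*}$. You instead classify the relevant moves more finely --- cyclic rotation and independent suspensions of the individual $E_i$ in addition to a global $\Sigma^a\tau^b$ --- and then verify directly that the functor $\FFF_{E_*}$ together with its evaluation morphism to $\id$ is unchanged by each move, so the cone $\TTT_{E_*}$ is too. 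This is more elementary (it doesn't need the conjugation Lemma~\ref{lem:conjugated_twists}) and in fact tidier: the paper's phrase ``obviously differ by some power of the AR translation'' glosses over exactly the two extra degrees of freedom you handle explicitly, since an exceptional cycle at the mouths can be re-suspended entry by entry without leaving the class of exceptional cycles at mouths, and can be cyclically rotated. Both operations fix $\TTT_{E_*}$ by your argument, so the conclusion is the same; but your route makes the ``differ by suspensions and AR translations'' clause in the statement match what is actually proved.

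One small point worth tightening in your classification step: concluding that the length must be exactly $m+r$ uses not only the cycle-closing relation $E_{n+1}\cong E_1$ together with the wrap-around $\Sigma^r|_\cX=\tau^{-m-r}$, but also the Hom-vanishing condition (3) in the definition of an exceptional cycle. The closing relation alone only forces $n$ to be a multiple of $m+r$; ruling out proper multiples requires noting that any cycle of length $c(m+r)$ with $c\geq 2$ would contain two entries $E_i$, $E_j$ with $|i-j|\geq 2$ that agree up to suspension, contradicting $\Hom^\bullet(E_i,E_j)=0$.
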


\begin{proof}
A suitable iterated suspension will move $E'_*$ into the $\cX$ component that $E_*$ inhabits, and two exceptional cycles at the mouth of the same AR component obviously differ by some power of the AR translation. Thus we can write $E'_* = \Sigma^a\tau^b E_*$ for some $a,b\in\IZ$. We point out that the suspension and the AR translation commute with all autoequivalences (it is a general and easy fact that the Serre functor does, see \cite[Lemma~1.30]{Huybrechts}). Finally, we have $\TTT_{E'_*} = \TTT_{\Sigma^a\tau^bE_*} = \Sigma^a\tau^b\TTT_{E_*}\Sigma^{-a}\tau^{-b} = \TTT_{E_*}$, using Lemma~\ref{lem:conjugated_twists}.
\end{proof}

\begin{proposition} \label{prop:twist-action}
The twist functors $\TTT_\cX$ and $\TTT_\cY$ act as follows on objects of $\Db(\Lambda)$, where $X\in\cX, Y\in\cY$ and $k=0,\ldots,r-1$ and $i,j\in\IZ$:
\begin{align*}
\TTT_\cX(X) &= \tau^{-1}(X), & \TTT_\cX(Y) &= Y,      & \TTT_\cX(Z^k_{i,j}) &= Z^k_{i+1,j} , \\
\TTT_\cY(Y) &= \tau^{-1}(Y), & \TTT_\cY(X) &= X,      & \TTT_\cY(Z^k_{i,j}) &= Z^k_{i,j+1} .
\end{align*}
\end{proposition}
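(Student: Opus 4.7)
The defining triangle $\FF_{E_*}(A)\to A\to \TTT_\cX(A)\to\Sigma\FF_{E_*}(A)$ reduces the proposition to identifying $\FF_{E_*}$ on each indecomposable and reading off the cone. I will focus on $\TTT_\cX$; the claims for $\TTT_\cY$ follow from the symmetric argument with $F_*$ in place of $E_*$.

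The case $A\in\cY$ is immediate: the vanishing $\Hom(\cX,\cY)=0$ from the remaining cases of Proposition~\ref{prop:X-hammocks} persists under all suspensions because $\Sigma^l Y$ stays in $\cY$ by Properties~\ref{properties}(3), so $\FF_{E_*}(Y)=0$ and hence $\TTT_\cX(Y)=Y$.

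The case $A\in\cX$ proceeds in two steps. First, Step~1 of the proof of Theorem~\ref{thm:twist}, applied to our cycle $E_*$ with degree shifts $(1,\ldots,1,1-r)$ from Lemma~\ref{lem:exceptional-cycle} and using $\Sigma^r|_\cX=\tau^{-m-r}$ from Properties~\ref{properties}(3), computes
\[ \TTT_\cX(E_1)=\Sigma^{1-(1-r)}E_{m+r}=\Sigma^r X^0_{1,1}=\tau^{-m-r}X^0_{1,1}=\tau^{-1}E_1. \]
Since autoequivalences commute with $\Sigma$ and $\tau$, and since by Lemma~\ref{lem:invariance_of_twists} the same computation may be carried out using any exceptional cycle at the mouth of any $\cX^k$, we deduce $\TTT_\cX(X^k_{i,i})=\tau^{-1}X^k_{i,i}$ for every mouth object. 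Second, being an autoequivalence, $\TTT_\cX$ permutes AR components, so the mouth computation forces $\TTT_\cX(\cX^k)=\cX^k$; its restriction to $\cX^k$ is then a translation-quiver automorphism of $\IZ A_\infty$, and since $\Aut(\IZ A_\infty)\cong\IZ$ is infinite cyclic generated by $\tau$, it is determined by its value on a single vertex, yielding $\TTT_\cX|_{\cX^k}=\tau^{-1}$.

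The case $A=Z^k_{i,j}\in\cZ$ uses Lemma~\ref{lem:invariance_of_twists} to replace $E_*$ by an exceptional cycle $E'_*$ at the mouth of $\cX^k$ containing $X^k_{i,i}$. A check using Lemma~\ref{lem:mouth_hammocks} together with the $\Sigma^r$-action on $\cZ$ from Properties~\ref{properties}(3) shows that the only non-vanishing $\Hom^\bullet(E'_s, Z^k_{i,j})$ is $\Hom(X^k_{i,i}, Z^k_{i,j})=\kk$ in degree zero, so $\FF_{E'_*}(Z^k_{i,j})=X^k_{i,i}$. Comparing the resulting twist triangle $X^k_{i,i}\to Z^k_{i,j}\to \TTT_\cX(Z^k_{i,j})\to \Sigma X^k_{i,i}$ with the distinguished triangle $X^k_{i,i}\to Z^k_{i,j}\to Z^k_{i+1,j}\to \Sigma X^k_{i,i}$ of Properties~\ref{properties}(4) at $d=0$, and using that both left-hand maps live in the one-dimensional space $\Hom(X^k_{i,i},Z^k_{i,j})$, identifies $\TTT_\cX(Z^k_{i,j})=Z^k_{i+1,j}$. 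The main obstacle throughout is the bookkeeping in the $\cZ$ case, in particular ruling out contributions from shifts $\Sigma^{cr}Z^k_{i,j}$ with $c\ne 0$; this is controlled by the $m+r$-periodicity encoded in Properties~\ref{properties}(3), which ensures that at most one member of any choice of $m+r$ consecutive mouth objects can land on the ray of such a shift.
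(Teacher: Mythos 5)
Your proof is correct and follows essentially the same route as the paper: the $\cY$ case is handled by orthogonality, the mouth of $\cX$ via Step~1 of Theorem~\ref{thm:twist} and the degree data of Lemma~\ref{lem:exceptional-cycle}, the rest of $\cX$ by rigidity of $\IZ A_\infty$, and the $\cZ$ case by matching the twist triangle against the special triangle of Properties~\ref{properties}(4). The only cosmetic difference is that the paper computes the $\cZ$ case for a fixed cycle with $1\le i\le m+r$ and then extends by $\tau$ and $\Sigma$, whereas you invoke Lemma~\ref{lem:invariance_of_twists} to move the cycle to contain $X^k_{i,i}$ directly; these are interchangeable.

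One small point of precision in your last sentence: for $c\neq 0$ you in fact need that \emph{no} member of the $m+r$ consecutive mouth objects lies on the relevant ray (not merely ``at most one''), which is exactly what the $(m+r)$-periodicity of $\Sigma^r|_\cZ$ gives once the range is chosen to contain $X^k_{i,i}$, since the unique solution $\ell=i+c(m+r)$ falls outside the window whenever $c\neq 0$. The argument you sketch delivers this, but the phrasing should be tightened so it is clear you rule out the contribution entirely rather than just bounding it.
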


\begin{corollary} \label{cor:twist-action}
The twist functors $\TTT_\cX$ and $\TTT_\cY$ act simply transitively on each component $\cZ^k$ and factorise the inverse AR translation:
$\TTT_\cX\TTT_\cY = \TTT_\cY\TTT_\cX = \tau^{-1}$ on the objects of $\Db(\Lambda)$.
Moreover, $\TTT_\cX$, $\TTT_\cY$ and $\Sigma$ act transitively on $\ind{\cZ}$.
\end{corollary}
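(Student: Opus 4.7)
The plan is to treat this corollary as an essentially formal consequence of Proposition~\ref{prop:twist-action}, which already gives explicit formulas for the action of $\TTT_\cX$ and $\TTT_\cY$ on all indecomposables. I would split the statement into the three assertions (simple transitivity on each $\cZ^k$, factorisation of $\tau^{-1}$, transitivity on $\ind{\cZ}$) and check each by direct calculation on objects.

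For simple transitivity on $\cZ^k$: given $Z^k_{i,j}$ and $Z^k_{i',j'}$, Proposition~\ref{prop:twist-action} shows that $\TTT_\cX^{\,a}\TTT_\cY^{\,b}(Z^k_{i,j}) = Z^k_{i+a,\,j+b}$, so taking $a=i'-i$ and $b=j'-j$ produces a functor in the group $\langle \TTT_\cX,\TTT_\cY\rangle$ sending one to the other, and the fact that $\TTT_\cX$ shifts only the first coordinate while $\TTT_\cY$ shifts only the second makes the choice unique. For the factorisation, I would verify $\TTT_\cX\TTT_\cY = \tau^{-1}$ separately on each AR component. On $\cZ^k$ one has $\TTT_\cX\TTT_\cY(Z^k_{i,j}) = \TTT_\cX(Z^k_{i,j+1}) = Z^k_{i+1,j+1} = \tau^{-1}Z^k_{i,j}$ using Properties~\ref{properties}(2). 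On $\cX$ the second twist is the identity and the first is $\tau^{-1}$; on $\cY$ it is the other way around, and crucially $\TTT_\cY$ preserves $\cY$ so that applying $\TTT_\cX$ afterwards does nothing. The same bookkeeping with the two twists exchanged gives $\TTT_\cY\TTT_\cX = \tau^{-1}$, hence also commutativity on objects.

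For the final assertion, I would combine simple transitivity inside each $\cZ^k$ with the action of $\Sigma$ on the index $k$: Properties~\ref{properties}(3) gives $\Sigma Z^k_{i,j} = Z^{k+1}_{i,j}$ for $k<r-1$ and $\Sigma Z^{r-1}_{i,j} = Z^0_{i+r+m,\,j+r-n}$, so $\Sigma$ permutes the components $\cZ^0,\ldots,\cZ^{r-1}$ cyclically. Thus given $Z^k_{i,j}$ and $Z^{k'}_{i',j'}$, I would first apply the appropriate power of $\Sigma$ to land in the component $\cZ^{k'}$, and then a suitable monomial in $\TTT_\cX$ and $\TTT_\cY$ to hit $Z^{k'}_{i',j'}$.

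None of these steps involves a genuine obstacle: every statement reduces to reading off coordinates from Proposition~\ref{prop:twist-action} and Properties~\ref{properties}. The only point that requires a moment's care is the case-split in the factorisation $\TTT_\cX\TTT_\cY = \tau^{-1}$, where one must observe that each twist fixes the component ($\cX$, $\cY$, or $\cZ^k$) of the object it acts on, so the composition can safely be evaluated one component at a time.
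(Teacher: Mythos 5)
Your proposal is correct and, in effect, coincides with what the paper intends: the authors state the corollary without a separate proof because it is a routine computation from Proposition~\ref{prop:twist-action} together with Properties~\ref{properties}(2)--(3), which is exactly what you have written out. Your component-by-component check of $\TTT_\cX\TTT_\cY=\TTT_\cY\TTT_\cX=\tau^{-1}$, including the observation that each twist preserves the component it acts on (so the composition can be evaluated one component at a time), and the use of $\Sigma$ to cycle through the $\cZ^k$ before applying $\TTT_\cX^{\:a}\TTT_\cY^{\:b}$, are precisely the intended verification.
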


\begin{proof}[Proof of the proposition.]
By Lemma~\ref{lem:mouth_hammocks}, we have $\Hom^\bullet(X^k_{ii},Y)=0$ for all $Y\in\cY$. This immediately implies $\TTT_\cX|_\cY = \id$.

\emph{Action of $\TTT_\cX$ on objects of $\cX$:} we recall that the proof of Theorem~\ref{thm:twist} showed
$\TTT_\cX(E_i)=\Sigma^{1-k_{i-1}}(E_{i-1})$, and furthermore $k_1=\ldots=k_{m+r-1}=1$ and $k_{m+r}=1-r$ from Lemma~\ref{lem:exceptional-cycle}.
Hence $\TTT_\cX(E_i)=\tau^{-1}(E_i)$ for all $i$ --- as explained in Lemma~\ref{lem:invariance_of_twists}, this holds for any exceptional cycle at an $\cX$ mouth. Since $\TTT_\cX$ is an equivalence and each $\cX$ component is of type $\IZ A_\infty$, this forces $\TTT_{E_*}|_{\cX}=\tau^{-1}$ on objects.

\emph{Action of $\TTT_\cX$ on objects of $\cZ$:}
Pick $Z^0_{ij}\in\cZ^0$ with $1\leq i\leq m+r$. Using $\TTT_\cX = \TTT_{E_*}$ with the cycle originally specified, i.e.\ $E_{m+r}=X^0_{11}$, we invoke Lemma~\ref{lem:mouth_hammocks} once more to get
 $\kk = \Hom^\bullet(X^0_{ii},Z^0_{ij}) = \Hom^\bullet(E_{m+r+1-i},Z^0_{ij})$,
and $0 = \Hom^\bullet(E_l,Z^0_{ij})$ for all $l\neq m+r+1-i$. So
$\FF_{E_*}(Z^0_{ij}) = X^0_{ii}$ and the triangle defining $\TTT_{E_*}(Z^0_{ij})$ is one of the special triangles of Properties~\ref{properties}(4):
\[ \xymatrix@R=2ex{
 \FF_{E_*}(Z^0_{ij}) \ar[r] \ar@{=}[d] & Z^0_{ij} \ar[r] \ar@{=}[d] & \TTT_{E_*}(Z^0_{i,j}) \ar[r] \ar@{=}[d] & \Sigma\FF_{E_*}(Z^0_{i,j}) \ar@{=}[d]\\
 X^0_{ii}           \ar[r]            & Z^0_{ij} \ar[r]            & Z^0_{i+1,j}          \ar[r]            &  \Sigma X^0_{i,i}
} \]
Application of AR translations extends this computation to arbitary $Z\in\cZ^0$, and suspending extends it to all $\cZ$ components, thus $\TTT_\cX(Z^0_{i,j}) = Z^0_{i+1,j}$.

\emph{Remaining cases:}
Analogous reasoning shows $\TTT_{F_*}(F_i)=\tau^{-1}(F_i)$ for all $i=1,\ldots,n-r$, and the rest of the above proof works as well: $\TTT_\cY(Z^0_{i,j})=Z^0_{i,j+1}$, now using the other special triangle.
\end{proof}

The following technical lemma about the additive closures of the $\cX$ and $\cY$ components will be used later on, but is also interesting in its own right. Using the twist functors, the proof is easy.

\begin{lemma} \label{lem:thick_AR-components}
Each of $\cX$ and $\cY$ is a thick triangulated subcategory of $\catD$.
%Furthermore, $\cX$ is the full subcategory of objects fixed by $\TTT_\cY$ and $\cY$ is the fixed subcategory of $\TTT_\cX$.
\end{lemma}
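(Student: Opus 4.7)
The plan is to verify the three defining axioms of a thick triangulated subcategory: closure under (de)suspension, under direct summands, and under cones. Closure under suspension is immediate from Properties~\ref{properties}(3), which shows that $\Sigma$ merely permutes the $\cX^k$ components (and similarly for $\cY^k$), and the same holds for $\Sigma\inv$. Closure under summands is automatic: since $\catD$ is Krull--Schmidt and $\cX$ is by definition the additive closure of the indecomposables in $\bigcup_k\cX^k$, every summand of an object of $\cX$ again decomposes into such indecomposables.

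The real content is closure under cones, and this is where the twist functors of Proposition~\ref{prop:twist-action} enter. Given a triangle $A\to B\to C\to\Sigma A$ with $A,B\in\cX$, the approach is to apply $\TTT_\cY$. The crucial observation is that $\FF_{F_*}$ vanishes identically on $\cX$: since each $F_i$ lies in $\cY$ and $\Hom^\bullet(F_i,X)=0$ for $X\in\cX$ by Lemma~\ref{lem:mouth_hammocks}, the defining triangle $\FF_{F_*}\to \id\to \TTT_\cY$ yields a canonical isomorphism $\TTT_\cY X\cong X$ for every $X\in\cX$, natural in $X$. Applying $\TTT_\cY$ to the given triangle and transporting along these canonical isomorphisms produces a triangle $A\to B\to \TTT_\cY C\to\Sigma A$ with the \emph{same} left-hand morphism $f$, so uniqueness of cones forces $\TTT_\cY C\cong C$.

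From here one invokes Krull--Schmidt. Decompose $C$ into indecomposables and note that by Proposition~\ref{prop:twist-action}, $\TTT_\cY$ preserves each of $\cX$, $\cY$, $\cZ$ separately and acts on $\cY$ as $\tau^{-1}$ and on each $\cZ^k$ as $(i,j)\mapsto(i,j+1)$ --- both actions are fixed-point-free with every orbit infinite. Since the isomorphism $\TTT_\cY C\cong C$ forces $\TTT_\cY$ to permute the finitely many indecomposable summands of $C$, no such summand can live in $\cY\cup\cZ$; hence $C\in\cX$. The thickness of $\cY$ is proved symmetrically, using $\TTT_\cX$, which acts as the identity on $\cY$ and fixed-point-freely on both $\cX$ (as $\tau^{-1}$) and each $\cZ^k$ (as $(i,j)\mapsto(i+1,j)$).

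The main obstacle is the somewhat pedantic step of checking that the transported triangle really has the same morphism $f$, so that uniqueness of cones applies and yields $\TTT_\cY C\cong C$ rather than merely a triangle with some other base morphism. This follows cleanly from the naturality of the isomorphism $\TTT_\cY|_\cX\cong \id_\cX$ coming from $\FF_{F_*}|_\cX=0$, but it is the only place where care is needed.
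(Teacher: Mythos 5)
Your proof is correct, and it takes a genuinely different (and more laborious) route than the paper's. The paper disposes of the lemma in two sentences: the orthogonality computations from the hammocks identify $\cY$ as $\thick{}{E_*}\orth$ (with $E_*$ the exceptional cycle at the $\cX$-mouths), and orthogonal complements of triangulated subcategories are automatically thick and triangulated; symmetrically $\cX=\thick{}{F_*}\orth$. Your approach instead exploits the twist functors dynamically: from $\FF_{F_*}|_\cX=0$ you extract the natural isomorphism $\TTT_\cY|_\cX\cong\id_\cX$, apply $\TTT_\cY$ to a triangle $A\to B\to C$ with $A,B\in\cX$, and use naturality plus TR3 to conclude $\TTT_\cY C\cong C$; then Krull--Schmidt and the fixed-point-free, infinite-orbit action of $\TTT_\cY$ on $\ind{\cY}$ and $\ind{\cZ}$ (Proposition~\ref{prop:twist-action}) force every indecomposable summand of $C$ into $\cX$. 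Both arguments ultimately rest on the same orthogonality $\Hom^\bullet(F_*,\cX)=0$, but the paper converts it directly into the thick-perpendicular characterization, whereas you convert it into a rigidity statement (``$\TTT_\cY$ fixes the cone, so the cone has no components on which $\TTT_\cY$ moves'') — a neat alternative that showcases the twist functors, at the cost of needing the explicit action of $\TTT_\cY$ on all AR components and the Krull--Schmidt permutation argument. The care you flag about transporting the morphism along the natural isomorphism is exactly right and is handled correctly by naturality of $\id\to\TTT_\cY$.
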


\begin{proof}
The proof of Proposition~\ref{prop:twist-action} contains the fact $\thick{}{E_*}\orth = \cY$. Perpendicular subcategories are always closed under extensions and direct summands; since $\thick{}{E_*}$ is by construction a triangulated subcategory, the orthogonal complement $\cY$ is triangulated as well.
%%
%% About the second claim: generally it is not obvious that the fixed subcategory of a functor is triangulated, but here it is by construction of the twist. We first observe for any two exceptional cycles $E_*, E'_*$ at mouths of $\cX$ components, we have $\FFF_{E_*}=\FFF_{E'_*}$. This follows a posteriori from the same property for the twists functors of Lemma~\ref{lem:invariance_of_twists}, or also from the defining property of exceptional cycles. Note that we only have to check conjugation with the Serre functor. In particular, we can write $\FFF_\cX = \FFF_{E_*}$.
%% Now given a triangle $D'\to D\to D''\to\Sigma D'$ in $\catD$, the diagram defining the twist functor looks like
%% \[ \xymatrix@R=3ex{
%%  \FF_\cX(D')  \ar[r] \ar[d]  &  D'  \ar[r] \ar[d]  &  \TTT_\cX(D')  \ar[d]  \\
%%  \FF_\cX(D)   \ar[r] \ar[d]  &  D   \ar[r] \ar[d]  &  \TTT_\cX(D)   \ar[d]  \\
%%  \FF_\cX(D'') \ar[r]         &  D'' \ar[r]         &  \TTT_\cX(D'')         .
%% } \]
%% If $\TTT_\cX(D')\cong D'$ and $\TTT_\cX(D)\cong D$, then $\FF_\cX(D')=0$ and $\FF_\cX(D)=0$, hence $\FF_\cX(D'')=0$ and thus $\TTT_\cX(D'')\cong D''$.
\end{proof}

Our results enable us to compute the group of autoequivalences of $\Db(\LLambda)$. For $\Lambda(1,2,0)$, K\"onig and Yang \cite[Lemma~9.3]{Koenig-Yang} showed $\Aut(\Db(\Lambda(1,2,0))) \cong \IZ^2 \times \kk^*$.

\begin{theorem} \label{thm:auteq}
The group of autoequivalences of $\Db(\LLambda)$ is an abelian group generated by $\TTT_\cX$, $\TTT_\cY$, $\Sigma$ and $\Out(\LLambda)=\kk^*$, subject to one relation
\[  \Sigma^{r} = f_0\TTT_\cX^{\:m+r} \: \TTT_\cY^{\:r-n}  \qquad\text{for some } f_0\in\Out(\LLambda) . \]

As an abstract group, $\Aut(\Db(\LLambda)) \cong \IZ^2 \times \IZ/\ell \times \kk^*$, where $\ell\coloneqq\gcd(r,n,m)$.
\end{theorem}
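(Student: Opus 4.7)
The plan has three stages: check that the listed generators commute and satisfy the stated relation; show every autoequivalence lies in the resulting subgroup; and extract the abstract group structure via Smith normal form.

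\textbf{Commutativity and the relation.}
The suspension $\Sigma$ commutes with every triangle autoequivalence by definition; $\Out(\LLambda)=\kk^{*}$ acts by natural automorphisms of the identity and hence is central; the twists $\TTT_\cX$ and $\TTT_\cY$ commute by a direct check on the AR quiver using Proposition~\ref{prop:twist-action}, since on $\cX$ (resp.\ $\cY$) one is $\tau^{-1}$ and the other the identity, while on $\cZ$ they translate the coordinates in orthogonal directions. To verify the relation, evaluate the composite $\Sigma^{-r}\TTT_\cX^{\,m+r}\TTT_\cY^{\,r-n}$ on objects using Properties~\ref{properties}(3) and Proposition~\ref{prop:twist-action}: on $\cX$ it is $\tau^{m+r}\tau^{-(m+r)}=\id$; on $\cY$ it is $\tau^{r-n}\tau^{n-r}=\id$; on $\cZ^{k}$ it sends $(i,j)\mapsto(i,j+r-n)\mapsto(i+m+r,j+r-n)\mapsto(i,j)$. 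Any triangle autoequivalence of $\Db(\LLambda)$ fixing every isomorphism class of objects is naturally isomorphic to an element of $Z(\LLambda)^{\times}=\kk^{*}$, so this composite must equal some $f_0^{-1}\in\kk^{*}$.

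\textbf{Exhaustion.}
Let $\Phi$ be an arbitrary autoequivalence. Preservation of AR-component type forces $\Phi$ to permute the $\cZ$-components amongst themselves and the set of $\cX$- and $\cY$-components amongst itself. The key asymmetry in Properties~\ref{properties}(3) --- $\Sigma^{r}$ acts on $\cX$ as $\tau^{-(m+r)}$ but on $\cY$ as $\tau^{n-r}$, the exponents having opposite signs because $m>0$ and $n>r$ --- combined with the fact that $\Phi$ preserves the direction of $\tau$ (since it preserves AR triangles) prevents $\Phi$ from mixing $\cX$ with $\cY$. Composing with a suitable power of $\Sigma$ we may assume $\Phi(\cX^{0})=\cX^{0}$; the Hom-hammocks (Propositions~\ref{prop:X-hammocks}--\ref{prop:Z-hammocks}) identify $\cZ^{0}$ as the unique $\cZ$-component receiving non-zero maps from $\cX^{0}$, forcing $\Phi(\cZ^{0})=\cZ^{0}$, and then $\cY^{0}$ as the unique $\cY$-component mapping into $\cZ^{0}$, giving $\Phi(\cY^{0})=\cY^{0}$. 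Applying $\Sigma$ iteratively, $\Phi$ fixes every AR component. Since $\Phi$ acts on the $\IZ A_\infty^\infty$ quiver $\cZ^{0}$ preserving both ray directions (distinguished by whether morphisms come from $\cX^{0}$- or $\cY^{0}$-mouths), it is a translation $(i,j)\mapsto(i+b,j+c)$; replacing $\Phi$ by $\Phi\,\TTT_\cX^{-b}\TTT_\cY^{-c}$ we may assume $\Phi$ also fixes $Z^{0}_{00}$. By Lemma~\ref{lem:mouth_hammocks}, the objects $X^{0}_{00}$ and $Y^{0}_{00}$ are uniquely characterised at the mouths of $\cX^{0}$ and $\cY^{0}$ by admitting non-zero maps to $Z^{0}_{00}$, so $\Phi$ fixes them too; since $\Phi|_{\cX^{0}}$ and $\Phi|_{\cY^{0}}$ are powers of $\tau$, these fixed points force $\Phi$ to act as the identity on every indecomposable. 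Rigidity as above gives $\Phi\in\kk^{*}$.

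\textbf{Abstract group structure.}
The previous two stages yield a surjection $\IZ^{3}\times\kk^{*}\twoheadrightarrow\Aut(\Db(\LLambda))$, $(a,b,c,f)\mapsto f\,\Sigma^{a}\TTT_\cX^{\,b}\TTT_\cY^{\,c}$, with kernel the infinite cyclic subgroup generated by $(r,\,-(m+r),\,n-r,\,f_0^{-1})$. Applying Smith normal form to the lattice $\IZ^{3}/\IZ\cdot(r,-(m+r),n-r)$: the gcd of the three entries is $\gcd(r,m+r,n-r)=\gcd(r,m,n)=\ell$, so the quotient is $\IZ^{2}\times\IZ/\ell$, and folding in the $\kk^{*}$ factor yields $\Aut(\Db(\LLambda))\cong\IZ^{2}\times\IZ/\ell\times\kk^{*}$. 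The main obstacle in this plan is the exhaustion step: ruling out autoequivalences that swap $\cX$ with $\cY$ and controlling the residual action on a component once the component structure has been normalised; both rely on combining the non-symmetric formulae in Properties~\ref{properties}(3) with the precise Hom-hammock structure of Section~\ref{sec:hammocks}.
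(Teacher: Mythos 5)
Your exhaustion step takes a genuinely different route from the paper, normalising component by component via the Hom-hammocks and the sign asymmetry of $\Sigma^r$ in Properties~\ref{properties}(3), instead of the paper's use of simple transitivity of $\langle\Sigma,\TTT_\cX,\TTT_\cY\rangle$ on $\ind{\cZ}$ combined with the uniqueness (up to scalar) of morphisms from Theorem~\ref{thm:hom-dimensions}; that alternative looks workable, provided you spell out why an automorphism of a $\IZ A_\infty$ component commuting with $\tau$ must be a power of $\tau$. Note also that checking $[\TTT_\cX,\TTT_\cY]=\id$ on objects in the AR quiver is weaker than showing the functors commute up to natural isomorphism; the paper obtains functorial commutativity via Lemma~\ref{lem:conjugated_twists}.

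The rigidity step, however, contains a genuine error. You claim that an autoequivalence fixing every isomorphism class of objects is naturally isomorphic to one coming from $Z(\LLambda)^{\times}=\kk^{*}$. A central unit induces a natural automorphism of the \emph{identity} functor, hence the trivial element of $\Aut(\Db(\LLambda))$; taken literally, your claim would force $f_0=\id$ and collapse the $\kk^{*}$ factor in the theorem, which is false. The correct statement, and the actual content of the paper's Steps~1 and~2, is that an autoequivalence fixing all objects (in particular $\LLambda$ itself, hence the standard heart) is induced by an algebra automorphism, and thus by an element of $\Out(\LLambda)$; the explicit quiver of $\LLambda$ then forces $\Out(\LLambda)\cong\kk^{*}$, realised by the common arrow-scaling automorphisms, which are shown \emph{not} to be inner. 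It is precisely this non-inner $\kk^{*}$ that produces the $\kk^{*}$ factor in $\Aut(\Db(\LLambda))$ --- substituting $Z(\LLambda)^{\times}$ for $\Out(\LLambda)$ hides exactly the computation that matters. Finally, ``folding in the $\kk^{*}$ factor'' in the Smith normal form step quietly uses that $\kk$ is algebraically closed (so that $f_0$ has an appropriate root); the paper's Step~6 makes this explicit.
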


\begin{proof}
In this proof, we will write $\sD = \Db(\LLambda)$ and $\Lambda = \LLambda$.

\Step{1} \emph{$\Out(\Lambda) = \kk^*$ from common scaling of arrows.}

It is a well-known fact that inner automorphisms induce autoequivalences of $\mod{\Lambda}$ and $\Db(\Lambda)$ which are isomorphic to the identity; see \cite[\S3]{Yekutieli}. The quotient group $\Out(\Lambda) = \Aut(\Lambda)/\Inn(\Lambda)$ acts faithfully on modules. The form of the quiver and the relations for $\LLambda$ imply that algebra automorphisms can only act by scaling arrows. 

Scaling of arrows leads to a subgroup $(\kk^*)^{m+n}$ of $\Aut(\Lambda)$. However, choosing an indecomposable idempotent $e$ (i.e.\ a vertex) together with a scalar $\lambda\in\kk^*$ produces a unit  $u = 1_\Lambda + (\lambda-1)e$, and hence an inner automorphism $c_u\in\Aut(\Lambda)$. It is easy to check that $c_u(\alpha)=\frac{1}{\lambda}\alpha$ if $\alpha$ ends at $e$, and $c_u(\alpha)=\lambda\alpha$ if $\alpha$ starts at $e$, and $c_u(\alpha)=\alpha$ otherwise. The form of the quiver of $\Lambda$ shows that an $(n+m-1)$-subtorus of the subgroup $(\kk^*)^{m+n}$ of arrow-scaling automorphisms consists of inner automorphisms.
Furthermore, the automorphism scaling all arrows simultaneously by the same number is easily seen not to be inner, hence, $\Out(\Lambda)=\kk^*$.

\Step{2} \emph{$\varphi\in\Aut(\sD)$ is the identity on objects $\iff$ $\varphi\in\Out(\Lambda)$.}

By Step 1, it is clear that algebra automorphisms act trivially on objects. Let now $\varphi\in\Aut(\sD)$ fixing all objects. In particular, $\varphi$ fixes the abelian category $\mod{\Lambda}$ and the object $\Lambda$, thus giving rise to $\varphi\colon \Lambda\to\Lambda$, i.e.\ an automorphism which by Step 1 can be taken to be outer.

\Step{3} \emph{The subgroup $\generate{\Sigma,\TTT_\cX,\TTT_\cY,\Out(\Lambda)}$ is abelian.}

The suspension commutes with all exact functors. Next, to see $[\TTT_\cX,\TTT_\cY]=\id$, we fix exceptional cycles $E_*$ for $\cX$ and $F_*$ for $\cY$; then $\TTT_{E*}\TTT_{F_*}(\TTT_{E_*})\inv = \TTT_{\TTT_{E_*}(F_*)} = \TTT_{F_*}$ by Lemma~\ref{lem:conjugated_twists} and Proposition~\ref{prop:twist-action}.
Let $f\in\Out(\Lambda)$. Then we have $[f,\TTT_\cX]=[f,\TTT_\cY]=\id$ by the same lemma, now using $f(E_*)=E_*$ and $f(F_*)=F_*$ from Step 2.

\Step{4} \emph{$\Aut(\sD)$ is generated by $\Sigma,\TTT_\cX,\TTT_\cY,\Out(\Lambda)$.}

Fix a $Z\in\ind{\cZ}$.  For any $\varphi\in\Aut(\sD)$, there are $a,b,c\in\IZ$ with $\Sigma^a\TTT_\cX^{\:b}\TTT_\cY^{\:c}(Z) = \varphi(Z)$, since the suspension and the twist functors act transitively on $\ind{\cZ}$ by Corollary~\ref{cor:twist-action}. Therefore, $\psi \coloneqq \Sigma^a\TTT_\cX^{\:b}\TTT_\cY^{\:c}\varphi\inv$ fixes $Z$. Moreover, since all autoequivalences commute with $\tau$ (because they commute with the Serre functor $\SSS = \Sigma\tau$ and with $\Sigma$) and $\cZ$ is a $\IZ A_\infty^\infty$-component, either $\psi$ is the identity on $\ind{\cZ}$ or else $\psi$ flips $\ind{\cZ}$ along the $Z\tau(Z)$ axis. However, the latter possibility is excluded by the action of $\Sigma^r|_\cZ$; see Properties~\ref{properties}(3).

By Properties~\ref{properties}(4), every indecomposable object of $\cX$ or $\cY$ is a cone of a morphism $Z_1\to Z_2$ for some $Z_1,Z_2\in\ind{\cZ}$. Moreover, the morphism $Z_1\to Z_2$ is unique up to scalars by Theorem~\ref{thm:hom-dimensions}.  (The proofs in that section make no use of the autoequivalence group. Note that by the proof of Theorem~\ref{thm:hom-dimensions}, morphism spaces between indecomposable objects in $\cZ$ are 1-dimensional, even for $r=1$.) Hence $\varphi$ actually fixes all indecomposable objects and thus all objects of $\Db(\Lambda)$.

Thus, by Step 2, $\psi\in\Out(\Lambda)$ and $\varphi\in\generate{\Sigma,\TTT_\cX,\TTT_\cY,\Out(\Lambda)}$.

\Step{5} \emph{$\Aut(\sD)$ is abelian with one relation $f_0 \Sigma^{-r} \TTT_\cX^{\:m+r} \TTT_\cY^{\:r-n}=\id$ for some $f_0\in\Out(\Lambda)$.}

By Steps 3 and 4, $\Aut(\sD)=\generate{\Sigma,\TTT_\cX,\TTT_\cY,\Out(\Lambda)}$ is abelian.
Properties~\ref{properties}(3) and Proposition~\ref{prop:twist-action} imply that the autoequivalence $\Sigma^{-r} \TTT_\cX^{\:m+r} \TTT_\cY^{\:r-n}$ fixes all objects of $\sD$, hence $f_0\Sigma^{-r} \TTT_\cX^{\:m+r} \TTT_\cY^{\:r-n} = \id$ for a unique automorphism $f_0\in\Out(\Lambda)$. 

Let now be $a,b,c\in\IZ$ and $g\in\Out(\Lambda)$ such that $g \Sigma^a\TTT_\cX^{\:b}\TTT_\cY^{\:c} = \id$. In particular, $\psi \coloneqq \Sigma^a\TTT_\cX^{\:b}\TTT_\cY^{\:c}$ fixes all objects. From $X=\psi(X)=\Sigma^a\TTT_\cX^{\:b}(X)=\Sigma^a\tau^{-b}(X)$ we deduce first $a=lr$ for some $l\in\IZ$ and then $b=-l(m+r)$; whereas $Y=\psi(Y)$ similarly implies $a=kr$ and $c=k(n-r)$ for some $k\in\IZ$. Hence $k=l$ and $\psi = \Sigma^{lr}\TTT_\cX^{\:-l(m+r)}\TTT_\cY^{\:l(n-r)}=f_0^{l}$. So $g=f_0^{-l}$ and altogether, 
  $ g\Sigma^a\TTT_\cX^{\:b}\TTT_\cY^{\:c}  = (f_0 \Sigma^{-r} \TTT_\cX^{\:m+r} \TTT_\cY^{\:n-r})^{-l}$ is a power of the stated relation.

\Step{6} \emph{$\Aut(\sD) \cong \IZ^2 \times \IZ/(r,n,m) \times \kk^*$.}

This is elementary algebra: let $A$ be a free abelian group of finite rank and $a_0\in A$, $f_0\in\kk^*$. Write $a_0=da_1$ with $d\in\IZ$ and $a_1$ indivisible. Choose $f_1\in\kk^*$ with $f_1^d=f_0$ --- this is possible because $\kk$ is algebraicaly closed. Now fix a group homomorphism $\nu\colon A\to\IZ$ with $\nu(a_1)=1$ --- this is possible because $a_1$ is indivisible. Consider the diagram with exact rows
\[ \xymatrix@R=3ex{
   0 \ar[r] & \{ (na_0,f_0^n) \mid n\in\IZ \} \ar[r]               & A \times \kk^* \ar[r]             & A \times \kk^* /\generate{(a_0,f_0)} \ar[r] & 0 \\
   0 \ar[r] & \{ (na_0,1)     \mid n\in\IZ \} \ar[r] \ar[u]^\alpha & A \times \kk^* \ar[r] \ar[u]^\beta & A \times \kk^* /\generate{(a_0,1)}   \ar[r] & 0 
} \]
where $\alpha(na_0,1) = (na_0,f_0^n)$ and $\beta(a,f) = (a,ff_1^{\nu(a)})$. Both maps are easily checked to be group homomorphisms and bijective. Moreover, the left-hand square commutes:
\[ \beta(na_0,1) = (na_0,f_1^{\nu(na_0)}) = (na_0,f_1^{nd\nu(a_1)}) = (na_0,f_0^{n\nu(a_1)}) = (na_0,f_0^n) = \alpha(na_0,1) . \]
Therefore we obtain an induced isomorphism between the right-hand quotients:
\[ A\times\kk^* / \generate{(a_0,f_0)} \cong A\times\kk^* / \generate{(a_0,1)} = A/\generate{a_0} \times \kk^* . \]
For the case at hand, $A=\IZ^3$ and $a_0=(r,n,m)\in\IZ^3$ and hence $A/a_0\cong\IZ^2\times\IZ/\ell$ with the greatest common divisor $\ell=(r,n,m)$, by the theory of elementary divisors.
\end{proof}

\begin{question}
It is natural to speculate about the action of the various functors on maps. More precisely, we ask whether
\begin{enumerate}[label=(\arabic*)~]
\item $\Sigma^r|_\cX = \tau^{-m-r}$ and $\Sigma^r|_\cY = \tau^{n-r}$ 
\item $\TTT_\cX|_\cX = \tau\inv$ and $\TTT_\cY|_\cY = \tau\inv$
\item $\Sigma^{r} = \TTT_\cX^{\:m+r} \: \TTT_\cY^{\:r-n}$
\end{enumerate}
hold as functors. In all cases, we know these relations hold on objects. Note that (1) and (2) together imply (3), and that (3) means $f_0=\id$ in Theorem~\ref{thm:auteq}.
\end{question}

%%%%%%%%%%%%%%%%%%%%%%%%%%%%%%%%%%%%%%%%%%%%%%%%%%%%%%%%%%%%%%%%%%%%%%%%%%%%%%%%%%%%%%%%%

\section{Hom spaces: dimension bounds and graded structure}
\label{sec:hom-dimensions}

\noindent
In this section, we prove a strong result about $\Db(\Lambda)\coloneqq \Db(\LLambda)$ which says that the dimensions of homomorphism spaces between indecomposable objects have a common bound. We also present the endomorphism complexes in Lemma~\ref{lem:graded-end}.

\subsection{Hom space dimension bounds}
The bounds are given in the the following theorem; for more precise information in case $r=1$ see Proposition~\ref{prop:r=1_intersection}.

\begin{theorem} \label{thm:hom-dimensions}
  Let $A,B$ be indecomposable objects of $\Db(\LLambda)$ where $n>r$. If $r\geq2$, then $\dim\Hom(A,B) \leq 1$ and if $r=1$, then $\dim\Hom(A,B) \leq 2$.
\end{theorem}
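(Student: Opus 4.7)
The plan is to combine the precise Hom-hammock descriptions from Propositions~\ref{prop:X-hammocks}, \ref{prop:Y-hammocks} and~\ref{prop:Z-hammocks}, which identify exactly when $\Hom(A,B)\ne 0$, and upgrade these qualitative statements to a dimension bound by induction. Since autoequivalences preserve Hom-dimensions, the twists of Section~\ref{sec:auteq} together with $\Sigma$ let me reduce to the case where $A$ lies in one representative of $\cX$, $\cY$, $\cZ$, say $\cX^0$, $\cY^0$ or $\cZ^0$.

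The base case is Lemma~\ref{lem:mouth_hammocks}: when $A$ is at the mouth of an $\cX$- or $\cY$-component, every non-zero Hom-space has dimension~$1$, with the single exception $\Hom(X^0_{ii},X^0_{i,i+m})=\kk^2$ when $r=1$. For the induction step with $A\in\ind{\cX^k}$ of height $h\ge 1$, I apply $\Hom(\blank,B)$ to the triangle $A'\to A\to A_0\to\Sigma A'$ from Lemma~\ref{lem:triangles}, where $A_0$ sits at the mouth and $\hi{A'}=h-1$. The induced exact sequence
\[ \Hom(A_0,B) \to \Hom(A,B) \to \Hom(A',B) \]
immediately yields $\dim\Hom(A,B)\le \dim\Hom(A_0,B)+\dim\Hom(A',B)\le 2$ by the induction hypothesis and the mouth case. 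The crucial refinement, which delivers the sharp bound~$1$ when $r\ge 2$, is the claim that for every indecomposable $B$ in the hammock of $A$, \emph{at most one} of $\Hom(A_0,B)$ and $\Hom(A',B)$ is non-zero.

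This disjointness claim is verified by comparing the hammocks of $A_0$ and $A'$ component by component. In the $\cX^k$-component, the single ray $\rayfrom{A_0}$ is disjoint from the triangular region $\rayfrom{\overline{A'A'_0}}$, a direct consequence of the coordinate conventions of Properties~\ref{properties}(1). In $\cX^{k+1}$, the hammock $\corayto{\SSS A_0}$ of $A_0$ sits on the line $\corayto{\SSS A}$, which meets $\corayto{\overline{{}_0(\SSS A'),\SSS A'}}$ only off the hammock of $A'$; a small coordinate check shows they are disjoint. In $\cZ^k$, the hammock of $A$ is obtained from the hammock of $A'$ by adjoining a single ray $\raythrough{Z^k_{jj}}$, which is exactly the $\cZ^k$-hammock of $A_0$. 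The analogous statements for $A\in\ind{\cY^k}$ follow from Proposition~\ref{prop:Y-hammocks}. For $A\in\cZ^k$, I would run a parallel induction using the special triangles $X^k_{ii}\to Z^k_{ij}\to Z^k_{i+1,j}$ from Properties~\ref{properties}(4), reducing to the $\cX$-mouth case already handled and iterating on the first coordinate; here the termination of the iteration rests on the finite global dimension of $\LLambda$, which forces $\Hom(A,B)=\Hom(A,\Sigma^{cr}B)$ to vanish for $|c|$ large enough, as exploited already in the proof of Proposition~\ref{prop:Z-hammocks}.

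The main obstacle is the disjointness verification itself: the argument is elementary, but requires careful bookkeeping of the three types of target component (and the two cases $\cX^k/\cX^{k+1}$, resp.\ $\cY^k/\cY^{k+1}$) for each of the three types of source object. The case $r=1$ needs the softer bound~$2$ rather than~$1$ because already at the mouth a $2$-dimensional Hom-space appears; my induction then at best preserves the bound~$2$, and the finer analysis pinpointing exactly when both $\Hom(A_0,B)$ and $\Hom(A',B)$ genuinely contribute simultaneously is precisely the content of the subsequent Proposition~\ref{prop:r=1_intersection}.
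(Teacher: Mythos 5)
Your argument for $A$ in the $\cX$ or $\cY$ components is correct and is essentially the paper's own, reorganised: you use the triangle $A'\to A\to A_0$ from Lemma~\ref{lem:triangles} where the paper uses the companion triangle ${}_0A\to A\to A''$, but your disjointness claim---at most one of $\Hom(A_0,B)$ and $\Hom(A',B)$ is nonzero for $B$ in the hammock of $A$---is precisely what the paper expresses by asserting $\Hom^\bullet(A'',B)=0$ for $B\in\rayfrom{A}$ and $\Hom^\bullet({}_0A,B)=0$ otherwise. Your component-by-component coordinate check confirming disjointness is accurate, and the induction on height then delivers the bound $1$ when $r\ge 2$.

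Your $\cZ$-source argument, however, has a real gap. Iterating along $X^k_{ii}\to Z^k_{ij}\to Z^k_{i+1,j}$ moves $A$'s first coordinate while fixing $B$, yet you cite termination via $\Hom(A,\Sigma^{cr}B)\to 0$, which moves $B$; these don't match. More seriously, the required direction of iteration depends on where $B$ lies: for $B\in\cZ^{k+1}$ the terms $\Hom(Z^k_{i+N,j},B)$ do \emph{not} vanish as $N\to+\infty$ (the region $\rayto{\corayto{\SSS Z^k_{i+N,j}}}$ opens leftwards and eventually engulfs any fixed $B$), so one must decrease the first coordinate instead, whereas for $B\in\cZ^k$ in $\rayfrom{\corayfrom{A}}$ the analogous terms fail to vanish in that direction. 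The paper sidesteps this by reusing the constancy of $\hom(A,\blank)$ on the two non-vanishing regions, already established in the proof of Proposition~\ref{prop:Z-hammocks}, and then evaluating the constant at $A$ itself: $\hom(A,A)=1$ because $S(0)\in\cZ$ is exceptional and the twist functors with $\Sigma$ act transitively on $\ind{\cZ}$.

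Finally, your claim that for $r=1$ the induction ``at best preserves the bound $2$'' overstates what the exact sequence gives. Without disjointness one only gets $\hom(A,B)\le\hom(A_0,B)+\hom(A',B)\le 4$ from the three-term sequence; the bound $2$ is genuinely the content of the full regional analysis in Proposition~\ref{prop:r=1_intersection}, not a free by-product of the same induction run more loosely.
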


\begin{proof}
Our strategy for establishing the dimension bound follows that of the proofs of the Hom-hammocks.
Let $A,B\in\ind{\Db(\LLambda)}$ and assume $r>1$. In this proof, we use the abbreviation $\hom = \dim \Hom$. We want to show $\hom(A,B)\leq1$ by considering the various components separately.

\Case{$A\in\cX^k$ or $\cY^k$}
Consider first $A,B\in\cX^k$ and perform induction on the height of $A$.
If $A=A_0$ sits at the mouth, then $\hom(A,B)\leq1$ by Lemma~\ref{lem:mouth_hammocks}. For $A$ higher up, and assuming $\Hom(A,B)\neq0$, which means $B\in\rayfrom{\overline{AA_0}}$, we consider one of the triangles from Lemma~\ref{lem:triangles}
\[ \trilabels{{}_0 A}{A}{A''}{}{g}{}. \]

Using the Hom-hammock Proposition~\ref{prop:X-hammocks}, we see that $\Hom^\bullet(A'',B)=0$ if $B\in\rayfrom{A}$ and $\Hom^\bullet({}_0 A,B)=0$ otherwise. Thus the exact sequence
\[ \Hom(\Sigma ({}_0 A),B) \too \Hom(A'',B) \too \Hom(A,B) \too \Hom({}_0 A,B) \too \Hom(\Sigma\inv A'',B) \]
yields $\hom(A,B) \leq \hom(A'',B)$ if $B\in\rayfrom{A}$ and $\hom(A,B) \leq \hom({}_0 A,B)$ otherwise. The induction hypothesis then gives $\hom(A,B)\leq1$.

The subcase $B\in\cX^{k+1}$ follows from the above by Serre duality.

Furthermore, the above argument applies without change to $B\in\cZ^k$ --- with $\rayfrom{A}\subset\cZ^k$ understood to mean the subset of indecomposables of $\cZ^k$ admitting non-zero morphisms from $A$ (these form a ray in $\cZ^k$) and similarly $\rayto{B}\subset\cX^k$, and application of Proposition~\ref{prop:Z-hammocks}. An obvious modification, which we leave to the reader, extends the argument to $B\in\cZ^{k+1}$. The statements for $A\in\cY$ are completely analogous.

\Case{$A\in\cZ^k$}
In light of Serre duality, we don't need to deal with $B \in \cX$ or $B \in \cY$.
Therefore we turn to $B\in\cZ$. However, we already know from the proof of Proposition~\ref{prop:Z-hammocks} that the dimensions in the two non-vanishing regions $\rayfrom{\corayfrom{A}}$ and $\rayto{\corayto{\SSS A}}$ are constant. Since the $\cZ$ components contain the simple $S(0)$ and the twist functors together with the suspension act transitively on $\cZ$, it is clear that $\hom(A,A)=\hom(A,\SSS A)^*=1$. This completes the proof.
\end{proof}

\begin{proposition} \label{prop:r=1_intersection}
Let $r=1$ and $X,A\in\ind{\cX}$.
Then
\begin{align*}
 \hom(X,A)=2 &\iff A\in\rayfrom{\overline{XX_0}}\cap\corayto{\overline{{}_0(\SSS X),\SSS X}} . %\\
\end{align*}
\end{proposition}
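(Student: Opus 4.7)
My plan is induction on the height $h = h(X) \geq 0$, entirely within the single component $\cX^0$, using that for $r = 1$ we have $\Sigma|_{\cX} = \tau^{-(m+1)}$ on objects and hence $\SSS|_{\cX} = \tau^{-m}$, so that the two hammock pieces of Proposition~\ref{prop:X-hammocks} both live in the same component and can therefore overlap.

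For the base case $h = 0$ with $X = X^0_{ii}$, the two regions collapse to $\rayfrom{\overline{XX_0}} = \rayfrom{X}$ and $\corayto{\overline{{}_0\SSS X, \SSS X}} = \corayto{\SSS X}$, with $\SSS X = X^0_{i+m,i+m}$. A direct check shows their intersection is the single object $X^0_{i,i+m}$, and Lemma~\ref{lem:mouth_hammocks} states exactly that $\Hom(X, X^0_{i,i+m}) = \kk^2$ while all other Hom spaces in the hammock are one-dimensional. For the inductive step with $h \geq 1$ and $X = X^0_{ij}$, I would apply $\Hom(\blank, A)$ to the triangle ${}_0 X \to X \to X'' \to \Sigma\,{}_0 X$ from Lemma~\ref{lem:triangles}, where $X'' = X^0_{i+1,j}$ has height $h-1$. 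This yields the five-term exact sequence
\[
 \Hom(\Sigma\,{}_0 X, A) \to \Hom(X'', A) \to \Hom(X, A) \to \Hom({}_0 X, A) \to \Hom(\Sigma\inv X'', A).
\]
The object $\Sigma\,{}_0 X = X^0_{i+m+1, i+m+1}$ is again on the mouth, so the outer-left and middle-right terms are handled by Lemma~\ref{lem:mouth_hammocks}. The term $\Hom(X'', A)$ is handled by the inductive hypothesis, and $\Hom(\Sigma\inv X'', A) = \Hom(X^0_{i-m, j-m-1}, A)$ by induction as well (its height equals $h-1$). Combined with the bound $\hom(X, A) \leq 2$ from Theorem~\ref{thm:hom-dimensions}, one reads off $\hom(X,A)$ in each case by comparing how the various hammocks for $X''$, ${}_0 X$, and $\Sigma^{\pm1}\,{}_0 X$ cover and intersect the candidate intersection region for $X$.

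The main obstacle is the careful bookkeeping at the boundary of the intersection region for $X$: there the predicted dimension transitions from $2$ to $1$, and the regions for $X''$ and $\Sigma\,{}_0 X$ do not align perfectly with the one for $X$, so one must verify that the connecting maps in the long exact sequence have exactly the ranks needed to produce the correct jump. As an alternative, one could try a direct route, building two linearly independent maps $X \to A$ from Lemma~\ref{lem:raysfromA} (one in the forward direction, one Serre-dual via a map $A \to \SSS X$); but establishing linear independence when $A$ lies in both hammocks simultaneously appears to require the string-module bases for $\Hom$-spaces summarised in Appendix~\ref{app:strings}, making the inductive dimension-counting route the more elementary option.
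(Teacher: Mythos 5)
Your proposal takes a genuinely different route, and it contains a gap together with a circularity problem.

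\textbf{Circularity.} You invoke the bound $\hom(X,A)\leq 2$ from Theorem~\ref{thm:hom-dimensions} as an input. But the proof of Theorem~\ref{thm:hom-dimensions} explicitly assumes $r>1$ and defers the $r=1$ bound to \emph{this very proposition}. The same is true of the $r=1$ clauses of Lemma~\ref{lem:raysfromA} and Proposition~\ref{prop:X-hammocks}: both are stated with the caveat ``the $r=1$ case will be treated in Proposition~\ref{prop:r=1_intersection}.'' So citing any of these for $r=1$ is circular. (Your fallback route via Lemma~\ref{lem:raysfromA} is excluded for the same reason.) This can in principle be repaired by folding the relevant claims into your inductive hypothesis, but then the hypothesis is substantially larger than the stated proposition and the base case must check that the full $r=1$ clause of Proposition~\ref{prop:X-hammocks} holds for mouth objects — true by Lemma~\ref{lem:mouth_hammocks}, but this needs to be said.

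\textbf{The genuine gap: connecting maps.} Your five-term exact sequence gives
\[
 \hom(X,A) = \hom(X'',A) - \operatorname{rank}\alpha + \hom({}_0X,A) - \operatorname{rank}\delta,
\]
where $\alpha$ and $\delta$ are the connecting maps. The outer dimensions are known (Lemma~\ref{lem:mouth_hammocks} plus induction), but the ranks of $\alpha,\delta$ are not determined by dimension-counting. On the boundary of the predicted $2$-region — exactly the locus you flag as ``the main obstacle'' — the various possible rank combinations are consistent with several different values of $\hom(X,A)$, so the induction does not close without establishing the ranks. This is not a bookkeeping nuisance but the substantive content. The paper's proof handles it by fixing $X$ instead, applying $\Hom(X,\blank)$ to \emph{both} the ray triangle and the coray triangle of $A$, and then proving in its Step~3 that the relevant connecting maps $\psi$ and $\mu$ vanish off specific rays; that vanishing is shown not by dimension-counting but by exhibiting concrete non-zero compositions of irreducible morphisms in the $\mathbb{Z}A_\infty$ mesh (e.g.\ ``the composition of the maps down the coray from $X$ to $X_0$ with a map along $\rayfrom{X_0}$ to $\Sigma A$ is non-zero, forcing injectivity''). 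Your proposal, as written, has no analogous ingredient. Without it, the inductive step can conclude $\hom(X,A)\leq \hom(X'',A)+\hom({}_0X,A)$ and similar inequalities, but not the exact value. To complete the proof in your style, you would need to supply Step~3-type arguments determining $\operatorname{rank}\alpha$ and $\operatorname{rank}\delta$ — or switch to the paper's organization (fix $X$, region-analyse $A$ via both triangles from Lemma~\ref{lem:triangles}, and use the mesh composition arguments to pin the connecting maps).
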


The following diagram illustrates the proposition: all indecomposables $A$ in the heavily shaded square have $\dim\Hom(X,A)=2$:
\begin{center}
  \begin{tikzpicture}[scale=0.4]

\pgfmathsetmacro{\l}{1.5}    % placement of letters for regions
\pgfmathsetmacro{\s}{1.8}    % length of co/rays

\draw (-15,0) -- (15,0);

\draw (-11,13) -- ( 2,0) -- ( 6,4) -- (-3,13);
\draw ( 11,13) -- (-2,0) -- (-6,4) -- ( 3,13);

\fill[fill = gray!20]  (-6,4) -- ( 3,13) -- ( 11,13) -- (-2,0) -- cycle;
\fill[fill = gray!40]  ( 6,4) -- (-3,13) -- (-11,13) -- ( 2,0) -- cycle;
\fill[fill = gray!70]  ( 0,2) -- ( 4, 6) -- (  0,10) -- (-4,6) -- cycle;

\filldraw(-6,4) circle (4pt) node[left]  {$X$};
\filldraw( 6,4) circle (4pt) node[right] {$\SSS X$};
\filldraw(-2,0) circle (4pt) node[below] {$X_0$};
\filldraw( 2,0) circle (4pt) node[below] {${}_0(\SSS X)$};

\node at (-5,11)   {$\scriptstyle\corayto{\overline{{}_0(\SSS X),\SSS X}}$};
\node at ( 5,11)   {$\scriptstyle\rayfrom{\overline{XX_0}}$};

\end{tikzpicture}
\end{center}

\begin{proof}
The argument is similar to the computation of the Hom-hammocks in the $\cZ$ components from Section~\ref{sec:hammocks}. We proceed in several steps.

\Step{1} For any $A\in\ind{\cX}$ of height 0 the claim follows from Lemma~\ref{lem:mouth_hammocks}. Otherwise we consider the AR mesh which has $A$ at the top, and let $A'$ and $A''$ be the two indecomposibles of height $h(A)-1$. There are two triangles (see Lemma~\ref{lem:triangles}):
\begin{align}
{}_0A &\too A \too A'' \too \Sigma({}_0A)  = {}_0\Sigma A , \tag{ray} \label{ray} \\
A' &\too A \too A_0 \too \Sigma A'   , \tag{coray} \label{coray}
\end{align}
where, as before, ${}_0A$ and $A_0$ are the unique indecomposable objects on the mouth which are contained in respectively $\rayto{A}$ and $\corayfrom{A}$. Applying the functor $\Hom(X,\blank)$ to both triangles we obtain two exact sequences:
\begin{align}
 \Hom(X,{}_0A) \too \Hom(X,A) \stackrel{\phi}{\too} \Hom(X,A'')  \stackrel{\psi}{\too} \Hom(X,\Sigma \, {}_0A) , \label{first-seq} \\
 \Hom(X,\Sigma^{-1}A_0) \stackrel{\mu}{\too} \Hom(X,A') \too  \Hom(X,A) \stackrel{\delta}{\too} \Hom(X,A_0)     . \label{second-seq}
\end{align}
Since ${}_0A$ and $A_0$ lie on the mouth of the component, Lemma~\ref{lem:mouth_hammocks} implies that the outer terms have dimension at most 2.
Using the fact that $X_0$ and ${}_0 \SSS X$ are the only objects of the Hom-hammock from $X$ lying on the mouth, Lemma~\ref{lem:mouth_hammocks} actually yields:
\begin{align*}
   \hom(X,{}_0 A) > 0            &~\iff~  A \in \rayfrom{X_0}\cup \rayfrom{{}_0 \SSS X}, \\
   \hom(X,\Sigma \, {}_0A) > 0   &~\iff~  A \in \rayfrom{\Sigma^{-1} \, X_0} \cup \rayfrom{\Sigma^{-1} \,{}_0 \SSS X}, \\
   \hom(X,\Sigma^{-1} \, A_0) > 0 &~\iff~  A \in \corayto{\Sigma \, X_0} \cup \corayto{\Sigma \,{}_0 \SSS X}, \\
   \hom(X,A_0) > 0               &~\iff~  A \in \corayto{X_0} \cup \corayto{{}_0 \SSS X}.
\end{align*}
The spaces are 2-dimensional precisely when $A$ belongs to the intersections of the (co)rays on the right-hand side, which can only happen when ${}_0 \SSS X = X_0$.
The set of rays and corays listed above divide the component into regions. In this proof, each region is considered to be closed below and open above.

\Step{2} \textit{The function $\hom(X,\blank)$ is constant on each region, and changes by at most 1 when crossing a (co)ray if ${}_0 \SSS X \neq X_0$, and by at most 2 otherwise.}

The first claim is clear from exact sequences \eqref{first-seq} and \eqref{second-seq}. We show the second claim for rays; for corays the argument is similar.
We get
 $\hom(X,A)  \leq \hom(X,A'')+ \hom(X, {}_0 A)$
from sequence \eqref{first-seq}. This yields the stated upper bound for $\hom(X,A) $, as $\hom(X, {}_0 A) \leq 1$ when ${}_0 \SSS X \neq X_0$ and $\hom(X, {}_0 A) \leq 2$ otherwise. For the lower bound,
instead observe that
  $\hom(X,A'')\leq \hom(X,\Sigma {}_0 A) +\hom(X,A)$,
again from sequence \eqref{first-seq}.

\Step{3} \textit{ $\psi =0$ unless $A \in \rayfrom{\Sigma^{-1} \,{}_0 \SSS X}$ and $\mu =0$ unless $A \in \corayto{\Sigma X_0}$}

If $A \notin \rayfrom{\Sigma^{-1} \, X_0} \cup \rayfrom{\Sigma^{-1} \,{}_0 \SSS X}$ then $\hom(X,\Sigma {}_0 A)=0$ and so $\psi =0$ trivially. Therefore, we just need to consider $A \in \rayfrom{\Sigma^{-1} \, X_0}$ but $A\notin\rayfrom{\Sigma^{-1} \,{}_0 \SSS X}$, and in this case $\hom(X,\Sigma \, {}_0A)=1$. It is clear that the maps going down the coray from $X$ to $X_0$ span a 1-dimensional subspace of $\Hom(X,\Sigma \, {}_0A)$, which therefore is the whole space. Using properties of the $\IZ A_\infty$ mesh, the composition of such maps with a map along $\rayfrom{X_0}$ from $X_0$ to $\Sigma A$ defines a non-zero element in $\Hom(X,\Sigma \, A)$. Thus the map $\Hom(X,\Sigma {}_0 A) \to \Hom(X, \Sigma A)$ in the sequence \eqref{first-seq} is injective and it follows that $\psi =0$.
The proof of the second statement is similar: here we use the chain of morphisms in Properties~\ref{properties}(5) to show that the map $\Hom(X,\Sigma^{-1} A) \to \Hom(X, \Sigma^{-1} A_0)$ in the sequence \eqref{second-seq} is surjective.

\Step{4} \textit{If $\rayfrom{\Sigma^{-1} X_0}$ (or $\corayto{\Sigma {}_0 \SSS X}$, respectively) does not coincide with one of the other three (co)rays, then crossing it does not affect the value of $\hom(X,\blank)$.
}

Suppose $\rayfrom{\Sigma\inv \, X_0} \ni A$ doesn't coincide with $\rayfrom{X_0}$, $\rayfrom{{}_0\SSS X}$ or $\rayfrom{\Sigma\inv{}_0\SSS X}$. Thus $\hom(X,{}_0A)=0$, and from Step 3 the map $\psi =0$, hence $\Hom(X,A) = \Hom(X,A'')$. Similarly, suppose $A \in \corayto{\Sigma \,{}_0 \SSS X}$ and this doesn't coincide with any of the other corays. Then $\hom(X,A_0)=0$ and $\mu=0$ and again the claim follows.

\Step{5} \textit{ There are three possible configurations of rays and corays determining the regions where $\hom(X, \blank)$ is constant.}

It follows from Step 4 that it suffices to consider the remaining rays and corays,
\[ \rayfrom{\Sigma^{-1} \,{}_0 \SSS X}, \rayfrom{{\Sigma \,{}_0 \SSS X}}, \rayfrom{ X_0}
   \text{  and }
   \corayto{\Sigma X_0}, \corayto{{\Sigma \,{}_0 \SSS X}}, \corayto{ X_0}, \]
for determining the regional constants $\hom(X, \blank)$. Note that these are precisely the rays and corays required to bound the regions $\rayfrom{\overline{XX_0}}$ and $\corayto{\overline{{}_0(\SSS X),\SSS X}}$ of the statement of the proposition. Considering their relative positions on the mouth, $\Sigma^{-1} \,{}_0 \SSS X$ is always furthest to the left and $\Sigma X_0$ is furthest to the right, while ${}_0 \SSS X$ can lie to the left, or to the right, or coincide with $X_0$, depending on the height of $X$. We consider now the case where ${}_0 \SSS X$ is to the left of $X_0$. We label the regions in the following diagram by letters A--M (this is the order in which we treat them, and the subscripts indicate the claimed $\hom(X,\blank)$ for the region):
\begin{center}
\includegraphics[width=0.5\textwidth]{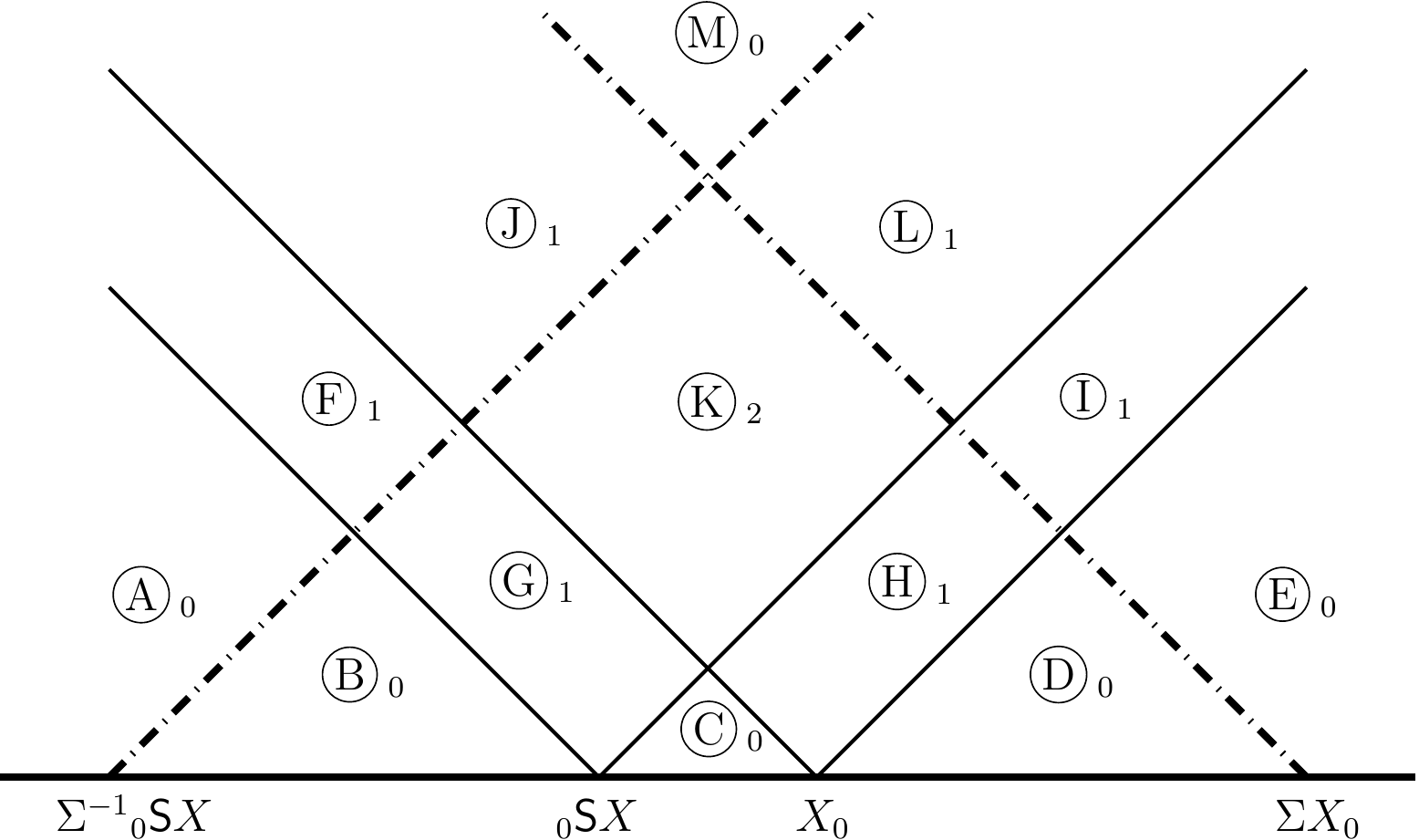}
\end{center}
First we note that regions A--E all contain part of the mouth and so $\hom(X,\blank)=0$ here. Looking at the maps from $X$ that exist in the AR component we see that $\hom(X,\blank) \geq 1$ on regions H, I, K and L; and on F, G, J and K using Serre duality. However regions F--I are reached by crossing a single ray or coray from one of the regions A--E. By Step 2 we thus get $\hom(X,\blank) = 1$ on regions F--I.

Now look at the element $A \in \rayfrom{\SSS{}_0X} \cap \corayto{X_0}$; this is the object of minimal height in region K. We can see that $A \in \corayfrom X$ and the map down the coray from $X$ to $A_0$, factors through the map from $A$ to $A_0$. Therefore the map $\delta$ in the second exact sequence \eqref{second-seq} is non-zero.
It is clear that $ A \notin \corayto{\Sigma X_0}$ so $\mu =0$ by Step 3 above.
We deduce from sequence \eqref{second-seq} that $\hom(X,A)>\hom(X,A')$, so $\hom(X,A)>1$ since $A'$ is in region G. Since $A$ is an object in region K, which can be reached from region D by crossing just two rays, Step 2 now gives $\hom(X,\blank)=2$ on region K.

In the same vein, consider $A \in \rayfrom{\SSS{}_0X} \cap \corayto{\Sigma X_0}$, the object of minimal height in region L. Observe that $A'' \in \rayfrom{\tau^{-1}\SSS{}_0X} \cap \corayto{\Sigma X_0} = \add \Sigma X$ from which we can see that the map to $\Hom(X,{}_0 A)$ in \eqref{first-seq} is surjective. Now $A \notin \rayfrom{\Sigma\inv{}_0 \SSS X}$, so $\psi =0$ by Step 3 and hence $\hom(X,A)=\hom(X,A'')$. With $A''$ in region I where we already know $\hom(X,A'')=1$, we get $\hom(X,\blank)=1$ on region L.

Finally we now take up $A \in \rayfrom{\Sigma^{-1}\SSS{}_0X} \cap \corayto{\Sigma X_0}$, the object of minimal height in region M. It is clear that $A \notin \rayfrom{\SSS{}_0X} \cup \rayfrom{X_0}$, so $\hom(X,{}_0 A) = 0$. A short calculation shows $A'' \in \rayfrom{X}$, and again using the chain of morphisms in Properties~\ref{properties}(5), we see that there is a map $X \to \Sigma {}_0 A = \SSS {}_0 X$ factoring through $A''$. Looking at the sequence \eqref{first-seq} it follows that $\hom(X,A)<\hom(X,A'')= 1$ since $A''$ is in region L. Therefore, $\hom(X,\blank)=0$ on region M. For region J, we see that since it is sandwiched between regions K and M, $\hom(X,\blank)=1$ here.

This deals with the case that ${}_0 \SSS X$ lies to the left of $X_0$. If instead it lies to the right, analogous reasoning applies. Finally, if ${}_0 \SSS X = X_0$, matters are simpler: in that case, the regions C and F--I all vanish.
\end{proof}

\subsection{Graded endomorphism algebras}
In this section we use the Hom-hammocks and universal hom space dimension bounds to recover some results of Bobi\'nski on the graded endomorphism algebras of algebras with discrete derived categories; see \cite[Section~4]{Bobinski}.
Our approach is somewhat different, so we provide proofs for the convenience of the reader.
Using these descriptions, we give a coarse classification of indecomposable objects of discrete derived categories in terms of their homological properties.

In order to conveniently write down the endomorphism complexes, we define four functions
$\lend^+_\cX, \lend^-_\cX, \lend^+_\cY, \lend^-_\cY \colon \IN\to\IN$ by
\[ \lend^+_\cX(h) \coloneqq \left\lfloor{\frac{ h }{m+r}}\right\rfloor, \quad
   \lend^-_\cX(h) \coloneqq \left\lfloor{\frac{h+1}{m+r}}\right\rfloor, \qquad
   \lend^+_\cY(h) \coloneqq \left\lfloor{\frac{h+1}{n-r}}\right\rfloor, \quad
   \lend^-_\cY(h) \coloneqq \left\lfloor{\frac{ h }{n-r}}\right\rfloor. \]
We write $\lend^\pm(A)$ to mean $\lend^\pm_\cX(h(A))$ or $\lend^\pm_\cY(h(A))$ for $A\in\ind{\cX}$ or $A\in\ind{\cY}$, respectively.

\begin{lemma} \label{lem:graded-end}
The endomorphism complexes of $A\in\ind{\cX}$ and $B\in\ind{\cY}$ are
\[ \Hom^\bullet(A,A) = \bigoplus_{l=0}^{\lend^+(A)} \Sigma^{-lr}\kk \:\oplus \bigoplus_{l=1}^{\lend^-(A)} \Sigma^{lr-1}\kk
   \text{~~and~~}
   \Hom^\bullet(B,B) = \bigoplus_{l=0}^{\lend^-(B)} \Sigma^{lr}\kk \:\oplus \bigoplus_{l=1}^{\lend^+(B)} \Sigma^{-lr-1}\kk . \]
\end{lemma}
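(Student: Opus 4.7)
The plan is to compute $\Hom^\bullet(A,A)$ directly via the decomposition
\[
\Hom^\bullet(A,A) = \bigoplus_{s \in \IZ} \Sigma^{-s}\Hom(A,\Sigma^s A),
\]
by combining the suspension action on objects recorded in Properties~\ref{properties}(3) with the Hom-hammocks of Section~\ref{sec:hammocks} to count non-vanishing summands. For $A \in \ind{\cX^k}$, Properties~\ref{properties}(3) says $\Sigma^s A$ lies in $\cX^{k+s \bmod r}$ and $\Sigma^r|_\cX$ acts as $\tau^{-(m+r)}$ on objects. The remaining cases of Proposition~\ref{prop:X-hammocks} give $\Hom(\cX^k,\cX^j)=0$ for $j \notin \{k,k+1\}$ and $\Hom(\cX^k,\cY)=0$, so only the residues $s \equiv 0$ and $s \equiv 1 \pmod r$ can contribute.

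For $s = lr$ we have $\Sigma^{lr}A = X^k_{i+l(m+r),\,j+l(m+r)}$. Applying the first clause of Proposition~\ref{prop:X-hammocks}, $\Hom(A,\Sigma^{lr}A)\neq 0$ if and only if this object lies in the $2$-dimensional region $\rayfrom{\overline{AA_0}}$, which translates to $0 \leq l(m+r) \leq \hi{A}$, i.e.\ $l \in \{0,1,\ldots,\lend^+(A)\}$. For $s = lr+1$, the object $\Sigma^{lr+1}A$ lies in $\cX^{k+1}$, and the second clause of Proposition~\ref{prop:X-hammocks} detects it via $\corayto{\overline{{}_0(\SSS A),\SSS A}}$; the corresponding coordinate inequalities force $l \in \{-\lend^-(A),\ldots,-1\}$, contributing summands $\Sigma^{-(lr+1)}\kk = \Sigma^{l'r-1}\kk$ with $l' = -l \in \{1,\ldots,\lend^-(A)\}$.

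For $r \geq 2$, Theorem~\ref{thm:hom-dimensions} yields $\dim\Hom(A,\Sigma^s A) \leq 1$, and the two residues above are distinct, so the direct sum exactly produces the stated formula. For $r=1$ the two residues coincide and the two hammock regions can overlap; by Proposition~\ref{prop:r=1_intersection} the $2$-dimensional Hom-spaces occur precisely on this intersection, which is consistent with the formula, since at each such degree $s$ both sums contribute a copy of $\kk$ and $\kk \oplus \kk = \kk^2$. The proof for $B \in \ind{\cY}$ is entirely parallel: one uses $\Sigma^r|_\cY = \tau^{n-r}$ together with Proposition~\ref{prop:Y-hammocks} (and, for $r=1$, the $\cY$-analogue of Proposition~\ref{prop:r=1_intersection}, which follows by the same argument), yielding the stated formula with $\lend^\pm_\cY$.

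The main obstacle is pure coordinate bookkeeping: each application of $\Sigma^r$ shifts coordinates by $(r+m,r+m)$ in the $\cX$ direction (or by $-(n-r,n-r)$ in $\cY$), and the intermediate step $\Sigma\colon \cX^{r-1}\to\cX^0$ carries its own boundary shift. One must carefully match these shifted coordinates against the piecewise description of $\rayfrom{\overline{AA_0}}$ and $\corayto{\overline{{}_0(\SSS A),\SSS A}}$ across component boundaries; once this is done, the floor functions defining $\lend^\pm_\cX$ and $\lend^\pm_\cY$ drop out of the inequalities on $l$.
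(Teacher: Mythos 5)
Your argument is correct and takes essentially the same route as the paper: decompose $\Hom^\bullet(A,A)$ degree by degree, note via Proposition~\ref{prop:X-hammocks} that only residues $\equiv 0,1\pmod r$ can contribute, rewrite $\Sigma^{lr}A$ and $\Sigma^{lr+1}A$ in AR coordinates using $\Sigma^r|_\cX=\tau^{-(m+r)}$, match against $\rayfrom{\overline{AA_0}}$ and $\corayto{\overline{{}_0\SSS A,\SSS A}}$ to read off the ranges $0\leq l\leq\lend^+(A)$ and $1\leq -l\leq\lend^-(A)$, and invoke Theorem~\ref{thm:hom-dimensions} for one-dimensionality when $r\geq 2$. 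The only place where your write-up is thinner than the paper is the $r=1$ case: the paper actually recomputes the ranges of $l$ for both pieces of the (now merged) hammock in $\cX^0$ and records the resulting index shift explicitly, whereas you argue by consistency with Proposition~\ref{prop:r=1_intersection} rather than deriving the bounds; the conclusion is the same, but the direct computation is what rigorously confirms the index conventions in the second sum.
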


\noindent
In words, the functions $\lend^+$ and $\lend^-$ determine the ranges of self-extensions of positive and negative degree, respectively. We point out that the result holds for all $r\geq1$.

\begin{proof}
Let $A\in\ind{\cX}$, assuming $r>1$. Suspending if necessary, we may suppose that $A = X^0_{ij}$. We are looking for all $d\in\IZ$ with $\Hom^d(A,A)=\Hom(A,\Sigma^dA)\neq0$. By Proposition~\ref{prop:X-hammocks}, this is only possible for either $d \equiv 0$ or $d \equiv 1$ modulo $r$.

We start with the first possibility: $d=lr$ for some $l\in\IZ$. By Properties~\ref{properties}(3) and (2),
\[ \Sigma^{lr}A = \tau^{-l(m+r)}A = X^0_{i+l(m+r),j+l(m+r)} \]
which is an indecomposable object in $\cX^0$ sharing its height $h=j-i$ with $A$. Again using Proposition~\ref{prop:X-hammocks},
we can reformulate the claim as follows:
\begin{align*}
           & \Hom^{lr}(A,A)\neq0
   \iff     \Sigma^{lr}A\in\rayfrom{\overline{AA_0}}
\\ \iff~~ & \Sigma^{lr}A = X^0_{i+l(m+r),j+l(m+r)} \in \{ A,\tau^{-1}A,\ldots,\tau^{-h}A \}
                                                 = \{ X^0_{ij}, X^0_{i+1,j+1},\ldots, X^0_{i+h,j+h} \}
\\ \iff~~ & i \leq i+l(m+r) \leq i+h
   \iff     0 \leq l(m+r) \leq h
\\ \iff~~ & 0 \leq l \leq \lend_\cX^+(h) = \lend^+(A) , \\
\intertext{where the set of $h+1$ objects in the second line are precisely the objects in $\rayfrom{\overline{AA_0}}$ of height $h$.
We now turn to the other possibility, $d=1+lr$ for some $l\in\IZ$. Here we get}
          & \Hom^{1+lr}(A,A)\neq0
   \iff     \Sigma^{1+lr}A\in\rayfrom{\overline{{}_0\SSS A,\SSS A}}
\\ \iff~~ & \Sigma^{1+lr}A = X^1_{i+l(m+r),j+l(m+r)} \in \{ \tau^h\SSS A,\ldots,\SSS A \}
                                                  = \{ X^1_{i-h-1,j-h-1},\ldots, X^1_{i-1,j-1} \}
\\ \iff~~ & i-h-1 \leq i+l(m+r) \leq i-1
   \iff     -h-1 \leq l(m+r) \leq -1
\\ \iff~~ & 1 \leq -l \leq \lend_\cX^-(h) = \lend^-(A) .
\end{align*}
As we know from Theorem~\ref{thm:hom-dimensions}, all Hom spaces have dimension 1 when $r>1$, these two computations give
\vspace{-2ex}
\[ \Hom^\bullet(A,A)
 = \bigoplus_{l\in\IZ} \Sigma^{-l} \Hom(A,\Sigma^l A)
 = \bigoplus_{l=0}^{\lend^+(A)} \Sigma^{-lr}\kk \:\oplus \bigoplus_{l=1}^{\lend^-(A)} \Sigma^{lr-1}\kk . \]

For $r=1$ and $A=X^0_{ij}\in\ind{\cX}$, by Proposition~\ref{prop:X-hammocks} the hammock $\Hom(A,\blank)\neq0$ is $\rayfrom{\overline{AA_0}}\cup\corayto{\overline{{}_0(\SSS A),\SSS A}}$. We treat each part separately:
\begin{align*}
           & \Sigma^lA = \tau^{-l(m+1)}X^0_{ij} = X^0_{i+l(m+1),j+l(m+1)} \in \rayfrom{\overline{AA_0}}
\\ \iff ~~ & 0 \leq l(m+1) \leq h \iff 0 \leq l \leq \lend^+(h)
\intertext{and, noting $\SSS A = X^0_{i+m,j+m}$,}
           & \Sigma^lA \in \corayto{\overline{{}_0(\SSS A),\SSS A}}
\\[-1.0ex]
   \iff ~~ & m-h \leq l(m+1) \leq m \iff 0 \leq -l \leq \Big\lfloor\frac{h-m}{m+1}\Big\rfloor = 1 + \lend^-(h) .
\end{align*}
The last inequality translates to the same degree range as in the statement of the lemma --- note the index shift by 1.
The claim for $\Hom^\bullet(B,B)$ for $B\in\ind{\cY}$ is proved in the same way, now using $h=i-j$, $\Sigma^r=\tau^{n-r}$ and the hammocks specified by Proposition~\ref{prop:Y-hammocks}.
\end{proof}

\subsection{Coarse classification of objects} \label{sub:coarse_classification}
Our previous results allow us to give a crude grouping of the indecomposable objects of $\Db(\LLambda)$. In the $\cX$ and $\cY$ components, the distinction depends on the height of an object, i.e.\ the distance from the mouth; see page~\pageref{def:height}. Recall that an object $D$ of a $\kk$-linear Hom-finite triangulated category $\sD$ is \emph{exceptional} if $\hom^*(D,D)=1$, then $\Hom^\bullet(D,D)=\kk$; see Appendix~\ref{sec:exceptionals}, and $D$ is called \emph{spherelike} if $\hom^*(D,D)=2$, then $\Hom^\bullet(D,D)=\kk\oplus\Sigma^{-d}\kk$ as graded vector spaces for some $d\in\IZ$ and $D$ is called $d$-spherelike; see \cite{HKP} for details. Assuming $\sD$ has a Serre functor $\SSS$, a $d$-spherelike object $D$ is called \emph{$d$-spherical} if $\SSS(D)=\Sigma^dD$; see \cite[\S8]{Huybrechts}.

\begin{proposition} \label{prop:object_types}
Each object $A\in\ind{\Db(\LLambda)}$ is of exactly one type below:
\begin{itemize}[leftmargin=1em]
\item Exceptional if $A\in\cZ$, or $A\in\cX$ with $h(A)<m+r-1$, or $A\in\cY$ with $h(A)<n-r-1$.
\item $(1-r)$-spherelike if $A\in\cX$ with $h(A)=m+r-1$.
\item $(1+r)$-spherelike if $A\in\cY$ with $h(A)=n-r-1$.
\item $\dim\Hom^*(A,A)\geq3$ with $\Hom^{<0}(A,A)\neq0$ otherwise.
\end{itemize}
\end{proposition}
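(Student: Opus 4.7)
The plan is to split on which AR component contains $A$, and within $\cX$ and $\cY$ further to stratify by height. The main input is Lemma~\ref{lem:graded-end}, which supplies the explicit graded self-Hom for indecomposables in the $\cX$ and $\cY$ components; the $\cZ$ components are instead handled via the transitivity of the twist action from Section~\ref{sec:auteq} together with the hammock computation of Proposition~\ref{prop:Z-hammocks}.

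For $A \in \ind{\cZ}$, the idea is that exceptionality propagates under autoequivalences. Corollary~\ref{cor:twist-action} shows that $\generate{\Sigma, \TTT_\cX, \TTT_\cY}$ acts transitively on $\ind{\cZ}$, so it suffices to verify that a single object is exceptional. Theorem~\ref{thm:hom-dimensions} already gives $\hom(A,A)=1$; I still have to rule out self-Homs in non-zero degrees. For $l \not\equiv 0,1 \pmod r$ the shift $\Sigma^l A$ lies in a $\cZ^j$ component disjoint from the hammock $\Hom(A,\blank)\neq 0$ of Proposition~\ref{prop:Z-hammocks}. For $l\equiv 0$ or $1 \pmod r$ with $l\neq 0$, one invokes the coordinate formula $\Sigma^r Z^0_{ij} = Z^0_{i+r+m,\,j+r-n}$ from Properties~\ref{properties}(3): since $r+m>0$ while $r-n<0$, the two coordinates are pushed in opposite directions, so $\Sigma^l A$ cannot lie inside $\rayfrom{\corayfrom{A}}$ nor inside $\rayto{\corayto{\SSS A}}$.

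For $A \in \ind{\cX}$ with $h \coloneqq h(A)$, Lemma~\ref{lem:graded-end} reads
\[ \Hom^\bullet(A,A) \;=\; \bigoplus_{l=0}^{\lend^+(A)} \Sigma^{-lr}\kk \;\oplus\; \bigoplus_{l=1}^{\lend^-(A)} \Sigma^{lr-1}\kk, \]
with $\lend^+(A) = \lfloor h/(m+r)\rfloor$ and $\lend^-(A) = \lfloor (h+1)/(m+r)\rfloor$. The three height ranges drop out immediately: if $h < m+r-1$, both floors vanish and $\Hom^\bullet(A,A)=\kk$, so $A$ is exceptional; if $h = m+r-1$, then $\lend^+=0$ and $\lend^-=1$, yielding $\Hom^\bullet(A,A) = \kk \oplus \Sigma^{r-1}\kk$, which is $(1-r)$-spherelike; and if $h \geq m+r$, both floors are at least $1$, so $\dim\Hom^\bullet(A,A) \geq 3$ and the $l=1$ summand $\Sigma^{r-1}\kk$ places a class in degree $1-r$, strictly negative for $r\geq 2$. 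The case $A \in \ind{\cY}$ is completely symmetric via the second formula of Lemma~\ref{lem:graded-end}, giving exceptional for $h<n-r-1$, $(1+r)$-spherelike at $h=n-r-1$, and the remaining clause at $h\geq n-r$, where the $l=1$ summand $\Sigma^{-r-1}\kk$ sits in degree $-r-1 < 0$.

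The main obstacle is the $\cZ$ case, where Lemma~\ref{lem:graded-end} offers no direct self-Hom formula and one must combine the regional constancy of $\hom(A,\blank)$ on the hammock, the coordinate action of $\Sigma^r$, and the transitivity of the autoequivalence action; once organised, the argument is short, but it genuinely uses the finite global dimension assumption $n>r$ (through the sign $r-n<0$). Finally, mutual exclusivity of the four types is automatic since $\ind{\Db(\LLambda)}$ is partitioned by component type and each component is stratified disjointly by height.
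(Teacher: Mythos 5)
Your strategy — Lemma~\ref{lem:graded-end} for the $\cX$ and $\cY$ components, transitivity of the autoequivalence action for $\cZ$ — is the paper's, and the height computations for $\cX$ and $\cY$ are identical to the published proof. Where you diverge is how exceptionality of a $\cZ$-object is seeded: the paper places the indecomposable projective $P(n-r)$ in $\cZ$ via Lemma~\ref{lem:proj-position} and observes that a projective is automatically exceptional, then transports this by Corollary~\ref{cor:twist-action}; you instead verify $\Hom^{\neq 0}(A,A)=0$ for a general $Z\in\cZ$ by comparing the coordinates of $\Sigma^l A$ with the hammock of Proposition~\ref{prop:Z-hammocks}. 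That route also works and avoids appealing to Lemma~\ref{lem:proj-position}, but note that the ``coordinates pushed in opposite directions'' observation only bites when $|l|\geq r$: for $l=1$ (and $r\geq 2$), $\Sigma$ does not move the $\IZ A_\infty^\infty$-coordinates at all, and you need the separate observation that $\Sigma A = \tau^{-1}\SSS A$ lies one step diagonally outside the corner of $\rayto{\corayto{\SSS A}}$ — a fact the paper in effect records inside the proof of Proposition~\ref{prop:Z-hammocks} when it shows $\Hom^1(A,A)=0$ for $A\in\cZ$.

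There is one shared loose end worth recording. You correctly flag that the $l=1$ term $\Sigma^{r-1}\kk$ is only in strictly negative degree when $r\geq 2$. For $r=1$ it sits in degree $0$, and more seriously, Lemma~\ref{lem:graded-end} then gives, for $A\in\cX$ with $m\geq 1$ and $m+1\leq h(A)\leq 2m$, that $\lend^+=\lend^-=1$ and $\Hom^\bullet(A,A)=\kk^2\oplus\Sigma^{-1}\kk$, so $\dim\Hom^\bullet\geq 3$ holds but $\Hom^{<0}(A,A)=0$. Thus the last bullet's claim $\Hom^{<0}(A,A)\neq 0$ appears not to hold in those cases (the $\cY$ side is unaffected, since there the negative contribution is in degree $-lr-1\leq -2$). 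The published proof does not address $r=1$ for this clause either, so this is not a defect peculiar to your write-up, but your remark ``strictly negative for $r\geq 2$'' leaves the $r=1$ case formally open and it should either be restricted away or argued directly.
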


\begin{remark}
In fact, the direct sum $E_1\oplus E_2$ of two exceptional objects $E_1$ and $E_2$ with
 $\Hom^\bullet(E_1,E_2) = \Hom^\bullet(E_2,E_1) = 0$ is a 0-spherelike object.
Examples for $r>1$ are given by taking $E_1\in\cX$ and $E_2\in\cY$ at the mouths.
The theory of spherelike objects also applies in this degenerate case, but is less interesting \cite[Appendix]{HKP}.
\end{remark}

\begin{remark}
We can infer the existence of $(1-r)$-spherelike indecomposable objects in $\cX$ and $(1+r)$-spherelike objects in $\cY$ also from Proposition~\ref{prop:spherelike_top} and Lemma~\ref{lem:exceptional-cycle}. To any reasonable $\kk$-linear triangulated category, \cite{HKP2} associates a poset derived from indecomposable spherelike objects. In \cite[\S6]{HKP2}, these posets are computed for discrete derived algebras.
\end{remark}

\begin{proof}
We know from Lemma~\ref{lem:proj-position} that the projective module $P(n-r)\in\cZ$. This is an exceptional object by Proposition~\ref{prop:Z-hammocks}. As the autoequivalence group acts transitively on $\ind{\cZ}$ by Corollary~\ref{cor:twist-action}, every indecomposable object of $\cZ$ is exceptional.
The remaining parts of the proposition all follow from Lemma~\ref{lem:graded-end}. We only give the argument for $A\in\ind{\cX}$, as the one for indecomposable objects of $\cY$ runs entirely parallel.

Observing the trivial inequalities $0\leq\lend^+(A)\leq\lend^-(A)$, we see that $A$ is exceptional if and only if $1=\dim\Hom^*(A,A)=1+\lend^+(A)+\lend^-(A)$. In turn, this happens precisely if $\lend^-(A)=0$, which means $h<m+r-1$.

Similarly, $A$ is spherelike if and only if $2=\dim\Hom^*(A,A)=1+\lend^+(A)+\lend^-(A)$ which is equivalent to $\lend^+(A)=0$ and $\lend^-(A)=1$. The only solution of these equations is $h=m+r-1$. Furthermore, in this case the endomorphism complex is
$\Hom^\bullet(A,A) = \kk \oplus \Sigma^{r-1}\kk$, so that $A$ is indeed $(1-r)$-spherelike.
\end{proof}

\begin{corollary}
Spherical objects exist in $\Db(\LLambda)$ only if $m=0$, $r=1$ or $n-r=1$. More precisely, $A\in\ind{\Db(\LLambda)}$ is
\begin{itemize}[leftmargin=1em]
\item 0-spherical if and only if $m=0$, $r=1$ and $A$ sits at an $\cX$-mouth;
\item $n$-spherical if and only if $n=r+1$  and $A$ sits at a $\cY$-mouth.
\end{itemize}
\end{corollary}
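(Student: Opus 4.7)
The plan is to start from Proposition \ref{prop:object_types}, which classifies the indecomposable spherelike objects: they are the $(1-r)$-spherelikes of height $h=m+r-1$ in $\cX$ components, and the $(1+r)$-spherelikes of height $h=n-r-1$ in $\cY$ components. A spherical object is in particular spherelike, so any indecomposable spherical object must be of one of these two kinds; I only need to decide when the additional condition $\SSS A=\Sigma^d A$ is satisfied.

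For the $\cX$-case I would rewrite $\SSS=\Sigma\tau$, so that $\SSS A=\Sigma^{1-r}A$ becomes $\tau A=\Sigma^{-r}A$. By Properties \ref{properties}(3), $\Sigma^{-r}$ acts on $\ind{\cX}$ as $\tau^{m+r}$, so the equation reduces to $\tau^{m+r-1}A=A$. Since $\tau$ acts freely on the $\IZ A_\infty$ component containing $A$, this forces $m+r-1=0$, i.e.\ $m=0$ and $r=1$; then $h(A)=0$, so $A$ sits at an $\cX$-mouth. The $\cY$-case is symmetric: $\SSS A=\Sigma^{1+r}A$ rearranges to $\tau A=\Sigma^r A$, and $\Sigma^r|_\cY=\tau^{n-r}$ turns this into $\tau^{n-r-1}A=A$, forcing $n=r+1$ and $A$ at a $\cY$-mouth.

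For the converses I would verify sphericity directly. When $m=0$, $r=1$ and $A$ sits at an $\cX^0$-mouth, the formula $\Sigma X^{0}_{ij}=X^0_{i+1,j+1}$ from Properties \ref{properties}(3) gives $\Sigma A=\tau^{-1}A$, whence $\SSS A=\Sigma\tau A=A$ and $A$ is $0$-spherical. When $n=r+1$ and $A$ sits at a $\cY^0$-mouth, the identity $\Sigma^r|_\cY=\tau^{n-r}=\tau$ gives $\Sigma^r A=\tau A$, and hence $\SSS A=\Sigma\tau A=\Sigma^{1+r}A=\Sigma^n A$, so $A$ is $n$-spherical.

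There is no substantial obstacle here: the whole argument pivots on the free action of $\tau$ on $\IZ A_\infty$ components, and everything else is bookkeeping with the shift formulas already tabulated in Properties \ref{properties}(3). The only mild care needed is to note that the condition $\tau^kA=A$ happens inside a single AR component, so one really does get freeness rather than merely an identification up to autoequivalence.
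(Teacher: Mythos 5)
Your proof is correct and follows essentially the same route as the paper: reduce to the spherelike candidates from Proposition~\ref{prop:object_types}, rewrite the sphericity condition $\SSS A = \Sigma^d A$ using $\SSS=\Sigma\tau$ and the shift formulas $\Sigma^r|_\cX=\tau^{-m-r}$, $\Sigma^r|_\cY=\tau^{n-r}$ of Properties~\ref{properties}(3), and conclude from the free $\tau$-action on a $\IZ A_\infty$ component. Your explicit check of the converses and your explicit mention of freeness are small additions beyond what the paper writes, but they fill in exactly the steps the paper leaves implicit rather than constituting a different argument.
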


\begin{proof}
The only candidates for spherical objects are the spherelike objects listed in Proposition~\ref{prop:object_types}.
Start with $A\in\cX$ with $h(A)=m+r-1$. Then $A$ is spherical if and only if $\SSS A = \Sigma^{1-r} A$. By $\SSS = \Sigma\tau$ and $\Sigma^{1-r} = \Sigma\tau^{m+r}$ (Properties~\ref{properties}(3)), this is equivalent to $\tau^{m+r-1}A=A$ which happens precisely if $m+r=1$. The only solution for this equation is $m=0$, $r=1$.

Next, $B\in\cY$ with $h(B)=n-r-1$ is spherical if and only if $\Sigma\tau B = \SSS B = \Sigma^{1+r}B = \Sigma\tau^{n-r} B$, so that here we get $\tau^{n-r-1}B=B$ which is possibly only for $n=r+1$.
\end{proof}

%%%%%%%%%%%%%%%%%%%%%%%%%%%%%%%%%%%%%%%%%%%%%%%%%%%%%%%%%%%%%%%%%%%%%%%%%%%%%%%%%%%%%%%%%

\section{Reduction to Dynkin type $A$ and classification results}
\label{sec:classifications}

\noindent
Two keys for understanding the homological properties of algebras are t-structures and co-t-structures, especially bounded ones. The main theorem of \cite{Koenig-Yang}, cited in the appendix as Theorem~\ref{thm:koenig-yang}, states that for finite-dimensional algebras, bounded co-t-structures are in bijection with silting objects, which are in turn in bijection with bounded t-structures whose heart is a length category; see Appendices~\ref{app:torsion} and \ref{app:correspondences} for a more detailed overview.

It turns out, however, that any bounded t-structure in $\Db(\LLambda)$ has length heart, and hence to classify both bounded t-structures and bounded co-t-structures it is sufficient to classify silting objects in $\Db(\LLambda)$. This is the main goal of this section. In the first part, we prove that any bounded t-structure in $\Db(\LLambda)$ is length, then we obtain a semi-orthogonal decompositon $\Db(\LLambda) = \sod{\Db(\kk A_{n+m-1}),\sZ}$, for some trivial thick subcategory $\sZ$, and use this to bootstrap Keller--Vossieck's classification of silting objects in the bounded derived categories of path algebras of Dynkin type $A$ to get a classification of silting objects in discrete derived categories.

\subsection{All hearts in $\boldsymbol{\Db(\LLambda)}$ are length}
\label{sec:length}

The main result of this section is:

\begin{proposition} \label{prop:length}
 Any heart of a t-structure of a discrete derived category has only a finite number of indecomposable objects up to isomorphism, and is a length category.
\end{proposition}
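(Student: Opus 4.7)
The plan is to reduce the proposition to a single finiteness claim: $\cH$ contains only finitely many isomorphism classes of indecomposable objects. Indeed, a classical argument of Auslander shows that any Hom-finite, Krull--Schmidt, abelian $\kk$-category $\cH$ with finitely many indecomposables up to isomorphism is equivalent to $\mod{\Gamma}$, where $\Gamma=\End(M)^{\mathrm{op}}$ for $M$ a direct sum of representatives of the indecomposables; the algebra $\Gamma$ is then finite-dimensional of finite representation type and $\mod{\Gamma}$ is a length category. Since $\cH$ inherits Hom-finiteness and Krull--Schmidt from $\Db(\LLambda)$, this reduction applies.

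To bound the set of indecomposables in $\cH$, I would combine three constraints. First, \emph{rigidity}: every indecomposable $X\in\cH$ satisfies $\Hom^{<0}(X,X)=0$, so Lemma~\ref{lem:graded-end} and Proposition~\ref{prop:object_types} force $X$ to be exceptional when $r\geq 2$: either $X\in\ind{\cZ}$, or $X\in\ind{\cX}$ with $h(X)<m+r-1$, or $X\in\ind{\cY}$ with $h(X)<n-r-1$; for $r=1$ an analogous explicit height bound applies. Second, boundedness of the t-structure yields integers $a\leq b$ such that every $X\in\cH$ has its cohomology with respect to the standard t-structure concentrated in degrees between $a$ and $b$. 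Third, the pairwise vanishing $\Hom^{<0}(X,Y)=0$ for all $X,Y\in\cH$.

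The first two constraints already cut the candidates inside the $\cX$ and $\cY$ components down to a finite list: on these components $\tau^{m+r}$ acts as $\Sigma^{-r}$, and $\tau^{n-r}$ as $\Sigma^r$ respectively, so AR-translating along the coordinates shifts the standard cohomological support by a non-zero amount, and the height bound together with the window $[a,b]$ leaves finitely many positions. The remaining step is to bound $\ind{\cH}\cap\ind{\cZ}$, where cohomological support alone still permits infinitely many candidates. Here I would use Proposition~\ref{prop:Z-hammocks}, the simply transitive action of the autoequivalences $\TTT_\cX,\TTT_\cY$ on each $\cZ^k$ provided by Proposition~\ref{prop:twist-action} and Corollary~\ref{cor:twist-action}, and the pairwise constraint: normalising one indecomposable $Z$ of $\cH\cap\ind{\cZ^k}$ by an autoequivalence, the vanishing of $\Hom(Z,\Sigma^{-lr}W)$ for all $l\geq 1$ and all other $W\in\cH\cap\ind{\cZ}$ should confine $W$ to a bounded region of the AR quiver, leaving only finitely many options.

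The main obstacle will be precisely this last combinatorial analysis in the $\cZ$ components, where finiteness has to be extracted from the pairwise Hom-negative constraint rather than from individual rigidity or a cohomological support bound. I expect the discreteness of $\Db(\LLambda)$ in the sense of Definition~\ref{def:DDC} to play an essential role, since within a fixed bounded cohomological window there remain countably many candidate indecomposables in $\cZ$, and one has to rule out infinite compatible subfamilies by appealing carefully to the Hom-hammock structure.
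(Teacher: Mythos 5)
Your reduction of the ``length'' conclusion to finiteness of indecomposable objects is valid, though it routes through Auslander's theorem where the paper gives a shorter, self-contained argument (the function $A\mapsto\dim\Hom(L,A)$, for $L$ the sum of all indecomposables, directly bounds subobject chains). On the finiteness claim itself, however, there are two gaps. The ``cohomological window'' --- that boundedness of $(\sX,\sY)$ gives integers $a\le b$ with $\sH\subset\sD^{[a,b]}_\standard$ --- is asserted but not justified; it is true for $\Db(\LLambda)$ (one shows $\sD^{\le a}_\standard\subseteq\sX$ from $\Sigma^{-a}\Lambda\in\sX$ together with $\sD^{\le 0}_\standard=\susp(\Lambda)$, which uses finite global dimension, and dually for the co-aisle, then passes to orthogonals), but this requires an argument you do not supply, and it turns out to be unnecessary.

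The more serious gap is the $\cZ$-components, which you flag as incomplete yourself, speculating that discreteness in the sense of Definition~\ref{def:DDC} ``plays an essential role''. The paper never invokes Definition~\ref{def:DDC} here. What is missing is the simple but decisive Lemma~\ref{lem:one-suspension}: at most one suspension $\Sigma^iD$ of any object $D$ can lie in the heart, since if $\Sigma^iD,\Sigma^jD\in\sH$ with $i<j$ then $\Sigma^iD$ lies in both $\sX$ and $\sY$, forcing $\End(\Sigma^iD)=0$. With this in hand the $\cZ$-argument closes at once: the complement of the hammocks $\Hom^{<0}(Z,-)\neq0$, $\Hom^{<0}(-,Z)\neq0$ inside the $\cZ$-components is a union of $\Sigma$-translates of a finite set, and the one-suspension lemma cuts this to a genuinely finite set. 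The same lemma also supersedes the cohomological window for the $\cX$ and $\cY$ components: indecomposables without negative self-extensions have bounded height by Lemma~\ref{lem:graded-end}, hence form finitely many $\Sigma$-orbits, and at most one object per orbit can lie in $\sH$.
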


We prove these statements separately in the following lemmas. The first lemma is a general statement regarding t-structures, which is well known to experts, and included for the convenience of the reader. The second is a generalisation of the corresponding statement for the algebra $\Lambda(1,2,0)$ proved in \cite{Koenig-Yang}; the third is a general statement about Hom-finite abelian categories.

\begin{lemma}[cf.\ {\cite[Lemma 4.1]{HJY}}] \label{lem:one-suspension}
Let $\sD$ be a triangulated category equipped with a t-structure $(\sX,\sY)$ with heart $\sH = \sX \cap \Sigma \sY$. Then at most one suspension of any object of $\sD$ may lie in the heart $\sH$.
\end{lemma}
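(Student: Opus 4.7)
The plan is to reduce the statement to a short computation with the defining axioms of a t-structure. Recalling the standard convention compatible with the heart being $\sH = \sX \cap \Sigma \sY$, the aisle satisfies $\Sigma \sX \subseteq \sX$, the coaisle satisfies $\Sigma^{-1} \sY \subseteq \sY$, and the orthogonality axiom $\Hom(\sX, \sY) = 0$ holds. I would begin by reformulating the conclusion: it suffices to show that if a nonzero object $A \in \sD$ satisfies $A, \Sigma^k A \in \sH$ for some integer $k$, then $k = 0$. By replacing $A$ with $\Sigma^{\min(0,k)} A$ and possibly swapping roles, I may assume $k \geq 1$.

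Next I would unpack membership in the heart. From $A \in \sH$ we extract $A \in \sX$ and $\Sigma^{-1} A \in \sY$; from $\Sigma^k A \in \sH$ we extract $\Sigma^k A \in \sX$ and $\Sigma^{k-1} A \in \sY$. Now I would iterate the closure properties: applying $\Sigma \sX \subseteq \sX$ to $A \in \sX$ a total of $k-1 \geq 0$ times gives $\Sigma^{k-1} A \in \sX$. Combined with $\Sigma^{k-1} A \in \sY$, this places $\Sigma^{k-1} A$ in $\sX \cap \sY$.

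The finishing step is to apply the orthogonality axiom to the identity morphism: $\id_{\Sigma^{k-1} A} \in \Hom(\sX, \sY) = 0$, so $\Sigma^{k-1} A = 0$, and hence $A = 0$ since $\Sigma$ is an autoequivalence. This contradicts $A \neq 0$, so we must have $k = 0$.

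There is no genuine obstacle here — the only subtlety is fixing the direction of the two closure axioms so that the induction in the middle step runs the right way; once that is pinned down, the proof is a direct manipulation of the three t-structure axioms. I would simply verify the convention once at the start of the proof and then carry out the three-line argument above.
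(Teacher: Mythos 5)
Your proof is correct and uses essentially the same idea as the paper: land a nonzero object in $\sX\cap\sY$ via the closure axioms, then kill it with $\Hom(\sX,\sY)=0$. The only superficial difference is that you reduce to $k\geq 1$ by a WLOG at the start and conclude at $\Sigma^{k-1}A$, whereas the paper treats the cases $n>0$ and $n<0$ separately and applies the orthogonality at $H$ itself; both are one-step variants of the same argument.
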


\begin{proof}
Let $0 \neq H \in \sH$.  We show that $\Sigma^n H \notin \sH$ for any $n \neq 0$. First suppose that $\Sigma^n H \in \sH$ for some $n > 0$. Then $H \in \Sigma^{-n} \sH$. We have $\Sigma^{-n} \sH \subseteq \Sigma^{-n} \Sigma \sY \subseteq \sY$. The condition $\Hom(\sX,\sY)=0$ then implies that $\Hom(H,H)=0$, a contradiction.

Now suppose $\Sigma^{-n} H \in \sH$ for some $n>0$. In this case we have $\Sigma^{n} \sH \subseteq \Sigma^{n} \sX \subseteq \Sigma \sX$, whence the condition $\Hom(\Sigma \sX,\Sigma \sY)=0$ gives the required contradiction.
\end{proof}

\begin{lemma} \label{lem:finiteheart}
   Any heart of a t-structure of a discrete derived category has a finite number of indecomposable objects up to isomorphism.
\end{lemma}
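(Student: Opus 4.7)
The plan is to combine the $\Sigma$-orbit restriction of Lemma~\ref{lem:one-suspension} with the graded self-Hom computation of Lemma~\ref{lem:graded-end}, and then to dispatch the $\cZ$-components separately using the boundedness of the $t$-structure. Let $(\sX,\sY)$ be a bounded $t$-structure on $\sD=\Db(\LLambda)$ with heart $\sH$. Since $\sH$ is a heart, $\Hom_\sD(H,\Sigma^iH')=0$ for all $H,H'\in\sH$ and every $i<0$; taking $H=H'$ indecomposable and combining with Lemma~\ref{lem:one-suspension}, $\sH$ meets each $\Sigma$-orbit in $\ind{\sD}$ in at most one indecomposable, and only indecomposables with vanishing negative self-extensions can occur.

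For the $\cX$ and $\cY$ components this will already suffice. Lemma~\ref{lem:graded-end} shows that the negative-degree part of $\Hom^\bullet(A,A)$ for $A\in\ind{\cX}$ is governed by $\lend^-(A)$, which vanishes precisely when $h(A)$ lies below an explicit threshold depending only on $r$ and $m$; the dual statement holds for $B\in\ind{\cY}$. Consequently every indecomposable of $\sH\cap(\cX\cup\cY)$ lives in a bounded-height strip of the Auslander--Reiten quiver. Since by Properties~\ref{properties}(3) the functor $\Sigma^r$ acts on each $\cX^k$ as $\tau^{-(m+r)}$ and on each $\cY^k$ as $\tau^{n-r}$, this strip contains only finitely many $\Sigma$-orbits, so Lemma~\ref{lem:one-suspension} forces $\sH\cap(\cX\cup\cY)$ to be finite.

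The main obstacle is the $\cZ$-components. Every $Z\in\cZ$ is exceptional by Proposition~\ref{prop:object_types}, so negative self-Hom vanishing is automatic, and by Properties~\ref{properties}(3) the functor $\Sigma^r$ acts on each $\cZ^k$ as the translation $(i,j)\mapsto(i+r+m,j+r-n)$, whose orbit space in $\IZ^2$ is infinite. To close this gap I plan to use boundedness of the $t$-structure essentially. Fix a classical generator $G$ of $\sD$, say $G=\LLambda$; by boundedness its $\sH$-cohomology lies in some finite interval $[a,b]$, hence $\Hom_\sD(Z,\Sigma^\ell G)=0$ for all $Z\in\sH$ and all $\ell\notin[-b,-a]$. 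Combined with the quarter-plane shape of the Hom-hammocks out of $\cZ$ given by Proposition~\ref{prop:Z-hammocks} and the explicit $\Sigma$-action above, the idea is that if the coordinates $(i,j)$ of indecomposables $Z^k_{ij}\in\sH\cap\cZ$ could be made arbitrarily large, then for some summand of $G$ a suspension $\Sigma^\ell G$ with $\ell\notin[-b,-a]$ would land inside the hammock from $Z$, contradicting the required vanishing. This would confine $\sH\cap\cZ$ to a finite region of the AR quiver and complete the proof. The technical heart of the argument is to make this hammock-plus-boundedness estimate rigorous and to ensure that each $\cZ^k$-component is actually detected by a suitable suspension of a summand of $G$; this is where I expect the main work to lie.
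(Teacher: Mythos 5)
Your treatment of the $\cX$ and $\cY$ components via negative self-extensions and Lemma~\ref{lem:one-suspension} is correct and matches the paper, which cites Proposition~\ref{prop:object_types} in place of your direct appeal to Lemma~\ref{lem:graded-end}.

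For the $\cZ$ components you take a genuinely different route, and the key step as stated has a gap. You assert that if $G$ has $\sH$-cohomology concentrated in a finite interval, then $\Hom_\sD(Z,\Sigma^\ell G)=0$ for all $Z\in\sH$ whenever $\ell$ lies outside a corresponding finite window. Only one direction of this follows from the t-structure axioms: for $\ell$ sufficiently \emph{negative}, $\Sigma^\ell G\in\sD^{\geq 1}$ while $Z\in\sD^{\leq 0}$, so $\Hom(Z,\Sigma^\ell G)=0$. For $\ell\gg 0$ we have $\Sigma^\ell G\in\sD^{\leq -N}$ for large $N$, but $\Hom(\sD^{\geq 0},\sD^{\leq -1})$ carries no a priori vanishing: these would be positive Ext groups between heart objects, and boundedness of the t-structure gives no control over them. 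Bounding them would amount to knowing the heart has finite global dimension, which is not available at this stage and is in essence what the lemma is a step toward. Serre duality can rescue the missing direction, since $\Hom(\Sigma^\ell G,Z)=0$ \emph{does} hold for $\ell\gg 0$ and equals $\Hom(Z,\Sigma^\ell\SSS G)^*$, so one could test against $G\oplus\SSS G$ instead; but then you would still need to carry out the hammock estimate you deferred, and the whole approach remains restricted to bounded t-structures, which you must explicitly assume.

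The paper's argument for $\cZ$ avoids all of this by constraining heart objects against \emph{each other} rather than against a generator. Fix any $Z\in\sH\cap\cZ$; for every other $Z'\in\sH\cap\cZ$ one has $\Hom^{<0}(Z,Z')=0=\Hom^{<0}(Z',Z)$, simply because there are no negative extensions inside a heart. The quarter-plane shape of the $\cZ$-hammocks in Proposition~\ref{prop:Z-hammocks} then shows that the complement of these two regions meets only finitely many $\Sigma$-orbits, and Lemma~\ref{lem:one-suspension} finishes. This version is shorter, requires no choice of generator and no boundedness hypothesis, and sidesteps the half-missing vanishing that undermines your step.
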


\begin{proof}
We use the fact that there can be no negative extensions between objects in the heart $\sH$ of a t-structure $(\sX,\sY)$. Suppose $\sH$ contains an indecomposable $Z\in\ind{\cZ}$. Then any other indecomposable object in $\sH$ must lie outside the hammocks $\Hom^{<0}(Z,\blank) \neq 0$ and $\Hom^{<0}(\blank,Z) \neq 0$.  Looking at the complement of these Hom-hammocks, it is clear that all objects of $\ind{\sH}\cap\cZ$ must be (co)suspensions of a finite set of objects, see Figure~\ref{fig:type-A} for an illustration. Lemma~\ref{lem:one-suspension} implies that at most one suspension can sit in the heart $\sH$; hence $\ind{\sH}\cap\cZ$ is finite.

Now consider the $\cX$ component. By Proposition~\ref{prop:object_types}, any object $X_{i,j}^l$ which is sufficiently high up in an $\cX$ component --- here $j-i \geq r+m-1$ will do --- has a negative self-extension. Such objects cannot lie in the heart (\cite[Lemma 4.1(a)]{HJY}, for instance) and so again, up to (co)suspension, $\ind{\sH}\cap\cX$ is finite. The argument for the $\cY$ component is similar.
\end{proof}

\begin{lemma}
Let $\sH$ be a Hom-finite abelian category with finitely many indecomposable objects. Then $\sH$ is a finite length category.
\end{lemma}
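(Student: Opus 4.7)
The plan is to reduce the statement to a length statement in the module category of the Auslander-style algebra associated to $\sH$. Since $\sH$ is abelian, idempotents split, and since $\sH$ is $\kk$-linear Hom-finite, every endomorphism algebra is a finite-dimensional $\kk$-algebra; these two facts together ensure that $\sH$ is Krull--Schmidt. Combined with the hypothesis that $\sH$ has only finitely many indecomposables up to isomorphism, say $M_1,\ldots,M_n$, every object of $\sH$ decomposes as $\bigoplus_i M_i^{a_i}$. Setting $M \coloneqq M_1 \oplus \cdots \oplus M_n$ and $A \coloneqq \End_\sH(M)$, which is a finite-dimensional $\kk$-algebra, I would consider the left exact functor
\[ F \coloneqq \Hom_\sH(M,-) \colon \sH \too \mod{A}. \]

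Next I would check that $F$ is fully faithful. The computation $F(M^k) = A^k$, combined with the fact that every $X \in \sH$ is a direct summand of some $M^k$, gives by additivity a bijection $\Hom_\sH(X,Y) \xrightarrow{\sim} \Hom_A(FX,FY)$ for all $X,Y \in \sH$. This is the standard Yoneda-type computation for the endomorphism algebra of an additive generator. In particular, $F$ identifies $\sH$ with the full subcategory $\proj{A}$ of finitely generated projective right $A$-modules.

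Given a strict chain of subobjects $0 = X_0 \subsetneq X_1 \subsetneq \cdots \subsetneq X_k = X$ in $\sH$, applying $F$ yields a chain $0 = FX_0 \subseteq FX_1 \subseteq \cdots \subseteq FX_k = FX$ of submodules of $FX$, because left exactness of $F$ preserves monomorphisms. The inclusions remain strict: if $FX_{i-1} = FX_i$, then applying $F$ to the short exact sequence $0 \to X_{i-1} \to X_i \to X_i/X_{i-1} \to 0$ forces the map $FX_i \to F(X_i/X_{i-1})$ to vanish, and by full faithfulness the non-zero epimorphism $X_i \onto X_i/X_{i-1}$ would itself be zero, a contradiction. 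Therefore $k$ is bounded by the length of $FX$ in $\mod{A}$, which is in turn at most $\dim_\kk FX < \infty$. This shows that $\sH$ satisfies both the ascending and descending chain conditions on subobjects, hence is a finite length category. The main technical input is the full faithfulness of $F$; once that is in place, the chain comparison is straightforward.
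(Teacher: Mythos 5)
Your proof is correct and follows essentially the same route as the paper. The paper also forms the additive generator $L = M_1 \oplus \cdots \oplus M_n$, applies $\Hom(L,-)$, and bounds the length of a chain of subobjects by $\dim_\kk \Hom(L,-)$; the key step — that a proper inclusion of subobjects gives a strict increase in $\dim_\kk\Hom(L,-)$ — is exactly your faithfulness argument, written out bare-handed by factoring the quotient through a surjection $L^{\oplus s}\onto B$ rather than by invoking the identification $\sH\simeq\proj(\End M)$. Your packaging via $\proj A$ and full faithfulness is a little heavier than needed (faithfulness of $\Hom(M,-)$ on $\sH$ is already enough), but it is the same mechanism and the step-by-step correspondence is exact.
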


\begin{proof}
Since $\sH$ is a Hom-finite, $\kk$-linear abelian category, it is Krull--Schmidt; see \cite{Atiyah}. Now let $L$ be the direct sum of all indecomposable objects (up to isomorphism) of $\sH$. By assumption, this sum is finite and hence $L\in\sH$. We define the function $d\colon\text{Ob}(\sH)\to\IN$, $A\mapsto\dim\Hom(L,A)$.

If $A\subset B$ is a subobject, we obtain exact sequences $0 \to A \to B \to C \to 0$ and
 $0 \to \Hom(L,A) \to \Hom(L,B) \to \Hom(L,C)$.
This shows $d(A)\leq d(B)$. Moreover, if $d(A)=d(B)$, then the induced map $\Hom(L,B) \to \Hom(L,C)$ is zero.
For some $s\in\IN$, there is a surjection $p\colon L^{\oplus s}\onto B$, inducing a further surjection $q\colon L^{\oplus s}\onto C$. However, we also get $0 \to \Hom(L^{\oplus s},A) \to \Hom(L^{\oplus s},B) \xrightarrow{v} \Hom(L^{\oplus s},C)$. The dimensions of the first two Hom spaces are $sd(A)=sd(B)$, so that $v=0$. Since $v(p)=q$ by construction, this forces $C=0$.

Hence for $B\in\sH$, the function $d$ can only take the values $1,\ldots,d(B)-1$ on non-trivial subobjects. Thus ascending or descending chains of subobjects of $B$ must stabilise.
\end{proof}

\begin{remark}
Proposition~\ref{prop:length} means that the heart of each bounded t-structure in $\Db(\LLambda)$ is equivalent to $\mod{\Gamma}$, for a finite-dimensional algebra $\Gamma$ of finite representation type. Note that, by work of Schr\"oer and Zimmermann \cite{Schroer-Zimmermann}, $\Gamma$ is again gentle.
\end{remark}

Knowing this, we can now turn our attention solely to classifying the silting objects. The first step in our approach is to decompose $\Db(\LLambda)$ into a semi-orthogonal decomposition, one of whose orthogonal subcategories is the bounded derived category of a path algebra of Dynkin type $A$.

\subsection{A semi-orthogonal decomposition: reduction to Dynkin type $A$}
\label{sec:dynkin-A}

We start by showing that the derived categories of derived-discrete algebras always arise as extensions of derived categories of path algebras of type $A$ by a single exceptional object.

\begin{proposition} \label{prop:embedding-A}
Let $Z\in\ind{\cZ}$ and $\sZ=\thick{\Db(\Lambda)}{Z}$. Then $\sZ\orth \simeq \Db(\kk A_{n+m-1})$ and there is a semi-orthogonal decomposition $\Db(\LLambda) = \sod{\Db(\kk A_{n+m-1}), \sZ}$. In particular, $\sZ$ is functorially finite in $\Db(\LLambda)$. Moreover, $\Db(\LLambda)$ has a full exceptional sequence.
\end{proposition}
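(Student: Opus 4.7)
The plan has three steps: (i) establish the semi-orthogonal decomposition from the exceptionality of $Z$; (ii) identify $\sZ\orth$ with $\Db(\kk A_{n+m-1})$ via silting reduction; (iii) deduce the full exceptional sequence.

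For step (i), by Proposition~\ref{prop:object_types} every indecomposable of $\cZ$ is exceptional, so $\sZ = \thick{}{Z} \simeq \Db(\kk)$. Because $\Db(\Lambda)$ is Hom-finite and admits a Serre functor (Appendix~\ref{app:category-properties}), Bondal's theorem on exceptional objects in categories with Serre duality makes $\sZ$ admissible: the inclusion $\sZ \hookrightarrow \Db(\Lambda)$ has both adjoints, built from left- and right-approximation triangles. The semi-orthogonal decomposition $\Db(\Lambda) = \sod{\sZ\orth, \sZ}$ and functorial finiteness of $\sZ$ both follow at once from admissibility.

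For step (ii), Corollary~\ref{cor:twist-action} asserts that the autoequivalence group acts transitively on $\ind{\cZ}$, so the identification is independent of the choice of $Z$; I would fix $Z = P(n-r)$, the unique indecomposable projective in $\cZ$ (Lemma~\ref{lem:proj-position}). The tilting object $\Lambda$ then decomposes as $Z \oplus T$ with $T = \bigoplus_{i\neq n-r} P(i)$, and Aihara--Iyama silting reduction \cite{AI} with respect to the summand $Z$ exhibits the image $\bar T$ of $T$ in $\Db(\Lambda)/\sZ \simeq \sZ\orth$ as a silting object with $n+m-1$ indecomposable summands. To identify the endomorphism algebra, one computes $\End^\bullet_{\sZ\orth}(\bar T)$ as $e\Lambda e$ (with $e = 1 - e_{n-r}$) modulo morphisms factoring through $\sZ$, i.e.\ modulo paths through vertex $n-r$. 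Deleting this vertex from $Q(r,n,m)$ cuts the cycle into a linear chain and, by the canonical form of Figure~\ref{fig:canonical-form}, the tail hangs off one of its endpoints, producing a linear quiver of Dynkin type $A_{n+m-1}$. Together with the vanishing of negative self-extensions of $\bar T$ (checkable via the approximation triangles and the Hom-hammocks of Section~\ref{sec:hammocks}), a Rickard-style derived Morita argument in the algebraic enhancement inherited from $\Db(\Lambda)$ yields the equivalence $\sZ\orth \simeq \Db(\kk A_{n+m-1})$.

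For step (iii), a full exceptional sequence of $\Db(\Lambda)$ arises by concatenating any full exceptional sequence of $\Db(\kk A_{n+m-1})$ (for instance the simple modules in an appropriate order) with the exceptional object $Z$, via the semi-orthogonal decomposition. The main obstacle is the endomorphism algebra computation in step (ii): one must verify both the vanishing of the negative self-extensions of $\bar T$ in $\sZ\orth$ and the precise shape of the quiver obtained by deleting vertex $n-r$ from $Q(r,n,m)$, checking in particular that the tail's attachment point sits at an endpoint of the cut cycle so that the resulting quiver is of type $A$ rather than a branched shape.
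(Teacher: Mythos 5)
Your step (i) coincides exactly with the paper's: $Z$ is exceptional by Proposition~\ref{prop:object_types}, so $\sZ$ is admissible via \cite[Theorem~3.2]{Bondal}, giving the semi-orthogonal decomposition and functorial finiteness. Step (iii) also matches.

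The difficulty is step (ii), and in fact the specific check you flag at the end would \emph{fail}. After reducing to $Z = P(n-r) = e_{n-r}\Lambda$ (exactly as the paper does), the paper does not run silting reduction; it cites \cite[Lemma~3.4]{AKL} to get a direct full embedding $\Db(\Lambda/\Lambda e_{n-r}\Lambda) \hookrightarrow \Db(\Lambda)$ with essential image precisely $\sZ\orth$, which sidesteps your need to prove $\bar T$ is tilting (not merely silting) and to run a Rickard/Keller derived Morita argument. More importantly, the Gabriel quiver of $\Lambda/\Lambda e_{n-r}\Lambda$ is \emph{not} a linear $A_{n+m-1}$ quiver when $r\geq 2$ and $m\geq 1$. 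Deleting vertex $n-r$ leaves the chain $n-r+1 \to \cdots \to n-1 \to 0 \to 1 \to \cdots \to n-r-1$, so vertex $0$ sits $r-1$ steps from one end; the tail $-m \to \cdots \to -1 \to 0$ then creates a branch point at vertex $0$. Your proposed sanity check ("the tail's attachment point sits at an endpoint") is therefore false in general, and your identification of $\sZ\orth$ with $\Db(\kk A_{n+m-1})$ has a genuine gap. The paper instead invokes Assem's theorem \cite[Theorem, p.~2122]{Assem}: the branched gentle tree quiver of $\Lambda/\Lambda e_{n-r}\Lambda$ satisfies the criteria ($\alpha_1$), ($\alpha_3$), ($\alpha_4$), ($\alpha_6$) that characterize iterated tilted algebras of type $A_{n+m-1}$, and then Happel's theorem gives the derived equivalence $\Db(\Lambda/\Lambda e_{n-r}\Lambda) \simeq \Db(\kk A_{n+m-1})$. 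This is a statement about derived equivalence, not quiver isomorphism, and is the essential ingredient your approach is missing.
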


\begin{proof}
By Proposition~\ref{prop:object_types}, the object $Z$ is exceptional. This implies, on general grounds, that the thick hull of $Z$ just consists of sums, summands and (co)suspensions: $\sZ = \add(\Sigma^i Z \mid i \in \IZ)$ and that $\sZ$ is an admissible subcategory of $\Db(\Lambda)$; for this last claim see \cite[Theorem~3.2]{Bondal}. Furthermore $\Db(\Lambda) = \sod{\sZ\orth,\sZ}$ is the standard semi-orthogonal decomposition for an exceptional object; see Appendix~\ref{sec:exceptionals} for details.
%
%Another way to see this: inspect the Hom-hammocks of Section~\ref{sec:hammocks} and apply Lemma~\ref{lem:functorially-finite} to find that the subcategory $\sZ$ is functorially finite in $\Db(\Lambda)$. Applying \cite[Proposition~1.3]{Keller-Vossieck} gives the admissibility of $\sZ$.

Lemma~\ref{lem:proj-position} places the indecomposable projective $P(n-r)$ in the $\cZ$ component of the AR quiver of $\Db(\Lambda)$. Using the transitive action of the autoequivalence group on $\ind{\cZ}$, see Corollary~\ref{cor:twist-action}, we thus can assume, without loss of generality, that $Z=P(n-r)=e_{n-r}\Lambda$. There is a full embedding
 $\iota\colon \Db(\Lambda / \Lambda e_{n-r} \Lambda) \to \Db(\Lambda)$
with essential image
% $\iota(\Db(\Lambda / e_{n-r} \Lambda e_{n-r})) =
 $\thick{\Db(\Lambda)}{e_{n-r} \Lambda}\orth = \sZ\orth$; see, for example, \cite[Lemma~3.4]{AKL}. Inspecting the Gabriel quiver of $\Lambda / \Lambda e_{n-r} \Lambda$, we see that this quiver satisfies the criteria of \cite[Theorem,~p.~2122]{Assem}. For the convenience of the reader, we list those criteria which are relevant for our case, where we have specialised the conditions of \cite{Assem} to bound quivers:

\bigskip
\noindent
\parbox{0.48\textwidth}{
\includegraphics[width=0.48\textwidth]{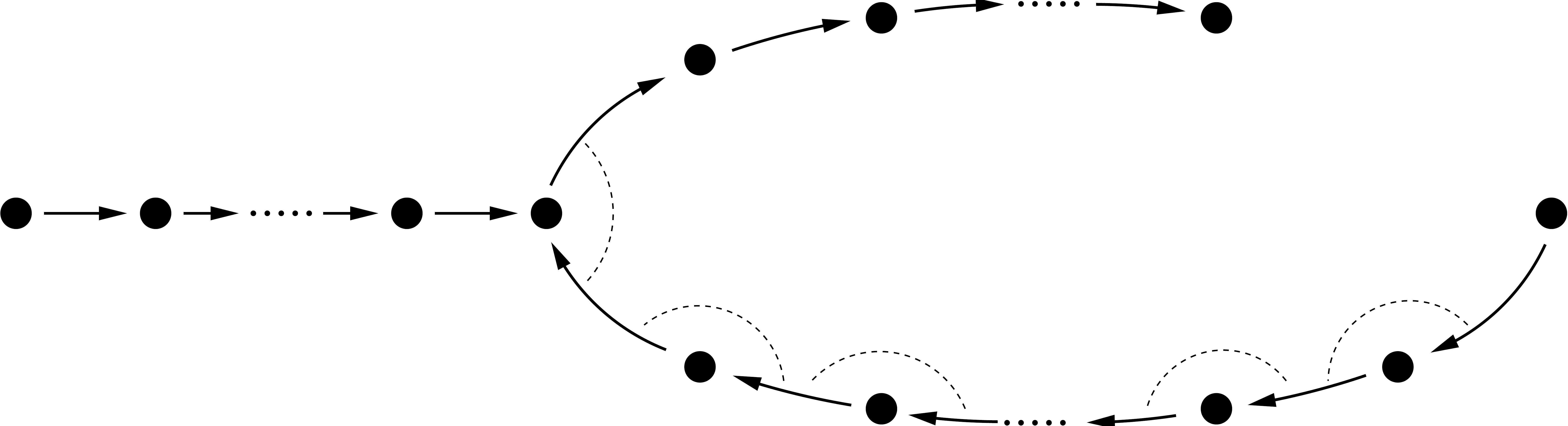}
}
\hfill
\parbox{0.47\textwidth}{\scriptsize
\noindent
\begin{tabular}{@{} p{0.05\textwidth} @{} p{0.42\textwidth} @{} p{0em} @{}}
($\alpha_1$) & The underlying graph is a tree. \\
($\alpha_3$) & All relations are zero-relations of length two. \\
($\alpha_4$) & Each vertex has at most four neighbours. \\
($\alpha_6$) &  \multicolumn{1}{@{}p{0.30\textwidth}}{A vertex with three neighbours sits in a full subgraph of the form:}
             &  \multicolumn{1}{@{}p{0.09\textwidth}}{ \hspace*{-0.10\textwidth}
                                                       \raisebox{-3ex}{\includegraphics[height=4.0ex]{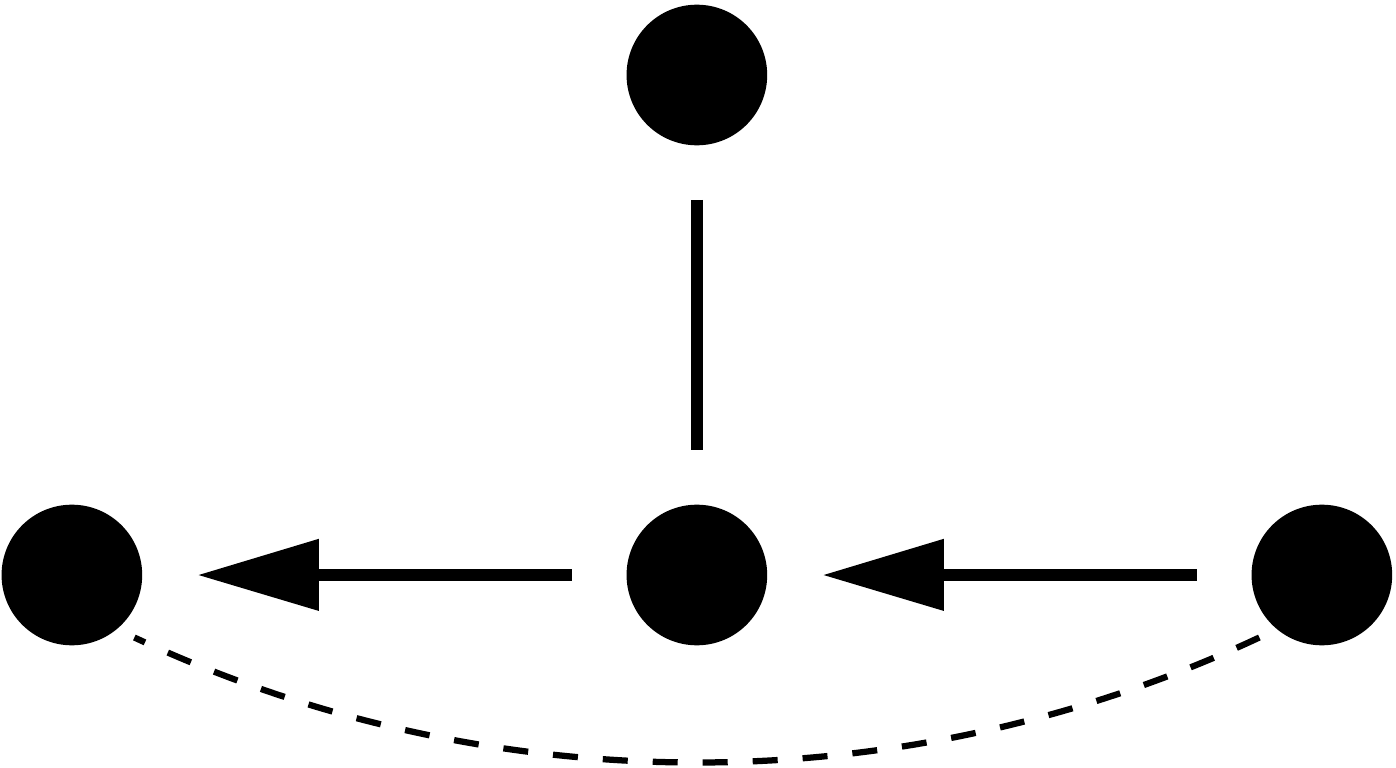}}} \\
%($\alpha_7$) & There is no full subgraph of the type \includegraphics[height=2ex]{assem_two} with at least four vertices
%               and zero-relations as shown.
\end{tabular}
}
\bigskip

Therefore $\Lambda / \Lambda e_{n-r} \Lambda$ is an iterated tilted algebra of type $A_{n+m-1}$. It is well known that
this implies
 $\Db(\Lambda / \Lambda e_{n-r} \Lambda) \simeq \Db(\kk A_{n+m-1})$; see \cite{Happel}.
Combining these pieces, we get $\sZ\orth \simeq \Db(\kk A_{n+m-1})$.
The final claim about $\Db(\Lambda)$ having a full exceptional sequence follows at once from the fact that $\Db(\kk A_{n+m-1})$ has one.
\end{proof}

\begin{remark} \label{rem:explicit}
  The subcategory of type $\Db(\kk A_{n+m-1})$ can be explicitly identified in the AR quiver of $\Db(\LLambda)$; see Figure \ref{fig:type-A}. The choice of right orthogonal to $Z$ was arbitrary, since Serre duality provides an equivalence $\orth\sZ\to\sZ\orth$, $X\mapsto\SSS(X)$. We mention in passing that the thick subcategory $\sZ$ is equivalent to $\Db(\kk A_1)$.
\end{remark}

\begin{figure}
  \begin{tikzpicture}[scale=0.2]
  \begin{scope}[gray!50]
    \fill ( 1,0) circle (8pt); \fill ( 3,0) circle (8pt); \fill ( 5,0) circle (8pt); 
    \fill ( 2,1) circle (8pt); \fill ( 4,1) circle (8pt); \fill ( 3,2) circle (8pt);
    \fill (17,0) circle (8pt); \fill (19,0) circle (8pt); \fill (18,1) circle (8pt); \fill (19,2) circle (8pt);
  \end{scope}
  \begin{scope}[black]
    \fill ( 9,0) circle (8pt); \fill (11,0) circle (8pt); \fill (13,0) circle (8pt); 
    \fill (10,1) circle (8pt); \fill (12,1) circle (8pt); \fill (11,2) circle (8pt);
  \end{scope}  
  \draw[fill, gray!50] (3,4) -- (7,0) -- (11,4) -- (15,0) -- (19,4) -- (19.5,3.5) -- (19.5,18) -- (-1.5,18) -- (-1.5,0.5) -- (-1,0) -- cycle;
  \node at (0.5,9) {$\cX^0$};
  \begin{scope}[shift = {(26,0)}]
    \begin{scope}[gray!50]
      \fill ( 1,18) circle (8pt); \fill ( 3,18) circle (8pt); \fill (13,18) circle (8pt); \fill (15,18) circle (8pt); 
      \fill ( 2,17) circle (8pt); \fill (14,17) circle (8pt); \fill (19,18) circle (8pt);
    \end{scope}
    \begin{scope}[black]
      \fill ( 7,18) circle (8pt); \fill ( 9,18) circle (8pt); \fill ( 8,17) circle (8pt);
    \end{scope}
    \draw[fill, gray!50] (-1,18) -- (2,15) -- (5,18) -- (8,15) -- (11,18) -- (14,15) -- (17,18) -- (19.5,15.5) -- (19.5,0) -- (-1,0) -- cycle;
    \node at (1,9) {$\cY^0$};
  \end{scope}
  \begin{scope}[shift = {(54,0)}]
    \begin{scope}[gray!50]
      \fill (7,0) circle (8pt); \fill ( 9,0) circle (8pt); \fill (11,0) circle (8pt);  % bottom triangle
      \fill (8,1) circle (8pt); \fill (10,1) circle (8pt); \fill ( 9,2) circle (8pt); 
      \fill (8,11) circle (8pt);       
      \fill (7,12) circle (8pt); \fill (9,12) circle (8pt); 
      \fill (6,13) circle (8pt); \fill (8,13) circle (8pt); \fill (10,13) circle (8pt);
      \fill (5,14) circle (8pt); \fill (7,14) circle (8pt); \fill ( 9,14) circle (8pt);
      \fill (6,15) circle (8pt); \fill (8,15) circle (8pt); \fill (7,16) circle (8pt); \fill (7,18) circle (8pt); 
    \end{scope}
    \begin{scope}[black]
      \fill (9,4) circle (8pt); \fill (8,5) circle (8pt); \fill (10,5) circle (8pt);
      \fill (7,6) circle (8pt); \fill (9,6) circle (8pt); \fill (11,6) circle (8pt);
      \fill (6,7) circle (8pt); \fill (8,7) circle (8pt); \fill (10,7) circle (8pt);      
      \fill (7,8) circle (8pt); \fill (9,8) circle (8pt); \fill ( 8,9) circle (8pt);      
    \end{scope} 
    \draw[fill, gray!50] (-3,18) -- (5,18) -- (6,17) -- (3,14) -- (7,10) -- (4,7) -- (8,3) -- (5,0) -- (-3,0) -- cycle;
    \draw[fill, gray!50] (9,18) -- (8,17) -- (12,13) -- (9,10) -- (13,6) -- (10,3) -- (13,0) -- (19,0) -- (19,18) -- cycle;
    \node at (-1,9) {$\cZ^0$};
    \draw[fill=white, thick] (9,10) circle (8pt);
    \node at (10.2,10) {$\scriptstyle Z$};
  \end{scope} 
\end{tikzpicture}

\bigskip

\begin{tikzpicture}[scale=0.3]
  \newcommand{\Sst}[1]{\scalebox{0.75}{$#1$}}

  \foreach \x in {-4,...,40} \foreach \y in {0,...,5}
     { \ifnumodd{\x+\y}{}{ \fill[gray!30] (\x,\y) circle (5pt); }}
 
  \fill (12,0) circle (5pt); \fill (14,0) circle (5pt); \fill (16,0) circle (5pt); \fill (18,0) circle (5pt);
  \fill (13,1) circle (5pt); \fill (15,1) circle (5pt); \fill (17,1) circle (5pt); \fill (19,1) circle (5pt);
  \fill (14,2) circle (5pt); \fill (16,2) circle (5pt); \fill (18,2) circle (5pt); \fill (20,2) circle (5pt);
  \fill (15,3) circle (5pt); \fill (17,3) circle (5pt); \fill (19,3) circle (5pt);
  \fill (14,4) circle (5pt); \fill (16,4) circle (5pt); \fill (18,4) circle (5pt); 
  \fill (13,5) circle (5pt); \fill (15,5) circle (5pt); \fill (17,5) circle (5pt); 

  \draw[gray!70] (-4,-1) -- (3,6) -- (10,-1) -- (17,6) -- (24,-1) -- (31,6) -- (38,-1) -- (40,1);
  \draw[gray!70] (-4,5) -- (-3,6) -- (4,-1) -- (11,6) -- (18,-1) -- (25,6) -- (32,-1) -- (39,6) -- (40,5);
   \begin{scope}[semithick]
    \draw (-5,-1) -- (41,-1); \draw (-5, 6) -- (41, 6);
    \draw (-3,6) -- ( 0,3) -- (-4,-1);
    \draw ( 3,6) -- ( 7,2) -- ( 4,-1);
    \draw (11,6) -- (14,3) -- (10,-1);
    \draw (17,6) -- (21,2) -- (18,-1);
    \draw (25,6) -- (28,3) -- (24,-1);
    \draw (31,6) -- (35,2) -- (32,-1);
    \draw (39,6) -- (40,5); \draw (38,-1) -- (40,1);
  \end{scope}

  \node at (17,2) {\Sst{\cZ^0}};  \node at (14,5) {\Sst{\cY^0}};  \node at (15,0) {\Sst{\cX^0}};
  \begin{scope}[gray!60]
     \node at (11,2) {\Sst{\cZ^1}};  \node at ( 8,5) {\Sst{\cY^1}};  \node at ( 7,0) {\Sst{\cX^1}};
     \node at (25,2) {\Sst{\cZ^1}};  \node at (22,5) {\Sst{\cY^1}};  \node at (21,0) {\Sst{\cX^1}};
     \node at (31,2) {\Sst{\cZ^0}};  \node at (28,5) {\Sst{\cY^0}};  \node at (29,0) {\Sst{\cX^0}};
     \node at ( 3,2) {\Sst{\cZ^0}};  \node at ( 0,5) {\Sst{\cY^0}};  \node at ( 1,0) {\Sst{\cX^0}};
     \node at (-3,2) {\Sst{\cZ^1}};
     \node at (39,2) {\Sst{\cZ^1}};  \node at (36,5) {\Sst{\cY^1}};  \node at (35,0) {\Sst{\cX^1}};
  \end{scope}
\end{tikzpicture}
  \caption{ \label{fig:type-A}
     Above: $\Db(\kk A_{6}) \cong \thick{}{Z}\orth \embed \Db(\Lambda(2,5,2))$. \newline
     Remaining components $\cX^1=\Sigma\cX^0, \cY^1=\Sigma\sY^0, \cZ^1=\Sigma\cZ^0$ not shown. \newline
     Below: AR quiver of $\Db(\kk A_6)$ with its $\Db(\Lambda(2,5,2))$ pieces.
  }
\end{figure}

The silting objects of $\Db(\kk A_{n+m-1})$ are well understood from work of Keller and Vossieck in \cite{Keller-Vossieck}. We shall now bootstrap their classification to discrete derived categories using the technique of silting reduction Aihara and Iyama in \cite{AI}.

\subsection{Silting reduction}
\label{sec:silting-reduction}

The main technical tool in the classification is the following result of Aihara and Iyama in \cite{AI}:

\begin{theorem}[Silting reduction {\cite[Theorem~2.37]{AI}}] \label{thm:silt-red}
Let $\sD$ be a Krull--Schmidt triangulated category, $\sU\subset\sD$ a thick, contravariantly finite subcategory and $F\colon\sD \to \sD/\sU$ the canonical functor. Then for any silting subcategory $\sN$ of $\sU$, there is an injective map
\[ \{ \text{silting subcategories $\sM$ of $\sD$} \mid \sN \subseteq \sM \} \embed
   \{ \text{silting subcategories of $\sD/\sU$} \}, \quad \sM \mapsto F(\sM) .
\]
If $\sU$ is functorially finite in $\sD$, then the map is bijective.
\end{theorem}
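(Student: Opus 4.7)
Since this is a theorem cited from Aihara--Iyama, the plan is to reproduce the main steps of their argument. I would work with the Verdier quotient $\sD/\sU$ via calculus of fractions, so that a morphism $X \to Y$ in $\sD/\sU$ is represented by a roof $X \xleftarrow{s} X' \xrightarrow{f} Y$ with $\cone(s)\in\sU$, modulo the standard equivalence. The proof then splits into three parts: well-definedness, injectivity, and (under functorial finiteness) surjectivity.

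First, for well-definedness, given a silting subcategory $\sM \supseteq \sN$ of $\sD$, I would verify both silting axioms for $F(\sM)$ in $\sD/\sU$. Generation is immediate because $F$ is essentially surjective, preserves triangles and summands, and $\thick{}{\sM}=\sD$. For the presilting condition $\Hom_{\sD/\sU}(F\sM, \Sigma^{>0}F\sM) = 0$, given a roof $M \xleftarrow{s} X \xrightarrow{f} \Sigma^i M'$ with $i>0$ and $M,M'\in\sM$, I would embed $s$ in a triangle $U[-1] \to X \to M \to U$ with $U\in\sU$. Since $\sN$ is silting in $\sU$, the object $U$ admits a finite resolution by $\sN$; combining the presilting property of $\sM$ with $\sN\subseteq\sM$ (so that $\Hom_{\sD}(\sM,\Sigma^{>0}\sN)=0$), I would then construct a factorisation certifying that the roof is equivalent to zero.

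For injectivity of $\sM \mapsto F(\sM)$, I would reconstruct $\sM$ intrinsically from the pair $(\sN,F(\sM))$: an object $M \in \sD$ lies in $\sM$ if and only if $F(M)\in F(\sM)$ and $\Hom_{\sD}(\sN,\Sigma^{>0}M)=0=\Hom_{\sD}(M,\Sigma^{>0}\sN)$. Since the perpendicularity conditions depend only on $\sN$, any two silting subcategories $\sM,\sM'\supseteq\sN$ with $F(\sM)=F(\sM')$ must coincide.

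The main obstacle is surjectivity under full functorial finiteness. Given silting $\sL\subseteq\sD/\sU$, I must produce a coherent family of lifts. For each indecomposable $L\in\sL$ I would pick any $\tilde L\in\sD$ with $F(\tilde L)\cong L$ and then modify $\tilde L$ within its $F$-fibre using alternately a right $\sU$-approximation (to kill $\Hom_{\sD}(\sN,\Sigma^{>0}\tilde L)$) and a left $\sU$-approximation (to kill $\Hom_{\sD}(\tilde L,\Sigma^{>0}\sN)$), both of which exist precisely because $\sU$ is functorially finite. The delicate part is verifying that after these modifications, the lifts together with $\sN$ admit no positive self-extensions in $\sD$: a hypothetical non-zero extension would either survive to a non-trivial map in $\sD/\sU$ between objects of $\sL$, contradicting presilting of $\sL$, or factor through $\sU$, contradicting the perpendicular conditions just arranged. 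Setting $\sM \coloneqq \add(\sN \cup \{\tilde L \mid L\in\sL\})$ and checking that $F(\sM)=\sL$, that $\sM$ is silting, and that $\sM \supseteq \sN$, completes the proof. Orchestrating the two approximation procedures so that they do not undo each other, and ensuring the lifts can be chosen compatibly across all of $\sL$, is the principal technical difficulty I would anticipate.
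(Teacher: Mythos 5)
Note first that the paper does not actually prove this theorem: it cites Aihara--Iyama \cite[Theorem~2.37]{AI} and only records the description of the inverse map needed later (Definition~\ref{map-G} and the surrounding discussion). So the meaningful comparison is against that description, which is the lift $G(\sK)=\add(\sN\cup\{A_K\mid K\in\sK\})$, where $A_K\to K\to B_K\to\Sigma A_K$ is the approximation triangle with respect to a \emph{single} co-t-structure $(\sA,\sB)$ on $\sD$, with $\sB=\susp(\Sigma\sN)$ and $\sA={}\orth\sB$. The point is that one approximation simultaneously arranges both perpendicularities: $A_K\in\sA$ gives $\Hom(A_K,\Sigma^{>0}\sN)=0$ by definition of $\sA$, while applying $\Hom(\sN,\blank)$ to the triangle and using $K\in\sU\orth$ together with $\Hom(\sN,\Sigma^{\geq0}\sB)=0$ yields $\Hom(\sN,\Sigma^{>0}A_K)=0$.

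Your proposal diverges exactly at surjectivity, and this is where a genuine gap lies. You propose to modify a na\"ive lift $\tilde L$ by alternately applying a right $\sU$-approximation (to kill $\Hom(\sN,\Sigma^{>0}\tilde L)$) and a left $\sU$-approximation (to kill $\Hom(\tilde L,\Sigma^{>0}\sN)$), and you correctly flag that ``orchestrating the two approximation procedures so that they do not undo each other'' is the principal difficulty. That difficulty is real and is not a mere technicality: there is no reason the alternation should stabilise after finitely many steps, and each pass can destroy what the previous one achieved. The co-t-structure device in the paper's description sidesteps this completely --- the existence of $(\sA,\sB)$ as a co-t-structure on $\sD$ (which needs both that $\sN$ is silting in $\sU$, providing covariant finiteness of $\sB$ in $\sU$, and that $\sU$ is functorially finite in $\sD$) guarantees that one approximation triangle already places $A_K$ in the intersection of both perpendicular conditions, and then well-definedness of the map on $\add$-closures follows by taking minimal approximations. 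Your well-definedness and injectivity sketches are in roughly the right spirit (the intrinsic characterisation of $\sM$ via $F(\sM)$ plus the two $\sN$-perpendicularities is indeed what one wants), but without replacing the alternating procedure by the single co-t-structure approximation, the surjectivity step is not a proof.
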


We are working towards an explicit description of the inverse map $G$ in Proposition~\ref{prop:cocones}.
The subcategory  $\sB\coloneqq \susp{\Sigma \sN}$ is the `co-aisle' of the co-t-structure associated to $\sN$ (see Theorem~\ref{thm:koenig-yang}) and thus covariantly finite in $\sU$.
Putting this together with $\sU$ being functorially finite in $\sD$, it gives rise to a co-t-structure $(\sA,\sB)$ in $\sD$, where $\sA \coloneqq {}\orth\sB$. Now let $\sK$ be a silting subcategory of $\sU\orth$ and consider the approximation triangle of $K \in \sK$ with respect to the co-t-structure $(\sA,\sB)$,
\[
\tri{A_K}{K}{B_K}
\]
with $A_K \in \sA$ and $B_K \in \sB$. In their proof of Theorem~\ref{thm:silt-red} in \cite{AI}, Aihara and Iyama show that $G(\sK) \coloneqq \add(\sN \cup \{A_K \mid K \in \sK\})$ is a silting subcategory of $\sD$.

\begin{definition} \label{map-G}
Assume the notation and hypotheses of Theorem~\ref{thm:silt-red} above. Given a silting subcategory $\sN$ of $\sU$, by abuse of notation we write $G_{\sN}$ for the map $G_{\sN}\colon \sU\orth \to \sD$, which for $V \in \sU\orth$, is defined by
\[
\trilabels{G_{\sN}(V)}{V}{B_V}{}{f_V}{},
\]
where $f_V\colon V \to B_V$ is a \emph{minimal} left $\sB$-approximation of $V$. Note that here, in contrast to elsewhere in this paper, we require that the approximation is minimal to ensure well-definedness of the map $G_{\sN}$. Furthermore, we stress here that $G_{\sN}$ is a map not a functor.
\end{definition}

In light of Proposition~\ref{prop:embedding-A}, the natural choice for a functorially finite thick subcategory to which we can apply Theorem~\ref{thm:silt-red} is $\sZ$ for some $Z$ in the $\cZ$ components. For silting reduction to work, we first need to establish that any silting subcategory of $\Db(\LLambda)$ contains an indecomposable object from the $\cZ$ components. The following lemma is a small generalisation of the statement we need, which we specialise in the subsequent corollary. Simple-minded collections (see \cite{Koenig-Yang} for the definition) are also an important focus of current research. Therefore, while we do not use them in this paper, it is useful to highlight in the corollary below that the following lemma also applies to them.

\begin{lemma}
If $\sM$ is a subcategory of $\Db(\Lambda)$ such that $\thick{}{\sM}=\Db(\Lambda)$, then $\sM$ contains an indecomposable object from the $\cZ$ components.
\end{lemma}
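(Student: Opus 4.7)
The plan is to argue by contradiction: assume $\sM$ contains no indecomposable from $\cZ$, and show that $\thick{}{\sM}$ cannot be all of $\Db(\Lambda)$. Since $\Db(\Lambda)$ is Krull--Schmidt, every object of $\sM$ decomposes into a finite direct sum of indecomposables. If no indecomposable summand of any object in $\sM$ lies in $\cZ$, then $\sM \subseteq \add(\ind{\cX} \cup \ind{\cY})$. Thus it suffices to prove that $\add(\cX \cup \cY)$ is a thick subcategory of $\Db(\Lambda)$, for then $\thick{}{\sM} \subseteq \add(\cX \cup \cY)$, which is a proper subcategory (it contains no $\cZ$-indecomposable).

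The two ingredients needed to establish thickness of $\add(\cX \cup \cY)$ are already in hand. First, by Lemma~\ref{lem:thick_AR-components}, both $\cX$ and $\cY$ are themselves thick triangulated subcategories of $\Db(\Lambda)$. Second, there are no morphisms between the two: from the vanishing clauses in Proposition~\ref{prop:X-hammocks} we have $\Hom(\cX,\cY)=0$, and combining $\SSS \cX \subseteq \cX$ (since $\SSS = \Sigma\tau$ preserves $\cX$ by Properties~\ref{properties}) with Serre duality yields $\Hom(\cY,\cX) = \Hom(\cX,\SSS\cY)^* = 0$ as well (alternatively, read this directly off Proposition~\ref{prop:Y-hammocks}).

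With these two facts, the thickness of $\add(\cX \cup \cY)$ is routine: given any morphism $f\colon X \oplus Y \to X' \oplus Y'$ with $X,X' \in \cX$ and $Y,Y' \in \cY$, the Hom-orthogonality forces $f$ to be block diagonal $f = f_\cX \oplus f_\cY$, so its cone decomposes as $\cone(f_\cX) \oplus \cone(f_\cY)$, with $\cone(f_\cX)\in\cX$ and $\cone(f_\cY)\in\cY$ by thickness of each component. This shows closure under cones; closure under suspensions and summands is immediate from the corresponding closure of $\cX$ and $\cY$.

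There is no serious obstacle here; the only subtlety is noting that an arbitrary morphism between objects of $\add(\cX\cup\cY)$ really does split into its $\cX$- and $\cY$-parts, which is exactly where the Hom-orthogonality of the two components is used. Once this is observed, the chain $\thick{}{\sM} \subseteq \add(\cX\cup\cY) \subsetneq \Db(\Lambda)$ contradicts the hypothesis $\thick{}{\sM} = \Db(\Lambda)$, and the result follows.
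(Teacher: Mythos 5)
Your proof is correct and follows essentially the same route as the paper: invoke Lemma~\ref{lem:thick_AR-components} for thickness of $\cX$ and $\cY$, use the Hom-vanishing from Propositions~\ref{prop:X-hammocks} and~\ref{prop:Y-hammocks} (or Serre duality) to get full orthogonality, conclude $\add(\cX\cup\cY)$ is thick, and derive a contradiction with $\thick{}{\sM}=\Db(\Lambda)$. The only difference is that you spell out the block-diagonal splitting of cones, which the paper leaves implicit.
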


\begin{corollary} \label{cor:silting-in-Z}
Any silting subcategory of $\Db(\Lambda)$ and any simple-minded collection in $\Db(\Lambda)$ contain objects from some $\cZ$ component.
\end{corollary}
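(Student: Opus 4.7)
The plan is to argue by contradiction: suppose that every indecomposable summand appearing in objects of $\sM$ lies in some $\cX$ or $\cY$ component, so that $\sM \subseteq \cX \oplus \cY$, where $\cX \oplus \cY \coloneqq \add(\cX \cup \cY)$ denotes the full additive subcategory spanned by all $\cX^k$ and $\cY^k$. My goal is to show that $\cX \oplus \cY$ is itself a proper thick triangulated subcategory of $\Db(\Lambda)$. This immediately forces $\thick{}{\sM} \subseteq \cX \oplus \cY \subsetneq \Db(\Lambda)$, contradicting the hypothesis $\thick{}{\sM} = \Db(\Lambda)$.

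For thickness of $\cX \oplus \cY$, I need two ingredients. First, Lemma~\ref{lem:thick_AR-components} already gives that $\cX$ and $\cY$ are each thick triangulated subcategories of $\Db(\Lambda)$. Second, I use the Hom-vanishing $\Hom(\cX,\cY) = 0 = \Hom(\cY,\cX)$, which is read off directly from Propositions~\ref{prop:X-hammocks} and \ref{prop:Y-hammocks}: the non-vanishing hammock of any indecomposable in $\cX^k$ is contained in $\cX^k \cup \cX^{k+1} \cup \cZ^k$, and likewise indecomposables in $\cY^k$ only map inside $\cY \cup \cZ$.

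Given this orthogonality, every morphism $f \colon M_\cX \oplus M_\cY \to N_\cX \oplus N_\cY$ between objects of $\cX \oplus \cY$ decomposes as $f_\cX \oplus f_\cY$, so $\Cone(f) = \Cone(f_\cX) \oplus \Cone(f_\cY)$ still lies in $\cX \oplus \cY$ by the thickness of $\cX$ and $\cY$ individually. Closure under direct summands follows from Krull--Schmidt combined with the fact that the AR components partition the indecomposable objects of $\Db(\Lambda)$. Thus $\cX \oplus \cY$ is thick, and it is proper because the $\cZ$ components are non-empty (for instance, by Lemma~\ref{lem:proj-position} the indecomposable projective $P(n-r)$ lies in $\cZ$).

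I do not expect a serious obstacle here: the whole argument is essentially a bookkeeping assembly of prior results, and the only conceptual content — the Hom-vanishing between $\cX$ and $\cY$ — has already been pinned down by the hammock propositions in Section~\ref{sec:hammocks}. The mildly subtle point is interpreting the conclusion correctly when $\sM$ is not a priori closed under summands: what one really proves is that some object of $\sM$ has an indecomposable direct summand in a $\cZ$ component, which is exactly what is needed for the corollary on silting subcategories and simple-minded collections, since those are closed under (respectively consist of) indecomposable summands.
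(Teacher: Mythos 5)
Your argument is exactly the paper's: reduce to showing $\cX\oplus\cY$ is a proper thick subcategory via Lemma~\ref{lem:thick_AR-components} and the full orthogonality $\Hom(\cX,\cY)=\Hom(\cY,\cX)=0$ supplied by Propositions~\ref{prop:X-hammocks} and~\ref{prop:Y-hammocks}, then contradict $\thick{}{\sM}=\Db(\Lambda)$. The proposal is correct and matches the paper's proof, with a bit more detail spelled out on closure under cones and summands.
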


\begin{proof}[Proof of lemma]
By Lemma~\ref{lem:thick_AR-components}, the additive closure of the $\cX$ components of $\Db(\Lambda)$ is a thick subcategory of $\Db(\Lambda)$, and likewise for the additive closure of the $\cY$ components. Furthermore, these two subcategories are fully orthogonal by Propositions~\ref{prop:X-hammocks} and \ref{prop:Y-hammocks}, so that their sum is a thick subcategory of $\Db(\Lambda)$ as well.
Therefore we cannot have $\sM\subset\cX\oplus\cY$ as that would force $\Db(\Lambda) = \thick{}{\sM} = \cX\oplus\cY$, a contradiction.
\end{proof}

Theorem~\ref{thm:silt-red} coupled with Proposition~\ref{prop:embedding-A} tells us that all silting objects in $\Db(\Lambda)$ containing $Z$ can be obtained by lifting silting objects in $\sZ\orth \simeq \Db(\kk A_{n+m-1})$ back up to $\Db(\Lambda)$. In other words, any silting object in $\Db(\Lambda)$ can be described by a pair $(Z,M')$ consisting of an indecomposable object $Z\in\cZ$ and a silting object $M'\in\sZ\orth \simeq \Db(\kk A_{n+m-1})$.

We now make a brief expository digression explaining Keller and Vossieck's classification of silting subcategories of $\Db(\kk A_t)$, from which the silting subcategories of $\Db(\LLambda)$ can be `glued'.

\subsection{Classification of silting objects in Dynkin type $A$}
\label{sec:classification-A}

Consider the following diagram of the  AR quiver of $\Db(\kk A_t)$ with coordinates $(g,h)$ with $g \in \IZ$ and $h\in\{1,\ldots,t\}$.
\[ \xymatrix@M=0.1em@H=0.0ex@W=0.4ex@!0{
\cdots \ar[dr] &             & \arft{-1,3} &            & \arft{0,3} &            & \arft{1,3} &            & \arft{2,3} &            & \cdots \\
               & \arfc{-1,2} &             & \arfc{0,2} &            & \arfc{1,2} &            & \arfc{2,2} &            & \arfc{3,2} &        \\
\cdots \ar[ur] &             & \arfb{0,1}  &            & \arfb{1,1} &            & \arfb{2,1} &            & \arfb{3,1} &            & \cdots
} \]
Given an indecomposable object $U \in \Db(\kk A_t)$ we write its coordinates as $(g(U),h(U))$.

Following \cite{Keller-Vossieck}, a quiver $Q=(Q_0,Q_1)$ is called an \emph{$\AAA_t$-quiver} if $|Q_0|=t$, its underlying graph is a tree, and $Q_1$ decomposes into a disjoint union $Q_1 = Q_{\alpha} \cup Q_{\beta}$ such that at any vertex at most one arrow from $Q_{\alpha}$ ends, at most one arrow from $Q_{\alpha}$ starts, at most one arrow from $Q_{\beta}$ ends and at most one arrow from $Q_{\beta}$ starts.
One should think of an $\AAA_t$-quiver as a `gentle tree quiver', where gentle is used in the sense of gentle algebras.

We define maps $s_\alpha,e_\alpha,s_\beta,e_\beta\colon Q_0\to\IN$ by
\begin{align*}
s_\alpha(x) & \coloneqq \#\{y \in Q_0 \mid \textrm{the shortest walk from } x \textrm{ to } y \textrm{ starts with an arrow in } Q_{\alpha}\}; \\
e_\alpha(x) & \coloneqq \#\{y \in Q_0 \mid \textrm{the shortest walk from } y \textrm{ to } x \textrm{ ends with an arrow in } Q_{\alpha}\}.
\end{align*}
The functions $s_\beta$ and $e_\beta$ are defined analogously.
With these maps, there is precisely one map
 $\phi_Q \coloneqq (g_Q(x),h_Q(x)) \colon Q_0 \to (\IZ A_t)_0$,
where $g_Q$ and $h_Q$ correspond to the coordinates in the AR quiver of $\Db(\kk A_t)$, such that $h_Q(x) = 1 + e_\alpha(x) + s_\beta(x)$ and $g_Q(y) = g_Q(x)$ for each arrow $x \too y$ in $Q_\alpha$, and $g_Q(y) = g_Q(x) + e_\alpha(x) +s_\alpha(x) +1$ for each arrow $x \too y$ in $Q_\beta$, and finally normalised by  $\min_{x \in Q_0}\{g_Q(x)\} = 0$.

By abuse of notation we identify the object $T_Q \coloneqq \phi_Q(Q_0)$ with the direct sum of the indecomposables lying at the corresponding coordinates. This map gives rise to the following classification result.

\begin{theorem}[\cite{Keller-Vossieck}, Section 4]
The assignment $Q \mapsto T_Q$ induces a bijection between isomorphism classes of $\AAA_t$-quivers and tilting objects $T$ in $\Db(\kk A_t)$ satisfying the condition $\min\{g(U) \mid U \textrm{ is an indecomposable summand of } T\} =0$.
\end{theorem}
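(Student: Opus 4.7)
The strategy is to build an inverse in both directions: send a tilting object $T \in \Db(\kk A_t)$ to the Gabriel quiver $Q_T$ of $\End(T)$ equipped with a canonical partition of its arrows, and verify that $T_{Q_T} \cong T$.

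First, one checks that $\phi_Q$ is well-defined on an $\AAA_t$-quiver $Q$: since $Q$ is a tree, propagating the recursions for $(g_Q, h_Q)$ from any chosen root vertex produces a consistent assignment, and the normalisation $\min_x g_Q(x) = 0$ fixes the overall $g$-shift. Injectivity of $\phi_Q$---needed so that $T_Q$ has exactly $t$ non-isomorphic indecomposable summands---follows by induction along the tree. To show $T_Q$ is tilting, the key input is the explicit shape of the Hom-hammocks in $\IZ A_t$: for an indecomposable $U$ at $(g,h)$, the objects $V$ with $\Hom(U,V) \neq 0$ fill an explicit triangular region, and $\Ext^i(U,V) \neq 0$ means $\Sigma^i V$ sits in that region. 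The gentle-tree axioms---at most one $\alpha$-arrow into/out of each vertex, and likewise for $\beta$---are engineered precisely so that the positions $\phi_Q(Q_0)$ avoid these forbidden zones for all $i \neq 0$: arrows in $Q_\alpha$ govern vertical displacement along a fixed ray (via the formula for $h_Q$), while arrows in $Q_\beta$ govern horizontal jumps (via the increment formula for $g_Q$). Once Ext-vanishing is in hand, generation of $\Db(\kk A_t)$ follows from having $t$ Ext-orthogonal summands in a category whose $K_0$ has rank $t$.

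For the reverse direction, given a tilting object $T$, the algebra $\End(T)$ is iterated tilted of type $A_t$; by classical results (such as the Assem criterion used above in the proof of Proposition~\ref{prop:embedding-A}) its Gabriel quiver $Q_T$ is a tree with gentle zero-relations, and these relations cut out a canonical bipartition of the arrow set $Q_{T,1}$ into $\alpha$- and $\beta$-classes (two arrows lie in the same class when they form a length-two path avoiding a zero-relation). Thus $Q_T$ is naturally an $\AAA_t$-quiver, and under the normalisation $\phi_{Q_T}$ recovers the positions in $(\IZ A_t)_0$ of the indecomposable summands of $T$, establishing $T \cong T_{Q_T}$.

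The main obstacle is the compatibility inside the second step: one must simultaneously track how the local structure at each vertex $x \in Q_0$---encoded by the counts $s_\alpha(x), e_\alpha(x), s_\beta(x), e_\beta(x)$---translates into the global coordinates $\phi_Q(x)$, and verify that the resulting staircase of positions rules out \emph{every} non-trivial self-extension of $T_Q$, uniformly across the entire tree. This amounts to a purely combinatorial-geometric compatibility between two descriptions of $A_t$-type tilting data, and is where the bulk of the technical work lies.
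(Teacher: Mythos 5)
This theorem is not proved in the paper; it is quoted verbatim from Keller--Vossieck (the bracketed citation ``\cite{Keller-Vossieck}, Section 4'' is the theorem's attribution, not a theorem number in this article), so there is no internal proof to compare your plan against. Your overall skeleton --- well-definedness and injectivity of $\phi_Q$ along the tree, Ext-vanishing via the triangular Hom-hammocks in $\IZ A_t$, generation by the count of Ext-orthogonal exceptional summands, and an inverse via $\End(T)$ and Assem's description of iterated tilted algebras of type $A$ --- is indeed the natural route and follows the spirit of Keller--Vossieck's argument.

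One point deserves more care, though. You propose to recover the $\alpha/\beta$-partition of $Q_{T,1}$ purely from the relation structure of $\End(T)$ (``two arrows lie in the same class when they form a length-two path avoiding a zero-relation''). That rule does not in general produce a well-defined two-colouring of the arrow set, and in Keller--Vossieck's setting the partition is read off from the AR-quiver geometry rather than from the algebra: an arrow $T_i \to T_j$ is $\alpha$ when the two summands lie on a common coray (same $g$-coordinate) and $\beta$ when they lie on a common ray, which is exactly what the recursion $g_Q(y)=g_Q(x)$ versus $g_Q(y)=g_Q(x)+e_\alpha(x)+s_\alpha(x)+1$ encodes. Without using the positions of the $T_i$ directly, one still has to show that the quiver of $\End(T)$ is a tree and that a consistent colouring exists, and the cleanest way is to define the colouring from the coordinates. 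Also, your closing appeal to ``$t$ Ext-orthogonal summands with $K_0$ of rank $t$ generate'' is true in $\Db(\kk A_t)$ but is itself a non-trivial fact (maximal partial tilting implies tilting for Dynkin hereditary algebras) that should be cited rather than treated as automatic.
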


Note that in Dynkin type $A_t$, the summands of any tilting object $T=\bigoplus_{i=1}^t T_i$ can be re-ordered to give a strong, full exceptional sequence $\{T_1,\cdots,T_t\}$, see \cite[Section~5.2]{Keller-Vossieck}.
We now have the following classification of silting objects in $\Db(\kk A_t)$.

\begin{theorem}[\cite{Keller-Vossieck}, Theorem 5.3] \label{thm:A-silting}
Let $T=T_1\oplus\cdots\oplus T_t$ be a tilting object in $\Db(\kk A_t)$ whose summands form an exceptional collection. Let $p\colon \{1,\ldots,t\} \to \IN$ be a weakly increasing function. Then $\Sigma^{p(1)} T_1\oplus\cdots\oplus\Sigma^{p(t)} T_t$ is a silting object in $\Db(\kk A_t)$. Moreover, all silting objects of $\Db(\kk A_t)$ occur in this way.
\end{theorem}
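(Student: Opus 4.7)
The forward direction is a direct verification. Setting $S \coloneqq \bigoplus_{i=1}^t \Sigma^{p(i)} T_i$, the thick hull $\thick\{S\}$ coincides with $\thick\{T\} = \Db(\kk A_t)$ since shifting summands does not affect thick closure. To check $\Hom^{>0}(S,S) = 0$ I would split into pairs of summands: for $k>0$ the relevant Hom is $\Hom(T_i, \Sigma^{p(j)-p(i)+k} T_j)$. When $i \leq j$, the fact that $p$ is weakly increasing forces $p(j)-p(i)+k > 0$, and $T$ tilting gives $\Hom^{>0}(T_i, T_j) = 0$. When $i > j$, the exceptional ordering of $(T_1,\ldots,T_t)$ yields $\Hom^\bullet(T_i, T_j) = 0$ outright, killing all shifts.

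For the converse, I would argue by induction on the \emph{shift spread} $s(S) \coloneqq \max_i p(i) - \min_i p(i)$, having first observed that any silting object of $\Db(\kk A_t)$ has exactly $t$ indecomposable summands, since the Grothendieck classes of the summands are forced to be a $\IZ$-basis of $K_0(\Db(\kk A_t)) \cong \IZ^t$. In the base case $s(S)=0$, after a global shift $S$ is a tilting object, and the claim follows from the preceding classification of tilting objects in $\Db(\kk A_t)$ together with the known fact that the summands of any tilting object in Dynkin type $A$ admit an exceptional ordering (any algebra of the form $\End(T)^{\mathrm{op}}$ for such $T$ is a tilted algebra of type $A_t$, in particular hereditary up to derived equivalence, and its projectives form such a sequence).

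For the inductive step I would apply left silting mutation in the sense of Aihara--Iyama \cite{AI} at a summand $\Sigma^{p(i_0)} T_{i_0}$ of maximal shift. The mutated object $\mu_{i_0}^-(S)$ is again silting, and the key point is that mutation at a \emph{maximally} shifted summand amounts locally to decreasing that shift by one, producing a silting object of strictly smaller spread to which the inductive hypothesis applies.

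The main obstacle is the bookkeeping for this inductive step: one must verify that left mutation at the maximally shifted summand returns an object of the same shape, i.e.\ that the underlying unshifted data $(T'_1,\ldots,T'_t)$ is again a tilting object whose summands can be arranged into an exceptional collection, and that the resulting shift function is weakly increasing after suitable relabelling. My plan is to transport this check through the Keller--Vossieck bijection $Q \mapsto T_Q$ between $\AAA_t$-quivers and normalised tilting objects, so that mutation on silting objects translates to a local combinatorial move on $\AAA_t$-quivers; once the bijection is shown to be equivariant with respect to mutation, the combinatorics of the $\IZ A_t$ AR quiver closes the induction.
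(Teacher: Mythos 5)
This statement is cited by the paper directly from Keller--Vossieck; the paper does not supply a proof of its own, so there is no ``paper's proof'' to compare against. I therefore assess your argument on its own merits.

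Your forward direction is correct and complete: the split into cases $i\leq j$ (using that $p$ is weakly increasing together with $\Hom^{>0}(T,T)=0$) and $i>j$ (using the exceptional ordering) is exactly right, and $\thick$-closure is trivially preserved.

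The converse has a genuine gap, and in fact a false key step. First, a minor point: in the Aihara--Iyama conventions which the paper uses, \emph{left} mutation $\mu^-_X$ fits $X$ into $X\to E\to Y\to\Sigma X$ with $X\to E$ a left approximation; when $\Hom(X,S/X)=0$ this gives $Y=\Sigma X$, i.e.\ the shift \emph{increases}. You need \emph{right} mutation $\mu^+$ to push the top summand down. More seriously, the claim that mutating a single maximally shifted summand ``amounts locally to decreasing that shift by one, producing a silting object of strictly smaller spread'' is false once two or more summands share the maximal shift, because those peer summands can map nontrivially into the one being mutated. Concretely in $\Db(\kk A_3)$ with linear orientation, take $T=P_3\oplus P_2\oplus P_1$ in exceptional order and $p=(0,1,1)$, so $S=P_3\oplus\Sigma P_2\oplus\Sigma P_1$ with spread $1$. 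Right mutation at the last maximal summand $\Sigma P_1$ uses the approximation $\Sigma P_2\to\Sigma P_1$ (the radical inclusion), with cocone $S_1$, so $\mu^+(S)=P_3\oplus\Sigma P_2\oplus S_1$. This \emph{is} again of the claimed shape, but for the tilting object $T'=P_3\oplus\Sigma^{-1}S_1\oplus P_2$ with $p'=(0,1,1)$: the spread has not dropped, so your induction does not advance. The step that does decrease the spread is mutation at the direct sum of \emph{all} maximally shifted summands simultaneously (there the approximation is zero and the whole block shifts down), but to identify that block you must already know $S$ has the asserted form --- which is exactly what you are trying to prove. Your induction is therefore circular as stated, and the invariant $s(S)$ needs to be replaced by something defined intrinsically on arbitrary silting objects (for instance, the distance in the silting order between the largest $\Sigma^a\Lambda\geq S$ and the smallest $\Sigma^b\Lambda\leq S$). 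You do flag the bookkeeping as the main obstacle and sketch transporting it through the $\AAA_t$-quiver bijection, but that transfer is precisely the content of the theorem, so leaving it as a plan means the proof is not closed.
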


The machinery above is slightly technical, so we give a quick example of the classification of tilting (and hence silting) objects in $\Db(\kk A_3)$.

\begin{example}[Classification of tilting objects in $\Db(\kk A_3)$] \label{ex:A_3}
When $t=3$, up to isomorphism there are the following possible $A_3$-quivers:
\begin{align*}
& 1 \rightlabel{\alpha} 2 \rightlabel{\alpha} 3, \quad  1 \rightlabel{\alpha} 2 \rightlabel{\beta} 3, \quad 1 \rightlabel{\alpha} 2 \leftlabel{\beta}   3, \\
& 1 \leftlabel{\alpha}  2 \rightlabel{\beta}  3, \quad  1 \rightlabel{\beta}  2 \rightlabel{\beta} 3, \quad 1 \rightlabel{\beta}  2 \rightlabel{\alpha} 3.
\end{align*}
Computing the $\phi_Q$ for each of the above quivers gives the following, where each $3$-tuple denotes $(\phi_Q(1),\phi_Q(2),\phi_Q(3))$:
\begin{align*}
& ((0,1),(0,2),(0,3)), \quad ((0,1),(0,3),(2,1)), \quad ((1,1),(1,2),(0,3)), \\
& ((0,3),(0,2),(1,1)), \quad ((0,3),(1,2),(2,1)), \quad ((0,3),(2,1),(2,3)).
\end{align*}
We indicate the corresponding tilting objects in the following sketch:

\begin{center}
\begin{tikzpicture}[scale=0.35]
  \pgfmathsetmacro{\X}{14}    % horizontal and vertical distance
  \pgfmathsetmacro{\Y}{4.5}   % between diagrams, in natural steps
  
  \foreach \column in {0,1,2} \foreach \row in {0,1}
     {
     \pgfmathsetmacro{\XX}{\column*\X} 
     \pgfmathsetmacro{\YY}{\row*\Y}
     \foreach \x in {0,1,...,8} \foreach \y in {0,1,2}
        { \ifnumodd{\x + \y}{}{ \fill[gray!50] (\x+\XX,\y+\YY) circle (6pt); } }
     \begin{scope}[gray!70, shift={(\XX,\YY)}]
       \draw (1,-0.5) -- (7,-0.5) -- (4,2.75) -- cycle;
     \end{scope} 
     }   
  \begin{scope}[black!70] 
    \begin{scope}[shift={(   0,\Y)}]  \fill (2,0) circle (6pt); \fill (3,1) circle (6pt); \fill (4,2) circle (6pt); \dnr{1}  \end{scope}
    \begin{scope}[shift={(  \X,\Y)}]  \fill (2,0) circle (6pt); \fill (4,2) circle (6pt); \fill (6,0) circle (6pt); \dnr{2}  \end{scope}
    \begin{scope}[shift={(2*\X,\Y)}]  \fill (4,0) circle (6pt); \fill (5,1) circle (6pt); \fill (4,2) circle (6pt); \dnr{3}  \end{scope}
                                      \fill (4,0) circle (6pt); \fill (3,1) circle (6pt); \fill (4,2) circle (6pt); \dnr{4}
    \begin{scope}[shift={(  \X, 0)}]  \fill (6,0) circle (6pt); \fill (5,1) circle (6pt); \fill (4,2) circle (6pt); \dnr{5}  \end{scope}
    \begin{scope}[shift={(2*\X, 0)}]  \fill (4,2) circle (6pt); \fill (6,0) circle (6pt); \fill (8,2) circle (6pt); \dnr{6}  \end{scope}
  \end{scope}
\end{tikzpicture}
\end{center}

\noindent
In each sketch the triangle depicts the standard heart for the quiver $1 \longleftarrow 2 \longleftarrow 3$ whose indecomposable projectives have coordinates $(0,1),(0,2),(0,3)$. These are precisely the tilting objects having an indecomposable summand $U$ with minimal $g(U)=0$. In particular, these are precisely the exceptional sequences in $\Db(\kk A_3)$ containing one of $P(i)$ for $1 \leq i \leq 3$ as a least element.

To obtain all tilting objects (up to suspension), we next consider those for which there exists an indecomposable summand $U$ with minimal $g(U) = 1$. These correspond precisely to $\tau\inv$ applied to each of the diagrams $\tns{1}$ to $\tns{6}$. Observe that
 $\tau\inv \tns{5} = \Sigma\tns{1}$ and $\tau\inv \tns{6} = \Sigma\tns{2}$.
Therefore, up to suspension, we pick up only four more tilting objects. Next we consider those for which there exists an indecomposable summand $U$ with minimal $g(U) =2$, which correspond precisely to $\tau^{-2}$ applied to each of the diagrams $\tns{1}$ to $\tns{6}$. We have $\tau^{-2} \tns{3} = \Sigma \tns{3}$, $\tau^{-2} \tns{4} = \Sigma \tns{4}$, $\tau^{-2} \tns{5} = \Sigma\tau\inv \tns{1}$, and $\tau^{-2} \tns{6} = \Sigma\tau\inv \tns{2}$, which leaves, up to suspension, only $\tau^{-2} \tns{1}$ and $\tau^{-2} \tns{2}$ as new tilting objects.  Continuing in this way, one sees that, up to suspension, these are all tilting objects. Hence, there are twelve tilting objects in $\Db(\kk A_3)$ up to suspension:

\smallskip
\noindent
\resizebox{\textwidth}{!}{
$ \begin{array}{@{} r@{\:=\:}l r@{\:=\:}l r@{\:=\:}l @{}}
 \tns{1}          & P(1)\oplus P(2)\oplus P(3),        & \tns{2}          & P(1)\oplus P(3) \oplus S(3),              & \tns{3}          & P(3)\oplus I(2)\oplus S(2), \\
 \tns{4}          & P(2)\oplus P(3)\oplus S(2),        & \tns{5}          & P(3)\oplus I(2)\oplus S(3),               & \tns{6}          & P(3)\oplus S(3)\oplus \Sigma S(2), \\
 \tau\inv\tns{1}  & S(2)\oplus I(2)\oplus \Sigma P(1), & \tau\inv\tns{2}  & S(2)\oplus \Sigma P(1)\oplus \Sigma P(3), & \tau^{-1}\tns{3} & \Sigma P(1)\oplus \Sigma P(2)\oplus S(3), \\
 \tau^{-1}\tns{4} & I(2)\oplus \Sigma P(1)\oplus S(3), & \tau^{-2}\tns{1} & S(3)\oplus \Sigma P(2)\oplus \Sigma S(2), & \tau^{-2}\tns{2} & S(3)\oplus \Sigma S(2)\oplus \Sigma^2 P(1).
\end{array}
$ }
\end{example}

\subsection{Classification of silting objects for derived-discrete algebras}
\label{sec:classification-dd}

As this section is rather technical, the reader may find it helpful to refer to the detailed example, $\Lambda(2,3,1)$ studied in Section~\ref{sec:example} whilst reading this section.

We first start with some preliminary results regarding the indecomposability of the images of indecomposable objects under the map $G_Z \colon \sZ\orth \to \Db(\LLambda)$ from Definition~\ref{map-G}, where $\sZ = \thick{}{Z}$ for some fixed, arbitrary, indecomposable object $Z \in \ind{\cZ}$.

We first explicitly compute the map $G_Z\colon \ind{\sZ\orth} \to \Db(\LLambda)$ on objects in the case $Z = Z^0_{0,0}$.

\begin{proposition} \label{prop:cocones}
If $r>1$ and $Z = Z^0_{0,0}$, and $G \coloneqq G_{Z_{0,0}}$, then $G(U) = U$ for all but finitely many (up to positive suspension) $U \in \ind{\sZ\orth}$. The exceptions are:
\begin{enumerate}[label=(\arabic*)~]
\item $G(\Sigma^i X^1_{0,j})     = \Sigma^i Z^0_{j+1,0}$ for $0 \leq j < r +m-1$ and $i \geq 0$.
\item $G(\Sigma^i Y^1_{j,0})     = \Sigma^i Z^0_{0,j+1}$ for $0 \leq j < n-r-1$ and $i \geq 0$.
%\item $G(\Sigma^i Z^1_{j,r-n}) = \Sigma^i X^1_{j,r+m-1}$ for $1 \leq j \leq r+m-1$ and $i \geq 0$.
\item $G(\Sigma^i Z^1_{j,0})     = \Sigma^i X^1_{j,-1}$ for $1 \leq j \leq r+m-1$ and $i \geq 0$.
\item $G(\Sigma^i Z^1_{0,j})     = \Sigma^i Y^1_{-1,j}$ for $r-n+1 \leq j \leq -1$ and $i \geq 0$.
\item $G(\Sigma^i Z^1_{-r-m,0})  = \left\{ \begin{array}{ll}
                                       \Sigma^i X^1_{-r-m,-1} & \text{for } 0 \leq i \leq r, \\
                                       \Sigma^i Z^1_{0,n-r} & \text{for } i > r.
                                 \end{array} \right.$
\end{enumerate}
\end{proposition}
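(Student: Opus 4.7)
The strategy is in three parts: first, identify the co-aisle $\sB$ explicitly and reduce the claim to a Hom-vanishing criterion; second, use Serre duality and the Hom-hammocks to enumerate all indecomposables admitting a non-trivial map into $\sB$; third, compute $G(U)$ for each such exception via the standard triangles of Properties~\ref{properties}(4).

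Since $Z = Z^0_{0,0}$ is exceptional by Proposition~\ref{prop:object_types}, the thick subcategory $\sZ = \thick{}{Z}$ is equivalent to $\Db(\kk)$, so up to shift its silting subcategory is $\sN = \add(Z)$, and its co-aisle simplifies to
\[ \sB = \susp{\Sigma\sN} = \add\{\Sigma^i Z : i \geq 1\} \]
because all positive self-extensions of $Z$ vanish. For $U \in \sZ\orth$, any minimal left $\sB$-approximation has the form $f_U \colon U \to \bigoplus_{i \geq 1}(\Sigma^i Z)^{n_i}$ with $n_i = \dim\Hom(U, \Sigma^i Z) \leq 1$ by Theorem~\ref{thm:hom-dimensions} (applicable since $r>1$), so $G(U) = U$ precisely when $\Hom(U, \Sigma^i Z) = 0$ for every $i \geq 1$.

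To find the exceptions, I rewrite $\Hom(U, \Sigma^i Z)^* \cong \Hom(\SSS^{-1}\Sigma^i Z, U)$ by Serre duality and apply the outgoing hammocks of Proposition~\ref{prop:Z-hammocks}. Because $\Sigma^r$ acts by translation on each AR component by Properties~\ref{properties}(3), the union of these hammocks over $i \geq 1$ meets $\sZ\orth \simeq \Db(\kk A_{n+m-1})$ (Remark~\ref{rem:explicit}, Figure~\ref{fig:type-A}) in only finitely many $\Sigma$-orbits. A case-by-case inspection across the AR components of $\sZ\orth$ reads off exactly the indecomposables in (1)--(5); the numerical ranges on $j$ arise from where each hammock terminates on the boundary of $\sZ\orth$.

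For each exception I identify the approximation triangle explicitly. Cases (1) and (2) follow by rotating the standard triangles of Properties~\ref{properties}(4): for instance, rotating
\[ X^0_{0,j} \to Z^0_{0,0} \to Z^0_{j+1,0} \to \Sigma X^0_{0,j} \]
yields $Z^0_{j+1,0} \to X^1_{0,j} \to \Sigma Z \to \Sigma Z^0_{j+1,0}$, which is the required $\sB$-approximation triangle with $B_U = \Sigma Z \in \sB$, so $G(X^1_{0,j}) = Z^0_{j+1,0}$. Cases (3) and (4) come from rotating the same triangles in the opposite direction. Case (5) is the genuinely subtle one: the indecomposable $Z^1_{-r-m,0}$ admits non-zero maps to \emph{two} distinct positive shifts of $Z$, so $B_U$ has two summands; as $i$ increases, one of these contributions leaves $\sB$ exactly at the threshold $i = r$, producing the piecewise answer. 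The extension from $i=0$ to all $i\geq 0$ is immediate since $\sB$ is suspension-closed, giving $G(\Sigma^i U) = \Sigma^i G(U)$. The main obstacle is case (5), which requires simultaneous tracking of two Hom-contributions into $\sB$ and careful checking of minimality of the approximation after each suspension.
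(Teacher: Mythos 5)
Your strategy lines up with the paper's: identify $\sB = \add\{\Sigma^i Z : i\geq 1\}$, reduce $G(U)=U$ to the Hom-vanishing condition $\Hom(U,\Sigma^i Z)=0$ for $i\geq 1$ (equivalently $U\in\sA={}^\perp\sB$), enumerate the exceptions from the $\cZ$-hammocks (the paper reads them directly; you dualise via $\Hom(U,\Sigma^iZ)^*\cong\Hom(\SSS^{-1}\Sigma^iZ,U)$, which is equivalent), and compute the cocones from the triangles of Properties~\ref{properties}(4), with case~(5) as the delicate one. The explicit identification of the minimal $\sB$-approximation as $U\to\bigoplus_{i\geq 1}(\Sigma^iZ)^{n_i}$ with $n_i=\hom(U,\Sigma^iZ)\leq 1$, justified by Theorem~\ref{thm:hom-dimensions} and exceptionality of $Z$, is a clean way to package what the paper does implicitly.

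However, your description of case~(5) is backward, and this matters. You write that $Z^1_{-r-m,0}$ ``admits non-zero maps to two distinct positive shifts of $Z$'' and that ``as $i$ increases, one of these contributions leaves $\sB$.'' In fact $\Hom(Z^1_{-r-m,0},\Sigma^kZ)\neq 0$ exactly for $k=1$ and $k=1-r$; only the first is a positive shift, so at $i=0$ the approximation has a single summand $\Sigma Z$. As $i$ grows, the map to $\Sigma^{i+1-r}Z$ \emph{enters} $\sB$ (once $i+1-r\geq 1$), giving a \emph{second} summand. Since $\sB$ is suspension-closed, contributions can only enter, never leave, as $i$ increases. With your mechanism (two summands collapsing to one) the piecewise structure would be inverted. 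Relatedly, the glib claim that ``$G(\Sigma^iU)=\Sigma^iG(U)$ is immediate since $\sB$ is suspension-closed'' hides exactly this phenomenon: for (1)--(4) it works because one must check that $\Hom(U,\Sigma^jZ)=0$ for all $j\leq 0$, which does hold there but not in (5). Finally, the genuinely substantive part of (5) is the octahedral computation showing $Z^1_{-r-m,0}$ is the cocone of $Z^1_{-r-m,n-r}\oplus Z^1_{0,0}\to Z^1_{0,n-r}$, which you gesture at but do not carry out; that step is where the actual answer comes from and cannot be waved away.
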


\begin{proposition} \label{prop:cocones_r=1}
If $r=1$ and $Z = Z_{0,0}$, and $G \coloneqq G_{Z_{0,0}}$, then $G(U) = U$ for all but finitely many (up to positive suspension) $U \in \ind{\sZ\orth}$. The exceptions are:
\begin{enumerate}[label=(\arabic*)~]
\item $G(\Sigma^i X_{1+m,1+m+j}) = \Sigma^i Z_{j+1,0}$ for $0 \leq j < m$ and $i \geq 0$.
\item $G(\Sigma^i Y_{1-n+j,1-n}) = \Sigma^i Z_{0,j+1}$ for $0 \leq j < n-2$ and $i \geq 0$.
\item $G(\Sigma^i Z_{j,1-n})    = \Sigma^i X_{j,m}$ for $0 < j < m+1$ and $i \geq 0$.
\item $G(\Sigma^i Z_{m+1,j})    = \Sigma^i Y_{-n,j}$ for $2-2n < j < 1-n $ and $i \geq 0$.
\item $G(\Sigma^i Z_{0,1-n})    = \left\{ \begin{array}{ll}
                                       X_{0,m} & \text{for } i=0, \\
                                       \Sigma^i Z_{0,n-1} & \text{for } i > 0.
                                 \end{array} \right.$
\end{enumerate}
\end{proposition}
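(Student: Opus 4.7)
The strategy mirrors the proof of Proposition~\ref{prop:cocones}. With $\sN = \add Z_{0,0}$, the co-aisle is $\sB = \susp{\Sigma\sN} = \add\{\Sigma^j Z_{0,0} : j \geq 1\}$, and $G$ sends $V \in \sZ\orth$ to the cocone of a minimal left $\sB$-approximation $f_V \colon V \to B_V$. Since $Z_{0,0}$ is exceptional by Proposition~\ref{prop:object_types}, we have $\Hom(\Sigma^j Z_{0,0}, \Sigma^{j'} Z_{0,0}) = 0$ for all $j \neq j'$, so minimality dictates
\[ B_V = \bigoplus_{j\geq 1}(\Sigma^j Z_{0,0})^{\hom(V,\, \Sigma^j Z_{0,0})}, \]
with each component $V \to \Sigma^j Z_{0,0}$ of $f_V$ a basis vector of $\Hom(V, \Sigma^j Z_{0,0})$. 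Thus the task reduces to computing these dimensions and identifying the cocone via explicit triangles.

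By Properties~\ref{properties}(3), $\Sigma^j Z_{0,0} = Z_{j(m+1),\, j(1-n)}$, so the positive shifts of $Z_{0,0}$ trace out a discrete half-line diving into the bottom-right of $\cZ$. Substituting this into the $r=1$ Hom-hammocks from Propositions~\ref{prop:X-hammocks}, \ref{prop:Y-hammocks} and \ref{prop:Z-hammocks} reduces the non-vanishing of $\Hom(V, \Sigma^j Z_{0,0})$ for $j \geq 1$ to coordinate inequalities which can be checked case-by-case on the AR-component of $V$. A direct verification confines the exceptional $V$, up to shifts $\Sigma^i$ with $i \geq 0$, to the list in (1)--(5); for every other $V \in \sZ\orth$ one has $B_V = 0$ and hence $G(V) = V$. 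In each of cases (1)--(4), exactly one $j$ contributes and $\hom(V, \Sigma^j Z_{0,0}) = 1$, so $B_V$ is a single suspension of $Z_{0,0}$; the cocone $G(V)$ is then read off from the appropriate standard triangle of Properties~\ref{properties}(4)---the $X \to Z \to Z$ family for (1) and (3), and the $Y \to Z \to Z$ family for (2) and (4)---after applying $\Sigma^i$.

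The main obstacle is case~(5). For $V = \Sigma^i Z_{0, 1-n}$ with $i \geq 1$, the piece $\rayfrom{\corayfrom{V}}$ of the $\cZ$-hammock contains both $\Sigma^i Z_{0,0}$ and $\Sigma^{i+1} Z_{0,0}$, so $B_V = \Sigma^i Z_{0,0} \oplus \Sigma^{i+1} Z_{0,0}$. To identify the cocone I would apply the octahedral axiom to the factorisation $V \xrightarrow{f_V} B_V \onto \Sigma^{i+1} Z_{0,0}$: the cone of the projection is $\Sigma^{i+1} Z_{0,0}$, while the cone of the composite $V \to \Sigma^{i+1} Z_{0,0}$ is $\Sigma^{i+1} X_{0,m}$ by the first standard triangle $X_{0,m} \to Z_{0, 1-n} \to \Sigma Z_{0,0}$ shifted by $\Sigma^i$. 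The octahedron then yields the triangle $G(V) \to \Sigma^i X_{0,m} \to \Sigma^i Z_{0,0} \to \Sigma G(V)$, whose cocone is computed via the further standard triangle $X_{0,m} \to Z_{0,0} \to Z_{m+1,0}$ together with the identity $\Sigma Z_{0,n-1} = Z_{m+1,0}$, yielding $G(V) = \Sigma^i Z_{0,n-1}$. When $i = 0$, the would-be second contribution sits at the forbidden degree $j = 0$, so only $j = 1$ survives; thus $B_V = \Sigma Z_{0,0}$ and the triangle $X_{0,m} \to Z_{0, 1-n} \to \Sigma Z_{0,0}$ directly gives $G(Z_{0, 1-n}) = X_{0,m}$.
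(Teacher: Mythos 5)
Your proposal is correct and essentially reproduces the approach the paper itself takes: the published argument defers the $r=1$ calculations as ``similar'' to the $r>1$ proof of Proposition~\ref{prop:cocones}, which proceeds by computing $\sA = {}\orth\sB$ via the Hom-hammocks of Proposition~\ref{prop:Z-hammocks}, reading off the exceptional objects, and then computing the cocones from the standard triangles of Properties~\ref{properties}(4), with an octahedron in case~(5). You have simply carried out those same steps explicitly for $r=1$: identifying $B_V$ as a sum of $\Sigma^{j}Z_{0,0}$ via the $r=1$ hammocks, using $\Sigma^j Z_{0,0}=Z_{j(m+1),\,j(1-n)}$, reading off cases (1)--(4) from the triangles in Properties~\ref{properties}(4), and handling case~(5) for $i\geq 1$ with an octahedron on the factorisation through the split projection $B_V\twoheadrightarrow\Sigma^{i+1}Z_{0,0}$, exactly as the paper does in the $r>1$ case.
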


\begin{proof}[Proof of Propositions~\ref{prop:cocones} and \ref{prop:cocones_r=1}]
We do the calculations for the generic case with $r>1$ in Proposition~\ref{prop:cocones}; those for Proposition~\ref{prop:cocones_r=1} are similar.
The function $G$ is defined via the `co-aisle' of the co-t-structure $(\sA,\sB)$ with $\sB = \susp{\Sigma Z_0}=\add\{\Sigma^i Z_0 \mid i\geq 1\}$. Using Proposition~\ref{prop:Z-hammocks}, one can easily compute $\sA = {}\orth\sB$. If $U \in \sA$, then $G(U) = U$, so examining $\sA \cap \sZ\orth$ gives the list of exceptions above.

 We now compute the cocones $G(U)$ directly using the triangles from Properties~\ref{properties}$(4)$:
\begin{enumerate}[label=(\arabic*)]
\item The relevant triangles here are $\tri{Z^0_{j+1,0}}{X^1_{0,j}}{Z^1_{0,0}}$ for  $0 \leq j < r+m-1$, where we note that $\Sigma Z^0_{j+1,0} = Z^1_{j+1,0}$.
\item Here we have $\tri{Y^1_{j,0}}{Z^1_{0,0}}{Z^1_{0,j+1}}$ for $0 \leq j < n-r-1$, again noting that $Z^0_{0,j+1} = \Sigma^{-1} Z^1_{0,j+1}$.
\item \label{ZX-triangles} The triangles are $\tri{X^1_{j,-1}}{Z^1_{j,0}}{Z^1_{0,0}}$ for $1 \leq j \leq r+m-1$.
\item The triangles are $\tri{Y^1_{-1,j}}{Z^1_{0,j}}{Z^1_{0,0}}$ for $r-n+1 \leq j \leq -1$.
\item When $0 \leq i \leq r$, the relevant triangle belongs with the family in \ref{ZX-triangles} above, and can be computed analogously. However, when $i > r$, we need to take the cocone of the morphism $\Sigma^i Z_{-r-m,0} \to \Sigma^i \big( Z^1_{-r-m,r-n} \oplus Z^1_{0,0} \big)$. We claim that the cone of $Z_{-r-m,0} \to Z^1_{-r-m,r-n} \oplus Z^1_{0,0}$ is $Z^1_{0,r-n}$. To show this, we compute the cocone of $ Z^1_{-r-m,r-n} \oplus Z^1_{0,0} \to Z^1_{0,r-n}$ via the following octahedron:
\[
\xymatrix{
                    & Z^1_{-r-m,r-n} \ar@{=}[r] \ar[d]                 & Z^1_{-r-m,r-n} \ar[d]  \\
C \ar@{=}[d] \ar[r] & Z^1_{-r-m,r-n} \oplus Z^1_{0,0} \ar[r] \ar[d] & Z^1_{0,r-n} \ar[d] \\
C \ar[r]            & Z^1_{0,0} \ar[r]                        & \Sigma X^1_{-r-m,-1},
}
\]
where the second column is the split triangle, and the third column is a standard triangle from Properties~\ref{properties}$(4)$.
The triangle forming the bottom row is none other than $\tri{X^1_{-r-m,-1}}{Z^1_{-r-m,0}}{Z^1_{0,0}}$, which computes the cocone $C = Z^1_{-r-m,0}$ as claimed. \qedhere
\end{enumerate}
\end{proof}

\begin{corollary} \label{cor:indecomp}
Let $Z \in \ind{\cZ}$ be arbitrary.
If $U \in \sZ\orth$ is indecomposable then $G_Z(U)$ is also indecomposable.
\end{corollary}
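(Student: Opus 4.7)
The plan is to reduce the general case to the explicit computation already carried out in Proposition~\ref{prop:cocones} (and its $r=1$ counterpart Proposition~\ref{prop:cocones_r=1}), and then inspect the finite list of cases.

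First I would reduce to the distinguished base point $Z = Z^0_{0,0}$ using the transitivity result from Corollary~\ref{cor:twist-action}: the autoequivalences $\TTT_\cX$, $\TTT_\cY$ and $\Sigma$ act transitively on $\ind{\cZ}$, so there is some $\varphi \in \Aut(\Db(\LLambda))$ with $\varphi(Z^0_{0,0}) = Z$. Since $\varphi$ is a triangle equivalence, it takes $\thick{}{Z^0_{0,0}}$ to $\sZ$, hence the right orthogonal $\thick{}{Z^0_{0,0}}^\perp$ to $\sZ^\perp$, and the co-aisle $\susp{\Sigma Z^0_{0,0}}$ to $\susp{\Sigma Z}$. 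It also preserves minimal left approximations (any equivalence does). Applying $\varphi$ to the defining triangle of $G_{Z^0_{0,0}}(\varphi^{-1}(U))$ and comparing with that of $G_Z(U)$, one obtains an isomorphism $G_Z(U) \cong \varphi(G_{Z^0_{0,0}}(\varphi^{-1}(U)))$. Since $\varphi^{-1}(U)$ is indecomposable if and only if $U$ is, and $\varphi$ preserves indecomposability, it suffices to prove the statement for $Z = Z^0_{0,0}$.

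Second, I would perform a case analysis on $U \in \ind{\sZ^\perp}$. If $U$ lies in the co-aisle's left orthogonal $\sA = {}^\perp\susp{\Sigma Z}$, then the minimal approximation $f_U$ is zero, so $G_Z(U) = U$ is indecomposable. Otherwise $U$ belongs to the finite list (up to non-negative suspension) enumerated in Proposition~\ref{prop:cocones} or Proposition~\ref{prop:cocones_r=1}. Each of the five families in those propositions explicitly exhibits $G_Z(U)$ as a single indecomposable object (an object of $\cX^1$, $\cY^1$, or a single $Z^0$/$Z^1$ indecomposable), so indecomposability is visible by inspection.

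The only potential obstacle is ensuring that the reduction in the first step is clean: one must check that the notion of \emph{minimal} left $\sB$-approximation, on which the well-definedness of $G_Z$ (not just its stable isomorphism class) hinges, transports correctly along $\varphi$. This however is immediate because equivalences send minimal approximations to minimal approximations (they preserve the radical of the endomorphism ring of any object), so no additional work is required. Everything else is bookkeeping, and the corollary follows.
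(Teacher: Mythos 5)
Your proposal is correct and follows essentially the same route as the paper: reduce to $Z = Z^0_{0,0}$ via the transitive action of $\TTT_\cX$, $\TTT_\cY$, $\Sigma$ on $\ind{\cZ}$ (Corollary~\ref{cor:twist-action}), then read off indecomposability from the explicit case list in Proposition~\ref{prop:cocones} (and Proposition~\ref{prop:cocones_r=1} for $r=1$). The paper leaves the transport-of-minimal-approximations point implicit, so your added remark that equivalences preserve minimal approximations is a useful small clarification rather than a divergence.
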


\begin{proof}
Since the autoequivalences $\TTT_{\cX}$, $\TTT_{\cY}$ and $\Sigma$ act transitively on the $\cZ$ components, it is sufficient to see this for $Z = Z^0_{0,0}$. This is clear from the computations in (the proof of) Proposition~\ref{prop:cocones} above.
\end{proof}

Silting objects in $\Db(\Lambda)$ correspond to pairs $(Z,M')$, where $Z\in \ind{\cZ}$ and $M'$ is a silting object of $\sZ\orth \simeq \Db(\kk A_{n+m-1})$. However, a silting object in $\Db(\Lambda)$ may have more than one indecomposable summand in the $\cZ$ components. Thus, using silting reduction, we will obtain multiple descriptions of the same object. To rectify this problem, we classify silting objects for which $Z\in \ind{\cZ}$ is minimal with respect to a total order on $\ind{\cZ}$ defined as follows.
Let $Z\in \ind{\cZ^i}$ and $Z'\in \ind{\cZ^j}$ and define
\begin{align*}
Z \preceq Z' \iff &
\left\{
\begin{array}{r@{\:}c@{\:}l l}
\ray{\Sigma^{j-i} Z}         &\leq  & \ray{Z'}    & \text{if } i < j; \\
\ray{\tau^{-1}\Sigma^{j-i} Z} & \leq & \ray{Z'}    & \text{if } i > j; \\
\coray{Z}                   &\leq  & \coray{Z'}  & \text{if } i=j \text{ and } \ray{Z} = \ray{Z'}; \\
\ray{Z}                     & <    & \ray{Z'}    & \text{if } i=j \text{ and } \ray{Z} \neq \ray{Z'},
\end{array}
\right.
\end{align*}
where $\ray{Z^a_{ij}} \leq \ray{Z^a_{kl}}$ if and only if $i \leq k$ and $\coray{Z^a_{ij}} \leq \coray{Z^a_{kl}}$ if and only if $j \leq l$.
Equivalently, for $Z\in \ind{\cZ^i}$, the total order is defined by successor sets,
\begin{align*}
 \{ \tilde Z \in \ind{\cZ} \mid Z \preceq Z' \} =
    \rayfrom{Z} & \cup \raythrough{\corayfrom{\tau\inv Z}}
                  \cup \raythrough{\corayfrom{\Sigma^{\{i+1,\ldots,r-1\}} Z}} \\
                & \cup \raythrough{\corayfrom{\tau^{-1}\Sigma^{\{0,\ldots,i-1\}} Z}} .
\end{align*}

\noindent
The following diagrams indicate the indecomposables $Z\in\cZ$ with $Z\preceq Z'$:

\begin{center}
  \begin{tikzpicture}[scale=0.3]

    \foreach \x in {-5,...,5} \foreach \y in {-5,-4,...,-1,1,2,...,5}
       { \fill[gray!60] (\x,\y) circle (2pt); }
    \fill[gray!60] (5,0) circle (2pt); 
    \draw[fill,gray!50,line width=1pt] (-5,-5) -- (5,5) -- (-5,5) -- cycle;
    \filldraw(0,0) circle (4pt) node[right] {$\Sigma^{i-j}Z'$};	
    \node at (0,-6) {$\cZ^i, i<j$};

  \begin{scope}[shift={(15,0)}]
    \foreach \x in {-5,...,5} \foreach \y in {-5,-4,...,-1,1,2,...,5}
       { \fill[gray!60] (\x,\y) circle (2pt); }
    \fill[gray!60] (5,0) circle (2pt); \fill[gray!60] (4,0) circle (2pt); \fill[gray!60] (3,0) circle (2pt); 
    \draw[fill,gray!50,line width=1pt] (-5,-5) -- (0,0) -- (-1,1) -- (3,5) -- (-5,5) -- cycle;
    \filldraw(0,0) circle (4pt) node[right] {$Z'$};	
    \node at (0,-6) {$\cZ^j$};
  \end{scope}

  \begin{scope}[shift={(30,0)}]
    \foreach \x in {-5,...,5} \foreach \y in {-5,-4,...,-1,1,2,...,5}
       { \fill[gray!60] (\x,\y) circle (2pt); }
    \fill[gray!60] (5,0) circle (2pt); % \fill[gray!60] (4,0) circle (2pt); \fill[gray!60] (3,0) circle (2pt); 
    \draw[fill,gray!50,line width=1pt] (-5,-4) -- (4,5) -- (-5,5) -- cycle;
    \filldraw(-1,0) circle (4pt) node[right] {$\tau\Sigma^{i-j}Z'$};
    \node at (0,-6) {$\cZ^i, i>j$};
  \end{scope}

\end{tikzpicture}
\end{center}

\begin{lemma} \label{lem:total-order}
The relation $\preceq$ defines a total order on $\ind{\cZ}$.
\end{lemma}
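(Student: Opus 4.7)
The plan is to exhibit a bijection between $\ind{\cZ}$ and $\IZ \times \{0,1,\ldots,r-1\} \times \IZ$ which intertwines $\preceq$ with the lexicographic order on the latter; since lex order is manifestly a total order, the lemma then follows immediately. The natural candidate is $\sigma(Z^k_{a,b}) \coloneqq (a,k,b)$, which is a bijection by the labelling of indecomposables in Properties~\ref{properties}.

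To verify that $Z \preceq Z'$ if and only if $\sigma(Z) \leq_{\mathrm{lex}} \sigma(Z')$, I would simply work through the four cases of the definition. Writing $Z = Z^i_{a,b}$ and $Z' = Z^j_{c,d}$, the key computational input is Properties~\ref{properties}(3): the suspension $\Sigma$ acts on coordinates as $Z^k_{a,b} \mapsto Z^{k+1}_{a,b}$ for $k < r-1$, and only the step $\cZ^{r-1} \to \cZ^0$ wraps with a coordinate shift. Since $i,j \in \{0,\ldots,r-1\}$, the exponents $j-i$ appearing in the definition satisfy $|j-i|<r$, so no wrapping occurs. Consequently $\Sigma^{j-i}Z^i_{a,b} = Z^j_{a,b}$ with unchanged coordinates, while $\tau^{-1}Z^j_{a,b} = Z^j_{a+1,b+1}$.

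These identifications convert the conditions $\ray{\Sigma^{j-i}Z} \leq \ray{Z'}$ and $\ray{\tau^{-1}\Sigma^{j-i}Z} \leq \ray{Z'}$ into the first-coordinate comparisons $a \leq c$ (when $i<j$) and $a+1 \leq c$, i.e.\ $a<c$ (when $i>j$), respectively. On the lex side, $(a,i,b) \leq_{\mathrm{lex}} (c,j,d)$ with $i<j$ reduces to $a \leq c$ (because a tie $a=c$ is immediately resolved in the correct direction by $i<j$), while with $i>j$ it reduces to the strict inequality $a<c$. The two $i=j$ cases in the definition read directly as the lex comparison on $(a,b)$, completing the matching.

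I do not anticipate a substantive obstacle; the computation is routine bookkeeping with the coordinate conventions. The only point requiring care is confirming that the wrapping formula for $\Sigma$ is not triggered, which follows at once from the bound $|j-i|<r$.
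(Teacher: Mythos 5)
Your proposal is correct, and it takes a genuinely different route from the paper. The paper verifies anti-symmetry, transitivity and totality directly, by case analysis on the levels $i,j$ (and, for transitivity, $k$), manipulating the conditions $\ray{\Sigma^{j-i}Z}\leq\ray{Z'}$ and $\ray{\tau^{-1}\Sigma^{j-i}Z}\leq\ray{Z'}$ by hand. You instead exhibit an order-isomorphism $\sigma(Z^k_{a,b})=(a,k,b)$ onto $\IZ\times\{0,\ldots,r-1\}\times\IZ$ with the lexicographic order, so that totality, anti-symmetry and transitivity are all inherited at once. The computational core of both arguments is the same observation (that $|j-i|<r$ prevents the suspension from wrapping, so $\Sigma^{j-i}$ preserves $\cZ$-coordinates and $\tau^{-1}$ shifts $a\mapsto a+1$), but your reduction is cleaner and shorter: it trades the paper's three separate verifications for a single coordinate translation. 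A small stylistic point: the paper never actually records the equivalent lex description, and it is a useful reformulation — it makes immediately visible why the `successor set' picture after the definition looks the way it does — so your argument also has explanatory value, not merely efficiency.
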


\begin{proof}
\textit{Anti-symmetry:} Suppose $Z \preceq Z'$ and $Z' \preceq Z$ with $Z\in \ind{\cZ^i}$ and $Z'\in \ind{\cZ^j}$. If $i=j$, then anti-symmetry is clear. For a contradiction, suppose $i<j$. Then $\ray{\Sigma^{j-i}Z} \leq \ray{Z'}$ and $\ray{\tau^{-1}\Sigma^{i-j}Z'}\leq \ray{Z}$. In particular, it follows that $\ray{\tau^{-1} Z'} \leq \ray{\Sigma^{j-i} Z} \leq \ray{Z'}$, which is a contradition, since $\ray{\tau^{-1}Z'} > \ray{Z'}$. The same argument works when $i>j$.

\textit{Transitivity:} Suppose $Z \preceq Z'$ and $Z' \preceq Z''$ with $Z\in\ind{\cZ^i}$, $Z'\in \ind{\cZ^j}$ and $Z''\in \ind{\cZ^k}$. One simply analyses the different possibilities for $i$, $j$ and $k$. We do the case $i>j$ and $j<k$; the rest are similar. The first inequality means that $\ray{\tau^{-1}\Sigma^{j-i}Z} \leq \ray{Z'}$ and the second inequality means that $\ray{\Sigma^{k-j}Z'} \leq \ray{Z''}$. There are two subcases: first assume $i\leq k$. In this case, apply $\tau \Sigma^{k-j}$ to the condition arising from the first inequality and combine this with the second inequality to get
$\ray{\Sigma^{k-i}Z}\leq \ray{\tau \Sigma^{k-j} Z'} < \ray{\Sigma^{k-j} Z'} \leq \ray{Z''}$.
Now assume that $i > k$ and apply $\Sigma^{k-j}$ to the condition arising from the first inequality and combine with the second inequality to get
$\ray{\tau^{-1}\Sigma^{k-i}Z} \leq \ray{\Sigma^{k-j}Z'} \leq \ray{Z'}$.

\textit{Totality:} Suppose $Z\in \ind{\cZ^i}$ and $Z'\in \ind{\cZ^j}$. If $i=j$ then it is clear that either $Z \preceq Z'$ or $Z' \preceq Z$. Now suppose $i<j$. If $\ray{\Sigma^{j-i}Z} \leq \ray{Z'}$ then $Z \preceq Z'$ and we are done, so suppose that $\ray{\Sigma^{j-i}Z} > \ray{Z'}$. Then it follows that $\ray{\Sigma^{i-j}Z'} < \ray{Z}$, in which case, because $\tau^{-1}$ increases the index of the ray by $1$, one gets $\ray{\tau^{-1}\Sigma^{i-j}Z'}\leq \ray{Z}$ and hence $Z' \preceq Z$. A similar argument holds in the case $i>j$. Thus, $\preceq$ is indeed a total order.
\end{proof}

Using Corollary~\ref{cor:indecomp}, we now ensure we identify each silting subcategory of $\sM$ of $\Db(\Lambda)$ as precisely one pair $(Z,M')$, with $\sM'$ a silting object of $\sZ\orth \simeq \Db(\kk A_{n-m+1})$  by insisting that $Z \preceq Z'$ for each $Z' \in \ind{\cZ} \cap \add{M'}$.

\begin{definition}
We define the following additive subcategory of $\sD$:
\[
\sZ\orth_{\prec} \coloneqq \add \{U \in \ind{\sZ\orth} \mid G_Z(U) \in \cZ \text{ and } G_Z(U) \prec Z \}.
\]
\end{definition}

With the identification of $\Db(\kk A_{n+m-1})$ in $\Db(\LLambda)$ of Remark~\ref{rem:explicit}, using Proposition~\ref{prop:cocones}, we now give an explicit description of the additive subcategory $\sZ\orth_{\prec}$.

Recall from Proposition~\ref{prop:embedding-A} that $\sZ\orth \simeq \Db(\kk A_{n+m-1})$. Let $\Gamma \coloneqq \kk A_{n+m-1}$ be the path algebra of the $A_{n+m-1}$ quiver with the linear orientation:
\[
\xymatrix{1 & \ar[l] 2 & \ar[l]  3 & \ar@{.}[l] n+m-2 & \ar[l] n+m-1.}
\]
Consider the unique $\Sigma^i\mod{\Gamma}\subset\Db(\Gamma)$ that contains the indecomposable objects in $\sZ\orth \cap \cZ$ admitting non-zero morphisms to $Z$. In Lemma~\ref{lem:forbidden} below, when we specify $\mod{\Gamma}$, we shall mean precisely this copy sitting inside $\Db(\kk A_{n+m-1})$.

\begin{lemma} \label{lem:forbidden}
With the conventions described above, the additive subcategory $\sZ\orth_{\prec}$ is
\[
\sZ\orth_{\prec} = \add\{\Sigma^i \sA \mid i \leq -r\} \cup \add\{\Sigma^i \sB \mid 1-r \leq i < 0 \} \cup \add(\sC),
\]
where the sets of indecomposables $\sA$, $\sB$ and $\sC$ are defined as follows:
\begin{align*}
\sA & \coloneqq \{X \in \mod{\Gamma} \mid \Hom_{\Gamma}(P(r+m),X) \neq 0\}; \\
\sB & \coloneqq \sA \cap \{X \in \mod{\Gamma} \mid \Hom_{\Gamma}(P(r+m+1),X) \neq 0\} && \text{(empty when $n-r=1$);} \\
\sC & \coloneqq \{P(r+m-1),\ldots,P(n+m-1)\}                                        && \text{(empty when $n-r=1$),}
\end{align*}
where $P(i)$ is the indecomposable projective at vertex $i$ for the path algebra $\Gamma = \kk A_{n+m-1}$.
\end{lemma}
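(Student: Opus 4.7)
The plan is to combine the explicit computation of $G_Z$ from Proposition~\ref{prop:cocones} (and its $r=1$ analogue) with the definition of the total order $\preceq$, then translate the resulting indecomposables into the module-theoretic language of $\Gamma = \kk A_{n+m-1}$.

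First, by Corollary~\ref{cor:twist-action}, the autoequivalence group acts transitively on $\ind{\cZ}$, so $\sZ^\perp_\prec$ is carried to a corresponding subcategory under this action, and it suffices to treat the normalised case $Z = Z^0_{0,0}$. The condition $G_Z(U) \in \cZ$ then cuts out an explicit list of indecomposables from Proposition~\ref{prop:cocones}: one family coming from case (1), where objects $\Sigma^i X^1_{0,j}$ of $\cX^{1}$-type are sent to $\Sigma^i Z^0_{j+1,0}$; one family coming from case (2), where objects $\Sigma^i Y^1_{j,0}$ are sent to $\Sigma^i Z^0_{0,j+1}$; the high-suspension piece of case (5), sending $\Sigma^i Z^1_{-r-m,0}$ to $\Sigma^i Z^1_{0,n-r}$ for $i>r$; and finally the tautological case $G_Z(U)=U$ for $U \in \ind{\cZ} \cap \sZ^\perp$.

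The second step is to intersect this list with the predecessor set of $Z^0_{0,0}$ under $\preceq$. The definition of the successor sets gives an explicit description of $\{W \mid W \prec Z^0_{0,0}\}$ as a union of regions bounded by rays and corays through shifts of $Z$; imposing this inequality on each of the four families above restricts the allowed suspension indices and positions of $j$, keeping only those $U$ whose image lands in the predecessor region.

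The third and main step is the translation to $\mod\Gamma$. Under the identification $\sZ^\perp \simeq \Db(\Gamma)$ of Proposition~\ref{prop:embedding-A}, the heart $\mod\Gamma$ occupies specific portions of the $\cX^0,\cY^0,\cZ^0,\cX^1,\cY^1,\cZ^1$ components of $\Db(\Lambda)$, as illustrated in Figure~\ref{fig:type-A}. Indecomposable $\Gamma$-modules are parametrised by intervals $[a,b]$ of vertices, and the conditions $\Hom_\Gamma(P(r+m),M_{[a,b]}) \neq 0$ and $\Hom_\Gamma(P(r+m+1),M_{[a,b]}) \neq 0$ simply require that $b \geq r+m$ (resp.\ $b \geq r+m+1$). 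Matching these intervals against the restrictions obtained in Step 2, the family from case (1) corresponds to the intervals defining $\sA$ and lives in degrees $\Sigma^i$ with $i\leq -r$; the family from case (2) corresponds to $\sB$ in degrees $1-r \leq i < 0$; and the remaining indecomposables in $\cZ \cap \sZ^\perp$ lying below $Z$ correspond to the projectives listed in $\sC$. The case (5) contribution matches up with a particular member of $\sA$ at the boundary index $i=-r$ and requires no separate clause. The main obstacle is the precise bookkeeping of the embedding $\Db(\Gamma) \hookrightarrow \Db(\Lambda)$ and of which internal suspension each $U$ lies in; this is verified case-by-case but is straightforward given the AR coordinates and the shift formulas from Properties~\ref{properties}(3). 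When $r=1$ the same argument goes through using Proposition~\ref{prop:cocones_r=1}, with $\sB$ and $\sC$ empty (indeed $n-r=1$ is then possible, and the corresponding pieces vanish as claimed).
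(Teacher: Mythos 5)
Your plan is the same one the paper uses (the paper's proof just says ``direct computation'' from Proposition~\ref{prop:cocones}, Lemma~\ref{lem:total-order}, and the identification in Remark~\ref{rem:explicit}), but Step~3 attributes $\sA$, $\sB$ to the wrong source and this would derail the actual computation. In case~(1) of Proposition~\ref{prop:cocones} one has $U=\Sigma^i X^1_{0,j}$ with $i\geq0$ and $G(U)=\Sigma^i Z^0_{j+1,0}$; since $j\geq0$, already at $i=0$ the object $Z^0_{j+1,0}$ has ray index $j+1\geq1>0=\ray{Z^0_{0,0}}$, so by the definition of $\preceq$ it is a \emph{successor} of $Z$, and larger $i$ only pushes it further forward. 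The same happens in case~(2), where $G(U)=\Sigma^i Z^0_{0,j+1}$ lies on the same ray as $Z$ but with strictly larger coray index, and in the high-$i$ branch of case~(5). So the exceptional families of Proposition~\ref{prop:cocones} contribute \emph{nothing} to $\sZ\orth_{\prec}$, rather than the $\Sigma^{\leq-r}\sA$ and $\Sigma^{1-r,\dots,-1}\sB$ pieces you assign to them.

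What actually drives the lemma is the tautological case: $\sZ\orth_{\prec}$ consists precisely of $U\in\ind{\cZ}\cap\sZ\orth$ with $G_Z(U)=U\prec Z$ (the exceptional $\cZ$-objects of cases~(3), (4), (5) with small $i$ drop out because their $G_Z$-images land in $\cX$ or $\cY$, and they are in any case $\succ Z$). The real content of the proof is then the bookkeeping you defer to the end: under the identification of Remark~\ref{rem:explicit}, one has to determine which indecomposables of $\Db(\Gamma)$ sit in $\cZ\cap\sZ\orth$ and precede $Z$, and to verify that these are exactly the objects $\Sigma^i X$ with $X\in\sA$, $i\leq-r$, together with $\Sigma^i X$ with $X\in\sB$, $1-r\leq i<0$, and the projectives in $\sC$. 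That computation, not a restriction of the exceptional cases, is where $\sA$, $\sB$, $\sC$ come from; as written, your Step~3 hides a gap.
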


\begin{proof}
This is a direct computation using Proposition~\ref{prop:cocones}, the total order on the indecomposable objects of the $\cZ$ components of Lemma~\ref{lem:total-order}, and the identification of the subcategory from Remark~\ref{rem:explicit}.
\end{proof}

To illustrate Lemma~\ref{lem:forbidden}, we sketch the additive subcategory $\sZ\orth_{\prec}$ in the case of $\Lambda(2,5,2)$ and $Z = Z^0_{0,0}$ below.

\vspace{-2ex}
\begin{center}
\begin{tikzpicture}[scale=0.3] % does not scale with overbraces, unfortunately

  \foreach \x in {-4,...,40} \foreach \y in {0,...,5}
     { \ifnumodd{\x+\y}{}{ \fill[gray!30] (\x,\y) circle (5pt); }}
 
%  \fill (12,0) circle (5pt); \fill (14,0) circle (5pt); \fill (16,0) circle (5pt);

  \fill (18,0) circle (5pt); \fill (19,1) circle (5pt); \fill (20,2) circle (5pt);
  \fill (17,1) circle (5pt); \fill (18,2) circle (5pt); \fill (19,3) circle (5pt);
  \fill (16,2) circle (5pt); \fill (17,3) circle (5pt); \fill (18,4) circle (5pt); 
  \fill (15,3) circle (5pt); \fill (16,4) circle (5pt); \fill (17,5) circle (5pt);  
  \fill (4,0) circle (5pt); \fill (5,1) circle (5pt); \fill (6,2) circle (5pt);
  \fill (3,1) circle (5pt); \fill (4,2) circle (5pt); \fill (5,3) circle (5pt);
  \fill (2,2) circle (5pt); \fill (3,3) circle (5pt); \fill (4,4) circle (5pt);
  \fill (1,3) circle (5pt); \fill (2,4) circle (5pt); \fill (3,5) circle (5pt);
  \fill ( 8,2) circle (5pt); \fill ( 9,1) circle (5pt); \fill (10,0) circle (5pt);
  \fill ( 9,3) circle (5pt); \fill (10,2) circle (5pt); \fill (11,1) circle (5pt);
  \fill (10,4) circle (5pt); \fill (11,3) circle (5pt); \fill (12,2) circle (5pt);
  \fill (11,5) circle (5pt); \fill (12,4) circle (5pt); \fill (13,3) circle (5pt);
  \fill (-3,5) circle (5pt); \fill (-2,4) circle (5pt); \fill (-1,3) circle (5pt);
  \fill (-4,4) circle (5pt); \fill (-3,3) circle (5pt); \fill (-2,2) circle (5pt);
                             \fill (-4,2) circle (5pt); \fill (-3,1) circle (5pt);
                                                        \fill (-4,0) circle (5pt);
  \node at (8,7) {$\overbrace{\phantom{\rule{18.5em}{0.5em}}}^{\Sigma^{\leq -r}\sA}$};

  \fill (23,1) circle (5pt); \fill (24,2) circle (5pt); \fill (25,3) circle (5pt); \fill (26,4) circle (5pt);
  \fill (24,0) circle (5pt); \fill (25,1) circle (5pt); \fill (26,2) circle (5pt); \fill (27,3) circle (5pt);
  \node at (25,7) {$\overbrace{\phantom{\rule{3em}{0.5em}}}^{\Sigma^{-1}\sB}$};

  \fill (30,4) circle (5pt); \fill (31,5) circle (5pt);
  \node at (30.5,7) {$\overbrace{\phantom{\rule{0.5em}{0.5em}}}^{\sC}$};

  \begin{scope}[semithick]
    \draw (-5,-1) -- (41,-1); \draw (-5, 6) -- (41, 6);
    \draw (-5,0) -- (-4,-1) -- ( 3,6) -- (10,-1) -- (17,6) -- (24,-1) -- (31,6) -- (38,-1) -- (41,2);
  \end{scope}
  \begin{scope}[very thin]
    \draw (-5,4) -- (-3,6) -- (4,-1) -- (11,6) -- (18,-1) -- (25,6) -- (32,-1) -- (39,6) -- (41,4);
  \end{scope}

\end{tikzpicture}
\end{center}

\bigskip\noindent
We summarise this discussion in the following proposition, and obtain the main theorem of the section as a corollary.

\begin{proposition}
Suppose $Z \in \ind{\cZ}$ and write $\sZ = \thick{\Db(\Lambda)}{Z}$. Then there is a bijection between
\begin{enumerate}[label=(\arabic*)~]
\item Silting subcategories $\sM$ of $\Db(\Lambda)$ with $Z \in \sM$ and $Z \preceq \ind{\cZ}\cap\sM$.
\item Silting subcategories $\sN$ of $\sZ\orth$ with $\sN \cap \sZ\orth_{\prec} = \varnothing$.
\end{enumerate}
\end{proposition}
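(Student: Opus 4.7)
The plan is to reduce the statement to a direct application of Aihara--Iyama's silting reduction (Theorem~\ref{thm:silt-red}), followed by a bookkeeping argument translating the minimality condition via the explicit description of $G_Z$.

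First I would verify the hypotheses of Theorem~\ref{thm:silt-red} for the pair $(\sZ, \add\{Z\})$. The subcategory $\sZ = \thick{\Db(\Lambda)}{Z}$ is thick and, by Proposition~\ref{prop:embedding-A}, functorially finite in $\Db(\Lambda)$. By Proposition~\ref{prop:object_types}, the object $Z\in \ind{\cZ}$ is exceptional, so $\sZ$ is generated by $Z$ and $\add\{Z\}$ is a silting subcategory of $\sZ$. Theorem~\ref{thm:silt-red} therefore yields a bijection between silting subcategories $\sM$ of $\Db(\Lambda)$ with $Z \in \sM$ and silting subcategories of the Verdier quotient $\Db(\Lambda)/\sZ$; via the semi-orthogonal decomposition of Proposition~\ref{prop:embedding-A}, this quotient is canonically identified with $\sZ\orth$. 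By construction of the inverse in Theorem~\ref{thm:silt-red}, a silting subcategory $\sN$ of $\sZ\orth$ lifts to
\[
\sM \;=\; \add\bigl(\{Z\} \cup \{G_Z(U) \mid U \in \ind{\sN}\}\bigr),
\]
where $G_Z$ is the map introduced in Definition~\ref{map-G}.

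Next, I would translate the minimality condition. By Corollary~\ref{cor:indecomp}, each $G_Z(U)$ with $U$ indecomposable is again indecomposable, so the indecomposable summands of $\sM$ lying in $\cZ$ are exactly $Z$ together with those $G_Z(U)$, for $U \in \ind{\sN}$, which happen to lie in $\cZ$ (the others land in $\cX$ or $\cY$, as catalogued by Proposition~\ref{prop:cocones}). The condition $Z \preceq \ind{\cZ}\cap\sM$ therefore reduces to requiring $Z \preceq G_Z(U)$ for every $U \in \ind{\sN}$ with $G_Z(U)\in \cZ$. Equivalently, no indecomposable $U \in \sN$ satisfies both $G_Z(U)\in\cZ$ and $G_Z(U)\prec Z$, which is precisely the statement $\sN \cap \sZ\orth_{\prec} = \varnothing$ in view of the definition of $\sZ\orth_{\prec}$.

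Finally, I would note that the two directions of the correspondence match up cleanly: starting from $\sM$ on the left, the induced $\sN \subset \sZ\orth$ automatically satisfies the disjointness condition because its image under $G_Z$, intersected with $\cZ$, consists of indecomposables which are $\succ Z$; conversely, any $\sN$ satisfying $\sN \cap \sZ\orth_{\prec} = \varnothing$ lifts to an $\sM$ whose $\cZ$-summands have $Z$ as $\preceq$-minimum. The only non-routine part of the argument is the translation step, and that has already been prepared by the explicit computation of $G_Z$ in Proposition~\ref{prop:cocones} together with the description of $\sZ\orth_{\prec}$ in Lemma~\ref{lem:forbidden}; everything else is a direct invocation of Theorem~\ref{thm:silt-red}.
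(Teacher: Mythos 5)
Your proof is correct and follows the same route the paper intends: invoke Aihara--Iyama's silting reduction together with the semi-orthogonal decomposition of Proposition~\ref{prop:embedding-A} to identify the Verdier quotient with $\sZ\orth$, then use Corollary~\ref{cor:indecomp} and the explicit description of $G_Z$ from Proposition~\ref{prop:cocones} to translate the $\preceq$-minimality of $Z$ into the disjointness condition $\sN \cap \sZ\orth_{\prec} = \varnothing$. The paper does not spell out a proof (it presents the proposition as a summary of the preceding discussion), but the content of that discussion is exactly the argument you give.
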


\begin{theorem} \label{thm:silting-classification}
In $\Db(\LLambda)$ there are bijections between
\begin{enumerate}[label=(\arabic*)~]
\item Pairs $(Z,\sN)$ where $Z \in \ind{\cZ}$ and $\sN$ is a silting subcategory of $\Db(\kk A_{m+n-1})$ containing no objects in the additive subcategory $\sZ\orth_{\prec}$.
\item Silting subcategories of $\Db(\LLambda)$.
\item Bounded t-structures in $\Db(\LLambda)$.
\item Bounded co-t-structures in $\Db(\LLambda)$.
\end{enumerate}
\end{theorem}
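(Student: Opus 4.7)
The plan is to split the theorem into two essentially independent parts. The equivalences $(2) \iff (3) \iff (4)$ are a direct consequence of the König--Yang correspondences (Theorem~\ref{thm:koenig-yang} in the appendix), which provide bijections between silting subcategories, bounded co-t-structures, and bounded t-structures whose heart is a length category. The length-heart hypothesis is exactly what Proposition~\ref{prop:length} supplies for every bounded t-structure in $\Db(\LLambda)$, so no further work is needed here.

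For the harder equivalence $(1) \iff (2)$, the strategy is to reduce to silting in $\Db(\kk A_{n+m-1})$ via the semi-orthogonal decomposition of Proposition~\ref{prop:embedding-A}, using the total order $\preceq$ on $\ind{\cZ}$ to pin down a canonical summand. Starting from a silting subcategory $\sM \subseteq \Db(\LLambda)$, Corollary~\ref{cor:silting-in-Z} guarantees that $\sM \cap \ind{\cZ} \neq \varnothing$, so we may set $Z \coloneqq \min_{\preceq}(\sM \cap \ind{\cZ})$. Since $Z$ is exceptional (Proposition~\ref{prop:object_types}) and $\sZ = \thick{}{Z} \simeq \Db(\kk A_1)$ is functorially finite in $\Db(\LLambda)$ (Proposition~\ref{prop:embedding-A}), the singleton $\{Z\}$ is silting in $\sZ$, and Theorem~\ref{thm:silt-red} provides a bijection between silting subcategories of $\Db(\LLambda)$ containing $Z$ and silting subcategories $\sN$ of $\sZ\orth \simeq \Db(\kk A_{n+m-1})$.

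The remaining task is to identify exactly those $\sN$ that correspond to silting subcategories $\sM$ whose $\preceq$-minimum in $\cZ$ is $Z$ (so that each silting subcategory is counted once). Using the explicit description of the cocone map $G_Z$ from Proposition~\ref{prop:cocones} and Proposition~\ref{prop:cocones_r=1}, an indecomposable summand $G_Z(U)$ of $G_Z(\sN)$ lands in $\cZ$ strictly below $Z$ in the order $\preceq$ if and only if $U \in \sZ\orth_{\prec}$, by the identification of Lemma~\ref{lem:forbidden}. Thus the minimality of $Z$ is equivalent to $\sN \cap \sZ\orth_{\prec} = \varnothing$, and the resulting correspondence is precisely that of the proposition immediately preceding the theorem. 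Assembling these ingredients gives $(1) \iff (2)$.

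The main obstacle is the last step: verifying that the $\preceq$-minimality of $Z$ in $\sM$ translates cleanly into the avoidance condition $\sN \cap \sZ\orth_{\prec} = \varnothing$. This relies on the case analysis in Propositions~\ref{prop:cocones} and \ref{prop:cocones_r=1}, together with the fact that the autoequivalences $\TTT_\cX$, $\TTT_\cY$ and $\Sigma$ act transitively on $\ind{\cZ}$ (Corollary~\ref{cor:twist-action}), which reduces matters to the distinguished choice $Z = Z^0_{0,0}$. Once this identification is in place, the totality and antisymmetry of $\preceq$ (Lemma~\ref{lem:total-order}) ensure that each silting subcategory of $\Db(\LLambda)$ is enumerated by exactly one admissible pair $(Z,\sN)$.
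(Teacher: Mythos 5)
Your proof follows the same structure as the paper: the equivalences $(2)\iff(3)\iff(4)$ come from the König--Yang bijections together with Proposition~\ref{prop:length}, and $(1)\iff(2)$ is obtained by combining the semi-orthogonal decomposition of Proposition~\ref{prop:embedding-A}, Corollary~\ref{cor:silting-in-Z}, Aihara--Iyama silting reduction, and the $\preceq$-minimality condition matched against $\sZ\orth_{\prec}$ via Proposition~\ref{prop:cocones} and Lemma~\ref{lem:forbidden}. This is precisely the paper's argument, and the reasoning is correct.
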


%%%%%%%%%%%%%%%%%%%%%%%%%%%%%%%%%%%%%%%%%%%%%%%%%%%%%%%%%%%%%%%%%%%%%%%%%%%%%%%%%%%%%%%%%

\section{A detailed example: $\Lambda(2,3,1)$}
\label{sec:example}

\noindent
In this section we examine the algebra $\Lambda(2,3,1)$ in detail. Let $Z = Z^0_{0,0}$ and write $\sZ=\thick{}{Z}$. Take the convention for homological degree as in Lemma~\ref{lem:forbidden}. With this convention, we identify the indecomposable objects in $\sZ\orth$ and of $\Db(\kk A_3)$ as follows:
\[ \xymatrix@=3pt{
         &           & Z^0_{0,-1} &            &            &   &        &   &      &      & P(3) &      &      \\
         & X^0_{0,1}  &           & Z^0_{1,-1}  &            &   &\mapsto &   &      & P(2) &      & I(2) &      \\
X^0_{0,0} &           & X^0_{1,1}  &            & Z^0_{2,-1}  &   &        &   & P(1) &      & S(2) &      & S(3)
} \]
Using Lemma~\ref{lem:forbidden}, Theorem~\ref{thm:A-silting} and the explicit calulation of the tilting objects, up to suspension, in Example~\ref{ex:A_3}, we compute the twelve families of silting objects in $\Db(\kk A_3)$ that lift to silting objects in $\Db(\Lambda(2,3,1))$ containing $Z^0_{0,0}$ as the minimal indecomposable summand in the $\cZ$ components. The results of this computation are presented in Table~\ref{table:2-3-1}.

\begin{table}[t]
\begin{center}
\begin{tabular}{r@{$\:\oplus\:$}c@{$\:\oplus\:$}l @{\qquad} l} \toprule
\multicolumn{3}{c}{tilting object in $\kk A_3$ \phantom{xxx}} & silting family in $\Lambda(2,3,1)$ \\
$T_1$  & $T_2$ & $T_3$           & $\Sigma^iT_1 \oplus \Sigma^jT_2 \oplus \Sigma^kT_3$ \\ \midrule
$P(1)$ & $P(2)$ & $P(3)$        & $j\geq i$ and $k\geq \max\{j,-1\}$ \\
$P(1)$ & $P(3)$ & $S(3)$        & $k\geq j \geq \max\{i,-1\}$ \\
$P(2)$ & $S(2)$ & $P(3)$        & $j\geq i$ and $k\geq \max\{i,-1\}$ \\
$S(2)$ & $P(3)$ & $I(2)$        & $j\geq -1$ and $k\geq \max\{i,j,-1\}$ \\
$P(3)$ & $I(2)$ & $S(3)$        & $k\geq j\geq i\geq -1$ \\
$P(3)$ & $S(3)$ & $\Sigma S(2)$ & $k\geq j\geq i\geq -1$ \\
$S(2)$ & $I(2)$ & $\Sigma P(1)$ & $k\geq j\geq \max\{i,-1\}$ \\
$I(2)$ & $S(3)$ & $\Sigma P(1)$ & $k\geq i$ and $j\geq i\geq -1$ \\
$S(2)$ & $\Sigma P(1)$ & $\Sigma P(3)$   & $j\geq i$ and $k\geq \max\{j,-2\}$ \\
$\Sigma P(1)$ & $ S(3)$ & $\Sigma P(2)$  & $j\geq -1$ and $k\geq \max\{i,j\}$ \\
$S(3)$ & $\Sigma P(2)$ & $\Sigma S(2)$   & $k\geq j\geq i\geq -1$ \\
$S(3)$ & $\Sigma S(2)$ & $\Sigma^2 P(1)$ & $k\geq j\geq i\geq -1$ \\ \bottomrule
\end{tabular}
\end{center}
\caption{The twelve tilting objects in $\kk A_3$ giving rise to the silting objects containing $Z^0_{0,0}$ as the $\preceq$-minimal summand in $\cZ$ for $\Lambda(2,3,1)$.} \label{table:2-3-1}
\end{table}

We make the following observation regarding tilting objects in $\Db(\Lambda(2,3,1)$.

\begin{proposition}
Let $\Lambda = \Lambda(2,3,1)$. Fir any $Z\in \ind{\cZ}$, put $\sZ=\thick{}{Z}$ and $F_Z \colon \Db(\Lambda) \to \sZ\orth \simeq \Db(\kk A_3)$. Then:
\begin{enumerate}
\item There are precisely six tilting objects in $\Db(\Lambda)$ containing $Z$ as a summand.
\item If $T \in \Db(\Lambda)$ is a tilting object containing $Z$ as a summand then $F_Z(T)$ is a tilting object in $\sZ\orth$.
\end{enumerate}
\end{proposition}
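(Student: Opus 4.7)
The plan is to establish Part~(2) first via the semi-orthogonal decomposition, and then use it together with the silting classification to count in Part~(1).

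For Part~(2), write $T = Z^{\oplus c} \oplus T'$ and consider the SOD approximation triangle $Z_T \to T \to F_Z(T) \to \Sigma Z_T$ coming from $\Db(\Lambda) = \sod{\sZ\orth, \sZ}$ of Proposition~\ref{prop:embedding-A}. The first key step is to observe that $Z_T = Z^{\oplus c'}$ for some $c' \geq 0$, with no suspension shifts: by the right-adjoint characterisation of $Z_T$, one has $\Hom(\Sigma^j Z, Z_T) \cong \Hom(\Sigma^j Z, T) = \Hom^{-j}(Z, T)$, and the tilting hypothesis on $T$ (with $Z$ a summand) forces this to vanish for $j \neq 0$. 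Applying $\Hom(-, F_Z(T))$ to the triangle then yields $\Hom^i(F_Z(T), F_Z(T)) \cong \Hom^i(T, F_Z(T))$ since the outer terms vanish by $\Hom(\sZ, \sZ\orth) = 0$. Applying $\Hom(T, -)$ to the same triangle, and using the tilting conditions $\Hom^{\neq 0}(T, T) = 0$ and $\Hom^{\neq 0}(T, Z) = 0$, kills every long-exact-sequence contribution to $\Hom^{<0}(T, F_Z(T))$ except a residual kernel at $i = -1$, namely the kernel of the post-composition map $\Hom(T, Z_T) \to \Hom(T, T)$. Combined with Theorem~\ref{thm:silt-red}, which already gives that $F_Z(T)$ is silting in $\sZ\orth$, vanishing of $\Hom^{<0}$ upgrades this to tilting.

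The main obstacle in Part~(2) is verifying that the above post-composition map is injective. The plan is to exploit the universal property defining $Z_T$: the structural map $Z_T \to T$ induces an isomorphism $\Hom(Z, Z_T) \isom \Hom(Z, T)$, and since $Z$ is exceptional this rigidity should force the kernel to vanish via a basis-dimension count on the $\Hom(Z,-)$-components. Should the abstract argument prove troublesome, an alternative is to invoke Part~(1) and verify directly, using Proposition~\ref{prop:cocones}, that $F_Z$ sends each of the six tilting objects to a tilting object of $\sZ\orth$.

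For Part~(1), Part~(2) provides a well-defined map from tilting objects of $\Db(\Lambda)$ containing $Z$ to tilting objects of $\sZ\orth \simeq \Db(\kk A_3)$; conversely, silting reduction (Theorem~\ref{thm:silt-red}) uniquely lifts each silting object of $\sZ\orth$ to a silting object of $\Db(\Lambda)$ containing $Z$ as a summand, explicitly via the $G_Z$-functor of Proposition~\ref{prop:cocones}. By Example~\ref{ex:A_3}, $\Db(\kk A_3)$ has twelve tilting objects up to suspension, corresponding to the twelve rows of Table~\ref{table:2-3-1}. Since $\Db(\kk A_3)$ is hereditary, within each row the internal tilting condition forces equal shifts $i = j = k$, and the additional compatibility $\Hom^{\neq 0}_{\Db(\Lambda)}(Z, \Sigma^i T_l) = 0 = \Hom^{\neq 0}_{\Db(\Lambda)}(\Sigma^i T_l, Z)$ fixes $i$ uniquely whenever a valid tilting lift exists. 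Performing this verification for each row using the Hom-hammocks of Propositions~\ref{prop:X-hammocks}--\ref{prop:Z-hammocks}, exactly six of the twelve admit a valid tilting shift, yielding the six tilting objects claimed.
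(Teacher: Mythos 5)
Your reduction of Part~(2) to the vanishing of $\Hom^{-1}(F_Z(T),F_Z(T))$ --- equivalently, to injectivity of the post-composition $\Hom(T,Z_T)\to\Hom(T,T)$ --- is correct and is a cleaner formulation than what the paper writes down; the vanishing in degrees $\leq -2$ also checks out. But the step you flag as the ``main obstacle'' is a genuine gap, and the proposed ``basis-dimension count'' does not close it. Injectivity is not formal: writing $T=Z^{\oplus c}\oplus T'$, the isomorphism $\Hom(Z,Z_T)\cong\Hom(Z,T)$ does force $f|_{Z^{\oplus c}}=0$ for any $f\in\ker\iota_*$, but it says nothing about the restriction $f'\colon T'\to Z_T$, and the tilting relations on $T$ do not by themselves rule out $\iota f'=0$ with $f'\neq 0$. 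Silting reduction only controls $\Hom^{>0}$, and the fact that the paper states the analogue of Part~(2) for general $\LLambda$ only as a conjecture is a strong hint that no purely formal argument is expected here. Moreover your fallback --- invoke Part~(1) and verify Part~(2) on the six objects --- is circular in your logical scheme, since your Part~(1) uses Part~(2).

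The paper's proof runs in the opposite order and is purely computational: it first proves Part~(1) directly by forming the additive subcategory $\sT$ of indecomposables whose nonzero $\Hom$ spaces with all shifts of $Z$ are concentrated in degree zero, uses the Hom-hammocks to show only finitely many homological degrees can contribute, and then enumerates the six tilting objects; Part~(2) is then ``directly computed'' on those six. Your route to Part~(1) via silting reduction and the $\Db(\kk A_3)$ classification would be a reasonable alternative \emph{given} Part~(2), but even there two points need tightening: ``$\Db(\kk A_3)$ is hereditary forces equal shifts'' should really invoke connectedness of $\End(T)$ for a tilting object over a connected hereditary algebra, and the compatibility check with $Z$ must be carried out on the actual lifts $G_Z(\Sigma^i T_l)$ of Proposition~\ref{prop:cocones}, not on $\Sigma^i T_l$ themselves. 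Since case-by-case verification over the twelve families is unavoidable anyway, the most economical repair is to abandon the abstract argument, prove Part~(1) by direct enumeration as the paper does, and then verify Part~(2) on the resulting six objects.
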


\begin{proof}
The proof is a direct computation. Without loss of generality, we may set $Z=Z^0_{0,0}$. Consider the additive subcategory
 $\sT\coloneqq \big(\bigcap_{n\neq 0} {}\orth(\Sigma^n Z)\big) \cap \big(\bigcap_{n\neq 0} (\Sigma^n Z)\orth\big) \cap \sZ\orth$.
The subcategory $\sT$ consists of the thick subcategory $\sZ\orth \cap {}\orth\sZ \simeq \Db(\kk)$, which has just one indecomposable object in each homological degree, together with finitely many indecomposables in homological degrees 0,1 and 2.

Examining the Hom-hammocks from each of the indecomposables in $\sZ\orth \cap {}\orth\sZ$ shows that unless the object lies in homological degree 0, 1 or 2, there is not sufficient intersection with $\sT$ to give rise to a tilting object. Thus we must form tilting objects from only finitely many indecomposables. A detailed analysis of the Hom-hammocks of these finitely many indecomposables gives rise to the six tilting objects obtained from $Z^0_{0,0}$ and the following objects:
\[ \begin{array}{l @{\quad} l @{\quad} l}
 Z^0_{-1,0}\oplus X^1_{-2,-2}\oplus X^0_{0,0}, & X^1_{-2,-2}\oplus X^1_{-1,-1}\oplus X^0_{0,0}, & X^1_{-1,-1}\oplus X^1_{-2,-1}\oplus X^0_{0,0} \\[1ex]
 X^1_{-1,-1}\oplus X^0_{0,0}\oplus X^0_{0,1},  & X^1_{-1,-1}\oplus X^0_{0,1}\oplus X^0_{1,1},  & X^1_{-1,-1}\oplus X^0_{1,1}\oplus Z^0_{1,0}.
\end{array} \]
The second claim can be directly computed.
\end{proof}

Our computations lead us to state the following conjecture:

\begin{conjecture}
For an arbitrary $Z\in \ind{\cZ}$, writing $\Lambda = \LLambda$ and $\sZ=\thick{}{Z}$ and $F_Z \colon \Db(\Lambda) \to \sZ\orth \simeq \Db(\kk A_{n+m-1})$, we have:
\begin{enumerate}
\item There are finitely many tilting objects in $\Db(\Lambda)$ containing $Z$ as a summand.
\item If $T \in \Db(\Lambda)$ is a tilting object containing $Z$ as a summand then $F_Z(T)$ is a tilting object in $\sZ\orth$.
\end{enumerate}
\end{conjecture}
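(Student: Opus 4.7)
The plan is to reduce both parts to $Z = Z^0_{0,0}$ via the transitive action of the autoequivalence group on $\ind{\cZ}$ from Corollary~\ref{cor:twist-action}, since the tilting property and the containment of $Z$ as a summand are invariant under autoequivalences. I will then treat (2) via silting reduction together with a direct analysis of the canonical semi-orthogonal triangle, and (1) via the Hom-hammock descriptions.

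For (2), silting reduction (Theorem~\ref{thm:silt-red}) gives that $F_Z(T)$ is a silting object of $\sZ\orth \simeq \Db(\kk A_{n+m-1})$; it remains to show vanishing of negative self-extensions. Using the adjunction $F_Z \dashv \iota$ arising from $\Db(\Lambda) = \sod{\sZ\orth, \sZ}$, one identifies $\Hom^n_{\sZ\orth}(F_Z T, F_Z T)$ with $\Hom^n_{\Db(\Lambda)}(T, F_Z T)$. Apply $\Hom(T, -)$ to the canonical triangle $Y \to T \to F_Z T \to \Sigma Y$ with $Y \in \sZ$: since $T$ is tilting with $Z$ as a summand, the condition $\Hom^n(Z, T) = 0$ for $n \neq 0$ forces $Y = Z^a$ concentrated in degree zero, where $a = \dim\Hom(Z, T)$. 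The tilting property then yields $\Hom^n(T, Y) = \Hom^n(T, Z)^a = 0$ and $\Hom^n(T, T) = 0$ for $n \neq 0$, so the long exact sequence collapses to show $\Hom^n(T, F_Z T) = 0$ for $n \notin \{0, -1\}$. The remaining case reduces to injectivity of
\[
\Hom(T', Z)^{a'} \to \End(T'), \qquad (h_1,\ldots,h_{a'}) \mapsto \sum_{i=1}^{a'} g_i \circ h_i,
\]
where $T = Z \oplus T'$, $a' = \dim\Hom(Z,T')$, and $g_1,\ldots,g_{a'}$ is a basis of $\Hom(Z, T')$.

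For (1), use the Hom-hammocks of Propositions~\ref{prop:X-hammocks}, \ref{prop:Y-hammocks}, \ref{prop:Z-hammocks} to describe the set of indecomposables $U$ with $\Hom^n(U, Z) = \Hom^n(Z, U) = 0$ for all $n \neq 0$ --- precisely the admissible summands of a tilting object containing $Z$. This set decomposes as the thick subcategory $\sZ\orth \cap {}\orth\sZ$ together with finitely many extras lying in the degree-zero hammocks of $Z$. By Serre duality, $\sZ\orth \cap {}\orth\sZ = \sZ\orth \cap \SSS^{-1}\sZ\orth$ is a ``thin'' intersection --- in the examples a derived category of small Dynkin type --- and within it the pairwise vanishing of negative self-extensions between summands of a tilting object bounds the homological shifts to a finite range. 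Combined with the finiteness of the extras, this yields only finitely many tilting configurations.

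The main obstacle is the $n = -1$ injectivity claim in (2), equivalently the vanishing of negative self-extensions in $F_Z(T)$. Under the derived equivalence $\RHom(T,-) \colon \Db(\Lambda) \xrightarrow{\sim} \Db(\End(T))$ --- with $Z$ corresponding to the indecomposable projective at an idempotent $e_Z \in \End(T)^{\mathrm{op}}$ --- this injectivity becomes the uniqueness of factorisations through $Z$ in $\End(T)$, i.e., injectivity of the multiplication on Peirce components $e_{T'}\End(T)e_Z \otimes_\kk e_Z \End(T)e_{T'} \to e_{T'}\End(T)e_{T'}$ with $e_{T'} = 1 - e_Z$. I expect to establish this through a case analysis across the Keller--Vossieck families of Theorem~\ref{thm:A-silting}: if the shift function of $F_Z(T)$ as a silting object were non-constant, the explicit inverse silting reduction $G_Z$ of Proposition~\ref{prop:cocones} would produce indecomposable summands of $T$ with a non-trivial negative extension in $\Db(\Lambda)$, contradicting the tilting hypothesis. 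Making this propagation precise and exhaustive constitutes the principal technical challenge.
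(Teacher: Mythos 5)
The statement you are attempting to prove is a \emph{conjecture} in the paper, not a theorem. The authors verify it only for the single algebra $\Lambda(2,3,1)$ by an explicit finite case analysis (the Proposition immediately preceding the conjecture, whose proof reads ``is a direct computation''), and then pose the general claim as open. There is therefore no proof in the paper to compare your attempt against; any complete argument would constitute genuinely new material.

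As to the attempt itself: for part (2) your reduction is correct. From the semi-orthogonal decomposition $\Db(\Lambda)=\sod{\sZ\orth,\sZ}$ there is a functorial triangle $Y\to T\to F_ZT\to\Sigma Y$ with $Y\in\sZ$; since $Z$ is exceptional and $\Hom^{\neq 0}(Z,T)=0$ (as $Z$ is a summand of the tilting object $T$), one gets $Y\cong\Hom(Z,T)\otimes_\kk Z$ concentrated in degree zero, and the adjunction identifies $\Hom^n_{\sZ\orth}(F_ZT,F_ZT)$ with $\Hom^n(T,F_ZT)$. The long exact sequence then leaves exactly the $n=-1$ injectivity of the composition map $\Hom(T,Z)\otimes\Hom(Z,T)\to\End(T)$. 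But this injectivity is genuinely nontrivial: under the derived equivalence $\Db(\Lambda)\simeq\Db(\End(T))$ it is the assertion that $\Gamma e\Gamma$ is a stratifying ideal of $\Gamma=\End(T)$, where $e$ is the primitive idempotent corresponding to $Z$ and $e\Gamma e=\kk$, and this can fail for general finite-dimensional algebras and idempotents. You correctly flag this as ``the principal technical challenge'' but then only outline a plan (a case analysis over the Keller--Vossieck shift functions combined with propagation along $G_Z$ from Proposition~\ref{prop:cocones}); that plan is precisely where the real argument must happen, and you have not carried it out. For part (1), the key claim that ``pairwise vanishing of negative self-extensions \ldots bounds the homological shifts to a finite range'' is asserted without proof. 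Since $\sZ\orth\cap{}\orth\sZ$ is a thick subcategory and hence closed under all shifts, this bound is exactly what needs to be established; the paper achieves it for $\Lambda(2,3,1)$ by inspecting the Hom-hammocks of each of finitely many indecomposables, and a uniform argument for all $(r,n,m)$ is missing from your proposal as well. In short, your roadmap is plausible and consistent with what the authors do in the one verified case, but the two steps you leave open are exactly the steps that keep this a conjecture.
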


% First, simpler example of t-structure. Commented out for space reasons.
%
%% \begin{example} $T = P(1)\oplus P(2)\oplus P(3)$.
%% Using the identification above, $P(1)\oplus P(2)\oplus P(3)$ corresponds to the object $M'=X^0_{0,0} \oplus X^0_{0,1} \oplus Z^0_{0,-1}$ in $\sZ\orth$. By Proposition~\ref{prop:cocones}, $M'$ lifts to the silting object $M=Z^0_{0,0} \oplus X^0_{0,0} \oplus X^0_{0,1} \oplus X^0_{0,2}$. We indicate the corresponding bounded t-structure and bounded co-t-structure in Figure~\ref{fig:example1}.
%% \end{example}

\subsection*{An explicit example for a bounded t-structure in $\boldsymbol{\Db(\Lambda(2,3,1))}$}
We finish by choosing a silting object $N\in\Db(\kk A_3)$, assembling this with $Z=Z^0_{0,0}$ to the associated silting object $M\in\Db(\Lambda(2,3,1))$ and computing the bounded t-structure on $\Db(\Lambda(2,3,1))$ induced by $M$.

Let us start with the silting object
 \[ N = \Sigma^{-2} S(2) \oplus P(1) \oplus \Sigma^3 P(3) \in \Db(\kk A_3) \]
and set $Z=Z^0_{00}$ and $\sZ = \thick{}{Z}$.
As explained above, $N$ corresponds to the object
 \[ M' = \Sigma^{-2} X^0_{1,1}\oplus X^0_{0,0} \oplus \Sigma^3 Z^0_{0,-1} = X^0_{-2,-2} \oplus X^0_{0,0} \oplus Z^1_{3,-2} \in \sZ\orth . \]
By Proposition~\ref{prop:cocones}, $M'$ lifts under $G_Z$ to the silting object
 \[ M=Z^0_{0,0} \oplus X^0_{-2,-2} \oplus X^0_{0,0} \oplus Z^0_{6,-1} \in \Db(\Lambda(2,3,1)) . \]

% This huge picture will always require manual typesetting.

\begin{center}
  % \tikzstyle{Xaisle} and commands \boxo, \boxp, \boxm defined in header-discrete-one-tikz.tex.

\begin{tikzpicture}[scale=0.24]
 
 %%% top left: t-structure in \cX^0
  
    \foreach \x in {0,...,30} \foreach \y in {0,...,10}
      { \ifnumodd{\x+\y}{}{ \node [dot] at (\x,\y) {}; } }
  
    \draw[Yaisle] (0,0) -- (8,0) -- (10,2) -- (2,10) -- (0,10) -- cycle;
    \begin{scope} { \clip (0,0) rectangle (31,10.5); \draw[Yaisle] (7,11) -- (14,4) -- cycle; } \Ydot{14,4} \end{scope}
    \Ydot{14,2} \Xdot{10,0} \Ydot{12,0}
    \draw[Xaisle] (14,0) -- (16,0) -- (15,1) -- cycle;
    \draw[Xaisle] (20,0) -- (22,0) -- (21,1) -- cycle;
    \draw[Xaisle] (26,0) -- (30,0) -- (30,4) -- cycle;
  
    \boxm{2,0}  \boxm{4,0}  \boxm{8,0} 
    \boxo{10,0} \boxo{14,0} \boxp{16,0} 
    \boxp{20,0} \boxp{22,0} \boxp{26,0} \boxp{28,0}
  
    \node at (14,-1.2) {\scalebox{0.5}{(0,0)}};
    \node at (1,11.5) {$\cX^0$}; 

 %%% top right: t-structure in \cX^1

  \begin{scope}[shift={(35,0)}]
  
    \foreach \x in {0,...,30} \foreach \y in {0,...,10}
      { \ifnumodd{\x+\y}{}{ \node [dot] at (\x,\y) {}; } }
  
    \draw[Yaisle] (0,0) -- (8,0) -- (0,8) -- cycle;
    \begin{scope} { \clip (0,0) rectangle (31,10.5); \draw[Yaisle] (11,1) -- (1,11) -- cycle; } \Ydot{11,1} \end{scope}
    \Xdot{12,0}
    \draw[Xaisle] (16,0) -- (18,0) -- (17,1) -- cycle;
    \draw[Xaisle] (22,0) -- (30,0) -- (30,8) -- cycle;
  
    \boxm{0,0}  \boxm{4,0}  \boxm{6,0}  \boxm{10,0} 
    \boxp{12,0} \boxp{16,0} \boxp{18,0} \boxp{22,0} \boxp{26,0} \boxp{28,0}
  
    \node at (16,-1.2) {\scalebox{0.5}{(0,0)}};
    \node at (29.5,11.5) {$\cX^1$}; 

  \end{scope}

\end{tikzpicture}

\begin{tikzpicture}[scale=0.24]

 %%% mid left: t-structure in \cY^0

  \begin{scope}[shift={(0,-16)}]
  
    \foreach \x in {0,...,30} \foreach \y in {0,...,10}
      { \ifnumodd{\x+\y}{}{ \node [dot] at (\x,\y) {}; } }
  
    \draw[Yaisle] (14,10) -- (30,10) -- (30,0) -- (24,0) -- cycle;
    \begin{scope} {\clip (-0.5,-0.5) rectangle (11,11); \draw[Xaisle] (0,0) -- (0,10) -- (10,10) -- cycle; } \end{scope}
  
    \node at (16,11.2) {\scalebox{0.5}{(0,0)}}; \node [dot,gray!60] at (16,10) {};
    \node at (1,11.5) {$\cY^0$}; 

  \end{scope}

 %%% mid right: t-structure in \cY^1

  \begin{scope}[shift={(35,-16)}]
  
    \foreach \x in {0,...,30} \foreach \y in {0,...,10}
      { \ifnumodd{\x+\y}{}{ \node [dot] at (\x,\y) {}; } }
  
    \draw[Yaisle] (16,10) -- (30,10) -- (30,0) -- (26,0) -- cycle;
    \draw[Xaisle] (0,0) -- (0,10) -- (12,10) -- (2,0) -- cycle;
  
    \node at (16,11.2) {\scalebox{0.5}{(0,0)}}; \node [dot,gray!60] at (16,10) {};
    \node at (29.5,11.5) {$\cY^1$}; 

  \end{scope}

\end{tikzpicture}

\medskip

\begin{tikzpicture}[scale=0.24]

 %%% bottom left: t-structure in \cZ^0

  \begin{scope}[shift={(0,-52)}]
  
    \foreach \x in {0,...,30} \foreach \y in {0,...,30}
      { \ifnumodd{\x+\y}{}{ \node [dot] at (\x,\y) {}; } }
  
    \draw[Yaisle] (0,30) -- (24,30) -- (11,17) -- (0,28) -- cycle;
    \begin{scope} { \clip (0,0) rectangle (31,30.5); \draw[Yaisle] (29,31) -- (13,15) -- cycle; } \Ydot{13,15} \end{scope}
    \Ydot{16,12} \Ydot{18,8}
    \draw[Xaisle] (12,0) -- (20,8) -- (28,0) -- cycle;
    \draw[Xaisle] (20,8) -- (17,11) -- cycle; \Xdot{17,11}
    \draw[Xaisle] (17,13) -- (15,15) -- cycle; \Xdot{17,13} \Xdot{15,15}

    \boxo{15,15} \boxo{20,8}
    \boxm{16,16} \boxm{18,12} \boxm{10,28} \boxm{9,27} \boxm{12,24} \boxm{11,23} \boxm{14,20} \boxm{13,19} 
    \boxp{17,11} \boxp{19,7}  \boxp{24,0}  \boxp{22,4} \boxp{21,3}
  
    \node at (1,31.5) {$\cZ^0$}; 
  
  \end{scope}
  
 %%% bottom right: t-structure in \cZ^1

  \begin{scope}[shift={(35,-52)}]
  
    \foreach \x in {0,...,30} \foreach \y in {0,...,30}
      { \ifnumodd{\x+\y}{}{ \node [dot] at (\x,\y) {}; } }
  
    \begin{scope} { \clip (-0.5,-0.5) rectangle (30,30.5); \draw[Yaisle] (0,30) -- (24,30) -- (12,18) -- cycle; 
                                                     \draw[Yaisle] (29,31) -- (14,16) -- cycle; \Ydot{14,16} } \end{scope}
    \Ydot{16,12}
    \draw[Xaisle] (6,0) -- (17,11) -- (28,0) -- cycle;
    \draw[Xaisle] (17,13) -- (15,15) -- cycle; \Xdot{17,13} \Xdot{15,15}

    \boxm{16,16} \boxm{18,12} \boxm{10,28} \boxm{9,27} \boxm{12,24} \boxm{11,23} \boxm{14,20} \boxm{13,19} 
    \boxp{17,11} \boxp{20,8} \boxp{19,7}  \boxp{24,0}  \boxp{22,4} \boxp{21,3}  \boxp{15,15}
  
    \node at (29.5,31.5) {$\cZ^1$}; 
  
  \end{scope}
  
\end{tikzpicture}
\end{center}

\begin{tabular}{cl}
    \resizebox{1.5ex}{!}{\tikz{ \fill (0,0) rectangle (1,1);}} 
  & the four summands of $M$, with $Z_0$ the top one in $\cZ^0$; \\
    \resizebox{1.5ex}{!}{\tikz{ \node[fill,rectangle,inner sep=0pt,minimum height=4pt,minimum width=4pt] at (0,0) {};
                                \node[draw=white, cross out,rotate=45,thick,minimum size=2.5pt,inner sep=0pt,outer sep=0pt] at (0,0) {}; }}
    and
    \resizebox{1.5ex}{!}{\tikz{ \node[fill,rectangle,inner sep=0pt,minimum height=4pt,minimum width=4pt] at (0,0) {};
                                \node[fill=white, rectangle, inner sep=0pt, minimum height=1pt, minimum width=2.5pt] at (0,0) {}; }}
  & positive ($\Sigma^{>0}M$) and negative suspensions ($\Sigma^{<0}M$), respectively; \\
    \resizebox{1.5ex}{!}{\tikz{ \fill[gray!30] (0,0) circle  (8pt); }}
and \resizebox{1.5ex}{!}{\tikz{ \fill[gray!95] (0,0) circle  (8pt); }}
& coaisle $\sY_M = (\Sigma^{\geq 0} M)\orth$ and aisle $\sX_M = (\Sigma^{<0} M)\orth$, respectively.
\end{tabular}

\bigskip

The corresponding co-t-structure $(\sA_M,\sB_M)$ is right adjacent in the sense of \cite{Bondarko} to the t-structure $(\sX_M,\sY_M)$, i.e.\ $\sB_M = \sX_M$ and $\sA_M \coloneqq {}\orth \sB_M = {}\orth \susp{M} = {}\orth(\Sigma^{\geq 0} M)$.

Recall how to obtain from the silting object $M$ a bounded t-structure $(\sX_M,\sY_M)$ and bounded co-t-structure $(\sA_M,\sB_M)$, using the bijections of K\"onig and Yang \cite{Koenig-Yang}:
\[ \begin{array}{l @{\quad\text{and}\quad} l}
\sX_M \coloneqq (\Sigma^{<0} M)\orth    = \susp{M}                 &   \sY_M \coloneqq (\Sigma^{\geq 0} M)\orth, \\
\sA_M \coloneqq {}\orth(\Sigma^{\geq0}M) = \cosusp{\Sigma^{-1} M}   &   \sB_M \coloneqq (\Sigma^{<0} M)\orth    = \susp{M}.
\end{array} \]

%%%%%%%%%%%%%%%%%%%%%%%%%%%%%%%%%%%%%%%%%%%%%%%%%%%%%%%%%%%%%%%%%%%%%%%%%%%%%%%%%%%%%%%%%%%%%%%%%%%%%%

\newpage % Manual page break!

\appendix

\section{Notation, terminology and basic notions} \label{app:notation}

\noindent
In this section we collect some notation and basic terminology, which is mostly standard. We always work over an algebraically closed field $\kk$ and denote the dual of a vector space $V$ by $V^*$. Throughout, $\sD$ will be a $\kk$-linear triangulated category with suspension (otherwise know as shift or translation) functor $\Sigma\colon \sD \to \sD$.

For two objects $A,B \in \sD$, we use the shorthand $\Hom^i(A,B) = \Hom(A,\Sigma^i B)$ resembling Ext spaces in abelian categories, and $\hom(A,B) = \dim\Hom(A,B)$ for dimensions of homomorphism spaces.
We write 
\[ \Hom^{>0}(A,B) = \bigoplus_{i>0} \Hom(A,\Sigma^i B) \quad\text{and}\quad
   \Hom^\bullet(A,B) = \bigoplus_{i\in\IZ} \Sigma^{-i}\Hom(A,\Sigma^i B) \]
for aggregated homomorphism spaces (and similarly for obvious variants) and
for the homomorphism complex, a complex of vector spaces with zero differential.

\subsection{Properties of triangulated categories and their subcategories} \label{app:category-properties}

A $\kk$-linear triangulated category $\sD$ is said to be

\begin{description}[leftmargin=1em,font=\normalfont,style=sameline]
\item[\emph{algebraic}] if $\sD$ arises as the homotopy category of a $\kk$-linear differential graded category;
                        see \cite{Keller-dg}. Examples are bounded derived categories of $\kk$-linear abelian 
                        categories.
\item[\emph{Hom-finite}] if $\dim\Hom(D_1,D_2)<\infty$ for all objects $D_1,D_2\in\sD$. The bounded derived category
                        $\Db(\Lambda)$ of any finite-dimensional $\kk$-algebra $\Lambda$ is Hom-finite.
\item[\emph{Krull--Schmidt}] if every object of $\sD$ is isomorphic to a finite direct sum of objects all of whose 
                        endomorphism rings are local. In this case, the direct sum decomposition is unique up to 
                        isomorphism. Bounded derived categories of $\kk$-linear Hom-finite abelian categories are 
                        Krull--Schmidt; see \cite{Atiyah}.
% Reference: 
% The bounded derived category of any abelian category is idempotent complete by Corollary 2.10 in
%      Balmer-Schlichting: Idempotent completion of triangulated categories.
% Furthermore, endomorphism algebras of indecomposable objects are local by the Hom-finiteness assumption.
%
\item[\emph{indecomposable}] if for every decomposition $\sD\cong\sD_1\oplus\sD_2$ with triangulated categories $\sD_1$
                      and $\sD_2$ either $\sD_1\cong0$ or $\sD_2\cong0$. The derived category of a finite-dimensional 
                      algebra is indecomposable if (and only if) the associated Gabriel quiver is connected.
\item[in possession of \emph{Serre duality}] if there is an equivalence $\SSS\colon\sD\isom\sD$ with
                        $\Hom(D_1,D_2)\cong\Hom(D_2,\SSS D_1)^*$, bifunctorially in $D_1,D_2\in\sD$. Such an 
                        autoequivalence is canonical and unique, if it exists, and called the \emph{Serre functor}
                        of $\sD$. 
\end{description}

\noindent
The existence of a Serre functor is equivalent to the existence of Auslander--Reiten triangles; see \cite[\S I.2]{Reiten-vandenBergh}. If $\Lambda$ is a finite-dimensional $\kk$-algebra, then $\Db(\Lambda)$ has Serre duality if and only if $\Lambda$ has finite global dimension; in this case, the Auslander--Reiten translation is given by the cosuspended Serre functor: 
$\tau = \Sigma^{-1}\SSS$.

We conclude that $\Db(\LLambda)$ is algebraic, Hom-finite, Krull--Schmidt and indecomposable for all choices of $r,n,m$. It has Serre duality if and only if $n>r$, which we always assume in this article.

\subsection{Subcategories of triangulated categories} \label{app:subcategory-types}

\noindent
Let $\sC$ be a collection of objects of $\sD$, regarded as a full subcategory. We recall the following terminology:

\medskip
\noindent
\begin{longtable}{@{} p{0.13\textwidth} @{} p{0.87\textwidth} @{}}
$\sC\orth$,      & the \emph{right orthogonal} to $\sC$, the full subcategory of $D\in\sD$ with $\Hom(\sC,D)=0$, \\
${}\orth\sC$ ,   & the \emph{left orthogonal} to $\sC$, the full subcategory of $D\in\sD$ with $\Hom(D,\sC)=0$. \\
                 & If $\sC$ is closed under suspensions and cosuspensions, then $\sC\orth$ and $\orth\sC$ are
                  triangulated subcategories of $\sD$. \listskip
$\thick{}{\sC}$, & the \emph{thick subcategory generated by $\sC$}, the smallest triangulated  subcategory of $\sD$ containing $\sC$ which is also closed under taking direct summands. \listskip
$\susp(\sC)$
and
$\cosusp(\sC)$,  & the \emph{(co-)suspended subcategory generated by $\sC$}, the smallest full subcategory of 
                   $\sD$ containing $\sC$ which is closed under (co-)suspension, extensions and taking direct 
                   summands. \listskip
$\add(\sC)$,     & the \emph{additive subcategory} of $\sD$ containing $\sC$, the smallest full subcategory of
                   $\sD$ containing $\sC$ which is closed under finite coproducts and direct summands. \listskip
$\ind{\sC}$,     & the set of \emph{indecomposable} objects of $\sC$, up to isomorphism. \listskip
$\clext{\sC}$,   & the smallest full subcategory of $\sD$ containing $\sC$ that is closed under extensions, i.e. if $\tri{C'}{C}{C''}$ is a triangle with $C',C'' \in \sC$ then $C \in \sC$. \\
\end{longtable}
\medskip

\noindent
The \emph{ordered} extension closure of a pair of subcategories $(\sC_1,\sC_2)$ of $\sD$ is defined as
 \[ \sC_1 * \sC_2 \coloneqq \add\{D \in \sD \mid \tri{C_1}{D}{C_2} \text{ for } C_1\in \sC_2 \text{ and } C_2 \in \sC_2\} .\] 
This operation is associative and $\sC$ is extension closed in $\sD$ if and only if $\sC * \sC \subseteq \sC$.

\subsection{Approximations and adjoints} \label{app:approximations}

\noindent
For this section only, suppose $\sD$ is an additive category and $\sC$ a full subcategory of $\sD$.

Recall that $\sC$ is called \emph{right admissible} in $\sD$ if the inclusion functor $\sC \hookrightarrow \sD$ admits a right adjoint. Analogously for \emph{left admissible}. A subcategory $\sC$ is called \emph{admissible} if it is both left and right admissible.

Often, one does not need admissibility but only approximate admissibility. % This motivates the next concept.
A \emph{right $\sC$-approximation} of an object $D\in\sD$ is a morphism $C\to D$ with $C\in\sC$ such that the induced maps $\Hom(C',C)\to\Hom(C',D)$ are surjective for all $C'\in\sC$. 
A morphism $f\colon C\to D$ is called a \emph{minimal right $\sC$-approximation} if $fg=f$ is only possible for isomorphisms $g\colon C\to C$. Dually for \emph{(minimal) left $\sC$-approximations}.
We say $\sC$ is
\begin{itemize}
\item \emph{contravariantly finite in $\sD$} if all objects of $\sD$ have right $\sC$-approximations;
\item \emph{covariantly finite in $\sD$}     if all objects of $\sD$ have left $\sC$-approximations;
\item \emph{functorially finite in $\sD$}    if it is contravariantly finite and covariantly finite in $\sD$.
\end{itemize}
Note that in the case that $\sD$ is a Hom-finite, $\kk$-linear, Krull--Schmidt category, the existence of a $\sC$-approximation guarantees the existence (and uniqueness, up to isomorphism) of a minimal $\sC$-approximation.

Sometimes, right $\sC$-approximations are called \emph{$\sC$-precovers} and left $\sC$-approximations are called \emph{$\sC$-preenvelopes}. If for all $D\in\sD$ the induced map $\Hom(C',C) \to \Hom(C',D)$ above were bijective instead of surjective, then $\sC$ would be even right admissible. In this sense, the morphism $C\to D$ `approximates' the (possibly nonexistent) right adjoint to the inclusion functor.

For Krull--Schmidt triangulated categories $\sD$, these concepts coincide:

\begin{proposition}[{\cite[Proposition 1.3]{Keller-Vossieck}}] \label{prop:KV}
Let $\sD$ be a Krull--Schmidt triangulated category and let $\sC \subset \sD$ a suspended subcategory. Then $\sC$ is contravariantly finite in $\sD$ if and only if $\sC$ is right admissible. Dually for covariantly finite cosuspended subcategories.
\end{proposition}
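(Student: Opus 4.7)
My plan is to prove the two implications separately. The easy direction -- right admissibility $\Rightarrow$ contravariant finiteness -- is immediate: if $\iota\colon \sC \hookrightarrow \sD$ admits a right adjoint $R$, then for each $D\in\sD$ the counit $\epsilon_D\colon \iota R(D) \to D$ serves as a right $\sC$-approximation, since the adjunction provides a bijection $\Hom_\sD(C, \iota R(D)) \cong \Hom_\sD(C, D)$ for every $C\in\sC$.

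For the converse, the plan is to construct $R$ from \emph{minimal} approximations. For each $D\in\sD$, I would pick a minimal right $\sC$-approximation $f_D\colon R(D) \to D$; this exists by Krull--Schmidt (one strips off summands of any right approximation on which $f_D$ vanishes), and it is characterised by the property that any $\phi\in\End_\sD(R(D))$ with $f_D\phi = f_D$ is an isomorphism. The main task is then to upgrade the approximation property to a genuine adjunction, namely to show that for every $C\in\sC$ the induced map
\[
(f_D)_*\colon \Hom_\sD(C, R(D)) \longrightarrow \Hom_\sD(C, D)
\]
is bijective. Surjectivity is exactly the approximation property; once bijectivity is established for all $C\in\sC$, any $g\colon D\to D'$ lifts uniquely through $f_{D'}$ to yield $R(g)\colon R(D)\to R(D')$ with $f_{D'}\circ R(g) = g\circ f_D$, and functoriality plus the adjunction properties follow formally from uniqueness.

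The hard step is injectivity. Completing $f_D$ to a triangle $C_{-1}\xrightarrow{\alpha} R(D)\xrightarrow{f_D} D \to \Sigma C_{-1}$, injectivity of $(f_D)_*$ is equivalent to the vanishing of $\alpha_*$ on $\Hom_\sD(C, C_{-1})$. The plan is to combine three ingredients: (i) the suspended hypothesis $\Sigma\sC\subset\sC$, which places $\Sigma C$ in $\sC$ and therefore lets the approximation for $\Sigma D$ control all morphisms $\Sigma C \to \Sigma D$; (ii) the semiperfect structure of $\End_\sD(R(D))$ arising from Krull--Schmidt; and (iii) minimality of $f_D$, which forces any endomorphism $\psi$ of $R(D)$ satisfying $f_D\psi = 0$ to have $1-\psi$ invertible, placing $\psi$ in the Jacobson radical. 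Given $h\colon C\to C_{-1}$, I would iterate the approximation on $C_{-1}$ itself, factoring $h$ through a right $\sC$-approximation $\beta\colon C''\to C_{-1}$ (so $h = \beta h'$), thereby reducing the claim to $\alpha\beta = 0$ for $\beta\in\Hom_\sD(C'', C_{-1})$ with $C''\in\sC$; the suspension-closure and minimality should then combine to upgrade ``$\alpha\beta$ lies in the radical'' to ``$\alpha\beta = 0$''.

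The main obstacle is precisely this last upgrade: pinning down how the suspended hypothesis interacts with minimality to force $\alpha_*$ to \emph{vanish} rather than merely land in the radical is the technical heart of the argument, and is where one really exploits the interplay between the triangulated structure and Krull--Schmidt. The dual statement for covariantly finite cosuspended subcategories then follows by applying the whole argument in $\sD^{op}$, under which cosuspended becomes suspended and right approximations become left approximations.
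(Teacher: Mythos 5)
The paper does not prove this statement; it cites Keller--Vossieck \cite[Proposition~1.3]{Keller-Vossieck}. Your easy direction and your overall plan (construct the right adjoint $R$ from minimal right $\sC$-approximations, then show the counit $f_D\colon R(D)\to D$ induces a bijection on $\Hom$ from $\sC$) are correct and standard. But you have correctly flagged, and not closed, a genuine gap: the argument that $\alpha_*$ \emph{vanishes}, not merely lands in the radical. The tools you list for closing it (semiperfectness of $\End(R(D))$, radical membership) are not the ones that work here; the result is \emph{not} a formal consequence of minimality plus Krull--Schmidt.

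The missing ingredient is the triangulated Wakamatsu lemma, and it crucially uses the \emph{extension-closure} of $\sC$ (part of ``suspended'' in this paper's sense) together with the \emph{octahedral axiom} -- neither appears in your sketch. Concretely: complete $f_D$ to a triangle $C_{-1}\xrightarrow{\alpha} R(D)\xrightarrow{f_D} D\xrightarrow{\beta} \Sigma C_{-1}$ and take any $g\colon C'\to\Sigma C_{-1}$ with $C'\in\sC$. Apply the octahedron to the composite $\Sigma^{-1}C'\xrightarrow{\Sigma^{-1}g} C_{-1}\xrightarrow{\alpha} R(D)$; one of the resulting triangles is $R(D)\to Y\to C'\to\Sigma R(D)$, so $Y\in\sC$ by extension-closure, and another face gives a map $t\colon Y\to D$ with $t\circ(R(D)\to Y)=f_D$. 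The approximation property factors $t$ through $f_D$, and minimality then forces $R(D)\to Y$ to be split mono, so the connecting map $C'\to\Sigma R(D)$, which is $\pm\Sigma\alpha\circ g$, vanishes. Hence $g$ factors through $\beta\colon D\to\Sigma C_{-1}$; a second use of the approximation property plus $\beta f_D=0$ gives $g=0$. This shows $\Hom(\sC,\Sigma C_{-1})=0$; using $\Sigma$-closure of $\sC$ once more gives $\Hom(\sC,C_{-1})=0$, whence $(f_D)_*$ is bijective. (Equivalently, one shows $\Sigma C_{-1}\in\sC\orth$, so $(\sC,\sC\orth)$ is a t-structure and $R$ is its truncation functor.) Your step of pre-factoring $h\colon C\to C_{-1}$ through an approximation of $C_{-1}$ is a detour that does not feed into this argument; the suspended hypothesis is used through extension-closure (inside the octahedron) and through $\Sigma\sC\subseteq\sC$ (in the final shift), not through ``the approximation for $\Sigma D$''.
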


Thus, a thick subcategory $\sC$ of $\sD$ is functorially finite if and only if it is admissible.
Functorial finiteness can often be deduced from Hom-finiteness. More precisely, let
\[ H_D \coloneqq \{ C\in\ind{\sC} \mid \Hom(D,C)\neq0 \} , \qquad
   H^D \coloneqq \{ C\in\ind{\sC} \mid \Hom(C,D)\neq0 \} . \]

\begin{lemma} \label{lem:functorially-finite}
Let $\sD$ be a Hom-finite, Krull--Schmidt %(but not necessarily triangulated)
category with a subcategory $\sC$. If the set $H_D$ is finite for all $D\in\ind{\sD}$, then $\sC$ is covariantly finite in $\sD$. Dually, if $H^D$ is finite for all $D\in \ind{\sD}$, then $\sC$ is contravariantly finite in $\sD$.
\end{lemma}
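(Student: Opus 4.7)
The plan is to construct explicit left $\sC$-approximations one object at a time. The two statements are dual, so I would focus on the first. Fix $D\in\sD$; by Krull--Schmidt we may assume $D$ indecomposable without loss of generality (a left $\sC$-approximation of a direct sum can be assembled from left $\sC$-approximations of the summands). Write the hypothesized finite set as $H_D=\{C_1,\ldots,C_n\}\subset\ind{\sC}$ and set $d_i\coloneqq\dim\Hom(D,C_i)$, each finite by Hom-finiteness. Choosing a basis $f_{i,1},\ldots,f_{i,d_i}$ of each $\Hom(D,C_i)$, I would form the object
\[ C \coloneqq \bigoplus_{i=1}^n C_i^{\oplus d_i} \in \sC \]
together with the morphism $f\colon D\to C$ whose $(i,j)$-component is $f_{i,j}$. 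Since $\sC$ is additive and each $C_i\in\sC$, indeed $C\in\sC$.

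Next I would verify that $f$ is a left $\sC$-approximation. Given any $g\colon D\to C'$ with $C'\in\sC$, use that $\sD$ is Krull--Schmidt to decompose $C'=\bigoplus_k C'_k$ into indecomposables; as $\sC$ is closed under direct summands (this is built into the paper's notion of additive subcategory, consistent with the definition of $\add$), each $C'_k$ again lies in $\sC$, hence in $\ind{\sC}$. Decompose $g$ componentwise as $g=\sum_k\iota_k g_k$ with $g_k\colon D\to C'_k$. For those $k$ with $g_k\neq0$ we have $\Hom(D,C'_k)\neq 0$, so $C'_k\in H_D$, and hence $C'_k\cong C_{i_k}$ for some $i_k$. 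Expanding $g_k$ in the chosen basis yields $g_k=\sum_j\lambda^{(k)}_j f_{i_k,j}$, and these coefficients assemble naturally into a morphism $h\colon C\to C'$ satisfying $hf=g$. Thus $\Hom(f,C')$ is surjective, which is exactly the defining condition of a left $\sC$-approximation.

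The dual statement is proved the same way with arrows reversed: using finiteness of $H^D$ and Hom-finiteness, pick bases of each $\Hom(C_i,D)$ for $C_i\in H^D$, assemble into a morphism $C\to D$ from a finite direct sum $C\in\sC$, and verify that every $g\colon C'\to D$ with $C'\in\sC$ factors through it by decomposing the source. I do not expect any real obstacle: the argument is essentially the standard `universal map to a representing object' construction, and the hypotheses have been tailored precisely so that the finite direct sum makes sense (finiteness of $H_D$) and each Hom-space admits a finite basis (Hom-finiteness). The only conceptual point worth care is the implicit use that $\sC$ is closed under direct summands, which follows from the paper's standing convention that subcategories are additive in the sense of the $\add$ operation introduced in Appendix~\ref{app:subcategory-types}.
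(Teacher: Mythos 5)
Your argument is correct and is essentially the same as the paper's: the paper forms the object $\bigoplus_{C\in H_D} C\otimes\Hom(D,C)^*$ with the canonical evaluation map, which is exactly your $\bigoplus_i C_i^{\oplus d_i}$ with the basis-assembled map, and both reduce to indecomposable $D$ via Krull--Schmidt. You spell out the factoring-through verification (including the needed closure of $\sC$ under direct summands) which the paper leaves implicit, but the construction and logic are identical.
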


\begin{proof}
For  $D\in \ind{\sD}$, the direct sum $\bigoplus_{C\in H_D} C \otimes \Hom(D,C)^*$ is a well-defined object of $\sD$ by the assumption on $H_D$. Hence the natural morphism $D \to \bigoplus_{C\in H_D} C \otimes \Hom(D,C)^*$ is a (not necessarily minimal) left $\sC$-approximation of $D$. Therefore, indecomposable objects of $\sD$ have left $\sC$-approximations; as $\sD$ is Krull--Schmidt, all objects of $\sD$ do and $\sC$ is covariantly finite in $\sD$.
Dually for contravariant finiteness.
\end{proof}

\begin{corollary} \label{cor:functorially-finite}
Let $\sD$ be a Hom-finite, Krull--Schmidt %(but not necessarily triangulated)
category with a subcategory $\sC$ containing only finitely many indecomposable objects. Then $\sC$ is functorially finite in $\sD$.
\end{corollary}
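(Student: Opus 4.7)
The plan is to deduce this corollary directly from Lemma~\ref{lem:functorially-finite}, which already does essentially all the work. The assumption that $\sC$ has only finitely many indecomposable objects (up to isomorphism) makes the finiteness hypothesis of that lemma automatic.

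More precisely, for any $D \in \ind{\sD}$, the sets
\[ H_D = \{ C \in \ind{\sC} \mid \Hom(D,C) \neq 0 \} \quad\text{and}\quad H^D = \{ C \in \ind{\sC} \mid \Hom(C,D) \neq 0 \} \]
are subsets of $\ind{\sC}$, which is finite by hypothesis. Hence both $H_D$ and $H^D$ are finite for every indecomposable $D$. Applying the two halves of Lemma~\ref{lem:functorially-finite} then yields that $\sC$ is both covariantly finite and contravariantly finite in $\sD$, i.e.\ functorially finite.

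Since the work is entirely offloaded onto the preceding lemma, there is essentially no obstacle; the proof is a one-line verification of the hypothesis of Lemma~\ref{lem:functorially-finite}. The only thing to note for the reader is that Hom-finiteness of $\sD$ is used (via the lemma) to ensure that the direct sum $\bigoplus_{C \in H_D} C \otimes \Hom(D,C)^*$ (respectively $\bigoplus_{C \in H^D} C \otimes \Hom(C,D)$) is a genuine object of $\sD$ and thereby furnishes a left (respectively right) $\sC$-approximation of $D$, and Krull--Schmidt is used to extend approximations from indecomposables to arbitrary objects.
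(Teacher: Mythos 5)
Your proof is correct and follows exactly the route the paper intends: the corollary is stated immediately after Lemma~\ref{lem:functorially-finite} with no separate proof precisely because, as you observe, the finiteness of $\ind{\sC}$ makes the hypotheses $\lvert H_D\rvert<\infty$ and $\lvert H^D\rvert<\infty$ automatic for every $D\in\ind{\sD}$, and both halves of the lemma then apply.
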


\subsection{Silting subcategories} \label{sec:generating}

Silting objects are a generalisation of tilting objects, which were introduced in \cite{Keller-Vossieck}. However, we follow the terminology of \cite{AI}. Note that all subcategories are assumed to be additive and closed under isomorphisms.

Let $\sM$ be a subcategory of a triangulated category $\sD$. 
\begin{itemize}
\item $\sM$ is called a \emph{partial silting subcategory} if $\Hom^{>0}(\sM,\sM)=0$.
\item $\sM$ is called a \emph{silting subcategory} if it is partial silting and $\thick{\sD}{\sM}=\sD$.
\item An object $D\in\sD$ is called a \emph{silting object} if $\add(D)$ is a silting subcategory.
\item Two silting objects $D,D'\in\sD$ are \emph{equivalent} if and only if $\add(D) = \add(D')$.
\end{itemize}

For reasonable categories, there is a strong connection between silting objects and silting subcategories; see \cite[Theorem.~2.27]{AI}:

\begin{lemma}
Let $\sD$ be a Hom-finite, Krull--Schmidt triangulated category. Then $\sD$ has a silting object if and only if $\sD$ has a silting subcategory and $K(\sD)$ is free of finite rank.
\end{lemma}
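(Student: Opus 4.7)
The plan is to prove the two implications separately, linking silting subcategories to $K(\sD)$ via the associated bounded co-t-structure. For the forward direction, let $M$ be a silting object with pairwise non-isomorphic indecomposable summands $M_1,\dots,M_n$. Then $\add(M)$ is trivially a silting subcategory, so it remains to show $K(\sD)$ is free of rank $n$ with basis $[M_1],\dots,[M_n]$. I would deduce this from the bijection with bounded co-t-structures (Theorem~\ref{thm:koenig-yang}) applied to $\add(M)$: the co-t-structure $(\cosusp(\Sigma^{-1}M),\susp(M))$ has coheart $\add(M)$, and the standard dévissage theorem for bounded co-t-structures identifies $K(\sD)$ with the split Grothendieck group of the coheart, which is the free abelian group on the $[M_i]$ since $\add(M)$ is Krull--Schmidt with only $n$ indecomposables.

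For the converse, suppose $\sM$ is a silting subcategory and $K(\sD)$ is free of finite rank, say rank $n$. The same dévissage applied to the bounded co-t-structure with coheart $\add(\sM)$ yields that $K(\sD)$ is freely generated by the isomorphism classes of indecomposable objects of $\sM$. Hence $\sM$ has exactly $n$ pairwise non-isomorphic indecomposable objects $M_1,\dots,M_n$, and I claim that $M \coloneqq M_1\oplus\cdots\oplus M_n$ is a silting object. The thickness condition $\thick{}{M} = \thick{}{\sM} = \sD$ is automatic since $\add(M) = \add(\sM)$, and the silting condition $\Hom^{>0}(M,M)=0$ follows immediately by expanding the finite direct sum and using $\Hom^{>0}(\sM,\sM)=0$.

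The main obstacle is the dévissage isomorphism identifying $K(\sD)$ with the split Grothendieck group of $\add(\sM)$. The core idea is that, by boundedness of the co-t-structure, every object of $\sD$ admits a finite filtration whose successive cones are shifts of objects in the coheart; this ensures that classes of coheart objects span $K(\sD)$. Linear independence is more subtle: it requires that any relation $\sum n_i[M_i]=0$ in $K(\sD)$ forces $n_i=0$, which one can extract from the silting condition by pairing with the classes $[M_j]$ via an appropriate Euler-type form on the coheart, or alternatively by appealing to the Krull--Schmidt structure of $\add(\sM)$ together with the fact that the co-t-structure heart genuinely recovers $K_0$. Once this dévissage is in hand, both directions of the lemma follow directly by counting indecomposables against the rank of $K(\sD)$.
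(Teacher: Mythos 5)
The paper does not supply a proof of this lemma; it is quoted directly from Aihara--Iyama \cite[Theorem~2.27]{AI}, so there is no internal argument to compare against. Your strategy --- reduce both directions to the d\'evissage isomorphism between the split Grothendieck group of the coheart of the associated bounded co-t-structure and $K_0(\sD)$, and then count indecomposables against the rank --- is the right framework and is essentially what underlies the cited result.

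Two points in your sketch need repair. First, the silting-subcategory/bounded-co-t-structure bijection you invoke should be cited as \cite[Proposition~2.23]{AI} (see also the discussion in Appendix~\ref{app:torsion}), not Theorem~\ref{thm:koenig-yang}: the latter is stated only for $\sK^b(\proj\Lambda)$ with $\Lambda$ a finite-dimensional algebra, whereas the lemma concerns an arbitrary Hom-finite Krull--Schmidt triangulated category. Second, your proposed proof of linear independence in the d\'evissage does not go through as written: the Euler-type pairing $\sum_k(-1)^k\hom(M_i,\Sigma^k M_j)$ need not converge for a silting (as opposed to tilting) subcategory, since $\Hom^{<0}(\sM,\sM)$ can be nonzero in unboundedly negative degrees; and your fallback appeal to ``the fact that the co-t-structure heart genuinely recovers $K_0$'' is circular, being precisely the d\'evissage statement you are trying to establish. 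The clean fix is to invoke Bondarko's theorem \cite{Bondarko} that for a bounded weight structure the natural map from the split Grothendieck group of the heart to $K_0$ of the ambient category is an isomorphism; with that in hand, the remainder of your argument --- both the rank count in the forward direction and the finiteness-of-indecomposables conclusion in the converse --- is fine.
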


In particular, if a category $\sD$ as in the lemma has a silting object $D$, then 
\[ \rk K(\sD) = \#\{\text{isomorphism classes of indecomposable summands of } D \} \]
and in particular, the right-hand side is independent of the silting object.
We record two further easy observations:

\begin{lemma}
A partial silting subcategory is extension-closed.  
\end{lemma}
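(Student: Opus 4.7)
The plan is to observe that the partial silting condition forces every extension triangle between objects of $\sM$ to split. Concretely, given any distinguished triangle
\[ \trilabels{M'}{X}{M''}{}{}{\delta} \]
with $M', M'' \in \sM$, I would note that the connecting morphism $\delta\colon M'' \to \Sigma M'$ is an element of $\Hom(M'', \Sigma M') = \Hom^1(M'', M')$, which vanishes by the hypothesis $\Hom^{>0}(\sM,\sM)=0$.

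Since $\delta = 0$, a standard property of triangulated categories implies the triangle is split, so $X \cong M' \oplus M''$. As $\sM$ is by standing convention an additive subcategory closed under isomorphism (see Section~\ref{app:subcategory-types}), we conclude $X \in \sM$, proving that $\sM$ is closed under extensions.

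There is no real obstacle here; the content of the statement is simply the identification of the connecting morphism with a graded Hom space of positive degree between objects of $\sM$, which the partial silting hypothesis is designed to kill.
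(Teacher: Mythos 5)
Your proof is correct and matches the paper's argument exactly: both identify the connecting morphism $\delta \in \Hom(M'', \Sigma M') = \Hom^1(M'', M')$ as lying in $\Hom^{>0}(\sM,\sM)=0$, so the triangle splits and $X \cong M' \oplus M''$ lies in the additive subcategory $\sM$. Nothing further to add.
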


\begin{proof}
If $\sM\subset\sD$ is partial silting, then any extension $\trilabels{M'}{D}{M''}{}{}{e}$ with $M',M''\in\sM$ has $e\in\Hom(M'',\Sigma M')=0$, so that the extension is trivial. In other words, the extension closure $\sM*\sM$ is built from direct sums only.
\end{proof}

\begin{lemma} \label{lem:silting-ff}
If $\sD$ is a Hom-finite, Krull--Schmidt triangulated category with a silting object, then any (additive) subcategory $\sN$ of a silting subcategory $\sM$ is functorially finite in $\sM$. 
% In particular, both left and right mutations of $\sM$ with respect to $\sM$ are defined.
\end{lemma}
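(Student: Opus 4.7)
The plan is to reduce this to a direct application of Corollary~\ref{cor:functorially-finite}, which says that any subcategory of a Hom-finite Krull--Schmidt category with only finitely many indecomposable objects (up to isomorphism) is functorially finite. So the whole task is to establish that $\sM$, and therefore $\sN$, has only finitely many isomorphism classes of indecomposables.

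First I would invoke the preceding lemma: because $\sD$ admits a silting object, $K(\sD)$ is free of finite rank, say $n$, and any silting object has exactly $n$ pairwise non-isomorphic indecomposable summands. I would then argue that the same finiteness holds for the silting \emph{subcategory} $\sM$: the classes $[M_i] \in K(\sD)$ of pairwise non-isomorphic indecomposables $M_i \in \sM$ are linearly independent, a standard consequence of the silting condition $\Hom^{>0}(\sM,\sM) = 0$ together with $\thick{\sD}{\sM} = \sD$ (one verifies that any relation in $K(\sD)$ among the $[M_i]$ can be lifted to a genuine triangle by successively applying octahedra, contradicting the vanishing of positive self-extensions). Hence $|\ind \sM| \leq n$, i.e.\ $\sM = \add(M)$ for an actual silting object $M$ with $n$ summands.

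Next, since $\sN \subseteq \sM$ is additive, its indecomposables form a subset of $\ind \sM$, so $|\ind \sN| \leq n < \infty$. The ambient category $\sM$ inherits Hom-finiteness and the Krull--Schmidt property from $\sD$, so the hypotheses of Corollary~\ref{cor:functorially-finite} are met with $\sN$ sitting inside $\sM$. Applying it gives the functorial finiteness of $\sN$ in $\sM$.

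There is no genuine obstacle here: the statement is really a bookkeeping consequence of already established results. The only non-trivial point is the one technicality above, namely that the finite rank of $K(\sD)$ forces a silting subcategory to have finitely many indecomposables — and this in turn is immediate from the linear independence of the classes of its indecomposables, or alternatively from the remark in the text just after the preceding lemma, which already notes that the number of indecomposables of a silting object is independent of the choice.
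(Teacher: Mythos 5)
Your plan is the same as the paper's: observe that $\sM$ (hence $\sN$) has only finitely many indecomposables up to isomorphism, then invoke Corollary~\ref{cor:functorially-finite}. The paper simply takes the finiteness of $\ind{\sM}$ from \cite[Theorem~2.27]{AI}, which is quoted in the lemma immediately preceding and which in fact says that any silting subcategory of such a $\sD$ has exactly $\rk K(\sD)$ indecomposables; you instead re-derive it via linear independence of the classes $[M_i]$ in $K(\sD)$. That is a legitimate alternative, though your parenthetical sketch — that a relation in $K(\sD)$ among the $[M_i]$ can be ``lifted to a genuine triangle by successively applying octahedra'' — is not quite right as stated (relations in $K_0$ of a triangulated category are generated by triangles, but an individual relation need not come from one); the cleaner argument is the weight-decomposition one underlying \cite{AI}, producing an isomorphism $K^{\mathrm{split}}(\sM)\cong K(\sD)$. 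The conclusion you want is correct and standard, so this is a cosmetic imprecision rather than a gap.
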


\begin{proof}
The existence of a silting object implies that $\sM$ and $\sN$ are each additively generated by finitely many objects. Now apply Corollary~\ref{cor:functorially-finite}.
\end{proof}

\subsection{Torsion pairs, t-structures and co-t-structures} \label{app:torsion}

We assume again that $\sD$ is a $\kk$-linear triangulated category. A pair $(\sX,\sY)$ of full subcategories closed under direct summands is called a \emph{torsion pair} if $\Hom(\sX,\sY) =0$ and $\sD = \sX*\sY$; see \cite{Iyama-Yoshino}.

Both $\sX$ and $\sY$ are then extension closed. By definition, for every $D\in\sD$ there is a triangle $\tri{X}{D}{Y}$ with $X\in \sX$ and $Y\in\sY$. The map $X\to D$ is a right $\sX$-approximation and $D\to Y$ is a left $\sY$-approximation, i.e.\ $\sX$ is contravariantly finite and $\sY$ is covariantly finite in $\sD$. The triangle is called the \emph{approximation triangle} of $D$. By abuse of terminology, we shall call $\sX$ the \emph{aisle} and $\sY$ the \emph{co-aisle} of the torsion pair. The abuse arises as this terminology is normally reserved for the case that $(\sX,\sY)$ is a t-structure (see below).

The torsion pair $(\sX,\sY)$ will be called \emph{bounded} if $\bigcup_{i\in \IZ} \Sigma^i \sX = \bigcup_{i \in \IZ} \Sigma^i \sY = \sD$. Torsion pairs appear in three important guises, namely $(\sX,\sY)$ is called a
\begin{itemize}
\item \emph{t-structure} \cite{BBD} if $\Sigma \sX \subseteq \sX$ ($\iff \Sigma^{-1} \sY \subseteq \sY$);
\item \emph{co-t-structure} \cite{Pauksztello} (also \emph{weight structure} \cite{Bondarko}) if $\Sigma^{-1} \sX \subseteq \sX$ ($\iff \Sigma \sY \subseteq \sY$);
\item \emph{stable t-structure} (also \emph{semi-orthogonal decomposition}) if $\Sigma \sX = \sX$ ($\iff \Sigma \sY = \sY$). 
\end{itemize}
For historical reasons, when the terminology `semi-orthogonal decomposition' is used the torsion pair is often written as $\sod{\sY,\sX}$. Furthermore, a t-structure is stable if and only if it is also a co-t-structure.

If $(\sX,\sY)$ is a t-structure then its \emph{heart} $\sH = \sX \cap \Sigma \sY$ is an abelian subcategory of $\sD$; see \cite[Theorem 1.3.6]{BBD}. 
A bounded t-structure is determined by its heart via $\sX = \susp \sH$ and $\sY = \cosusp \Sigma^{-1} \sH$; see, for example, \cite[Section 3]{Bridgeland}. 

If $(\sX, \sY)$ is a co-t-structure then its \emph{co-heart} $\sM = \sX \cap \Sigma^{-1} \sY$ is a partial silting subcategory of $\sD$; see, for instance, \cite[Corollary 5.9]{MSSS}. Note that, if $\sM$ is abelian then it is semisimple. A co-t-structure is bounded if and only if $\sM$ is a silting subcategory. Moreover, a bounded co-t-structure is determined by its co-heart (\cite[Proposition 2.23]{AI}):
\[ \label{page:co-t-structure}
   \sX = \cosusp \sM = \bigcup_{l\geq 0} \Sigma^{-l} \sM * \Sigma^{-l+1} \sM * \cdots * \sM, 
   \textrm{ and, }
   \sY = \susp \sM   = \bigcup_{l\geq 0} \sM * \Sigma \sM * \cdots * \Sigma^l \sM.
\]

\begin{remark}
If $(\sX,\sY)$ is a t-structure then the approximation triangle is functorial and called the \emph{truncation triangle}, with $X\to D$ being a right minimal $\sX$-approximation called the \emph{right truncation} and $D\to Y$ a left minimal $\sY$-approximation called the \emph{left truncation} of $\sD$. Another way to express this functoriality is: the inclusion $\sX\embed\sD$ has a right adjoint (given by $D\mapsto X$) and $\sY\embed\sD$ has a left adjoint. In particular, truncations are minimal approximations. We mention that `t-structure' is an abbreviation for `truncation structure'. 
\end{remark}

\subsection{K\"onig-Yang bijections} \label{app:correspondences}

The notions of silting subcategories, t-structures and co-t-structures for finite dimensional $\kk$-algebras are related by the following bijections of K\"onig and Yang. Before we state them, recall  an abelian category $\sA$ is called a \emph{length category} if it is both artinian and noetherian.

\begin{theorem}[{\cite[Theorem 6.1]{Koenig-Yang}}]
\label{thm:koenig-yang}
Let $\Lambda$ be a finite dimensional $\kk$-algebra. There are bijections between
\begin{enumerate}[label=(\roman*)]
\item equivalence classes of silting objects in $\sK^b(\proj \Lambda)$,
%\item equivalence classes of simple-minded collections in $\sD^b(\mod \Lambda)$,
\item bounded t-structures in $\sD^b(\mod \Lambda)$ whose heart is a length category,
\item bounded co-t-structures in $\sK^b(\proj \Lambda)$.
\end{enumerate}
%Moreover, these bijections commute with the natural notions of mutation in each setting. 
%Likewise, each of the bijections above preserves natural partial orders in each setting.
\end{theorem}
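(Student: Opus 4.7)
The plan is to establish the bijection between (i) and (iii) first as the cleanest leg, then the map from (i) to (ii), and finally confront the converse from (ii) to (i), which is the technical heart of the result; the guiding principle is that a silting object $M$ simultaneously encodes a co-t-structure on $\Kb(\proj \Lambda)$ and a t-structure on $\Db(\mod \Lambda)$, linked by the fully faithful embedding $\Kb(\proj \Lambda) \embed \Db(\mod \Lambda)$, with the length heart condition precisely picking out the image of the silting-to-t-structure map.

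For the correspondence between (i) and (iii), send a silting object $M$ to the pair $(\sA_M, \sB_M) = (\cosusp(\add M), \susp(\add M))$. Orthogonality $\Hom(\sA_M, \sB_M) = 0$ is a short induction on tower lengths anchored on the silting vanishing $\Hom^{>0}(M,M) = 0$; the decomposition $\Kb(\proj \Lambda) = \sA_M * \sB_M$ is immediate from $\thick{}{M} = \Kb(\proj \Lambda)$ and the fact that this thick closure is presented by iterated extensions of finitely many shifts of $M$; boundedness is built into the construction. The inverse map sends a bounded co-t-structure $(\sA, \sB)$ to its co-heart $\sA \cap \Sigma^{-1}\sB$, which is a silting subcategory by \cite[Corollary 5.9]{MSSS}. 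Mutual inverseness is then an unwinding of the explicit descriptions of aisle and co-aisle via iterated extensions of the (co-)heart, recalled on page~\pageref{page:co-t-structure}.

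For the map from (i) to (ii), given a silting object $M$ I would define
\[ \sX_M = \{X \in \Db(\mod \Lambda) \mid \Hom(M, \Sigma^i X) = 0 \text{ for all } i > 0\}, \qquad \sY_M = \sX_M\orth . \]
The silting filtrations produced by the co-t-structure transport across $\Kb(\proj \Lambda) \embed \Db(\mod \Lambda)$ to verify the t-structure axioms, and boundedness of $(\sX_M, \sY_M)$ is inherited from that of the co-t-structure. The heart $\sH_M = \sX_M \cap \Sigma\sY_M$ consists of objects $H$ with $\Hom(M, \Sigma^i H) = 0$ for $i \neq 0$. I would show $\sH_M$ is a length category by producing its finite set of simple objects as a simple-minded collection dual to the indecomposable summands of $M$, and deducing Jordan--H\"older chains from the bounded silting filtration.

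The hard part will be the converse, from (ii) to (i): reconstructing a silting object from a bounded t-structure whose heart $\sH$ is length. My plan is to exploit length, Hom-finiteness, and finite generation of $K_0(\Db(\mod \Lambda))$ to extract the finite list of simple isoclasses $\{S_1, \ldots, S_n\}$ of $\sH$, assemble them into a simple-minded collection of $\Db(\mod \Lambda)$, and invoke the Koenig--Yang correspondence between simple-minded collections and silting objects---the technically heaviest ingredient, itself using Rickard-style derived Morita theory to invert the heart-construction above. With all three maps in hand, compatibility of the triangle of bijections reduces to checking that, for a common silting $M$, the subcategories produced on the two sides match under the embedding $\Kb(\proj \Lambda) \embed \Db(\mod \Lambda)$, closing the loop.
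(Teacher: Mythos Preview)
The paper does not prove this theorem at all: it is stated in Appendix~\ref{app:correspondences} as a cited result from \cite[Theorem~6.1]{Koenig-Yang}, with only the explicit description of the maps recorded afterwards. There is therefore no ``paper's own proof'' to compare your proposal against.

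Your sketch is a reasonable outline of the original K\"onig--Yang argument, so in that sense it is not wrong in spirit. One small discrepancy with the paper's conventions: you write $(\sA_M,\sB_M)=(\cosusp(\add M),\susp(\add M))$, but in the paper's normalisation of torsion pairs (Appendix~\ref{app:torsion}) the co-t-structure associated to $M$ is $(\cosusp(\Sigma^{-1}M),\susp(M))$, with co-heart $\sA\cap\Sigma^{-1}\sB$; your pair fails the orthogonality $\Hom(\sA_M,\sB_M)=0$ since $\Hom(M,M)\neq 0$. This is only an indexing issue, but worth tidying if you intend to use the explicit formulae later.
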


\noindent
Under these bijections, a silting subcategory $\sM\subset\sD$ is mapped to the
\[ \begin{array}{r l}
   \text{t-structure} & (\sX_\sM,\sY_\sM) \coloneqq (\Sigma^{<0}\sM)\orth,\Sigma^{\geq0}\sM)\orth)   = (\susp{\sM},\Sigma^{<0}\sM)\orth)    ; \\
\text{co-t-structure} & (\sA_\sM,\sB_\sM) \coloneqq {}\orth(\Sigma^{\geq0}\sM),\Sigma^{<0}\sM)\orth) = (\cosusp{\Sigma\inv\sM},\susp{\sM}) .
\end{array} \]

\subsection{Exceptional sequences and semi-orthogonal decompositions} \label{sec:exceptionals}

The notion of semi-orthogonal decomposition $\sD=\clext{\sC_1,\sC_2}$ is synonymous with that of a stable t-structure $(\sC_2,\sC_1)$, see \ref{app:torsion}, and leads to equivalences $\sC_1 \cong \sD/\sC_2$ and $\sC_2 \cong \sD/\sC_1$.
An admissible subcategory $\sC\subset\sD$ produces two semi-orthogonal decompositions $\sD = \clext{\sC,{}\orth\sC} = \clext{\sC\orth,\sC}$.

An object $E$ of a $\kk$-linear triangulated category $\sD$ is \emph{exceptional} if $\Hom(E,E)=\kk$ and $\Hom^{\neq0}(E,E)=0$, i.e.\ $E$ has the smallest possible graded endormorphism ring. Exceptional objects are characterised by the following property (which is used in the text): $\thick{\sD}{E} = \add(\Sigma^i E\mid i\in\IZ)$. 
Morever, the subcategory $\thick{\sD}{E}$ is then admissible by \cite[Theorem~3.2]{Bondal}. Hence an exceptional object $E$ leads to semi-orthogonal decompositions $\sD = \clext{\thick{\sD}{E}\orth,\thick{\sD}{E}}$.

An \emph{exceptional sequence} in $\sD$ is a tuple $(E_1,\ldots,E_t)$ of exceptional objects such that $\Hom^\bullet(E_i,E_j)=0$ for all $i>j$. 
The sequence is \emph{full} if $\thick{\sD}{E_1,\ldots,E_t}=\sD$ and \emph{strong} if $\Hom^\bullet(E_i,E_j)=\Hom(E_i,E_j)$, i.e.\ all homomorphisms occur in degree zero.
A full, strong exceptional sequence $(E_1,\ldots,E_t)$ gives rise to a tilting object $E_1\oplus\cdots\oplus E_t$. Similarly, a full exceptional sequence $(E_1,\ldots,E_t)$ with $\Hom^{>0}(E_i,E_j)=0$ for all $i,j$ gives rise to a silting object.

\section{The repetitive algebra and string modules} \label{app:strings}

\noindent
For a finite-dimensional algebra $\Lambda$, Happel showed in \cite{Happel} that there is a full embedding $F\colon\Db(\Lambda) \to \stmod{\rLambda}$, where $\stmod{\rLambda}$ denotes the stable module category of the repetitive algebra $\rLambda$, and $F$ is called the \emph{Happel functor}. A finite-dimensional algebra $\Lambda$ is gentle if and only if its repetitive algebra is special biserial (see \cite[Proposition]{Schroer}). For such an algebra, there is a convenient description of all the indecomposable objects of $\stmod{\rLambda}$ using string and band modules; see \cite{Schroer}. Since the algebras $\LLambda$ are gentle, this machinery applies. Moreover, only string modules occur; indeed it is this absence of band modules that is responsible for discreteness. Thus, we shall omit any further reference to band modules.

In this section we shall recall the construction of the repetitive algebra, the description of string modules and the maps between them. We then apply these results to the derived-discrete algebras $\LLambda$.

\subsection{The repetitive algebra}

The notion of a repetitive algebra was introduced by Hughes and Waschb\"usch in \cite{Hughes-Waschbuesch}. The standard references are \cite{Hughes-Waschbuesch,Ringel,Schroer}. The relations for $\LLambda$ are also recalled in \cite{BGS}. The following summary is based on \cite{Schroer}.

Let $Q=(Q_0,Q_1)$ be a finite, connected quiver with vertices $Q_0$ and arrows $Q_1$. A \emph{path} $p$ in $Q$ is a sequence of arrows $p=a_1 a_2 \cdots a_t$ with $s(a_{i+1}) = e(a_{i})$ for $1 \leq i < t$. The start of $p$, $s(p) = s(a_1)$ and the end of $p$, $e(p) = e(a_t)$. The path $p$ is said to have \emph{length} $t$. Note there is a trivial path of length $0$, $e_v$, corresponding to each vertex $v \in Q_0$.
The concatenation $p_1 p_2$ of paths $p_1$ and $p_2$ is defined if and only if $e(p_1) = s(p_2)$. A path $q$ is called a \emph{subpath} of a path $p$ if $p=p_1 q p_2$ for some (not necessarily non-trivial) paths $p_1$ and $p_2$.
Write $\paths$ for the set of paths of $Q$.
A \emph{relation} for $Q$ is a non-zero linear combination of paths of length at least $2$ which have the same starting points and end points. A \emph{zero-relation} is a relation of the form $p$ (sometimes written $p=0$). A \emph{commutativity relation} is a relation of the form $p - q$.

Now let $\rho$ be a set of zero- and commutativity relations for $Q$ and consider the path algebra arising from the bound quiver $\Lambda \coloneqq \kk Q/\clext{\rho}$. Two paths $p_1$ and $p_2$ in $Q$ are \emph{equivalent} if $p_1 = p'vp''$ and $p_2 = p'wp''$, where $v-w$ or $w-v$ is a commutativity relation in $\rho$. Note that this generates an equivalence relation on $\paths$; we denote the equivalence class of a path $p$ by $\bp$. A path $p$ in $Q$ is called a \emph{path} in $(Q,\rho)$ if for each $p' \in \bp$, $p'$ does not have a subpath belonging to $\rho$. A path $a_1 \cdots a_n$ is called \emph{maximal} if $b a_1 \cdots a_n$ and $a_1 \cdots a_n c$ are not paths in $(Q,\rho)$ for each $b$ and $c$ such that $e(b) = s(a_1)$ and  $e(a_n) = s(c)$.

The repetitive algebra $\rLambda \coloneqq \kk \rQ/\clext{\rrho}$, where $\rQ=(\rQ_0,\rQ_1)$ is specified by:
\begin{itemize}
\item the vertex set is given by $\rQ_0 \coloneqq \IZ \times Q_0$;
\item for each arrow $a\colon x \to y$ in $Q_1$ there is an arrow $(i,a)\colon (i,x) \to (i,y)$ in $\rQ_1$;
\item for each maximal path $p$ in $(Q,\rho)$, there is a \emph{connecting arrow} $\rp \colon (i,y) \to (i+1,x)$ in $\rQ_1$, where $s(p) = x$ and $e(p)=y$.
\end{itemize}
If $p$ is a path in $Q$, the corresponding path in $(i,Q)$ is denoted $(i,p)$. Let $p=p_1 p_2$ be a maximal path in $(Q,\rho)$. Then the path $(i,p_2)(i,\rp)(i+1,p_1)$ is called a \emph{full path} in $\rQ$. We now define the relations:
\begin{itemize}
\item $\rrho$ inherits the relations from $\rho$, i.e.\ for paths $p$, $p_1$ and $p_2$ in $Q$, if $p\in \rho$ (resp. $p_1 - p_2 \in \rho$) then $(i,p) \in \rrho$ (resp. $(i,p_1) - (i,p_2) \in \rrho$) for all $i \in \IZ$.
\item Let $p$ be a path that contains a connecting arrow. If $p$ is not a subpath of a full path then $p \in \rrho$.
\item Let $p=p_1 p_2 p_3$ and $q=q_1 q_2 q_3$ be maximal paths in $(Q,\rho)$ with $p_2 = q_2$. Then $(i,p_3)(i,\rp)(i+1,p_1) - (i,q_3)(i,\rq)(i+1,q_1) \in \rrho$ for all $i\in \IZ$.
\end{itemize}
Denote the set of paths in $(\rQ,\rrho)$  by $\rpaths$.

%It is sometimes easier to view $\rQ$ as a type of $\IZ$-graded quiver, where the inherited arrows have degree zero and the connecting arrows have degree one. For the algebra $\rLLambda$, the quiver $\rQ(r,n,m)$ is shown in the figure below, where the inherited arrows $a_i, b_i, c_i$ have degree zero and $x_i, y$  have degree one.

In $\rQ(r,n,m)$ there are $\IZ$ copies of each vertex in $Q_0(r,n,m)$, labelled $(i,x)$ for $x \in Q_0(r,n,m)$ and $i \in \IZ$. Likewise, there are $\IZ$ copies of each arrow in $Q_1(r,n,m)$, for each $i \in \IZ$ we have:
\begin{itemize}
\item the arrows $(i,a_j)\colon (i,j) \to (i,j+1)$ for $-m \leq j \leq -1$;
\item the arrows $(i,b_j)\colon (i,j) \to (i,j+1)$ for $0 \leq j \leq n-r$;
\item the arrows $(i,c_j)\colon (i,j) \to (i,j+1)$ for $n-r+1 \leq j \leq n-1$, where $n \equiv 0$;
\item the arrows $(i,x_j) \colon (i,j+1) \to (i+1,j)$ for $n-r+1 \leq j \leq n-1$, where $n \equiv 0$;
\item the arrows $(i,y) \colon (i,n-r+1) \to (i+1,-m)$.
\end{itemize}
In an abuse of notation, we write down only one copy of each vertex and arrow in the following shorthand version of the quiver $\rQ(r,n,m)$.

\[ \label{fig:repetitive_algebra} % not a Latex "figure", so use \pageref rather than \ref
\centering \xymatrix@R=2ex{
     & & & & & \smxy{1} \ar[r]^-{b_1}     &
                \cdots \ar[r]^-{b_{n-r-1}} &
               \smxy{n-r} \ar@/^/[dr]^-{b_{n-r}}
\\
\smxy{-m} \ar[r]^-{a_{-m}}        &
\smxy{1-m} \ar[r]^-(0.4){a_{1-m}} &
  \cdots \ar[r]^-{a_{-2}}       &
 \smxy{-1} \ar[r]^-{a_{-1}}        &
\smxy{0} \ar@/^/[ur]^-{b_0} \ar@/^/[dr]^-{x_{n-1}}
 & & & & \smxy{n-r+1} \ar@/^/[dl]^-(0.6){c_{n-r+1}} \ar`d[dd] `/20pt[llllllll]^y [llllllll]
\\
 & & & & & \smxy{n-1} \ar@/^/[ul]^-{c_{n-1}} \ar@/^/[r]^-{x_{n-2}}  &
            \cdots \ar@/^/[l]^-{c_{n-2}}  \ar@/^/[r]^-{x_{n-r+2}} &
           \smxy{n-r+2} \ar@/^/[l]^-{c_{n-r+2}} \ar@/^/[ur]^-{x_{n-r+1}} & \\
 & & & & & & & &
} \]
Following the rules above, we can read off the following relations for $\rLLambda$; see \cite[Section~3]{BGS}. The degrees of the arrows should be inferred by the presence of the connecting arrows labelled $x$ and $y$ of degree $1$. We have the following relations:
\begin{itemize}
\item $c_k c_{k+1} = 0$ for $k = n-r, \ldots, n-1$, where $c_{n-r} \coloneqq b_{n-r}$ and $c_n \coloneqq b_0$;
\item $x_k x_{k-1} = 0$ for $k = n-r+2, \ldots, n-1$;
\item $y x_{n-1} = 0$ if $m =0$, and $a_{-1} x_{n-1} = 0$ if $m>0$;
\item $c_{n-r+1} x_{n-r+1} - y a_{-m} \cdots b_{n-r} = 0$ if $r > 1$;
\item $c_k x_k - x_{k-1} c_{k-1} = 0$ for $k=n-r+2, \ldots, n-1$ if $r>1$;
\item $x_{n-1}c_{n-1} - b_0 \cdots b_{n-r} y a_{-m} \cdots a_{-1} = 0$ if $r >1$, and in the case $r=1$ we have $y a_{-m} \cdots b_{n-1} - b_0 \cdots b_{n-1} y a_{-m} \cdots a_{-1} = 0$;
\item Any path starting at $(i,k)$ and ending at $(i+1,k+1)$, with $k \neq 0$ and $-m \leq k \leq n-r$, that contains $y$ as a subpath is zero.
\end{itemize}

\subsection{String modules} \label{sec:strings}

Let $\Lambda = \kk Q/\clext{\rho}$ be a special biserial algebra. We describe strings for the bound quiver $(Q,\rho)$, which give rise to string modules. The references are \cite{Butler-Ringel} and \cite{Wald-Waschbusch}. We remind the reader that all modules are right modules.

For each arrow $a\in Q_1$, introduce a formal inverse $\ia = a^{-1}$ with $s(\ia)=e(a)$ and $e(\ia)=s(a)$. For a path $p=a_1 \cdots a_n$ the inverse path $\ip = \ia_n \cdots \ia_1$.

A \emph{walk} $w$ of length $l>0$ in $(Q,\rho)$ is a sequence $w=w_1 \cdots w_l$, satisfying the usual concatenation requirements, where each $w_i$ is either an arrow or an inverse arrow. Formal inverses of walks are defined in the obvious way. Starting and ending vertices of walks and their inverses are defined analogously to those for paths.

A walk is called a \emph{string} if it contains neither subwalks of the form $a \ia$ or $\ia a$ for some $a \in Q_1$, nor a subwalk $v$ such that $v \in \rho$ or $\iv \in \rho$. We also define \textbf{two} \emph{strings of length zero}, namely, for each $x \in Q_0$ there are \emph{trivial strings} $1^+_x$ and $1^-_x$. We write $s(1^{\pm}_x) = e(1^{\pm}_x) = x$ and set $(1^{\pm}_x)^{-1} = 1^{\mp}_x$.

For technical reasons, in order to define composition of strings with trivial strings, we need to introduce \emph{string functions} $\sigma, \epsilon \colon Q_1 \to \{-1,1\}$ satisfying the following properties:
\begin{itemize}
\item If $a_1 \neq a_2 \in Q_1$ with $s(a_1) = s(a_2)$ then $\sigma(a_1) = -\sigma(a_2)$.
\item If $b_1 \neq b_2 \in Q_1$ with $e(b_1) = e(b_2)$ then $\epsilon(b_1) = - \epsilon(b_2)$.
\item If $a,b \in Q_1$ are such that $ab \notin \rho$ then $\sigma(b) = -\epsilon(a)$.
\end{itemize}
The choice of such string functions is completely arbitrary. An explicit algorithm for choosing such functions is given in \cite[p.~158]{Butler-Ringel}. The functions $\sigma$ and $\epsilon$ can be extended to strings as follows. If $a \in Q_1$, define $\sigma(\ia) = \epsilon(a)$ and $\epsilon(\ia)=\sigma(a)$. If $w = w_1 \cdots w_n$ is a string, define $\sigma(w) = \sigma(w_1)$ and $\epsilon(w) = \epsilon(w_n)$. Finally, for $x \in Q_0$ define $\sigma(1^\pm_x) = \mp 1$ and $\epsilon(1^\pm_x) = \pm 1$.

We are now able to define compositions of strings. For strings $v = v_1 \cdots v_m$ and $w = w_1 \cdots w_n$ of length at least $1$ this is done in the obvious way: the composition $vw$ is defined if $vw = v_1 \cdots v_m w_1 \cdots w_n$ is a string. However, if $w = 1^\pm_x$ then $vw$ is defined if $e(v) = x$ and $\epsilon(v) = \pm 1$. Analogously, if $v = 1^\pm_x$ then $vw$ is defined if $s(w) = x$ and $\sigma(w) = \mp 1$. Note that given arbitary strings $v$ and $w$ whose composition $vw$ is defined, we necessarily have $\sigma(w) = -\epsilon(v)$.
However, in the case of a special biserial algebra, this condition is not sufficient for a string to be defined.

Modulo the equivalence relation $w \sim \iw$, the strings form an indexing set for the so-called \emph{string modules} of $\Lambda$. We shall write $M(w)$ for the corresponding string module. We direct the reader to \cite[Section~3]{Butler-Ringel} for precise details on how to pass to a representation-theoretic description of the modules.

\begin{example} \label{ex:B}
Consider $\rLambda(2,3,1)$, where we relabel the arrows in the figure above
%on page~\pageref{fig:repetitive_algebra}
as $a=a_{-1}$, $b=b_0$, $c=b_1$, $d=c_2$, $x=x_2$ and $y=y$ to avoid cumbersome subscripts. Consider the string $(-2,x)(-1,\ic)(-1,\ib)(-1,x)(0,\ic)(0,\ib)$, which we write as $x \ic \ib x \ic \ib_0$ for short, with $\ib_0 = (0,\ib)$ to determine the `degrees' of each of the arrows. This can be represented pictorially by the diagram below.
\[
\xymatrix@!=2pt{
                         &        &                          &                                           &        &                       & \bullet \ar[dl]^-{(0,b)}  \\
                         &        &                          & \bullet \ar[dl]_-{(-1,b)} \ar[dr]^(0.3){(-1,x)} &       & \bullet \ar[dl]^-{(0,c)} & \\
\bullet \ar[dr]_-{(-2,x)} &        & \bullet \ar[dl]^-{(-1,c)} &                                            & \bullet &                        &  \\
                         & \bullet &                          &                                           &        &                       &
}
\]
In this picture, we read from left to right, direct arrows point downwards and to the right and inverse arrows point downwards and to the left.
\end{example}

\subsection{Irreducible maps between string modules and a linear order}

A complete description of the irreducible maps between string modules was obtained in \cite{Butler-Ringel}. Given a string $w$, the irreducible maps whose source is the string module $M(w)$ can be determined by modifying $w$ in a minimal way either on the left, or on the right.
%one such modification yields an irreducible map.

We describe the algorithm that modifies $w$ on the left, i.e.\ that keeps the endpoint of $w$ fixed, to produce a new string $w[1]$. This yields an irreducible morphism $M(w) \to M(w[1])$.
\begin{enumerate}[label=(\arabic*)]
\item \textit{Adding a hook on the left:}
If there exists $a_0 \in Q_1$ such that $a_0 w$ is defined, then let $a_1\cdots a_n$ be the maximal direct string starting at $s(a_0)$. Then $w[1] \coloneqq \ia_n \cdots \ia_1 a_0 w$; the irreducible map is the natural inclusion.
\item \textit{Removing a cohook on the left:}
If there is no $a \in Q_1$ such that $aw$ is defined, then $w = v_1 \cdots v_{n-1} \bar{v}_n w'$ with $v_i \in Q_1$ and a string $w'$, where $v_1 \cdots v_{n-1}$ is a maximal direct substring at the beginning of $w$. Then $w[1] \coloneqq w'$; the irreducible map is the natural projection map.
\end{enumerate}
There is a dual algorithm, which adds a hook or removes a cohook on the right to output the string $[1]w$.  The inverse operations are written $w[-1]$ and $[-1]w$, respectively. For $n \in \IZ$ we define $w[n] = w[1]\cdots[1]$; similarly for $[n]w$.

We illustrate these concepts in the diagrams below; in the left-hand diagram, we add a hook, in the right-hand diagram, we remove a cohook.
\[
\xymatrix@!R=0.025pc@!C=0.005pc{
                   & &         &                        &                     &                                      &                    && \bullet \ar[dr]^-{v_1} &                    &                 &         &                             &&         &         &                   &&        \\
                   & &         &                        &                     & \ar[dl]_-{a_1} \bullet \ar[dr]^-{a_0} &                    &&                 & \bullet \ar@{.}[dr] &                 &         &                             &&         &         &                   &&         \\
\bullet \arrr^-{w} & & \bullet & \mapsto                & \ar@{.}[dl] \bullet &                                       & \bullet \arrr^-{w} &&  \bullet         &                     & \bullet \ar[dr]^-{v_n} &         & \ar[dl]^-{v_{n+1}} \bullet \arrr^-{w'} && \bullet & \mapsto & \bullet \arrr^-{w'} && \bullet \\
                   & &         & \ar[dl]_-{a_n} \bullet &                     &                                       &                    &&                  &                     &                 & \bullet &                             &&         &         &                    &&         \\
                   & & \bullet &                        &                     &                                       &                    &&                  &                     &                 &         &                             &&         &         &                    &&
}
\]
These operations give rise to AR sequences/triangles:
\[ \xymatrix@!=0.6pc{
                  & w[1] \ar[dr] & \\
w \ar[ur] \ar[dr] &              & **[r] [1]w[1] = [1](w[1]) =([1]w)[1] . \\
                  & [1]w \ar[ur] &
} \]

The process of adding a hook or removing a cohook determines a total order on strings ending at a given vertex whose modules lie in the same component of the AR quiver. This process can be generalised to produce a total order on all strings ending at a given vertex. This is the Gei\ss\ total order \cite{Geiss}, which we describe next.

Let $x \in Q_0$. There is a linear order on strings $w$ and $v$ in $(Q,\rho)$ such that $e(w) = e(v) = x$ and $\epsilon(w) = \epsilon(v) = t$ with $t \in \{-1,1\}$. Namely,
\[
v < w \iff
\left\{ \begin{array}{ll}
   \text{either} & w=w'v, \text{ where } w'=w'_1 \cdots w'_n \text{ with } w'_n \in Q_1; \\
   \text{or} & v = v'w, \text{ where } v'=v'_1 \cdots v'_m \text{ with } \bar{v}'_m \in Q_1;\\
   \text{or} & v=v'c, w=w'c \text{ with } w'_n \in Q_1 \text{ and } \bar{v}'_m \in Q_1,
\end{array} \right.
\]
where $w'$, $v'$ and $c$ are strings.
It may be useful to illustrate this definition with a picture. Below we indicate arrows of either direction by short wiggly lines, (sub)strings by long wiggly lines, a direct arrow points downwards and to the right, an inverse arrow points downwards and to the left.
\[ \xymatrix@!R=2pt@!C=5pt{
\text{Case 1: \quad} & w = & \bullet \arr^-{w'_1} & \bullet \ar@{.}[r] & \bullet \arr^-{w'_{n-1}} & \bullet \ar[dr]^-{w'_n} &                                   & & \\
                &     &                     &                    &                        &                         & \bullet \arrr^-{v}                & & \bullet \\
\text{Case 2: \quad} &     &                     &                    &                      &                         & \bullet \ar[dl]_-{v'_m} \arrr^-{w} & & \bullet \\
                & v = & \bullet \arr^-{v'_1} & \bullet \ar@{.}[r] & \bullet \arr^-{v'_{m-1}} & \bullet                 &                                   & &         \\
                & w = & \bullet \arr^-{w'_1} & \bullet \ar@{.}[r] & \bullet \arr^-{w'_{n-1}} & \bullet \ar[dr]^-{w'_n} &                                    & &         \\
\text{Case 3: \quad} &     &                     &                    &                      &                         & \bullet \ar[dl]_-{v'_m} \arrr^-{c}  & & \bullet \\
                & v = & \bullet \arr^-{v'_1} & \bullet \ar@{.}[r] & \bullet \arr^-{v'_{m-1}} & \bullet
}
\]

\begin{example} \label{ex:B2}
Consider $\rLambda(2,3,1)$, where we relabel the arrows in the figure on page~\pageref{fig:repetitive_algebra} as $a=a_{-1}$, $b=b_0$, $c=b_1$, $d=c_2$, $x=x_2$ and $y=y$ to avoid cumbersome subscripts. We have the following linear order on the strings ending at vertex $(0,-1)$:
\[ \resizebox{\textwidth}{!}{
$1_{(0,-1)} < \invd y_{-1} < a \invd y_{-1} < \cdots < c y a \invd y_{-1} < y_{-1} < c y_{-1} < \ix b c y_{-1} < \cdots < a b c \ix b c y_{-1} < b c y_{-1},$
}
\]
where we write $y_{-1} = (-1,y)$ for short, $1_{(0,-1)}$ denotes the trivial string at vertex $(0,-1)$, and we have only indicated the degree of final arrow arrow; the others can be deduced from this. All the inequalities above correspond to adding a hook, except for $cya \invd y_{-1} < y_{-1}$ and $abc \ix bcy_{-1} < bcy_{-1}$, which correspond to removing a cohook.

The AR triangle starting at $cy_{-1}$ is
\[
\xymatrix@!=0.6pc{
                          & \ix bcy_{-1}  \ar[dr] &  \\
cy_{-1}   \ar[ur] \ar[dr]  &                      & \ix bc_{-1}. \\
                          & c_{-1} \ar[ur]        &
}
\]
\end{example}

\begin{remark}
Note that $(i,a_{-1})$ and $(i,c_{n-1})$ are two arrows in $\rQ(r,n,m)$ ending at the vertex $(i,0)$. By definition of the string function $\epsilon \colon \rQ_1(r,n,m) \to \{-1,1\}$ we have $\epsilon \big( (i,a_{-1}) \big) = - \epsilon \big( (i, c_{n-1}) \big)$. Thus, $(i,a_{-1})$ and $(i,c_{n-1})$ are not comparable in the total order defined above (because their $\epsilon$ values differ).
\end{remark}

\subsection{Maps between string modules} \label{sec:maps}

It is straightforward to compute the maps between string modules. This was first observed in \cite{Crawley-Boevey} and later generalised in \cite{Crawley-Boevey2} and \cite{Krause}. We follow the neat exposition given in \cite[Section 2]{Schroer-Zimmermann}.

For a string $w$, define the set of \emph{factor strings}, $\Fac{w}$, to be the set of decompositions $w = def$ with $d,e,f \in \strings$ , where $d=d_1 \cdots d_n$ and $f=f_1 \cdots f_m$, in which we require $d$ to be trivial or $d_n \in Q_1^{-1}$ and $f$ to be trivial or $f_1 \in Q_1$.
Similarly, the set of \emph{substrings}, $\Sub{w}$, is the set of decompositions in which we require $d$ to be trivial or $d_n \in Q_1$ and $f$ to be trivial or $f_1 \in Q_1^{-1}$. A picture may be useful: on the left we illustrate a factor string decomposition and on the right, a substring decomposition.
\[
\xymatrix@!R=0.025pc@!C=0.005pc{
                                  &&         & \ar[dl]_-{d_n} \bullet \arrr^-{e} && \bullet \ar[dr]^-{f_1} &                                 &&         \\
\bullet \arrr^-{d_1 \cdots d_{n-1}} && \bullet &                                  &&                        & \bullet \arrr^-{f_2 \cdots f_m} && \bullet
}
\xymatrix@!R=0.025pc@!C=0.005pc{
\bullet \arrr^-{d_1 \cdots d_{n-1}} && \bullet \ar[dr]^-{d_n} &                    &&         & \ar[dl]_-{f_1} \bullet \arrr^-{f_2 \cdots f_m} && \bullet \\
                                  &&                        & \bullet \arrr^-{e} && \bullet &                                               &&
}
\]

A pair $((d_1,e_1,f_1),(d_2,e_2,f_2)) \in \Fac{v} \times \Sub{w}$ is called \emph{admissible} if $e_1 = e_2$ or $e_1 = \bar{e}_2$. Then the main results of \cite{Crawley-Boevey, Crawley-Boevey2} and \cite{Krause} assert:

\begin{theorem} \label{thm:maps}
Let $v,w \in \strings$ and suppose $M_v$ and $M_w$ are their corresponding string modules. Then
$
\hom(M_v,M_w) = \#\{\text{admissible pairs in } \Fac{v} \times \Sub{w}\}.
$
\end{theorem}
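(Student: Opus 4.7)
The plan is to establish a bijection between admissible pairs and a basis of $\Hom(M_v, M_w)$ by constructing canonical homomorphisms associated to admissible pairs, proving linear independence, and then proving spanning. The key point throughout is that string modules have an explicit combinatorial description whose basis is indexed by the vertices of the string, and the action of arrows corresponds to moving along the string.

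First, I would associate to each admissible pair a canonical morphism as follows. Given $((d_1, e_1, f_1), (d_2, e_2, f_2)) \in \Fac{v} \times \Sub{w}$ with $e_1 = e_2$ (the case $e_1 = \bar{e}_2$ is analogous after reversing the string), the conditions defining factor strings ($d_1$ trivial or ending in an inverse arrow, $f_1$ trivial or starting with a direct arrow) ensure that $M_{e_1}$ occurs as a \emph{quotient} of $M_v$, via the projection $\pi\colon M_v \twoheadrightarrow M_{e_1}$ killing basis vectors of $d_1$ and $f_1$. Dually, the conditions defining substrings ensure that $M_{e_2}$ occurs as a \emph{submodule} of $M_w$, via an inclusion $\iota\colon M_{e_2} \embed M_w$. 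One must check that $\pi$ and $\iota$ are indeed well-defined module homomorphisms; this amounts to verifying that the arrows in $d_1 \cdot f_1$ around $e_1$ really leave the quotient, and similarly for the inclusion, which follows directly from the factor/substring conditions once the basis convention is fixed. The composite $\iota \circ (e_1 \isom e_2) \circ \pi$ is the canonical map $\phi_{(d_1,e_1,f_1),(d_2,e_2,f_2)}\colon M_v \to M_w$.

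Next, linear independence is straightforward: each canonical map $\phi_{(d_1,e_1,f_1),(d_2,e_2,f_2)}$ sends the basis vector of $M_v$ indexed by the endpoint of $d_1$ (i.e.\ the leftmost vertex of $e_1$) to the basis vector of $M_w$ indexed by the endpoint of $d_2$, and annihilates all basis vectors lying in $d_1$ or $f_1$. By partitioning the basis of $\Hom(M_v,M_w)$ according to the pair of basis vectors that are matched, distinct admissible pairs give rise to linearly independent canonical maps.

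The real work is showing spanning. Given an arbitrary $\phi \in \Hom(M_v, M_w)$, expand $\phi$ in terms of the basis vectors $\{m_x\}_{x \in v_0}$ of $M_v$ and $\{n_y\}_{y \in w_0}$ of $M_w$, writing $\phi(m_x) = \sum_y c_{x,y} n_y$. The constraint that $\phi$ commutes with the $\Lambda$-action (arrows in the quiver acting on strings by moving one step along, or killing if no such step exists) forces strong compatibility of the coefficients $c_{x,y}$: if $x \to x'$ corresponds to a direct arrow $\alpha$ in $v$ and this same arrow exists at $y$ in $w$, then $c_{x,y} = c_{x',y'}$ where $y'$ is the image of $y$ under $\alpha$. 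A careful case analysis (direct arrows, inverse arrows, and ``dead ends'' where an arrow acts by zero on one side) shows that each nonzero coefficient $c_{x,y}$ propagates along a maximal common substring of $v$ and $w$ (up to inversion), and that this common substring must be bordered by a factor-string boundary on the $v$-side and a substring boundary on the $w$-side; otherwise, the relations $\rho$ (special biserial!) or the structure of the quiver forces $c_{x,y} = 0$. This is precisely the admissibility condition. Thus $\phi$ decomposes as a linear combination of canonical maps associated to admissible pairs.

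The main obstacle is the last step, because one must systematically rule out ``spurious'' coefficients by exploiting the gentle/special biserial relations and the boundary behaviour of strings. In practice this is done by tracking how each coefficient $c_{x,y}$ is determined by its neighbours under the arrow action and showing that it must either vanish or extend to a full admissible pair. Once this case analysis is set up carefully, the bijection between basis elements and admissible pairs is immediate, completing the proof.
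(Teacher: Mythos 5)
The paper does not prove Theorem~\ref{thm:maps} at all; it is quoted from the literature, attributed to Crawley-Boevey and Krause, with the formulation following Schr\"oer--Zimmermann. So there is no ``paper's own proof'' to compare against: you are supplying a proof for a black-boxed citation. Your plan --- construct the ``graph map'' associated to each admissible pair as (quotient of $M_v$ onto $M_{e_1}$) composed with (inclusion of $M_{e_2}$ into $M_w$), then show linear independence and spanning --- is exactly the approach taken in those original references, and the construction itself is correct: the factor-string conditions on $d_1,f_1$ are precisely what makes $\mathrm{span}\{m_x : x \in d_1 \cup f_1\}$ a submodule of $M_v$, so that killing it yields a well-defined quotient; dually the substring conditions make $M_{e_2}$ a submodule of $M_w$.

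That said, two of your steps are under-argued enough to constitute real gaps. The linear-independence argument, as stated, silently assumes that distinct admissible pairs produce distinct matched vertex pairs. This is true, but it requires a short maximality argument (a pair that stops early or late violates the factor/substring conditions, and a nontrivial pair cannot coexist with a same-vertex reversed pair because that would force $w_i = \overline{w_{i+1}}$), plus an explicit treatment of the $e_1 = \bar e_2$ case where the ``matched vertex'' on the $w$-side is the start of $f_2$, not of $d_2$ as you wrote. More seriously, the spanning argument is where essentially all the content of the theorem lives, and you have only sketched it. The assertion that a nonzero coefficient $c_{x,y}$ ``must be bordered by a factor-string boundary on the $v$-side and a substring boundary on the $w$-side; otherwise the relations force $c_{x,y}=0$'' is the theorem, not a proof of it. Proving it requires a careful case analysis over the four possible orientations of the arrows adjacent to $x$ in $v$ and $y$ in $w$, the end-of-string cases, and the two-arrows-per-vertex special biserial configuration. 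You have identified the right invariant (propagation along common substrings) and the right obstructions, but as written this is a statement of intent, not an argument, so the proof has a genuine gap at exactly the step that carries the difficulty.
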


The following corollary is immediate.

\begin{corollary} \label{cor:maps}
Suppose $v$ and $w$ are strings such that $e(v) = e(w)$ and $\epsilon(v) = \epsilon(w)$, making $v$ and $w$ comparable in the Gei\ss\ total order. If $v \leq w$ then there is a non-zero morphism $M(v) \to M(w)$.
\end{corollary}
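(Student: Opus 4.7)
The plan is to deduce this directly from Theorem~\ref{thm:maps} by exhibiting, in each of the three cases of the Gei\ss\ order, an explicit admissible pair in $\Fac{v}\times\Sub{w}$. The guiding observation is that in every case, $v$ (or $w$) appears in its entirety as the middle part $e$ of a decomposition of the other, so we can take $e_1 = e_2 = v$ (or $=w$), which automatically satisfies the admissibility condition $e_1 = e_2$.

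\smallskip\noindent
\textbf{Case 1:} $w = w'v$ with $w'_n \in Q_1$. Take the trivial factor decomposition $(1_{s(v)},v,1_{e(v)}) \in \Fac{v}$ and the substring decomposition $(w',v,1_{e(v)}) \in \Sub{w}$. The former is admissible as a factor decomposition because both outer parts are trivial; the latter is admissible as a substring decomposition because $w'_n \in Q_1$ meets the condition $d_n\in Q_1$ and the trailing part is trivial. The pair is admissible.

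\smallskip\noindent
\textbf{Case 2:} $v = v'w$ with $\bar v'_m \in Q_1$, equivalently $v'_m \in Q_1^{-1}$. Take $(v',w,1_{e(w)}) \in \Fac{v}$ (valid since $v'_m \in Q_1^{-1}$) and the trivial substring $(1_{s(w)},w,1_{e(w)}) \in \Sub{w}$. Admissible as before.

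\smallskip\noindent
\textbf{Case 3:} $v = v'c$ and $w = w'c$ with $\bar v'_m \in Q_1$ and $w'_n \in Q_1$. Take $(v',c,1_{e(c)}) \in \Fac{v}$ and $(w',c,1_{e(c)}) \in \Sub{w}$; the endpoint conditions are exactly the required ones.

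\smallskip\noindent
In each case, the admissible pair contributes one to the count in Theorem~\ref{thm:maps}, giving $\hom(M(v),M(w)) \geq 1$ and hence a non-zero morphism. There are no hidden difficulties here beyond bookkeeping: the only minor subtlety is making sure the required trivial strings exist with the correct sign (the $1^\pm$ distinction), but since the hypothesis guarantees the endpoints and $\epsilon$-values match, the appropriate trivial strings are available on both sides, so the decompositions are genuinely valid.
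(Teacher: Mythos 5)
Your proof is correct and is exactly the kind of bookkeeping the paper is alluding to when it labels the corollary "immediate" from Theorem~\ref{thm:maps}: in every branch of the Gei\ss\ order the common piece can be read off simultaneously as the middle term of a factor-string decomposition of $v$ and of a substring decomposition of $w$, and the endpoint conditions match up by construction. (The one micro-gap, not worth dwelling on, is that you only treat strict inequality $v<w$; for $v=w$ the identity morphism settles the claim.)
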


\subsection{Strings and maps for derived-discrete algebras}

Here we list some pertinent facts about strings and string modules for discrete derived categories from \cite{BGS}, and establish some additional routine but useful properties.

\begin{lemma}[\cite{BGS}] \label{lem:BGS-facts}
Denote the simple modules of $\LLambda$ by $S(i)$ for $-m \leq i < n$. In the coordinate system introduced in Properties~\ref{properties}, $Z^0_{0,0} = S(0)$. Then:
\begin{enumerate}[label=(\roman*)~]
\item If $m>0$ then $S(-1) = X^1_{0,0}$; in particular there is a simple module on the mouth of the $\cX$ component.
\item If $r<n$ then $S(n-r)$ lies on the mouth of the $\cY$ component.
\end{enumerate}
\end{lemma}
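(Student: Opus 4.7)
The plan is to combine Happel's embedding $F\colon \Db(\LLambda)\hookrightarrow\stmod{\rLLambda}$ with the string module calculus just developed. Under $F$, each simple right $\LLambda$-module $S(i)$ is sent to the trivial string module $M(1_{(0,i)})$ at the vertex $(0,i)$ of the repetitive quiver $\rQ(r,n,m)$, so the position of $S(i)$ inside the AR quiver of $\Db(\LLambda)$ is controlled entirely by the local structure of $\rQ$ at that vertex, together with the hook/cohook operations from Section~\ref{sec:strings}. In particular, the key heuristic is: the number of irreducible successors (resp.\ predecessors) of $M(1_{(0,i)})$ equals the number of arrows starting (resp.\ ending) at $(0,i)$ in $\rQ$ that survive the relations, and this count distinguishes mouth objects of $\IZ A_\infty$-components from interior objects of $\IZ A_\infty^\infty$-components.

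I would first examine $(0,0)$, which sits at the junction between the tail and the cycle. Inspecting $\rQ(r,n,m)$ on page~\pageref{fig:repetitive_algebra}, this vertex has two arrows starting at it and two arrows ending at it (their exact identities depend on whether $m>0$ or $r\geq 2$; in the degenerate cases the ordinary arrows get replaced by the connecting arrows $x_{n-1}$ or $y$, but the valency remains two). Applying the hook-addition and cohook-removal algorithm to $1_{(0,0)}$ therefore produces two distinct non-trivial irreducible successors and two non-trivial predecessors, so $M(1_{(0,0)})$ has an AR mesh with two middle summands on each side. This forces $S(0)$ into the interior of a $\IZ A_\infty^\infty$-component, i.e.\ into some $\cZ^k$. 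The coordinate system of Properties~\ref{properties} is then normalised by declaring $k=0$ and the position to be the origin, which is exactly the identification $S(0)=Z^0_{0,0}$.

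For claim~(i): when $m>0$ the vertex $(0,-1)$ lies strictly inside the tail, so it has only the single arrow $(0,a_{-1})\colon (0,-1)\to(0,0)$ starting at it and a single arrow ending at it (namely $(0,a_{-2})$ when $m>1$, or a connecting arrow from a maximal path when $m=1$). The string-module AR calculus therefore yields a single non-trivial irreducible successor on one side of $1_{(0,-1)}$, placing $S(-1)$ at the mouth of a $\IZ A_\infty$-component. Because the arrow $(0,a_{-1})$ gives a non-zero morphism $S(-1)\to S(0)=Z^0_{0,0}$, and by Proposition~\ref{prop:X-hammocks} the only mouth objects mapping non-trivially to $Z^0_{0,0}$ lie in $\cX^1$, the object $S(-1)$ must sit on the mouth of $\cX^1$; matching homological degrees against the labelling convention then pins it down to $S(-1)=X^1_{0,0}$. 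Claim~(ii) follows by a symmetric analysis of the vertex $(0,n-r)$ on the cycle, where the neighbouring zero-relation $c_{n-r+1}c_{n-r+2}=0$ (when $r>1$) rules out the additional outgoing arrow from the string perspective, and where Proposition~\ref{prop:Y-hammocks} replaces Proposition~\ref{prop:X-hammocks}. The main obstacle will be the fiddly bookkeeping of which ordinary arrows are present at a given vertex of $\rQ$ versus which are replaced by connecting arrows, and in particular handling the degenerate cases $m\in\{0,1\}$, $r=1$ and $n-r=1$ uniformly; but none of these cases alters the AR-theoretic conclusion, only its combinatorial justification.
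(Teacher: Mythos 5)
The paper does not actually prove this lemma: it is cited directly from \cite{BGS}, and the paper treats it as given input to the subsequent proofs (Lemma~\ref{lem:appendix-mouth}, Lemma~\ref{lem:proj-position}). So your proposal is attempting to establish something the paper itself only references. Your high-level strategy --- push everything through Happel's equivalence into $\stmod{\rLLambda}$ and locate the trivial string modules $M(1_{(0,i)})$ via the hook/cohook calculus --- is consistent with the paper's methodology where it does argue about positions in the AR quiver, and it is plausibly close to what \cite{BGS} actually did.

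However, your key step rests on a local arrow-count heuristic that is not a valid criterion. You claim that counting arrows at $(0,i)$ in $\rQ$ determines both whether $M(1_{(0,i)})$ sits in a $\IZ A_\infty$ or $\IZ A_\infty^\infty$ component and whether it is at the mouth. This is not correct as stated. For example, the vertex $(0,-1)$ with $m>1$ has exactly one arrow in and one arrow out, yet (with the appropriate choice of sign for the trivial string) \emph{both} $w[1]$ and $[1]w$ are defined for $w = 1^{\pm}_{(0,-1)}$, so the module-level AR mesh from $M(1_{(0,-1)})$ has two middle terms, the same count as for a vertex with two arrows on each side. What separates a mouth object from an interior one is not the local valency of the vertex but a combination of two things your proposal does not address: (1) whether the middle terms of the AR mesh (or further along the chain) are projective-injective $\rLambda$-modules that get killed on passing to $\stmod{\rLLambda}$, which is the mechanism that produces one-sided meshes; and (2) whether the iterated operations $[n]w$ and $w[n]$ remain defined for all $n$, which is precisely the criterion the paper invokes in the proof of Lemma~\ref{lem:proj-position} and which is inherently global, requiring one to propagate the relations of $\rQ$ along the hook/cohook chain rather than look at a single vertex. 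Your proposal acknowledges some "fiddly bookkeeping" for degenerate $m,r$, but the real missing content is this analysis, without which the inference "one in, one out implies mouth" and "two in, two out implies $\IZ A_\infty^\infty$" is simply asserted. Finally, the step that uses Proposition~\ref{prop:X-hammocks} to pin down the component as $\cX^1$ is circular in the context of the paper's logic: Lemma~\ref{lem:mouth_hammocks} (via Lemma~\ref{lem:appendix-mouth}) is proved \emph{using} the present lemma, so Proposition~\ref{prop:X-hammocks} cannot be taken as available when establishing it.
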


\noindent
The embedding $\mod{\LLambda} \hookrightarrow \mod{\rLLambda}$ maps simple modules $S(i) \mapsto \rS(0,i)$; the latter corresponds to the trivial string $1_{(0,i)}$. Since morphisms to and from a simple module cannot factor through projective-injective modules (recall that $\rLambda$ is a self-injective algebra), we obtain both $\stHom(\rS(0,i), X) = \Hom(\rS(0,i), X)$ and $\stHom(X,\rS(0,i)) = \Hom(X,\rS(0,i))$ for any $X \in \mod{\rLLambda}$.

\begin{lemma} \label{lem:appendix-mouth}
Let $A \in \ind{\Db(\LLambda)}$ with $r>1$ and let $i,k\in \IZ$, $0\leq k<r$. Then
\[ \begin{array}{ll}
  \Hom(X^k_{ii}, A) = \kk & \text{if } A \in \rayfrom{X^k_{ii}}   \cup \corayto{\SSS X^k_{ii}}     \cup \raythrough{Z^k_{ii}} ,\\[0.5ex]
  \Hom(Y^k_{ii}, A) = \kk & \text{if } A \in \corayfrom{Y^k_{ii}} \cup \rayto{\SSS Y^k_{ii}}       \cup \coraythrough{Z^k_{ii}} ,
\end{array} \]
and in all other cases the Hom spaces are zero. For $r=1$ the Hom-spaces are as above, except $\Hom(X^0_{ii},X^0_{i,i+m}) = \kk^2$.
\end{lemma}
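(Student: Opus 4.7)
The strategy is to transport the question, via Happel's embedding $\Db(\LLambda)\hookrightarrow\stmod(\rLLambda)$, to the counting of admissible pairs for string modules (Theorem~\ref{thm:maps}). First, I would reduce to a single mouth object in each of $\cX$ and $\cY$: since $\Sigma$ and $\tau$ are autoequivalences that act on the set $\{X^k_{ii}\}$ transitively (by Properties~\ref{properties}(2),(3)), and since the claimed sets $\rayfrom{X^k_{ii}}\cup\corayto{\SSS X^k_{ii}}\cup\raythrough{Z^k_{ii}}$ are defined intrinsically, it suffices to prove the statement for the distinguished mouth object $S(-1) = X^1_{00}$ (see Lemma~\ref{lem:BGS-facts}(i)). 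Analogously for the $\cY$-case, one reduces to $S(n-r) = Y^1_{00}$.

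Next, I would identify all objects in terms of strings for $\rLLambda$. The chosen mouth object $S(-1)$ corresponds to the trivial string $v=1_{(0,-1)}$, and every other indecomposable is isomorphic to a string module $M(w)$ for a nontrivial $w$. Because $S(-1)$ is simple and morphisms from a simple cannot factor through projective-injectives, $\stHom(S(-1),M(w))=\Hom_{\rLLambda}(S(-1),M(w))$. Since $v$ is trivial, $\Fac(v)$ is a singleton (after fixing the sign via the string function $\epsilon$), so admissible pairs in $\Fac(v)\times\Sub(w)$ are in bijection with the trivial substring decompositions of $w$ at the vertex $(0,-1)$ with compatible sign.

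Then comes the geometric translation: I would match the admissible substring decompositions of $w$ at $(0,-1)$ with the three families in the statement. Using the explicit description of the strings representing the objects in $\cX^1$, $\cX^0 \cong \Sigma^{-1}\cX^1\tau^{\cdots}$ and $\cZ^1$ from \cite{BGS}, a case-by-case analysis shows: an admissible substring at $(0,-1)$ in $w$ occurring at the boundary of $w$ places $M(w)$ on $\rayfrom{X^1_{00}}$ or $\corayto{\SSS X^1_{00}}$, while one occurring in the interior places $M(w)$ on $\raythrough{Z^1_{00}}$. Each of the three families contributes exactly one such decomposition, giving dimension $1$. Everywhere else the vertex $(0,-1)$ does not appear as an admissible substring of $w$, so the Hom space vanishes. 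The analogous bookkeeping works for the $\cY$-mouth case.

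For the exceptional case $r=1$, the quiver of $\rLambda(1,n,m)$ contains a longer full path through $(0,-1)$ involving all the $b$-arrows of the cycle, and the corresponding string appears twice as a substring of $w$ exactly when $A=X^0_{i,i+m}$, yielding a second admissible pair and hence $\dim\Hom = 2$ in that single family. The main obstacle is the last step: tracking through the rather intricate strings describing objects in $\raythrough{Z^k_{ii}}$ and $\corayto{\SSS X^k_{ii}}$, and verifying that for each indecomposable $A$ in the claimed hammock the vertex $(0,-1)$ appears in exactly one admissible position (two, if $r=1$ and $A$ is of the exceptional form), while for all other $A$ no such admissible position exists. The compatibility of the string functions $\sigma,\epsilon$ with the connecting arrows of $\rLLambda$ is what ultimately ensures this exact count.
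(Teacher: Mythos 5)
Your overall strategy matches the paper's: pass through Happel's embedding into $\stmod(\rLLambda)$, exploit transitivity of $\Sigma,\tau$ on the mouths to reduce to the simple $\rS(0,-1)=X^1_{0,0}$, note that $\Fac{1_{(0,-1)}}$ is a singleton so Theorem~\ref{thm:maps} reduces to counting trivial substring decompositions of $w$ at $(0,-1)$, and finally translate this back to AR-quiver coordinates.

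However, the key combinatorial step is mis-stated, and the misstatement conceals the actual mechanism behind the dimension bound. You posit a dichotomy: trivial substrings at the boundary of $w$ account for $\rayfrom{X^1_{00}}$ and $\corayto{\SSS X^1_{00}}$, while those in the interior account for $\raythrough{Z^1_{00}}$. In fact \emph{interior} occurrences never arise. The decisive observation is that $\rQ(r,n,m)$ has exactly one arrow ending at $(0,-1)$ — namely $(0,a_{-2})$ for $m>1$ and $(-1,y)$ for $m=1$. In a substring decomposition $w=d\cdot 1_{(0,-1)}\cdot f$, both $d_n$ (if $d$ is non-trivial) and $\bar f_1$ (if $f$ is non-trivial) must be direct arrows into $(0,-1)$; since there is only one such arrow, $d$ and $f$ cannot both be non-trivial (else $w$ would contain a forbidden $a\ia$). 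Hence the trivial substring always sits at a boundary of $w$, i.e.\ $w=d$ or $w=f$. The $\cZ$-component objects in the hammock are, like all the others, represented by strings \emph{ending} at $(0,-1)$ — see the chain in Example~\ref{ex:B2}, where every entry ends in $y_{-1}$ or is trivial. The paper then identifies the whole set of such strings with the Gei\ss{} total order and reads off Properties~\ref{properties}(5); the bound $\dim=1$ follows from the same uniqueness-of-arrow observation, not from three families each contributing a distinct type of decomposition.

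Two smaller gaps: you do not treat $m=0$, where the vertex $-1$ does not exist so $S(-1)$ is unavailable; the paper handles this by embedding $\Db(\Lambda(r,n,0))$ into $\Db(\Lambda(r,n,1))$ and reading hammocks from the larger category. And your explanation of the $r=1$ exception (a "longer full path appearing twice") is too vague to check; the real reason the dimension jumps to $2$ is that for $r=1$ the sets $\rayfrom{X^0_{ii}}$ and $\corayto{\SSS X^0_{ii}}$ lie in the same component and overlap at exactly $X^0_{i,i+m}$, so the two boundary occurrences (one at each end of $w$) can coexist.
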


The other two statements of Lemma~\ref{lem:mouth_hammocks} follow from these by Serre duality.

\begin{proof}
\Case{$m > 0$} 
Since, by \cite[Theorem~B]{BGS}, the action of $\tau$ and $\Sigma$ together is transitive on the set of objects at the mouths of the $\cX$ components, by Lemma~\ref{lem:BGS-facts}, we may assume that $X^k_{ii} = \rS(0,-1)$.
Note that the chain of morphisms in Properties~\ref{properties}$(5)$ corresponds to the totally ordered set of strings ending at $1_{(0,-1)}$ in the Gei\ss\ total order. By Corollary~\ref{cor:maps}, it follows that each object in this totally ordered set admits a morphism from $\rS(0,-1)$. This totally ordered set is shown in Example~\ref{ex:B2} for the algebra $\rLambda(2,3,1)$.

Now it remains to show that these are the only objects admitting morphisms from $\rS(0,-1)$. Let $w$ be a string that admits a substring decomposition $def \in \Sub{w}$ with $e = 1_{(0,-1)}$ or $\bar{e} = 1_{(0,-1)}$. We claim that $w = de (= d)$ or $w = ef (= f)$. This is clear since there is only one arrow ending at $(0,-1)$, namely $(0,a_{-2})$ when $m > 1$ and $(-1,y)$ when $m=1$. These strings (or their inverses) are precisely the strings listed in Gei\ss\ total order in Properties~\ref{properties}$(5)$. Therefore, by Theorem~\ref{thm:maps}, these are precisely the indecomposable objects admitting a morphism from $\rS(0,-1)$.  Reading this off gives $\rayfrom{S(0,1)}$ and $\corayto{\SSS S(0,1)}$ in the $\cX$ components, and $\raythrough{Z^1_{0,0}}$ in the $\cZ$ component.

The Hom-hammock of objects admitting morphisms from $\rS(0,n-r)$, which is in the $\cY$ component for any $m$, can be obtained in an analogous fashion.

\Case{$m=0$} 
We use an embedding  $\Db(\Lambda(r,n,0)) \hookrightarrow \Db(\Lambda(r,n,1))$.
By \cite[Lemma~3.1]{BGS}, the indecomposable projective $P(-1)$ lies on the mouth of the $\cX$ component.
Applying Lemma~\ref{lem:functorially-finite} to $\sP = \thick{\Db(\Lambda(r,n,1))}{P(-1)}$ yields $\sP\orth \simeq \Db(\Lambda(r,n,0))$ as in the proof of Proposition~\ref{prop:embedding-A}. Computing Hom-hammocks in $\sP\orth$ by the case $m>0$ gets the claim.
\end{proof}

\begin{remark}
In the `extended ray' of strings ending at $1_{(0,-1)}$, to obtain the part of this linearly ordered set corresponding to $\corayto{\SSS S(0,-1)}$, we consider the inverse strings of those ending at $1_{(0,-1)}$ with the direct arrow $a_{(0,-2)}$ (for $m>1$) or $y_{-1}$ (for $m=1$). We thus obtain strings starting with the corresponding inverse arrow, which gives the coray.
\end{remark}

\noindent
The next fact is used in particular for Proposition~\ref{prop:embedding-A}, the classification of silting objects.

\begin{lemma} \label{lem:proj-position}
The projective $P(n-r)\in\cZ$ in the AR quiver of $\Db(\LLambda)$.
\end{lemma}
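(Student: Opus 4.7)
The plan is to exclude $P(n-r)$ from every $\cX^k$ and $\cY^k$ component using the Hom-hammocks of \S\ref{sec:hammocks}.

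The projective cover surjection $P(n-r) \twoheadrightarrow S(n-r)$ gives $\Hom(P(n-r), S(n-r)) = \kk$. Since $S(n-r)$ lies at the mouth of $\cY^0$ by Lemma~\ref{lem:BGS-facts}(ii), and Proposition~\ref{prop:X-hammocks} gives $\Hom(\cX, \cY) = 0$, we conclude $P(n-r) \notin \cX$.

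To rule out $P(n-r) \in \cY$, first assume $m > 0$ and consider $\Ext^1(S(-1), P(n-r))$. There is a short projective resolution $0 \to P(0) \to P(-1) \to S(-1) \to 0$, because $\rad P(-1)\cong P(0)$ (the generator $e_0$ maps to $a_{-1}\in e_{-1}\LLambda e_0$). The induced map $\Hom(P(-1), P(n-r)) \to \Hom(P(0), P(n-r))$ is precomposition with this resolution map and sends the basis element $\alpha = a_{-1}b_0\cdots b_{n-r-1}$ of $e_{n-r}\LLambda e_{-1}$ to $\alpha \cdot a_{-1}=0$, since $\alpha$ ends at $n-r \neq -1 = s(a_{-1})$. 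So $\Ext^1(S(-1), P(n-r))\cong\Hom(P(0),P(n-r))=\kk$, and hence $\Hom(S(-1),\Sigma P(n-r))\neq 0$. Now $S(-1)=X^1_{0,0}$ is at the mouth of $\cX^1$ by Lemma~\ref{lem:BGS-facts}(i), and by Proposition~\ref{prop:X-hammocks} the hammock $\Hom(S(-1),-)$ is supported in $\cX\cup\cZ$, disjoint from $\cY$. If $P(n-r)\in\cY$ then $\Sigma P(n-r)\in\cY$ also (each $\cY^k$ is $\Sigma$-stable up to relabelling), contradicting the non-vanishing morphism. Thus $P(n-r)\notin\cY$ when $m>0$, giving $P(n-r)\in\cZ$.

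For $m=0$, I would reduce to the $m>0$ case via the embedding $\Db(\Lambda(r,n,0))\simeq\thick{}{P_{(r,n,1)}(-1)}\orth\hookrightarrow\Db(\Lambda(r,n,1))$ used in the proof of Lemma~\ref{lem:appendix-mouth}. The main obstacle here is identifying the image of $P_{(r,n,0)}(n-r)$ under this embedding and checking that it lies in a $\cZ$ component of $\Db(\Lambda(r,n,1))$; this is tractable because Happel's functor identifies both sides of the embedding with string modules in the respective repetitive algebras, and the string $(0,b_{n-r})$ representing $P(n-r)$ is unaffected by the additional tail vertex in $\Lambda(r,n,1)$. Combining this with the argument above closes the case $m=0$.
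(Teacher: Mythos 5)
Your approach is genuinely different from the paper's: the paper identifies the string $\bar{b}_{n-r}$ representing $P(n-r)$ in the repetitive algebra, shows the trivial string $1_{(0,n-r+1)}$ (i.e.\ $S(n-r+1)$) lies in a $\IZ A_\infty^\infty$-component because $[n]1_{(0,n-r+1)}$ and $1_{(0,n-r+1)}[n]$ exist for all $n\in\IZ$, and concludes since $P(n-r)$ is one AR-step away. You instead try to exclude $P(n-r)$ from the $\cX$ and $\cY$ components using the Hom-hammocks and explicit projective resolutions. The first exclusion ($P(n-r)\notin\cX$, via $\Hom(P(n-r),S(n-r))\neq0$ and $\Hom(\cX,\cY)=0$) is fine, and this use of Proposition~\ref{prop:X-hammocks} is not circular since its proof rests only on Lemma~\ref{lem:mouth_hammocks}.

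However, the second exclusion contains a genuine error that sinks the argument for $r>1$. With the paper's conventions (right modules, paths composed left-to-right), $\Hom(P(-1),P(n-r))\cong e_{n-r}\LLambda e_{-1}$, which consists of paths \emph{from} $n-r$ \emph{to} $-1$; there are none, so this space is zero. Your claimed basis element $\alpha = a_{-1}b_0\cdots b_{n-r-1}$ is a path from $-1$ to $n-r$, hence lies in $e_{-1}\LLambda e_{n-r}$, not in $e_{n-r}\LLambda e_{-1}$. Worse, $\Hom(P(0),P(n-r))\cong e_{n-r}\LLambda e_0$ consists of paths from $n-r$ to $0$, and for $r>1$ the only candidate $b_{n-r}c_{n-r+1}\cdots c_{n-1}$ is annihilated by the relation $b_{n-r}c_{n-r+1}=0$, so this group is also zero. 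Consequently $\Ext^1(S(-1),P(n-r))=0$ for $r>1$, and since $S(-1)$ has projective dimension $1$ (your resolution $0\to P(0)\to P(-1)\to S(-1)\to 0$), in fact $\Hom^\bullet(S(-1),P(n-r))=0$ entirely. One can check this concretely in $\Lambda(2,3,1)$: there $P(1)$ has basis $\{e_1,b_1\}$ supported on vertices $1,2$, and no nonzero representation morphism $P(0)\to P(1)$ exists because the arrow $b_0\colon 0\to 1$ acts invertibly on $P(0)$ but lands in the zero space of $P(1)$. So $S(-1)$ cannot detect that $P(n-r)\notin\cY$ when $r>1$, and the argument needs a different witness, or a different strategy altogether (the paper's string calculation avoids the issue entirely). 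Incidentally, your formula does give the correct answer $\Ext^1(S(-1),P(n-1))=\kk$ in the special case $r=1$, but only because $\Hom(P(-1),P(n-1))=0$ happens to hold for a different reason and $\Hom(P(0),P(n-1))=\kk b_{n-1}$.
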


\begin{proof}
The simple module $S(n-r+1) \in \mod{\Lambda}$ corresponds to  the trivial string $1_{(0,n-r+1)}$. One can show by direct computation that for any $n \in \IZ$ both $[n]1_{(0,n-r+1)}$ and $1_{(0,n-r+1)}[n]$ exist. This means that $1_{(0,n-r+1)}$ sits in a $\IZ A_{\infty}^{\infty}$-component of the AR quiver, for otherwise, eventually one of $[n]1_{(0,n-r+1)}$ or $1_{(0,n-r+1)}[n]$ would not be defined.  The projective $P(n-r)$ is represented by the string $\ib_{n-r}$, which is given by $1_{(0,n-r+1)}[-1]$, and hence lies in the same component as $S(n-r+1)$, i.e.\ $P(n-r)\in\cZ$.
\end{proof}

\FloatBarrier

\addtocontents{toc}{\protect{\setcounter{tocdepth}{-1}}}

\bigskip
\noindent
\resizebox{\textwidth}{!}{{Email: \texttt{broomhead@math.uni-hannover.de, david.pauksztello@manchester.ac.uk, david.ploog@uni-due.de}}}

\end{document}